\DeclarePairedDelimiter\ceil{\lceil}{\rceil}
\DeclarePairedDelimiter\floor{\lfloor}{\rfloor}
\newcommand{\R}{\mathbb{R}}
\newcommand{\sgn}{{\rm sgn}}
\newcommand{\N}{\mathbb{N}}
\newcommand{\Z}{\mathbb{Z}}
\newcommand{\E}{\mathbb{E}}
\newcommand{\ER}{Erdős-R\'{e}nyi~}
\newcommand{\pr}{\mathbb{P}}
\newcommand{\G}{\mathbb{G}}
\newcommand{\ignore}[1]{\relax}
\newcommand{\defeq}{\mathrel{\mathop:}=}
\newtheorem{lemma}{Lemma}
\newtheorem{theorem}{Theorem}
\newtheorem{definition}{Definition}
\newtheorem{corollary}{Corollary}
\newtheorem{proposition}{Proposition}
\newtheorem{assumption}{Assumption}
\numberwithin{equation}{section}
\newcommand{\argmin}{\mathop{\mathrm{argmin}}}
\DeclarePairedDelimiterX{\rbr}[1]{(}{)}{#1} 
\DeclarePairedDelimiterX{\sbr}[1]{[}{]}{#1}
\newcommand{\dg}[1]{\textcolor{red}{\emph{ #1}}}
\newcommand{\Furedi}{F{\"u}redi}
\newcommand{\yd}[1]{\textcolor{blue}{\emph{ #1}}}
  \providecommand{\R}{\mathbb{R}} 
  \providecommand{\N}{\mathbb{N}} 
  \def\sign{\@ifnextchar*{\@sgnargscaled}{\@ifnextchar[{\sgnargscaleas}{\@ifnextchar{\bgroup}{\@sgnarg}{\sgn} }}}
  \def\@sgnarg#1{\sgn\rbr{#1}}
  \def\@sgnargscaled#1{\sgn\rbr*{#1}}
  \def\@sgnargscaleas[#1]#2{\sgn\rbr[#1]{#2}}
  \providecommand{\ddot}{\dot\dot}
  \providecommand{\dddot}{\dot\dot}
  \providecommand{\cE}{\mathcal{E}}
  \providecommand{\cH}{\mathcal{H}}
  \providecommand{\cO}{\mathcal{O}}
  \providecommand{\cX}{\mathcal{X}}
\newcommand{\speedup}[1]{{\color{gray}(\ifdim #1 pt > 0.3pt #1\else $< #1$\fi{}$\times$)}}
\providecommand{\abs}[1]{\left\lvert#1\right\rvert}
\providecommand{\norm}[1]{\left\lVert#1\right\rVert}
  \providecommand{\R}{\mathbb{R}} %
  \providecommand{\N}{\mathbb{N}} %
  \newcommand{\Ea}[1]{\E\left[#1\right]}
\newcommand{\Eb}[2]{\E_{#1}\left[#2\right]}
  \providecommand{\erf}{\operatorname{erf}}
  \providecommand{\cE}{\mathcal{E}}
  \providecommand{\cH}{\mathcal{H}}
  \providecommand{\cO}{\mathcal{O}}
  \providecommand{\cX}{\mathcal{X}}
\providecommand{\mycomment}[3]{\todo[caption={},size=footnotesize,color=#1!20]{\textbf{#2: }#3}}%
\providecommand{\inlinecomment}[3]{%
  {\color{#1}#2: #3}}%
\newcommand\commenter[2]%
\newcommand\csname i#1\endcsname[1]{\inlinecomment{#2}{#1}{##1}}
\newcommand\csname #1\endcsname[1]{\mycomment{#2}{#1}{##1}}
  \definecolor{mydarkblue}{rgb}{0,0.08,0.45}
\def\qed{{\hfill \vrule width 1.5mm height 1.5mm \medskip}}
 \title{Maximally-stable Local Optima in Random Graphs and Spin Glasses: Phase Transitions and Universality}
\author[1]{Yatin Dandi}
\author[2]{David Gamarnik}
\author[1]{Lenka Zdeborová}
\affil[1]{{\small Statistical Physics of Computation Laboratory, École Polytechnique Fédérale de Lausanne (EPFL). }}
\affil[2]{{\small Operations Research Center and Sloan School of Management, MIT, Cambridge, MA 02139}}
\date{}
\begin{document}

\maketitle

\begin{abstract}
We consider $h$-stable local optima of Ising spin glass models. These are defined as spin configurations such that for nearly all of the spins, flipping their values results in increasing energy by at least a given amount $h$, up to a normalization.  Spins satisfying this condition are referred to as $h$-stable spins for that configuration.
Similarly, we consider a very related notion of $h$-friendly partitions of a  graph. These are defined as bi-partitionings such that for most nodes,  the normalized number of neighbors within the node's partition exceed the normalized number of neighbors outside the partition by a certain amount $h$, with the nodes satisfying this condition being termed $h$-friendly. 
For spin glasses as well as sparse and dense random graphs, while restricting to bisections i.e. partitions of equal sizes, we prove the existence of a phase transition for the normalized energy
level $h$ around a universal value $h^\star$. For $h$ below the phase transition value $h^\star$, bisections exist where the number spins (nodes) which are not $h$-stable (not $h$-friendly) is sub-linear. 
Above the phase transition level $h^\star$ the smallest number of spins that are not $h$-stable  (not $h$-friendly) is linear. This confirms a conjecture from~\cite{behrens2022dis}. Our results also allow the characterization of possible energy values of stable local optima for varying $h$. In particular, for $h=0$, this rigorously proves seminal results in statistical physics regarding the so-called metastable states, such as in the work of \cite{bray1981metastable}. 

Our results extend a recent proof of the so-called Friendly Partition Conjecture in~\cite{ferber2022friendly} from the case $h=0$ to the case when $h$ takes general values. Furthermore, our work offers novel contributions to the rigorous analysis of the local optima in spin glasses, going beyond the results on the number of local optima in \cite{skcount} obtained by the first moment (annealed) method.

Our proofs are obtained by analyzing the model on sparse random graphs and adopting Lindeberg's type universality method to lift the results from sparse to dense graphs and spin systems. 

\end{abstract}


\tableofcontents

\section{Introduction}\label{sec:intro}
In combinatorics and theoretical computer science, the properties of partitions of vertices of random graphs have been analyzed in the context of well-known NP-hard problems of maximum cuts and minimum bisections \citep{gamarnikMaxcutSparseRandom2018,dembo2017extremal, alaouiLocalAlgorithmsMaximum2021}. In statistical physics, such partitions arise naturally due to the association of the vertices with spin values $\in \{-1,1\}$.
This leads to the equivalence between notions in different disciplines such as maximum cuts and minimum bisections of a weighted graph corresponding to the minimum energy configurations in Ising models, and spin glasses.

In this work, we consider the property of a partition which is called $h$-assortative/disassortative as recently studied in \citep{behrens2022dis}.
These are partitions of vertex sets into two parts such that for each vertex the 
difference between the number of neighbors of this vertex within the  part containing the vertex 
and the number of neighbors of the vertex
in the opposite part exceeds/does-not-exceed a given threshold $h$, upon some natural 
normalization.
This property generalizes related notions of friendly and unfriendly partitions \citep{ferber2022friendly,shelah1990graphs,aharoni1990unfriendly}, satisfactory/co-satisfactory graph partitions \citep{gerber2000algorithmic,bazganExistenceDeterminationSatisfactory2003,bazgan2007efficient,bazganSatisfactoryGraphPartition2010}, or internal partitions \citep{banInternalPartitionsRegular2013,linialAsymptoticallyAlmostEvery2017}.
All of these papers focus on the case $h=0$. Similar notions of assortative/disassortative partitions have been considered in presence of additional  minimum-degree constraints \citep{stiebitzDecomposingGraphsDegree1996,bazgan2007efficient,maDecomposingC4freeGraphs2019,liuConjectureSchweserStiebitz2021}, as judicious partitions \citep{bollobas2002problems},  as alliances in game theory \citep{kristiansenAlliancesGraphs2004},  as cohesive subsets \citep{morrisContagion2000} in the theory of 
contagion, as local minimum
bisections, local maximum cuts in combinatorial optimization \citep{angelLocalMaxcutSmoothed2017}, and as d-cuts in generalizations of the matching cut problem \citep{gomesFindingCutsBounded2021}.

To introduce our set up, we consider weighted graphs, 
denoted by $G=(V,W)$, where $V$ is the set of vertices with $n=\abs{V}$ 
nodes, and $W$ is a symmetric matrix in $\R^{n\times n}$ with entries $(w_{ij})_{i,j \in [n]^2}$ representing 
the edge weights for each possible pair of nodes. Concretely, we associate 
each pair of vertices $(i,j)\in [n]\times [n]$ with a weighted edge $w_{ij} \in \R$. Here
 $w_{ij}=0$ is naturally interpreted as the absence of an edge connecting $i$ and $j$. 
We denote by $d_i$ the degree of the node $i$, which is the cardinality of the set $\{j: w_{i,j}\ne 0\}$.

We define partitions as assignment of $\pm 1$ spins to each vertex. Namely, every  $\sigma\in \{+1,-1\}^n$
or $\sigma:[n]\rightarrow \{\pm 1\}$ encodes a partition of the vertex set $[n]$ into two subsets $V_+$
and $V_-$, corresponding to nodes mapped to $+1$ and $-1$, respectively. 
We say that the partition $\sigma$ is a bisection if $\abs{V_+}= \abs{V_-}$ when $n=\abs{V}$ is even, 
and $\abs{V_+}- \abs{V_-}=\pm 1$ for odd $n$. 

Each partition $\sigma$ is associated with a measure of single-spin flip stability for vertices defined as follows:
\begin{align}
    s_\sigma(v,W) &= \frac{1}{\sqrt{d}}(\sum_{u\in [n],\sigma_u=\sigma_v} w_{vu} - \sum_{u\in [n],\sigma_u\neq\sigma_v)} w_{vu})\label{eq:friendli}\\
    &={1\over \sqrt{d}} \sigma_v \sum_{u\in [n]} w_{vu}\sigma_u.
\label{eq:stab_e}
\end{align}
Here $d$ is a parameter of choice to 
be specified, but
usually related to the average
degree of the graph or the number of 
nodes. This
scaling will become justified later.
For ease the of notation, we shall occasionally suppress the dependence of $s_\sigma(v,W)$ on $W$ and simply 
write $s_\sigma(v)$. 
When the entries of $W$ take values $0,1$,
namely, when $W$ encodes a graph, $s_\sigma(v)$
is proportional to the difference 
between the number of neighbors of $v$
in the same part as $v$ vs the number
of neighbors in the opposite part.
Following \cite{ferber2022friendly}, we refer to this difference as ``friendliness" of the $v_{th}$ vertex. 
In statistical physics jargon this 
corresponds to the so-called ferromagnetic
interaction. Increasing the 
sum $\sum s_\sigma(v)$ (the number
of $v$ with non-negative $s_\sigma(v)$)
corresponds to finding partition 
with larger ``level'' of friendliness 
(respectively, larger number of friendly
nodes). Conversely, when
the entries are $-1$ and $0$
$s_\sigma(v)$ encodes the level of 
unfriendliness, and increasing the 
sum (the number) corresponds to decreasing
the level of friendliness (decreasing
the number of friendly nodes). 
In statistical physics jargon this
is the anti-ferromagnetic interaction.
Equivalently, this corresponds to the stability requirement $s_\sigma(v)\le -h$
when $W$ is converted to the adjacency matrix.

The quantities $s_\sigma(v)$ allow us to characterize  configurations satisfying some prescribed minimum stability/friendliness 
requirement. Given a parameter $h$,
we say that a vertex $v$ in a given configuration/partition represented by $\sigma$ is $h$-stable if
\begin{equation}\label{def:h-stable}
    s_\sigma(v) \geq h.
\end{equation}

\ignore{
A configuration $\sigma$ is called $h$-stable if each vertex is $h$-stable, i.e if 
\begin{equation}
  s_\sigma(v) \geq h , \ \forall v \in [n].
\end{equation}
}
The term ``stability" is motivated by statistical physics considerations as follows.
Associating each partition $\sigma$  with a Hamiltonian (energy)
\begin{equation}\label{eq:hamilton}
H(\sigma) \triangleq -\frac{1}{\sqrt{d}}\sum_{1\leq i<j\leq n}\sigma_i\,\sigma_j\,w_{ij},
\end{equation}
we can think of  $s_\sigma(v)$ as quantifying the  Hamiltonian (energy) change obtained  
upon switching the spin value of one of the nodes to an opposite value. Specifically,
letting 
$\sigma^{(v)}$ denote the configuration obtained from $\sigma$ by switching the value of the vertex $v$,
we observe from (\ref{eq:hamilton}), that
\begin{align*}
 H(\sigma^{(v)})-H(\sigma)= 2s_\sigma(v). 
\end{align*}
Thus, the $h$-stability of the partition $\sigma$ means every spin flip results in energy ``increase'' by 
at least $h$ (in quotes since we allow negative values of $h$ as well). When $h=0$, this simply means $\sigma$ is a locally minimum configuration with respect
to the objective function $H(\cdot)$.
$s_\sigma(v, W)$ can also be interpreted as 
the normalized product of the spin value $\sigma_v$ of $v$ with its local field $\sum_{i=1}^n w_{vi}\sigma_i$. 

Locally minimum configurations, namely configurations $\sigma$ satisfying $s_\sigma(v) \geq 0,  \forall v$   are also
called ``metastable states" of the Hamiltonian in the statistical physics language. 
Such states have been extensively studied in physics (e.g.: \citep{bray1981metastable,tanaka1980analytic,de1980white} for spin glass models).

\begin{figure}
    \centering
    \includegraphics[width=0.7\textwidth]{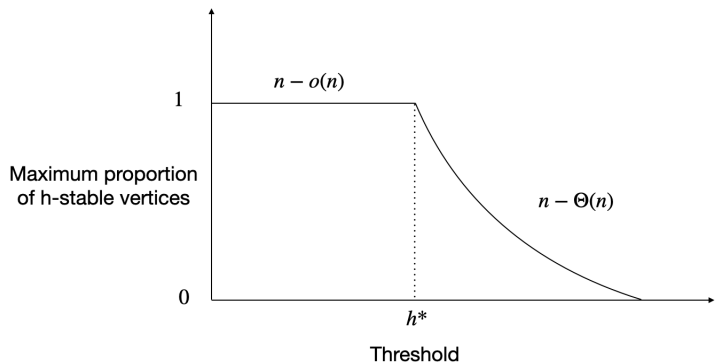}
    \caption{Illustration of the phase transition in the maximum number of $h$-stable vertices across all configurations.}
    \label{fig:phase_trans_ill_h}
\end{figure}

\begin{figure}
    \centering
    \includegraphics[width=\textwidth]{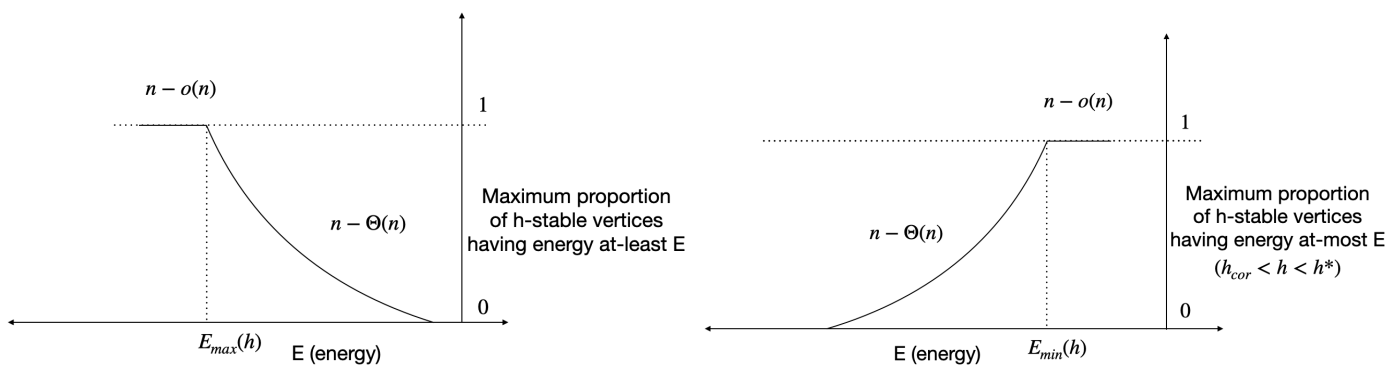}
    \caption{Illustration of the phase transitions in the maximum number of $h$-stable vertices amongst configurations satisfying certain energy constraints}
    \label{fig:phase_trans_ill_E}
\end{figure}

\paragraph{Informal summary of the 
main results.}
We now give an informal summary of our main findings. For clarity, we restrict to the case $h>0$, although the results extend to all $h$ bounded below by a chosen negative constant. 
\begin{enumerate}
    \item \emph{The maximum number of $h$-stable vertices in dense and sparse
    random graphs}. 
    Let $\G(n,1/2)$ denote an \ER random
    graph on $n$ nodes obtained by turning each pair $(i,j), 1\le i<j\le n$
    into an edge with probability $1/2$
    independently across all pairs. 
    Similarly, let
     $\G(n,d/n)$ denote a sparse random
     \ER graph
    on $n$ nodes obtained by turning
    each pair $(i,j), 1\le i<j\le n$
    into an edge with probability $d/n$,
    independently across all ${n\choose 2}$ pairs, where the parameter
    $d$ is fixed. While our results apply to general weighted graphs, for the sake of concreteness, we first explain them for the case of friendly bisections. For both $\G(n,1/2), \G(n,d/n)$, these correspond to setting $W$ as the adjacency matrix of the graph in the definition of $s_\sigma(v,W)$ in (\ref{eq:friendli}): 
    $w_{i,j}= 1$ if $(i,j)$ is
    an edge and $=0$ otherwise. 

For the case of dense \ER graphs $\G(n,1/2)$ we prove the following
phase transition around a threshold
$h^\star$, which we numerically estimate
to be $h^\star \approx 0.3513$: 
for $h<h^\star$ there exists bisections
$\sigma$ such that the number of 
$h$-stable ($h$-friendly) nodes is $n-o(n)$ with 
high probability (w.h.p.) as $n\to\infty$.
Conversely, when $h>h^\star$ 
every   bisection
results in at least $\Theta(n)$ nodes
which are not $h$-stable, w.h.p. as 
$n\to\infty$. For the special
case $h=0$ this result recovers
the one of \citep{ferber2022friendly}. 

For sparse random graphs our
results are similar, though the 
scaling necessarily switches from $n$
to $d$. Specifically, for the same
value of $h^\star$ we prove
that when $h<h^\star$ there exists
a bisection such that
the number of $h$-stable nodes 
is $n-o_d(1)n$ w.h.p. Here 
$o_d(1)$ denotes a function vanishing
in $d$. Alternatively, we can think of 
the setting where $d=d(n)$ is an arbitrarily
slowly growing function, in which 
case the number of $h$-stable nodes
is $n-o(n)$. Conversely, when
$h>h^\star$, for every bisection
there exist at least $\Theta_d(1)n$ many
$h$-non-stable nodes, where $\Theta_d(1)$ is a non-negative function
of $d$ bounded away from $0$ as $d \rightarrow \infty$. Since for any fixed $d$, Erdős-Rényi graphs contain $\Theta(n)$ vertices having no neighbors w.h.p as $n \rightarrow \infty$, the linear term $o_d(1)n$  
in our bounds on the maximum number of $h$-stable partitions are unavoidable.

Our results establish analogous transitions for unfriendly bisections (when $W$ is set as the negative of the adjacency matrix) and for spin-glass models (when $w_{ij} \sim \mathcal{N}(0,1)$). These results are illustrated in 
Figure~\ref{fig:phase_trans_ill_h} and proven in Theorems \ref{thm:sparse_thres}, \ref{thm:uni_sparse}, and \ref{thm:uni_dense}.

    \item 
    \emph{The interdependence between the energies of configurations and the maximum number of $h$-stable vertices:}

    We obtain results regarding the  energy/Hamiltonian ($H(\sigma))$ values of approximately $h$-stable bisections. Specifically, we
     prove for $\G(n,1/2)$ that for every fixed $h \in (0,h^\star)$, there exists $E_{max}(h)<0$ such that the maximum number of $h$-stable vertices amongst bisections 
     $\sigma$ with  energy (Hamiltonian) 
  $\approx nE$ is
     $n-o(n)$ when $E<E_{max}(h)$,
     and is $n-\Theta(n)$ when
     $E>E_{max}(h)$, w.h.p. as $n\to\infty$. 
     Interestingly, at $h=0$, 
     $E_{max}(h)$  is strictly negative, the 
 numerical value of which we found
 to be $E_{max}(0)\approx -0.2857$. This confirms rigorously 
 the computations from the seminal work of \cite{bray1981metastable} on metastable states in spin glass models which 
 was obtained using physics arguments.
     
    The strict negativity of $E_{max}(0)$ implies   that every friendly bisection results in a significant 
     imbalance between the in-degrees 
     and out-degrees: the difference between the sum of in-degrees and out-degrees must be order $\Theta(n^{3\over 2})$. Contrast
     this with the fact (which is easy
     to establish)
     that for any  bisection fixed a priori (without looking at the graph)  this imbalance is order $\Theta(n)$ w.h.p. It is also easy to show
     using a straightforward union
     bound that the maximum imbalance is  $\Theta(n^{3\over 2})$ as well, w.h.p.

Finally and conversely, we obtain analogous results for the minimum values of the energy, which fall into two distinct categories depending on the range of 
$h$. Specifically, we prove the existence of $h_{\rm cor} \approx 0.2860 <h^\star$, such that
     that for every $h\in (h_{\rm cor},h^\star)$  there exists 
     $E_{\rm min}(h)$ with the property 
     that the maximum number of $h$-stable vertices amongst configurations having energy  $\approx nE$ is $n-o(n)$ for $E>E_{min}(h)$ and is $n-\Theta(n)$ for $E < E_{min}(h)$. This is a non-trivial result, in the following
sense. Notice that the ground state, namely
the state with minimum energy is $0$-stable as any spin flip can only increase energy. When the energy $E_{min}(h)$ is strictly greater than the ground-state energy, our
result implies that  no configuration with energy close to one of ground state is $h$-stable, in the sense that all flips result in $h$-increase of 
energy. 

We establish a similar, but only partial
results in the case $h<h_{\rm cor}$. 
Specifically, for each such $h$, we show that $\exists$ a value of energy $E_{\rm cor}(h)$ with  $E_{\min}(h)< E_{\rm cor}(h) < E_{\max}(h)$ 
such that
there exist bisections with $n-o(n)$
$h$-stable vertices and energy $\approx E$ for any $E \in (E_{\rm cor}(h),E_{\max}(h)$ and 
when energy $\approx nE$, while when
$E<E_{\min}(h)$ the largest number
of stable vertices is $n-\Theta(n)$.
The mismatch between $E_{\min}(h)$
and $E_{\rm cor}(h)$ is due to the failure
of the second moment method.

The value,
$E_{\rm cor}(h)$ is the energy where the expected number of pairs of $h$-stable configurations having non-zero overlap dominates the number of orthogonal pairs of $h$-stable configurations. This corroborates the non-rigorous analysis of \cite{bray1981metastable}, who reported the energy value $\approx -0.672$ as the onset of the correlation of local minima being in the Sherrington-Kirkpatrick model. This matches the value $E_{\rm cor}(0)$ obtained through our analysis.
Illustrations of the above transitions can be found in Figure \ref{fig:phase_trans_ill_E} for $h \in (h_{\rm cor}, h^\star)$ and Figure \ref{fig:ill_ecor} for $h < h_{\rm cor}$.

\end{enumerate}

The proof outline for our results
is as follows. We first establish
our results for sparse graphs
$\G(n,d/n)$ and then we translate
them to dense graphs $\G(n,1/2)$
using  powerful Lindeberg's interpolation/universality
method (more on this below). 
This proof approach is somewhat
unconventional since typically
one uses the Lindeberg's method to reduce
a non-Gaussian model to a Gaussian
or Gaussian ``look alike'' (of which
$G(n,1/2)$ is an example). In our 
case we take the opposite route and 
establish results for  Gaussian-like
setting by lifting from non-Gaussian
setting (sparse \ER graphs). The
advantage of working with sparse
graphs is the availability of the 
configuration model of randomness, which
is rather lost in the dense random graph
setting. 

The
corresponding results for sparse graphs
are established using the second
moment method.

Our proofs are modulo the validity of certain assumptions regarding low-dimensional objectives. These assumptions are specified in Section \ref{sec:main_results}. The numerical verification of all the assumptions occurs through the use of the mpmath library \citep{mpmath} in Python,
and the validity of the numerical procedures is justified in Appendix \ref{sec:num_W}. 
\begin{figure}
    \centering \includegraphics[width=\textwidth]{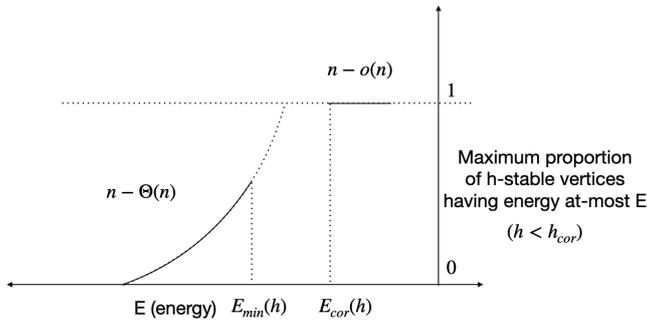}
    \caption{Illustration of the maximum number of $h$-stable vertices around $E_{\rm cor}(h)$ and $E_{min}(h)$ for $h<h_{\rm cor}$.}
    \label{fig:ill_ecor}
\end{figure}

\paragraph{Related work.}
Next we discuss some related papers
preceding our work.
In a recent work that directly inspired the present paper, \citep{behrens2022dis} utilized the non-rigorous cavity method from statistical physics to characterize the space of $h$-stable partitions for sparse random regular graphs with ferromagnetic as well as anti-ferromagnetic interactions. 
They conjectured the existence of a threshold $h^\star$ such that for $h > h^\star$ no $h$-stable partitions exist with high probability while for $h< h^\star$, such partitions do exist with high probability as $n \rightarrow \infty$. They obtained the following numerical value: $h^\star \approx 0.3513 + o_d(1)$. Note that our result validates
this estimation for  
sparse \ER graphs. It is not surprising
that the answers for \ER and random
regular graphs are asymptotically
similar as $d\to\infty$ as random \ER
graphs become progressively ``more 
regular'' in this regime.

A number of settings closely related to the case $h=0$ have been studied in the literature. In particular, \cite{ferber2022friendly} proved that in \ER graphs drawn from $\G(n,1/2)$, there exist bisections with $n-o(n)$ friendly vertices with high probability as $n \rightarrow \infty$. Their work constituted a positive resolution of the so-called Friendly Partition Conjecture by~\Furedi~\citep{FurediPersonal}.
The problem was also included in the list of Green's 100 open problems~\citep{Green} as problem 91. As already mentioned 
earlier, their result is covered as a special case of our main result, corresponding to the case $h=0$.
In concurrent work a stronger 
version of the \Furedi's conjecture
was established in \cite{minzer2023perfectly} (along with analogous results for general
$h$) who proved the existence
w.h.p. of bisections such that \emph{all}
nodes (and not just $n-o(n)$ nodes) are $h$-stable, using the second
moment method. Whether this still
holds for a very related
Sherrington-Kirkpatrick model (the case
when entries of $W$ are i.i.d. standard
normal random variables), remains open.

A related question is about the existence of efficient algorithms for finding such h-stable local optima. In this direction, the recent work by \cite{huang2025strong} showed that for the Sherrington-Kirkpatrick model, for any $h>0$, recovering such optima is hard for low-degree polynomial methods. In contrast, for $h=0$, an efficient algorithm for finding friendly bisections was already known as part of the constructive proof in \cite{ferber2022friendly}.

When $h=0$, $h$-stable partitions for anti-ferromagnetic interactions or equivalently, un-friendly partitions exactly correspond to the local maxima for the max-cut problem. The number of such local maxima was considered \citep{gamarnikMaxcutSparseRandom2018} for sparse Erdős-Rényi graphs with a fixed and linear in $n$ number of edges, with the aim of computing tight bounds on the max-cut of sparse random graphs. Earlier work by \citep{coppersmith2004random} had computed such bounds by considering the expected number of partitions having a given cut size. To obtain a tighter upper bound on the max-cut size, \citep{gamarnikMaxcutSparseRandom2018} restricted the count of partitions to ones satisfying local optimality w.r.t the max-cut problem. Analogously, to obtain a lower bound, they considered the second moment of the number of partitions having a given cut size.
The relation between local maxima for max-cut and $h$-stable partitions allows us to utilize the techniques in \citep{gamarnikMaxcutSparseRandom2018} for the computation of first and second moments for arbitrary $h$.
Thus parts of our work will borrow techniques  heavily from~\citep{gamarnikMaxcutSparseRandom2018}.

In a related work \citep{skcount} the authors computed the first moment of  the number of local minima up to leading exponential terms for the Sherrington-Kirkpatrick (SK) mode, again corresponding to the case $h=0$. In physics
jargon, this corresponds to the annealed version of the model.
Local optima for mean-field disordered systems have also been studied through suitably designed simulations. For instance, 
\cite{song2020local} designed an algorithm that discovers local minima for ferromagnetic systems having near-zero magnetization. This allowed them to probe the number as well as typical-energy ranges of such local minima.

The threshold $h^\star$ which characterizes a sharp phase transition in the optimization problem of maximizing the number of $h$-stable vertices,
remarkably is the same 
 for both friendly (assortative) and unfriendly (disassortative) cases, 
 and coincides with one for the Sherrington-Kirkpatrick model (namely $W$ with  standard normal entries).
Such an equivalence is analogous to the relation between the values of the max-cut and min-bisection problems, first conjectured in \citep{zdeborova2010conjecture}
and subsequently proven in \citep{dembo2017extremal} for \ER graphs with large degrees.



The $h$-stability constraint in \eqref{def:h-stable} can be viewed as a case of random constraint satisfaction problems. However, unlike related problems in this class such as the binary perceptron \citep{gardner1987maximum,aubin2019storage}, the $h$-stability constraint involves correlations between the different constraints since the weight $w_{ij}$ appears in the constraints for both the $i_{th}$ as well as the $j_{th}$ vertices. Under such correlations, the rigorous evaluation of even the first moment is non-trivial and as mentioned earlier, 
was only recently completed for the case of the Sherrington-Kirkpatrick model with Gaussian weights in \citep{skcount}.

The value of the threshold $h$ is furthermore related to the robustness to perturbation of metastable states in statistical physics. The constraint for robustness can be incorporated into the dynamics as in the case of Hopfield networks. Hopfield networks aim to model associative memory \citep{hopfieldNeuralNetworksPhysical1982} through the convergence of configurations to stable planted attractors.
For such networks, the maximum value of the threshold on the local field such that metastable states exist was considered in \citep{treves1988metastable}. When the ratio of the number of stored patterns to the number of spins approaches infinity, they obtained the maximum value of the threshold as $h^\star\approx0.3513$, which matches the threshold obtained by us for spin glass and random graphs.

\textbf{Concurrent Work:} As mentioned earlier, in a concurrent work \citep{minzer2023perfectly}, Minzer, Sah, and Sawhney prove the existence of \emph{exactly} friendly bisections $\G(n,1/2)$ with high probability, i.e bisections with all $n$ vertices being friendly. They also prove a phase transition in the existence and absence of bisections in $\G(n,1/2)$ with all $n$ vertices having friendliness $h\sqrt{d}$ around a value of $h^\star$ matching the value of the threshold in our results. Their proof relies on a fine-grained application of the second moment method, combining the analysis of \cite{gamarnikMaxcutSparseRandom2018} with switching and enumeration techniques, along with the use of isoperimetry in graphs. Based on our universality results, we conjecture that the existence of bisections with all $n$ $h$-stable vertices for $h<h^\star$ should also hold for other related models such as the Sherrington-Kirkpatrick (SK) model. Furthermore, we believe that our results related to the energy of $h$-stable configurations (Theorems \ref{thm:sparse_max_E_thres}, \ref{thm:sparse_max_E_thres_uni} and \ref{thm:dense_max_E_thres}) should also apply for such exactly $h$-stable configurations.

\paragraph{Organization of the paper.}

The rest of the paper is organized as follows: In Section \ref{sec:main_results}, we present our main
results formally. 
Section \ref{sec:Theorem1} is devoted
to the proof of the main result
regarding the phase transition around
$h^\star$ for the special case of sparse anti-ferromagnetic interactions (Theorem \ref{thm:sparse_thres}). In particular, we derive $h^\star$
 as a root of the non-linear equation specified in \eqref{def:w_firstmom}. Our proof relies on the usage of the second-moment method in sparse \ER graphs combined with concentration arguments. For the computation of the first and second moments up to sub-leading exponential terms, we borrow the notation and several techniques from \citep{gamarnikMaxcutSparseRandom2018}, who computed the same when $h=0$ and all the vertices are required to be locally $h$-stable. Our result includes the computation of the first moment for the number of partitions with at least a constant fraction of the vertices satisfying $h$-stability. This requires generalizations of the large deviation results in \citep{gamarnikMaxcutSparseRandom2018}.
For the second moment,
the expression is obtained
directly through appropriate modifications of the proof in  \citep{gamarnikMaxcutSparseRandom2018}. However, for the sake of completeness, we provide the full proof, based on techniques developed in our first-moment analysis.


We subsequently utilize the first and second moment computations to prove the absence of bisections having $n-o_d(1)n$ $h$-stable vertices with high probability for $h>h^\star$ and their existence for $h<h^\star$.
The high-probability existence for $h<h^\star$ is not a direct consequence of the Paley–Zygmund inequality (a standard method
of proving existence of structures using second moment method) and requires proving the concentration of certain auxiliary functions along with a technique of perturbing the threshold.

In Section \ref{sec:max_energy}, we utilize similar techniques to prove the phase transitions in the maximum number of $h$-stable vertices amongst bisections satisfying certain energy constraints.
In Section \ref{sec:sparse_uni}, we prove the universality of the threshold for sparse graphs through an application of the Lindeberg's method to carefully chosen functions of the edge weights for different regimes of $h$. In Section \ref{sec:dense_uni}, we connect this result to dense \ER graphs. In Section \ref{sec:universality_e_max}, we prove analogous universality results for the transitions obtained upon the imposition of energy constraints.
Finally, we conclude in Section \ref{sec:conc} with some open directions.

\paragraph{Notations.} 
We use $o_n, O_n, \Theta_n, o_d, O_d, \Theta_d$ to denote standard asymptotic bounds w.r.t the variable in the subscript. 
For every
integer $m$, $[m]$ denotes the set of
integers $1,2,\ldots,m$. $\stackrel{d}{=}$
denotes equality in distribution.
$\R$ and $\Z$ denote the set of real values and
the set of integer values respectively. 
$\R_+$ and $\Z_+$ denote the non-negative parts of these
two sets. We denote by $\Phi$ the  Cumulative
Distribution Function of a standard Normal random
variable. That is 
$\Phi(t)=\int_{-\infty}^t(2\pi)^{-{1\over 2}}
\exp(-t^2/2)dt$. $H(x)=-x\log x-(1-x)\log(1-x)$ 
denotes the binary entropy function.
\textbf{Remark:} Whenever both asymptotics w.r.t parameters $d,n$ are involved, we will operate under the sequential limit $\lim_{d \rightarrow \infty} \lim_{n \rightarrow \infty}$. Therefore, for brevity, we will denote terms of the form $o_n(1)f(d)$ for some function $f:\mathbb{R}\rightarrow \mathbb{R}$ as $o_n(1)$.


\section{Main Results}\label{sec:main_results}

In this section, we summmarize and present our main theorems. Our first set of main results concerns the existence and identification of a threshold $h^\star$, demarcating the phase transition illustrated in Figure \ref{fig:phase_trans_ill_h}, for different choices of weighted random graphs defined on $n$ nodes and indexed by a parameter $d$.

\subsection{Existence and Absence of $h$-stable bisections}

We first define such a threshold for sparse graphs indexed by the corresponding degree parameter $d$ below:
\begin{definition}\label{def:thres_def}
We say that a value $\tilde{h}>0$ is a maximal stability threshold for a family of random symmetric $n\times n$ matrices $W^{(d,n)}$ if as $n\rightarrow \infty$, the following hold:
\begin{enumerate}
    \item For any $h>\tilde{h}$, there exists an $0<\epsilon(h)<1$ and  $d(h)$, such that for  $d > d(h)$, with high probability as $n\rightarrow \infty$, all bisections of $[n]$ have at least  $\epsilon n$ vertices that are not $h$-stable.
    \item For any $h<\tilde{h}$, and any $\epsilon>0$, there exists a $d(h,\epsilon)$, such that for all $d > d(h,\epsilon)$, with high probability as $n\rightarrow \infty$, there exists a bisection of $[n]$ with at most $\epsilon n$ vertices violating $h$-stability. 
\end{enumerate}
\end{definition}
In both cases the probability is with 
respect to the randomness of $W^{(d,n)}.$
The  definition above formalizes the notion that when $h>\tilde{h}$ every bisection has at least $\Theta_d(1)n$ vertices violating $h$-stability, while when $h<\tilde{h}$, there exist bisections with  $(1-o_d(1))n$ $h$-stable vertices.

\ignore{
Note that $h$-stable partitions trivially exist for ferromagnetic interactions if all vertices belong to one of the groups. This is analogous to the distinction between max-cut and min-bisection, where the problem of minimizing the cut size is non-trivial only upon imposing limitations on the imbalance between the partitions.
In such cases, we restrict ourselves to the set of bisections, i.e configurations having magnetization $0$. We refer to an $h^\star$ satisfying the above conditions restricted to the set of bisections as a maximal bisection threshold for $W^{(d,n)}$.
}

To define the threshold value $h^\star$ and state our results, we introduce certain low-dimensional functions related to the first and second moments of the number of $h$-stable bisections. In particular, these functions will be used to define the threshold $h^\star$ as the root of a non-linear equation. Define: 

\begin{equation}\label{eq:def_w}
\begin{split}
w: & \R^2 \rightarrow \R,\\
w(E,h)&=-E^2+\inf_{\theta\in \mathbb{R}}\left(2\theta^2+\log(1+\erf(\theta-(2E+h)/\sqrt{2})) \right),
\end{split}
\end{equation}
where $\erf:\R \rightarrow \R$ denotes the Gaussian error function, defined as $\erf(z) = \frac{2}{\sqrt{\pi}} \int_0^z e^{-t^2} \mathrm{~d} t$ (where the function value is negative when $z$ negative).

We show in Appendix \ref{app:properties} that $w(E,h)$ as defined above satisfies the following property:

\begin{proposition}\label{prop:first_mom_conv}
$w(\cdot,h):\mathbb{R} \rightarrow \mathbb{R}$, is continuously differentiable, strictly concave and admits a unique maximizer strictly less than $-h/2$. Furthermore,
\begin{equation}
     \sup_{E \in \R} w(E,h)=\sup_{E \in \R}\left(-E^2+\log(1-\erf(E+h/\sqrt{2})) \right).
\end{equation}
\end{proposition}

We denote the unique maximizer defined by Proposition \ref{prop:first_mom_conv} as $E^\star(h)$:
\begin{equation}\label{eq:Estar_def}
    E^\star(h) \coloneqq \operatorname{argmax}_E w(E,h).
\end{equation}

We further denote by $w(h)$, the function over $h$ obtained by maximizing $w(E,h)$ w.r.t. the parameter $E$. By Proposition \ref{prop:first_mom_conv}, $w(h)$ is given by:  

\begin{equation}\label{def:w_firstmom}
   w(h) \coloneqq \sup_{E \in \R}\left(-E^2+\log(1-\erf(E+h/\sqrt{2})) \right) = -(E^\star(h))^2+\log(1-\erf(E^\star(h)+h/\sqrt{2}).
\end{equation}
$w(h)$ is plotted in Figure \ref{fig:first_mom} and  satisfies the following properties (proof in Appendix \ref{app:properties}):
\begin{proposition}\label{prop:w_mono}
   $w(h)$ is strictly decreasing in $h$ and possesses a unique root, $h^\star \in \mathbb{R}$. Furthermore, $h^\star > 0$.
\end{proposition}

Numerically, we estimate the value of the above root  as $h^\star\approx 0.3513$ which matches the phase transition value prediction in  \citep{behrens2022dis}. 


Similarly, we introduce another series of functions which our results will associate with the asymptotic second moment at a given overlap between configurations.

Let $\mathcal{Q}:\mathbb{R}^3 \rightarrow \mathbb{R}$ and $F:\mathbb{R} \times (-1,1) \times \R^3 \rightarrow \mathbb{R}$ be defined as:
\begin{equation}\label{eq:Q_def}
   \mathcal{Q}(\theta,a,b) \coloneqq \mathbb{E}_{Y_1,Y_2 \sim \mathcal{N}(0,I_2)}[\exp(\theta  Y_1)\mathbf{1}_{[Y_1+b \geq a\abs{Y_2}]}],
\end{equation}
and:
\begin{align}
 F(E,\omega,h,t,\theta_1, \theta_2)   
& \coloneqq 2 \log 2 +H(\frac{1+\omega}{2}) \notag\\
&-4{t^2\over (1+\omega)^2}-4{(E+t)^2\over (1-\omega)^2}
+(1+\omega)/2
\log \mathcal{Q}\left(\theta_1,\sqrt{1-\omega\over 1+\omega},
{4\sqrt{2}t\over (1+\omega)^{3/2}}-\frac{2h}{\sqrt{(1+\omega)}}\right) \notag\\
&+(1-\omega)/2
\log \mathcal{Q}\left(\theta_2,\sqrt{1+\omega\over 1-\omega},
{4\sqrt{2}(-E-t)\over (1-\omega)^{3/2}}-\frac{2h}{\sqrt{(1-\omega)}}\right)\label{def:F2ndmom}.
\end{align}

We then define:
\begin{align}
W: \ &\R \times (-1,1) \times \R \rightarrow \R, \nonumber \\ 
W(E,\omega,h) &= 
    \sup_{t \in \mathbb{R}} \inf_{\theta_1, \theta_2 \in \mathbb{R}^2} F(E,\omega,h,t,\theta_1, \theta_2).\label{eq:Wxbeta}
\end{align} 

In Appendix \ref{app:properties}, we show that $W(\cdot)$ defined above satisfies the following regularity properties:

\begin{proposition}\label{prop:twice_diff}
 $W(E,\omega,h)$ is twice continuously-differentiable w.r.t $\omega,E,h \in (-1,1) \times \R \times \R$ with $\omega=0$ being a stationary point for any $E,h \in \R^2$ i.e $\frac{dW}{d\omega}\vert_{\omega=0}=0$. 
\end{proposition}

Our first set of results rely on the following assumption about the behavior of $W(E,\omega,h)$ at $h=h^\star$ and $E=E^\star(h)$:

\begin{assumption}\label{ass:unique_max}
$W(E,\omega,h)$ defined by (\ref{eq:Wxbeta}) at $h=h^\star,E=E^\star(h^\star)$ is uniquely maximized w.r.t $\omega$ in $(-1,1)$ at $\omega=0$ i.e.:
    \begin{equation}
\operatorname{argmax}_{\omega \in (-1,1)} W(E^\star(h^\star),\omega,h^\star) = \{0\}.
    \end{equation}
\end{assumption}

We discuss the verification of the above assumption and other related properties of $W(E, \omega, h)$ in Appendix \ref{sec:num_W}.

Under Assumption \ref{ass:unique_max}, our first result establishes the existence of a threshold in the sense of Definition \ref{def:thres_def} for anti-ferromagnetic sparse \ER graphs. Its proof is found in Section \ref{sec:Theorem1}.

\begin{theorem}\label{thm:sparse_thres} (Largest number of unfriendly nodes in $\G(n,d/n)$)
Let $h^\star \in \mathbb{R}$ denote the unique root of $w(h)$. (with the existence and uniqueness established by Proposition \ref{prop:w_mono}).
Set $W^{(d,n)}$ as the  negative of the 
adjacency matrix of $\G(n,d/n)$. Namely,
$w_{ij}=-1$ with probability $d/n$ and
$=0$ otherwise for all pairs $1\le i<j\le n$ independently. Then, under Assumption \ref{ass:unique_max}, 
 $h^\star$ is a maximal stability threshold 
 of the sequence $W^{(d,n)}$ in 
 the sense of Definition~\ref{def:thres_def}. 
\end{theorem}

\ignore{
\yd{
(Threshold for sparse anti-ferromagnets)
Let $h^\star$ denote the unique root of the function
$w(h)$.
Let $W^{(d,n)}$ be the weight matrices corresponding to sparse Erdős-Rényi graphs $\G(n,p=d/n)$ with average degree $d$, defined as $w_{ij}= w_{ji}\in \{0,-1\}$ $p(w_{ij}=-1) = \frac{d}{n}$, with $w_{ij}$ being independent across edges. Then,  $h^\star$ is a maximal stability threshold for $W^{(d,n)}$ in the sense of Definition \ref{def:thres_def}. 
}
}
In the context of this theorem,
the parameter $d$ specifying $s_\sigma(v,W)$ and used in Definition \ref{def:thres_def} is the same 
as the parameter $d$ in the definition
of $\G(n,d/n)$.

The theorem above concerns the existence
of bisections reaching $h$-\emph{unfriendliness} as opposed
to $h$-friendliness for all nodes.
While the same result with the same
threshold holds for the friendliness
version of the problem as well, as we establish later in Corollary \ref{cor:friendly_sparse}, 
we begin with this result since
it is a direct generalization
of the main result in~\citep{gamarnikMaxcutSparseRandom2018}
from the case $h=0$ to the general 
case. In fact, most other results in this paper
will be established by reducing
to this result, namely 
Theorem~\ref{thm:sparse_thres}

\ignore{
Our proof  relies on first and second-moment computation up to sub-leading exponential terms. These rely on existing results in \citep{gamarnikMaxcutSparseRandom2018}, who computed these terms for the case of $h=0$, i.e for local optima of the max-cut problem.
By numerically solving the resulting non-linear systems of equations, we confirm that the first and second moments match up to sub-leading exponential terms for $h \le h^\star$. This, however, only yields an exponentially decaying lower bound on the probability of the existence of almost $h$-stable partitions through the Paley–Zygmund inequality. To boost the probability of existence, we utilize results on concentration along with a technique of perturbing the stability threshold, as discussed in Section \ref{sec:high_prob}.
}

Next,  we extend Theorem~\ref{thm:sparse_thres} to the
case of weighted sparse random graphs. 
This will be achieved by reducing
to the setting of Theorem~\ref{thm:sparse_thres} using the 
Lindeberg's interpolation method.

\begin{theorem}\label{thm:uni_sparse} (Largest number of stable nodes in weighted
sparse random graphs.)
Suppose $W^{(d,n)} \in \R^{n\times n}$  is a  sequence
of symmetric random matrices  satisfying 
the following properties
\begin{enumerate}
    \item $w^{(d,n)}_{ij}, 1\le i<j\le n$ are i.i.d. with mean $\mu=\mu_{d,n}$.
    \item $\E\left[\abs{w^{(d,n)}_{ij}-\mu}^2\right]=\frac{d}{n}(1-\frac{d}{n})$, $1\le i<j\le n$,
     \item $\E\left[\abs{w^{(d,n)}_{ij}-\mu}^3\right]=dO_n(\frac{1}{n})$,
     $1\le i<j\le n.$
\end{enumerate}
Under the above assumptions, the value of $h^\star$ from Theorem \ref{thm:sparse_thres} is a maximal stability bisection threshold for $W^{(d,n)}$.

\ignore{In particular, the above result holds for sparse Erdős-Rényi graphs with independent edges, average degree $d$, and anti-ferromagnetic, ferromagnetic, and spin glass interactions.}
\end{theorem}
As before, the parameter in the above theorem is assumed to be the same $d$ as in Definition \ref{def:thres_def}.
Considering the special case of Theorem \ref{thm:uni_sparse} when
$w_{ij}=1$ with probability $d/n$
and $=0$ otherwise, we obtain 
the phase transition result analogous to Theorem \ref{thm:sparse_max_E_thres} for 
friendly bisections as well. This is formalized below:

\begin{corollary}\label{cor:friendly_sparse}(Largest number of friendly nodes in $\G(n,d/n)$)
 Let $W^{(d,n)}$ be the 
adjacency matrix of $\G(n,d/n)$. Then, 
 $h^\star$ is a maximal stability threshold 
 of the sequence $W^{(d,n)}$ in 
 the sense of Definition~\ref{def:thres_def}.
\end{corollary}

We then extend the above universality result to dense graphs, including the Sherrington-Kirkpatrick model which
corresponds to the case when $w_{ij}$
is distributed as standard normal
random variable, namely
$w_{ij}\stackrel{d}{=}\mathcal{N}(0,1)$. For such graphs, the parameter $d$ in Definition \ref{def:thres_def} is replaced by $n$, and we obtain the following result:
\begin{theorem}\label{thm:uni_dense} (Stability threshold for dense graphs)
Let $W^{(n)} \in \R^{n\times n}$ be a sequence of symmetric random matrices  
with $W^{(n)}=(w^{(n)}_{ij})_{i,j \in [n]}$ satisfying:
\begin{enumerate}
    \item $w^{(n)}_{ij}, 1\le i<j\le n$ are independent and identically distributed with mean $\mu=\mu_n$.
    \item $\E\left[\abs{w^{(n)}_{ij}-\mu}^2\right]=1, 1\le i<j\le n$.
     \item $\E\left[\abs{w^{(n)}_{ij}-\mu}^3\right]=O_n(1), 1\le i<j\le n$.
\end{enumerate}
Then for $h^\star$ from Theorem \ref{thm:sparse_thres}, the following holds: 
\begin{enumerate}
    \item For any $h>h^\star$, there exists an $\epsilon(h)$ such that w.h.p. as $n\rightarrow \infty$, all bisections of $[n]$ have at least $\epsilon n$ vertices violating $h$-stability.
    \item For any $h<h^\star$, and any $\epsilon>0$, with high probability as $n\rightarrow \infty$, there exists a bisection of $[n]$ with at most $\epsilon n$ vertices violating $h$-stability.
\end{enumerate}
In particular, the above result holds for dense graphs $\G(n,1/2)$ and
the Sherrington-Kirkpatrick model, 
namely the case of Gaussian $w_{ij}\stackrel{d}{=} \mathcal{N}(0,1)$,
independently for $1\le i<j\le n.$
\end{theorem}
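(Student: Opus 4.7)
The plan is to deduce Theorem \ref{thm:uni_dense} from the Gaussian Sherrington-Kirkpatrick reference case via a Lindeberg swapping argument in the spirit of Theorem \ref{thm:uni_sparse}, but without passing through a $d\to\infty$ limit. The crucial observation is that for each fixed bisection $\sigma\in\{\pm 1\}^n$ the vector of local fields $\bigl(s_\sigma(v,W)\bigr)_{v\in [n]}$ is a linear form in the iid entries $w_{ij}/\sqrt{n}$, so by a multivariate CLT its joint law converges, for every fixed finite collection of bisections, to the \emph{same} Gaussian limit regardless of the marginal law of $w_{ij}$, provided the first two moments match and the third moment is bounded. The target reference is the SK model with $w_{ij}\sim\mathcal{N}(\mu,1)$, for which the local fields are exactly jointly Gaussian and, as in the sparse large-$d$ analysis, the first and second moment formulas reduce to the same non-linear equation (see \ref{def:w_firstmom}) whose root defines $h^*$.

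\textbf{Smoothing and swapping.} I would first smooth the hard-threshold indicator $\one\{s_\sigma(v,W)\geq h\}$ by a $\mathcal{C}^3$ bump $\phi_{h,\delta}$ equal to $1$ on $[h+\delta,\infty)$, equal to $0$ on $(-\infty,h-\delta]$, and satisfying $\|\phi_{h,\delta}^{(k)}\|_\infty=O(\delta^{-k})$. Writing
\[
Z_\sigma(W)=\frac{1}{n}\sum_{v}\phi_{h,\delta}\bigl(s_\sigma(v,W)\bigr),
\]
each entry $w_{ij}$ affects $Z_\sigma$ only through $s_\sigma(i,W)$ and $s_\sigma(j,W)$, each with partial derivative $\sigma_i\sigma_j/\sqrt n$, so $|\partial_{w_{ij}}^k Z_\sigma|=O(\delta^{-k}n^{-1-k/2})$. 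A third-order Taylor expansion at each swap, together with the moment matching $\EXP[w_{ij}]=\EXP[g_{ij}]$, $\EXP[w_{ij}^2]=\EXP[g_{ij}^2]=1$, and $\EXP|w_{ij}|^3=O(1)$, yields a per-edge error $O(\delta^{-3}n^{-5/2})$; summed across the $\binom{n}{2}$ entries the accumulated error in $\EXP Z_\sigma$ is $O(\delta^{-3}n^{-1/2})$. The same estimate applied to products $Z_\sigma Z_{\sigma'}$ delivers universality of the pairwise second moment for each fixed pair.

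\textbf{From moments to the phase transition.} Summing over bisections, the per-configuration universality transfers to the first and second moments of the count of bisections with at least $(1-\epsilon)n$ $h$-stable vertices, because these moments are at most exponentially large in $n$ while the accumulated Lindeberg error per configuration is sub-exponential. For $h>h^*$ the first moment in the Gaussian reference already tends to zero, so Markov's inequality yields direction (1). For $h<h^*$ the ratio of the second moment to the square of the first moment is bounded, and the Paley-Zygmund inequality combined with the threshold-perturbation trick and the Lipschitz concentration argument already developed for the sparse case (the stability $s_\sigma(v,W)$ is $O(1/\sqrt n)$-Lipschitz in each entry, giving fluctuations $O(\sqrt n)=o(n)$ of the maximum count) upgrades existence to high probability.

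\textbf{Main obstacle.} The principal difficulty is ensuring universality of the second moment for pairs $(\sigma,\sigma')$ whose overlap is close to $\pm 1$: there the joint covariance of the local fields degenerates and the Lindeberg estimate deteriorates. I would handle this regime by splitting the sum over pairs according to overlap, combining a crude law-agnostic Markov bound on the near-diagonal contribution with the smooth Lindeberg estimate for overlaps bounded away from $\pm 1$. The remaining task of verifying that the second-to-first-moment ratio is $O(1)$ for $h<h^*$ then reduces to the same non-linear system already solved numerically in the sparse anti-ferromagnetic case, giving the stated phase transition at the universal value $h^*$.
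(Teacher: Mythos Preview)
Your argument has a real gap at the step ``Summing over bisections, the per-configuration universality transfers to the first and second moments of the count.'' The Lindeberg estimate you derive gives an \emph{additive} error $O(\delta^{-3}n^{-1/2})$ on $\E[Z_\sigma]$ for each fixed $\sigma$; even after a further smoothing to pass from $\E[Z_\sigma]$ to $\pr[\sigma\text{ is nearly }h\text{-stable}]$, you would still have an additive error of polynomial size on each probability. But the first moment of the count is a sum of $\binom{n}{n/2}\sim 2^n/\sqrt{n}$ such probabilities, so the accumulated error is of order $2^n\cdot n^{-1/2}=e^{n\log 2-o(n)}$, whereas the first moment itself is only $e^{w(h)n}$ with $w(h)<\log 2$ for every finite $h$. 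The error swamps the signal; ``sub-exponential per configuration'' times ``exponentially many configurations'' is still exponential, and with the wrong rate. The same problem afflicts the second moment, independently of the near-diagonal overlap issue you flag.

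The paper avoids this by never attempting universality of the moments of the count. It applies the Lindeberg swap to a \emph{single} global functional, the free energy
\[
F(W,g_{\epsilon_1})=\frac{1}{\rho}\log\sum_{\sigma\in M_0}\exp\bigl(-\rho\,\cH_d(W,g_{\epsilon_1},\sigma)\bigr),
\]
where $\cH_d$ is the smoothed total deficit. The soft-min over $\sigma$ is already inside the partition function, so the swap is done once on a quantity of size $O(n)$ with $\|F^{(3)}\|_\infty=O(d^{-3/2})$, and the total error is $o(n)$. This gives universality of $D^*(W,h)=\min_\sigma D(W,h,\sigma)$ directly, and the phase transition then follows from the deficit/threshold-perturbation machinery already built for Theorem~\ref{thm:sparse_thres}, with no second-moment computation needed at the universality stage. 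A second difference: the paper's reference is not the Gaussian SK model but the sparse anti-ferromagnetic model. The dense entries are rescaled as $p_{ij}=\sqrt{d/n}\,w_{ij}$ so that the dense Hamiltonian equals the sparse one at parameter $d$; Lindeberg then compares $P$ to the sparse matrix $A$ (whose behavior is known from Theorem~\ref{thm:sparse_thres}), and only afterward does one send $d\to\infty$. This lets the paper import the sparse result wholesale rather than redoing a first/second-moment analysis for SK.
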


\begin{remark}\label{rem:furedi}
For the case of dense \ER graphs
we have $\mu=1/2$ and the variance
of each entry is $1/4$, ostensibly
not matching the requirement of 
having variance equal unity. However,
we  note that by the definition of $s_\sigma(v,W)$ in \eqref{def:h-stable}, for any $k \in \mathbb{R}, W \in \mathbb{R}^{n \times n}$, we have that $s_\sigma(v,kW)=ks_\sigma(v,W)$.
Therefore, the above result can be generalized to $W$ such that $\E[\abs{w_{ij}-\mu}^2]=k^2$ and $\E[\abs{w_{ij}-\mu}^3]=O_n(1)$ for any $k \in \mathbb{R}$, simply by scaling $h$ by $k$. Thus the result above
indeed applies to dense \ER graphs,
and considering the special case
$h=0$ we recover the main result,
 Theorem 1.1 in \cite{ferber2022friendly}, namely the positive
 resolution of the Furedi's conjecture \citep{FurediPersonal}.

We did not take advantage of the 
Definition~\ref{thm:sparse_thres} 
in stating Theorem~\ref{thm:uni_dense},
since in the dense case,
since in this case the complexity
of double asymptotic way of describing
the phase transition is gone.
\end{remark}

\subsection{Energy of $h$-stable bisections}

Next, we turn to our results
regarding the existence of $h$-stable
bisections with a fixed energy values, illustrated in Figures \ref{fig:phase_trans_ill_E} and \ref{fig:ill_ecor}. For notational convenience, we introduce the following normalized energy function (Recall (\ref{eq:hamilton})):
\begin{equation}\label{eq:def_energy}
    E(\sigma)=\frac{1}{n}H(\sigma)= -\frac{1}{n\sqrt{d}}\sum_{1\leq i<j\leq n}\sigma_i\,\sigma_j\,w_{ij}.
\end{equation}

To describe the phase transition associated to the range of energy values, we introduce the following definition, analogous
to Definition~\ref{thm:sparse_thres}
pertaining to sparse random graphs.

\begin{definition}\label{def:max_e}
Given  $h \in \R$, an interval $(E_1(h), E_2(h))$ with $-\infty \leq E_1(h) < E_2(h) \leq \infty$, and a sequence of random matrices $W^{(d,n)}$,
we say that:
\begin{enumerate}
    \item $h$-stable bisections
are asymptotically absent over $(E_1(h),E_2(h))$ if  there exists an $0<\epsilon(h)<1$ 
    and a $d(h)$, such that for every
    $d > d(h)$, w.h.p. as $n\rightarrow \infty$, all bisections $\sigma$ of $[n]$ with $E(\sigma) \in (E_1(h),E_2(h))$ have at least $\epsilon n$ vertices violating $h$-stability. 
    \item  $h$-stable bisections
are asymptotically present throughout  $(E_1(h),E_2(h))$ if for every $\epsilon>0$, there exists  $d(h,\epsilon)$, such that for all $d > d(h,\epsilon)$, w.h.p. as $n\rightarrow \infty$, there exists a bisection $\sigma$ of $[n]$ with at most $\epsilon n$ vertices violating $h$-stability and $E(\sigma) \in (E_1(h),E_2(h))$.
\end{enumerate}
\end{definition}

Definition \ref{def:max_e}
will allow us to state our results on existence of $h$-stable bisections within intervals of energy values, arbitrarily small but independent of $n$. These results rely on certain assumptions in addition to Assumption \ref{ass:unique_max} on the behavior of $W(\cdot)$, which we state next. We will subsequently utilize these assumptions to define $h_{\rm cor}, E_{\rm cor}(h)$ and utilize these definitions in our main results. Throughout, $h^\star$ denotes the unique root of $w(h)$ defined in \eqref{def:w_firstmom}, which we recall is strictly positive by Proposition \ref{prop:w_mono}. By Proposition \ref{prop:first_mom_conv}, for any $h < h^\star$, $w(h)=\sup_{E \in \mathbb{R}} w(E,h)$ is strictly positive and therefore, $w(E,h)$ possesses exactly two roots. In what follows, we denote the smaller of these two roots as $E_{\rm min}(h)$ and the larger root as 
$E_{\rm max}(h)$. Recall $W(E,\omega,h)$ defined by \eqref{eq:Wxbeta}.
We denote by $\operatorname{argmax}_{\omega \in (-1,1)} W(E,\omega,h)$, the following set:
\begin{equation}
\operatorname{argmax}_{\omega \in (-1,1)} W(E,\omega,h) = \{\omega^\star \in (-1,1): W(E,\omega^\star,h) = \sup_{\omega \in (-1,1)} W(E,\omega,h)\}.
\end{equation}

Our set of assumptions for the subsequent results specify this set of maximizers at certain values of $E,h$. These in-turn demarcate the validity of the second moment method as $E,h$ are varied. The numerical verification of these assumptions is detailed in Appendix \ref{sec:num_W}.

\begin{assumption}\label{ass:0_unique}
At $h=0$, $\exists E \in (E_{min}(0),E_{max}(0))$, such that $W(E,\omega,h)$ is not maximized at $\omega=0$ i.e:
 \begin{equation}
 W(E,0,0) < \sup_{\omega \in (-1,1)} W(E,\omega,0),     
 \end{equation}
for some $E \in (E_{min}(0),E_{max}(0))$.
\end{assumption}

\begin{assumption}\label{ass:e_max_h}
For all $h \in (-0.1,h^\star]$, $W(E,\omega,h)$ is uniquely maximized at $\omega=0$ for $E=E_{\rm max}(h)$ i.e.
\begin{equation}
\operatorname{argmax}_{\omega \in (-1,1)} W(E_{\rm max}(h),\omega,h) = \{0\},
\end{equation}
for all $h \in (-0.1,h^\star)$.
\end{assumption}

\begin{remark}
The value $-0.1$ above corresponds to the lower limit of $h$ up to which we numerically verify Assumption \ref{ass:e_max_h} to hold. One can check numerically 
the validity of the assumption up to any negative finite bound. Such numerical verification, however, does not establish its validity for an infinite range. For simplicity, we opt to restrict $h$ to $h > -0.1$. 
\end{remark}

Under the above assumptions, $h_{\rm cor},E_{\rm cor}(h)$ are  defined as:

\begin{equation}\label{def:h_cor}
    h_{\rm cor}= \sup(h < h^\star : \exists E \in (E_{min}(h),E_{max}(h)), \operatorname{argmax}_{\omega \in (-1,1)} W(E,\omega,h) \neq \{0\}).
\end{equation} 

\begin{equation}\label{def:E_cor}
    E_{\rm cor}(h) = \sup(E < E_{max}(h) : \operatorname{argmax}_{\omega \in (-1,1)} W(E,\omega,h) \neq \{0\}).
\end{equation}

In words, $h_{\rm cor}$ corresponds to the largest value of $h<h^\star$, where $W(E,\omega,h)$ is not uniquely maximized at $\omega=0$ for some $E \in (E_{min}(h),E_{max}(h))$. Our results will show that this marks the first failure of the second moment method within the feasible energy range $(E_{min}(h),E_{max}(h))$ as $h$ is lowered from $h^\star$. Similarly, $E_{\rm cor}(h)$ denotes the largest value of energy $E \leq E_{\max}(h)$ where $W(E,\omega,h)$ is not uniquely maximized at $\omega=0$. 

Assumption  \ref{ass:0_unique} implies that at $h=0$, $W(E,\omega,h)$ is not maximized at $\omega=0$ for some $E \in (E_{min}(0),E_{max}(0))$ Hence, the set in the definition of $h_{\rm cor}$ given by \eqref{def:h_cor} is non-empty and thus $h_{\rm cor}$ is well-defined. 
However, our results further require that $h_{\rm cor}$ is strictly less than $h^\star$. We show that such a strict inequality is indeed true under Assumption \ref{ass:unique_max}. Furthermore we show that under Assumption \ref{ass:e_max_h}, $E_{\rm cor}(h)$ is well-defined and $E_{\rm cor}(h)$ is strictly below $E_{max}(h)$ for any $h \in (-0.1,h_{\rm cor})$. These results are summarized in the following proposition, whose proof is found in Section \ref{sec:exist_ecor}:

\begin{proposition}\label{prop:hcor}
Under Assumptions \ref{ass:unique_max}, \ref{ass:0_unique}, and \ref{ass:e_max_h}, $h_{\rm cor}$ exists and
satisfies $0 < h_{\rm cor} < h^\star$.  Furthermore, $E_{\rm cor}(h)$ exists for all $h <h^\star$, and satisfies $E_{\rm min}(h) \leq E_{\rm cor}(h)<E_{\rm max}(h)$ for all $h \in (-0.1,h_{\rm cor})$.
\end{proposition}

\begin{remark} In the proof of Proposition \ref{prop:hcor} (Appendix \ref{sec:exist_ecor}), we show that the set under the definition of $E_{\rm cor}(h)$ is non-empty for all $h \in \mathbb{R}$. However, for $h \in (h_{\rm cor},h^\star)$, $E_{\rm cor}(h)$ simply equals $E_{\rm min}(h)$. Therefore, the definition of $E_{\rm cor}(h)$ is non-trivial only for $h<h_{\rm cor}$.
\end{remark}

Having defined $h_{\rm cor}$ and $E_{\rm cor}(h)$, we're now ready to state our main results concerning the range of energies of $h$-stable bisections. Analogous to Theorem \ref{thm:sparse_thres}, our first result applies to sparse \ER graphs with anti-ferromagnetic interactions. Its proof is found in Section \ref{sec:max_energy}.

\begin{theorem}\label{thm:sparse_max_E_thres} (Largest number of unfriendly nodes in $\G(n,d/n)$ at fixed energies.)
Let $W^{(d,n)}$ be the negative
of the adjacency matrix of the graph $\G(n,d/n)$:
\begin{itemize}
    \item $\forall h \in (-\infty,h^\star)$, 
    $h$-stable bisections are asymptotically absent in  $(E_{\rm max}(h), \infty) \cup (-\infty,E_{\rm min}(h))$.
    \item $\forall h \in (h_{\rm cor},h^\star)$,  $h$-stable bisections are asymptotically present throughout any sub-interval of \\ $(E_{\rm min}(h),E_{\rm max}(h))$
    \item $\forall h \in (-0.1,h_{\rm cor})$,  $h$-stable bisections are asymptotically present throughout any sub-interval of \\ $(E_{\rm cor}(h),E_{\rm max}(h))$.
\end{itemize}
\end{theorem}


The results in Theorem \ref{thm:sparse_max_E_thres} are illustrated in Figures \ref{fig:phase_trans_ill_E}, \ref{fig:ill_ecor} and explained further in Section \ref{sec:max_energy}. We numerically estimate the value of $h_{\rm cor}$ to be  $h_{\rm cor}\approx 0.2860$. 

Analogous to the extension of Theorem \ref{thm:sparse_thres} to Theorem \ref{thm:uni_sparse}, we next extend Theorem \ref{thm:sparse_max_E_thres} to sparse graphs satisfying the assumptions in Theorem \ref{thm:uni_sparse}.

\begin{theorem}\label{thm:sparse_max_E_thres_uni} (Energy thresholds for sparse graphs)
Let $W^{(d,n)} \in \R^{n\times n}$ be a family of random variables satisfying the assumptions in Theorem \ref{thm:uni_sparse}. Let $E_{\rm max}(h),E_{\rm min}(h), E_{\rm cor}(h)$ be as defined in Theorem \ref{thm:sparse_max_E_thres}.  Then:
\begin{itemize}
    \item $\forall h \in (-\infty,h^\star)$, 
    $h$-stable bisections are asymptotically absent in  $(E_{\rm max}(h), \infty)$ and $(-\infty,E_{\rm min}(h))$.
    \item $\forall h \in (h_{\rm cor},h^\star)$,  $h$-stable bisections are asymptotically present throughout any sub-interval of \\ $(E_{\rm min}(h),E_{\rm max}(h))$
    \item $\forall h \in (-0.1,h_{\rm cor})$,  $h$-stable bisections are asymptotically present throughout any sub-interval of \\ $(E_{\rm cor}(h),E_{\rm max}(h))$.
\end{itemize}
\end{theorem}

Similarly, we further extend the above results to dense graphs in the setting of Theorem \ref{thm:uni_dense}.

\begin{theorem}\label{thm:dense_max_E_thres} (Energy thresholds for dense graphs)
Let $W^{(n)} \in \R^{n\times n}$ be a family of random variables satisfying the assumptions in Theorem \ref{thm:uni_dense}. Then, with high-probability as $n \rightarrow \infty$:
\begin{itemize}
    \item $\forall h \in (-\infty,h^\star)$, $\exists \epsilon(h) >0$ such that all bisections in $W^{(n)}$ with $E \in (E_{\rm max}(h), \infty) \cup (-\infty,E_{\rm min}(h))$ have at least $\epsilon n$ vertices violating $h$-stability.
    \item For any $h \in (h_{\rm cor},h^\star)$, $\epsilon > 0$, and any sub-interval of $(E_{\rm min}(h),E_{\rm max}(h))$, $\exists$ bisections  with at-most $\epsilon n$ vertices violating $h$-stability and energy $E(\sigma)$ in the given sub-interval.
    \item For any $h \in (-0.1,h_{\rm cor})$, $\epsilon > 0$, and any sub-interval of $(E_{\rm cor}(h),E_{\rm max}(h))$, $\exists$ bisections  with at-most $\epsilon n$ vertices violating $h$-stability and energy $E(\sigma)$ in the given sub-interval.
\end{itemize} 
\end{theorem}

The proofs of the last two Theorems are provided in Section \ref{sec:universality_e_max}. We numerically estimate the values of $E_{\rm min}(0),E_{\rm max}(0)$ as $E_{min}(0)\approx -0.791$ and $E_{max}(0)\approx -0.2860$ respectively, matching the theoretical physics predictions for the range of energies of local minima in \cite{bray1981metastable}

\section{Proof 
of Theorem~\ref{thm:sparse_thres}.}
\label{sec:Theorem1}
We begin by recalling certain standard
models of sparse \ER random graphs. As mentioned in Section \ref{sec:intro}, we denote by $\G(n,p)$, an \ER random graph where each of the ${n\choose 2}$ ordered pairs of nodes is assigned as an edge, independently with probability $p$, which is possibly dependent on $n$. Similarly,
given $n$ and $m_n$, let $\bar{\G}(n,m_n)$
a random graph on the nodes set $[n]$
obtained by selecting $m_n$ edges uniformly
at random from the set of all 
${n\choose 2}$ unordered pairs of nodes. We further consider a 
configuration model $\tilde\G(n,m)$ similar to $\bar{\G}(n,m)$ where $m$ edges are generated uniformly at random with replacement 
from all $n^2$ ordered pairs of nodes, thus allowing loops and parallel edges. 
The following facts are well known,
see for example~\citep{janson2011random}).


\begin{lemma}\label{lemma:2-random-graphs}
Let $\mathcal{P}_n$ denote a graph property of the
set of $n$-node graphs, defined as a sequence of subsets of $[\binom{n}{2}]$. Let $d>0$ be fixed and suppose $P_n$ holds for $\bar{\G}(n,m)$ for any 
$m(n) = \frac{d}{2}n +O(\sqrt{n})$
 w.h.p. as $n\rightarrow \infty$. 
 Then $P_n$ also holds for $\G(n,d/n)$ w.h.p.
 as $n\to\infty$.
\end{lemma}

From Corollary $9.6$ in \citep{janson2011random}, we further have that:
\begin{lemma}\label{lem:config}
    For any $d>0$, under the configuration model 
    $\tilde{\G}(n, \frac{d}{2}n)$, \[\mathbb{P}[\tilde{\G}(n, \frac{d}{2}n) \text{\ contains no loops and parallel edges}] \geq c\exp(-C d),\]
for some constants $c, C > 0$. 
As a consequence, if any property $\mathcal{P}_n$, defined as a subset of multi-graphs, holds for $\bar{\G}(n, \frac{d}{2}n)$ with probability greater than $1-o_n(1)$, for large enough $d$, it also holds for  $\tilde{\G}(n, \frac{d}{2}n)$ w.h.p as $n \rightarrow \infty$. 
\end{lemma}

Since our results apply to bisections, it will be convenient to introduce a notation for the set of all bisections in $\{-1,+1\}^n$. We denote this set as $M_0$,  with $M_0$ defined for even $n$ as:
\begin{equation}\label{eq:M_0def}
    M_0 \coloneqq
        \{\sigma:\sum_{i=1}^n \sigma_i = 0\}.
\end{equation}
For odd $n$, we allow a minor violation of the bisection condition and define $M_0$ as $M_0 \coloneqq
        \{\sigma:\sum_{i=1}^n \sigma_i = 1\}$.

Throughout the present section, we refer to $h$-stability as the condition in (\ref{def:h-stable}) when $W$ is set as the negative of the adjacency matrix of the (multi)-graph under consideration.

\ignore{
Here a property $P_n$ is defined as the event when $G$ lies in a fixed subset of the $\binom{n}{2}$ possible edges.
Similarly, we obtain a reduction from $\G'(n,m=\frac{d}{2}n)$ to $\G'(n,p=d/n)$.
In our case, the property of interest corresponds to the existence of almost $h$-stable partitions with high probability. 
}

\subsection{Proof sketch}
Before we turn to the proof we provide
the reader with a proof sketch and
the proof plan for Theorem~\ref{thm:sparse_thres}, largely
following \cite{gamarnikMaxcutSparseRandom2018}.
\begin{enumerate}
    \item \emph{First moment conditioned on the number of violations}. 
    A natural approach is to compute
    the expected number of bisections
    satisfying the $h$-stability condition
    for every node. Provided this expectation converges to zero, by 
Markov's inequality this 
 would only allow us to prove the absence of $h$-stable partitions in the regime $h>h^\star$ with high probability. This is however insufficient to
    prove the stronger result in Theorem~\ref{thm:sparse_thres} that with high probability for $h>h^\star$, at least $\epsilon(h)n$ vertices violate $h$-stability. Furthermore, the absence of $h$-stable bisections already follows trivially from the fact that for $\G(n,d/n)$ graphs, there exist $\Theta(n)$ vertices with less than $h\sqrt{d}$ neighbors w.h.p. as $n \rightarrow \infty$. Therefore, we instead compute the first moment of the number of bisections \emph{with at least} $\floor{rn}$  vertices satisfying $h$-stability for some chosen $0 < r \leq 1$.  To account for such violations, we define $X(h,r)$ to be the total number of bisections with at least $\floor{rn}$ vertices satisfying $h$-stability. 
   Our first goal is thus a delicate
   approximations for the first moment of $X(h,r)$.
    
    \item \emph{Conditioning on cut size and the configuration model.}
    To obtain the first moment, it turns out to be convenient to switch to the configuration model and condition on the cut-sizes induced the by bisections Such conditioning ``decouples" the $h$-stability constraints across vertices, leading to tractable computations.  Specifically, denote by $X(z,h,r)$ the number of bisections with at 
    least $\floor{rn}$ $h$-stable nodes and with
    the induced cut size of value $\floor{zn}$. Here the $\floor{\cdot}$ operator ensures that the cut-sizes and the number of $h$-stable nodes remain integer valued.
    The cut size is defined as the number of edges with end points in different parts of the partition i.e $\abs{\{(i,j) \in E: \sigma_i \neq \sigma_j\}}$. When $r=1$, we denote $X(z,h,1)$ simply as $X(z,h)$.
    
    The variable $z$ representing the cut-size is related to another variable $E\in \mathbb{R}$, denoting the (normalized) energy of a configuration as per (\ref{eq:hamilton}), through the following equality:
    \begin{equation}\label{eq:cut_size}
           z=d/4-\frac{E}{2}\sqrt{d}.
    \end{equation}
    The factor $\frac{1}{2}$ arises due to the following relationship between the cut-size $z$ and the energy:
    \begin{equation}
\begin{split}
    -\frac{1}{n\sqrt{d}}\sum_{1\leq i<j\leq n}\sigma_i\,\sigma_j\,W_{ij}
    &= \frac{1}{n\sqrt{d}}\sum_{(i,j)\in E} \sigma_i\,\sigma_j\\
    &=\frac{1}{n\sqrt{d}}(d/4n-\frac{E}{2}n\sqrt{d}-(d/4n+\frac{E}{2}n\sqrt{d}))\\
    &=E\sqrt{d}.
\end{split}
\end{equation}
    
    We note that since the cut size only ranges from $0$ to $\frac{n(n-1)}{2}$, the first moment satisfies the following trivial bounds: \begin{equation}\label{eq:max_ent}
   \max_{z}{\mathbb{E}[X(z,h,r)]}  \leq \mathbb{E}[X(h,r)] \leq \frac{n(n-1)}{2} \max_{z}{\mathbb{E}[X(z,h,r)]}.
\end{equation}
    
    \item \emph{First moment entropy density} Exact, non-asymptotic closed-form expressions for the moments
    of $X(z,h,r)$ appear intractable. Instead, following \cite{gamarnikMaxcutSparseRandom2018},
    we obtain asymptotic expressions for the leading terms in the exponents of the first, and second moments, when conditioned on the cut sizes. We define the first-moment entropy density at a given number of nodes $n$, degree $d$, the proportion of the constrained vertices $r$, threshold $h$, and cut size $\floor{zn}$ (the $\floor{\cdot}$ ensures that the cut-size is an integer),
    as:
\begin{equation}\label{def:firstmomh_cut}
\frac{1}{n} \log \mathbb{E}[X(z,h,r)],
\end{equation}
where $z$ is defined as a function of $E$ through
(\ref{eq:cut_size}).

Note that at $r=1$, the above corresponds to the first moment entropy density of the number of $h$-stable bisections  with a prescribed
cut-size. 

\item \emph{Second Moment}.
For the second moment, instead of considering all possible bisections of arbitrary cut sizes for the two partitions, we restrict to bisections having a given cut size $\floor{zn}$.  
We further set $r=1$, i.e consider only the partitions with all the vertices satisfying $h$-stability. 
 We define the second moment entropy density at the given value of $z \in \mathbb{R}^n$, and threshold $h$ as:
\begin{equation}
   \frac{1}{n} \log [\mathbb{E}[X^2(z,h)]],
\end{equation}
The second-moment entropy density can be further divided into contributions from pairs of configurations having a fixed overlap. For two bisections $\sigma, \sigma' \in \{-1,+1\}^n$, we quantify the overlap between the two configurations through a parameter $\omega$ measuring the overlap between the two configurations, defined as:
\begin{align*}
    \omega(\sigma_i,\sigma'_i) = (\sum_{i=1}^n\sigma_i\sigma'_i)/n.
\end{align*}
Let $\cE_{opt}(\sigma,h,z)$ denote the event that the bisection $\sigma$ is $h$-stable and has a cut of size $\floor{zn}$.
We have:
\begin{align*}
X^2(z,h)=\sum_{\omega} \sum_{\sigma, \sigma' \in M_0, \omega(\sigma_i,\sigma'_i)=\omega } \mathbf{1}[\cE_{opt}(\sigma,h,z)\cap \cE_{opt}(\sigma',h, z)],
\end{align*}
where the sum is over $\omega \in [-1,-1+1/n,\cdots,1]$. We further denote by $ X^2_\omega(z,h)$, the contribution to the above sum from a fixed $\omega$ i.e:
\begin{equation}\label{eq:X2omeg}
    X^2_\omega(z,h) \coloneqq \sum_{\sigma, \sigma' \in M_0, \omega(\sigma_i,\sigma'_i)=\omega } \mathbf{1}[\cE_{opt}(\sigma,h, \floor{zn})\cap \cE_{opt}(\sigma',h, \floor{zn})],
\end{equation}
where $\mathbf{1}[\cdot]$ denotes the indicator function.
The second-moment entropy density conditioned on the overlap is then defined as:
\begin{equation}\label{eq:sec_mom_def}
    \frac{1}{n}\log[ \Ea{X^2_\omega(z,h)}]\, .
\end{equation}



 \item \emph{Poisson, Normal Approximations}. To derive analytic forms for the asymptotic first and second moment entropy densities specified by Equations \ref{def:firstmomh_cut} and \ref{eq:sec_mom_def} respectively, we use the so-called Balls-into-Bins Poisson
approximation, which effectively 
decouples the dependency of 
the $h$-stability condition between nodes.
Specifically, for the Poisson approximation
model, the probability that nodes $u$ and
$v$ are stable is the product of their
respective probabilities. The Poisson
parameters thus arising are growing 
with $d$ and thus well approximated
by the bi-variate normal random variables
(bi-variation corresponding to the joint
distribution of in and out degrees). 
The $h$-stability condition is then 
interpreted simply as the condition that the first component of the bi-variate
dominates the second component. The 
large deviations estimations arise
from the necessity of looking at the
stability events corresponding to 
all (for the first moment) or at least 
one (for the second moment) subset
of cardinality $\floor{rn}$.

    \item \emph{Boosting the probability of existence.} Results from the theory of large deviations, however, only allow us to obtain the first and second moments up to the leading exponential terms. An application of the Paley-Zygmund inequality then only yields a lower bound decaying exponentially with $n$. We boost this lower bound through the use of the concentration of a suitably chosen random variable. This is described in Section \ref{sec:high_prob}. 
\end{enumerate}

\subsection{Proof of Theorem \ref{thm:sparse_thres}, case $h>h^\star$.
}
\label{sec:first_moment}

The proof of Theorem \ref{thm:sparse_thres} when $h>h^\star$ is based on the first moment method. To this end, this section, we calculate up to leading exponential terms, the first moment of the number of bisections containing at least $\floor{rn}$ $h$-stable vertices, denoted by $X(h,r)$ as earlier. Through Markov's inequality, this will result in the proof of the theorem for the
case $h>h^\star$.
The evaluation of the first moment is based on the analysis presented in \cite{gamarnikMaxcutSparseRandom2018}, which concentrated on the setup of local maxima in the Max-cut problem, corresponding to the case of $r=1$ and $h=0$. The primary novel contributions of this section are:

\begin{enumerate}
    \item The generalization of the first moment computation in \cite{gamarnikMaxcutSparseRandom2018} to accommodate configurations with $h$-stability constraint imposed on a fraction $r<1$ of the vertices, rather than all vertices: This requires proving a large deviation result for mixture of distributions and controlling the effect of possibly different fraction of vertices satisfying $h$-stability in the two partitions.
    \item Utilizing the first moment $\mathbb{E}[X(h,r)]$ and a continuity argument to show that with high probability at least $\Theta_d(1)n-o(n)$ vertices violate $h$-stability for any $h>h^\star$. 
\end{enumerate}

Define the function  $w(E,h,r)$
as follows:
\begin{equation}\label{eq:wrh}
    w(E,h,r)=H(r)+(1-r)\log 2 -E^2+ \inf_{\theta \in \mathbb{R}} \left(2\theta^2 +(2r-1)\log(1+\erf(\theta-(2E+h)/\sqrt{2})) \right),
\end{equation}
where $H(r)$ denotes the binary entropy function $H(r) = -r\log r-(1-r)\log(1-r)$. For $r=1$, $w(E,h,1)$ reduces to $w(E,h)$ defined by (\ref{eq:def_w}):
\begin{equation}\label{eq:first_mom_bound}
  w(E,h,1)= -E^2+\inf_{\theta} \left(2\theta^2+\log(1+\erf(\theta-(2E+h)/\sqrt{2}))\right) = w(E,h).
\end{equation}

Recall the relation between the cut-size parameter $z$ and the energy $E$ given by (\ref{eq:cut_size}), i.e:
\begin{equation}
     z=d/4-\frac{E}{2}\sqrt{d}.
\end{equation}

Our first result provides a dimension-independent asymptotic bound on the first moment $\E[X(z,h,r)]$ up to leading exponential terms, with the bound being tight for $r=1$:

\begin{proposition}\label{prop:neg_first_mom}
Consider a random multi-graph $\tilde{\G}(n, \frac{d}{2}n)$ generated under the configuration model with the total number of edges fixed to $\frac{d}{2}n$. Let $X(z,h,r)$ denote the total number bisections in $\tilde{\G}(n, \frac{d}{2}n)$ with cut-size $\floor{zn}$ and at least $rn$ vertices satisfying $h$-stability i.e $s_\sigma(v,W) \geq h$ when $W$ is set as the negative of the adjacency matrix of $\tilde{\G}(n, \frac{d}{2}n)$.

Suppose that either of the following conditions hold:
\begin{enumerate}
    \item $1/2< r < 1$ and $E \in \mathbb{R}$ is arbitrary.
     \item $r=1$ and $E \leq -\frac{h}{2}$. 
\end{enumerate}
Then, $X(z,h,r)$ for $z=d/4-\frac{E}{2}\sqrt{d}$, satisfies:
\begin{align*}
\limsup_{d \rightarrow \infty}\limsup_{n \rightarrow \infty} {1\over n}
\log\E[X(z,h,r)]& \leq w(E,h,r),
\end{align*}
where $w(E,h,r)$  defined in \ref{eq:wrh}.
Furthermore, the inequality is tight when $r=1$ and $E \leq -\frac{h}{2}$. Namely,
\begin{equation}
    \lim_{d \rightarrow \infty}\lim_{n \rightarrow \infty} {1\over n}
\log\E[X(z,h,1)] = w(E,h),
\end{equation}
where $w(E,h)$ is  defined in \ref{eq:def_w}. 
\end{proposition}

\textbf{Remark:} The upper bound $-h/2$ in the range of $E$ when $r=1$ is a consequence of (\ref{eq:stab_e}) and (\ref{eq:hamilton}), which imply that:
\begin{equation}
    H(\sigma)= -\sum_{v \in [n]} s_\sigma(v).
\end{equation}
Thus, the constraint $s_\sigma(v) \geq h$ for all $v \in [n]$ implies that $ H(\sigma) \leq -\frac{h}{2}n$. $X(z,h,1)$ is therefore identically $0$ whenever $E > \frac{h}{2}$. The condition $1/2< r \leq 1$ however, is an artifact of our proof and choice of the upper bound $w(E,h,r)$. Such a restriction on $r$ suffices for our main results.

The proof of the above proposition 
closely follows \cite{gamarnikMaxcutSparseRandom2018}, 
and we relegate the full proof to Appendix \ref{app:first_mom}.



Proposition \ref{prop:neg_first_mom} characterizes the first moment of $h$-stable bisections having cut-size $\floor{zn}$ for a fixed value of $z \in \mathbb{R}$. As one would expect from (\ref{eq:max_ent}), maximizing over $E$ in Proposition \ref{prop:neg_first_mom} results in the characterization of the first moment of the total number of $h$-stable bisections across all values of cut-sizes.  For subsequent usage, we prove a slightly more general result, where we maximize over $E$ in an interval $(a,b)$ with $-\infty \leq a< b \leq \infty$. The first moment entropy density upon such maximization is established in the proposition below, which follows from Proposition \ref{prop:neg_first_mom} and a straightforward control over the range of $z$ and discretization errors:

\begin{proposition}\label{prop:first_moment_sup}
For any $a,b$ with $-\infty \leq a<b \leq \infty$, define:
\begin{equation}\label{def:xab}
    X_{a,b}(h,r) = \sum_{ z: zn \in \mathbb{N},\frac{d}{4}-\frac{b}{2}\sqrt{d} < z < \frac{d}{4}-\frac{a}{2}\sqrt{d}} X(z,h,r).
\end{equation}
Then, the following holds for any $1/2<r \leq 1$:
\begin{equation}
    \limsup_{d\rightarrow \infty}\limsup_{n\rightarrow \infty} \frac{1}{n} \log \mathbb{E}[X_{a,b}(h,r))] \leq \sup_{E \in (a,b)} w(E,h,r). 
\end{equation}
Furthermore, for $r=1$, the above inequality is tight:
\begin{equation}
     \lim_{d\rightarrow \infty}\lim_{n\rightarrow \infty} \frac{1}{n} \log \mathbb{E}[X_{a,b}(h,1)] = \sup_{E \in (a,b)} w(E,h),
\end{equation}

where $w(E,h)$ is as defined in (\ref{eq:def_w}).

\end{proposition}

The above proposition directly allows us to establish $\Theta(n)$ violations in $h$-stability amongst bisections for all $h>h^\star$. Recall that $w(h)=\sup_{E \in \mathbb{R}} w(E,h)$. Therefore, setting $a=-\infty, b=\infty$ in Proposition \ref{prop:first_moment_sup} yields:

\begin{equation}\label{eq:whr}
     \lim_{d\rightarrow \infty}\lim_{n\rightarrow \infty} \frac{1}{n} \log \mathbb{E}[X(h,1)] = \sup_{E \in \mathbb{R}} w(E,h) = w(h).
\end{equation}

\subsubsection{[Proof of Theorem \ref{thm:sparse_thres} for $h>h^\star$]}\label{sec:cont_neg}

In this section, we establish the absence of $h$-stable bisections for $h>h^\star$ (while allowing for $o(n)$ violations). This constitutes Theorem \ref{thm:sparse_thres} for $h>h^\star$.
The proof utilizes Proposition \ref{prop:first_moment_sup} along with a continuity-based argument. The required continuity properties are based on the following proposition, whose proof is be found in Appendix \ref{app:properties}.

\begin{proposition}\label{prop:first_mom_conv_r}
For all $h \in \mathbb{R}$, $r \in (\frac{1}{2},1]$, $w(\cdot,h,r):\mathbb{R} \rightarrow \mathbb{R}$ is continuously differentiable, strictly concave and admits a unique maximizer.
\end{proposition}

In light of Proposition \ref{prop:first_mom_conv_r}, we define:
\begin{equation}
    w(h,r)=\sup_{E \in \mathbb{R}} w(E,h,r),
\end{equation}
and:
\begin{equation}
    E^\star(h,r) =\operatorname{argmax}_{E \in \mathbb{R}} w(E,h,r),
\end{equation}
where the uniqueness of the maximizer is guaranteed by Proposition \ref{prop:first_mom_conv_r}.

A consequence of 
Proposition \ref{prop:first_mom_conv_r} is the continuity of $E^\star(h,r)$ w.r.t $h,r$ (proof in Appendix  \ref{app:properties}):

\begin{corollary}\label{cor:Econthr}
    $E^\star(h,r)$
is continuous w.r.t $r$ in $(1/2,1]$.
\end{corollary}

The above results imply that for all $h>h^\star$, $w(h,r)<0$ for $r$ sufficiently close to $1$:
\begin{proposition}\label{prop:moment_neg}
For any $h>h^\star$, there exists 
$r(h)<1$ such that:
\begin{equation}
    w(h,r(h)) < 0.
\end{equation}
\end{proposition}
\begin{proof}
Recall that, by definition $w(h,r)=w(E^\star(h),h,r)$. Therefore, by the closure of continuous functions under composition, the continuity of $w(E,h,r)$ (Proposition \ref{prop:first_mom_conv_r}) and that of $E^\star(h)$ (Corollary \ref{cor:Econthr}) imply the continuity of $w(h,r)$ w.r.t $r,h$ for $h<h^\star$. Since, by proposition \ref{prop:w_mono}, $w(h)$ is strictly decreasing for $h>h^\star$ with $w(h) < 0$, we obtain that $w(h,r) < 0$ for a sufficiently small neighborhood around $r=1$.
\end{proof}

With Proposition \ref{prop:moment_neg}, we're now ready to prove the case $h>h^\star$ in Theorem \ref{thm:sparse_thres}.
To see this, note that Proposition \ref{prop:moment_neg} along with
Proposition \ref{prop:first_moment_sup} with $a=-\infty, b=\infty$ imply that $\exists$ a constant $C(h)$ such that for large enough $d,n$:
\begin{equation}
    \mathbb{E}[X(h,r(h))] \leq \exp{(-C(h)n)},
\end{equation}
where $r(h)$ is as defined by Proposition \ref{prop:moment_neg}. 
Therefore, using Markov's inequality, we obtain that for any $h>h^\star$ and large enough $d$, with high probability as $n \rightarrow \infty$, any bisection contains at least $(1-r(h))n$ vertices violating $h$-stability. Since, $r(h)$ is independent of $d,n$, this completes the proof of Theorem \ref{thm:sparse_thres} for the case $h>h^\star$ for the configuration model. Furthermore, since $\Pr[X(h,r) >0 ] \leq \exp(-Cn)$ for some constant $C>0$, Lemmas \ref{lemma:2-random-graphs} and \ref{lem:config} then imply the result for the associated models $\bar{\G}(n, \frac{d}{2}n)$ and $\G(n, \frac{d}{2})$.


\begin{figure}
    \centering
    \includesvg[width=0.7\textwidth]{firstmomthres_8.svg}
    \caption{The first moment entropy density $w(h)$ as a function of the imposed threshold $h$, obtained from (\ref{eq:first_mom_bound}). The threshold $h^\star \approx 0.3513$ marks the point where the entropy becomes negative. Another value of interest is the entropy density $w(0) \approx 0.1992$ for $h=0$ that is well known from \cite{bray1981metastable}}. 
    \label{fig:first_mom}
\end{figure}

\qed

\begin{remark}
We note in Figure \ref{fig:first_mom} that at $h=0$, the numerical value of the first moment entropy density $w(0) \approx 0.1992$ matches the one obtained in \cite{skcount}. Furthermore, it matches the value of the entropy density i.e $\frac{1}{n}\E[\log{X}]$ reported in \cite{bray1981metastable}. This indicates similar to the high-temperature regime of the SK model, the number of local optima equals the corresponding first-moment entropy density. Furthermore, we conjecture that the value of the entropy density is universal. We leave the rigorous confirmation of this result to future work.
     
\end{remark}

\subsection{Proof of Theorem \ref{thm:sparse_thres}, case $h<h^\star$}\label{sec:second_mom}

To prove the existence of bisections with nearly all vertices being $h$-stable, we move to the approximation of the second moment entropy density. Unlike the proof for the first moment entropy density in Section \ref{sec:first_moment}, here we do not consider the $h$-stablity of a fraction $r<1$ of the vertices since it suffices to prove the existence of partitions for $r=1$. Therefore, the proof of the second moment does not require substantial modifications to the arguments in \cite{gamarnikMaxcutSparseRandom2018}, who obtained the second moment for local maxima of MAX-CUT i.e the case $h=0$. However, for the sake of completeness, we provide the full proof in Appendix \ref{app:sec_mom}, where we leverage techniques and results from our first-moment analysis.

Analogous to Propositions \ref{prop:neg_first_mom}, \ref{prop:first_moment_sup}, we obtain an asymptotic characterization of the second moment entropy density through the function $W(E,\omega,h)$ defined by (\ref{eq:Wxbeta}) (proof in Appendix \ref{app:sec_mom}):

\begin{proposition}\label{prop:second_mom_main}
Recall the definition of $X^2_\omega(z,h)$ in (\ref{eq:X2omeg}).
For any $h \in \R, \omega \in (-1,1)$ and any $E < -h/2$, with $z=\frac{d}{4}-\frac{E}{2}\sqrt{d}$
\begin{equation}
  \lim_{d \rightarrow \infty} \lim_{n 
  \rightarrow \infty} \frac{1}{n}\log[\mathbb{E}[ X^2_\omega(z,h)]] = W(E,\omega,h),
\end{equation}
where $W$ is as defined in (\ref{eq:Wxbeta}).
\end{proposition}

For $h=h^\star$ and $E=E^*(h^\star)$, i.e. the unique maximizer of $w(E,h^\star)$ w.r.t $E$, $W(E,\omega,h)$ is plotted in Figure \ref{fig:2ndmom_thres}. Maximizing over $\omega \in (-1,1)$ then leads to a description of the total second moment entropy density:
\begin{proposition}\label{prop:second_mom_sup}
For any $h \in \mathbb{R}$ and $E < -h/2$:
    \begin{equation}
         \lim_{d \rightarrow \infty} \lim_{n \rightarrow \infty} \frac{1}{n} \log [\mathbb{E}[X^2(z,h)]] = \sup_{\omega \in (-1,1)} W(E,\omega,h)
    \end{equation}
\end{proposition}

 Under Assumption \ref{ass:unique_max}
 and as illustrated in Figure \ref{fig:2ndmom_thres}, 
   $W(E^*(h),\omega, h^\star)$ is maximized at $\omega=0$. 
   
We further have the following general relation between $W(E,\omega,h)$ and $w(E,h)$ (proof in Appendix \ref{app:properties}):
\begin{proposition}\label{prop:second_mom_max}
For any $E,h \in \mathbb{R}$,
\begin{equation}
    W(E,0,h) = 2 w(E,h).
\end{equation}
\end{proposition}

Proposition \ref{prop:second_mom_sup} and  Assumption \ref{ass:unique_max} then yield the following lower bound on the second moment:
\begin{lemma}\label{lem:paley}
For $h \in \mathbb{R}$, let $z^\star(h)\coloneqq -\frac{d}{4}-\frac{E^\star(h)}{2}\sqrt{d}$.
Then:
    \begin{equation}
        (\E[X(z^*(h^\star),h^\star)])^2 \geq \exp(-o_d(1)n-o(n)) \E[X^2(z^*(h^\star),h^\star)]\, .
    \end{equation}
\end{lemma}
\begin{proof}

By Assumption \ref{ass:unique_max} and Proposition \ref{prop:second_mom_max}, we have that $W(E^*(h^\star),0,h^\star)=2w(E^*(h^\star),h^\star)=0$. Thus, Proposition \ref{prop:first_moment_sup}  and Proposition \ref{prop:second_mom_sup} imply :
    \begin{equation}\label{eq:paley_2ndmom}
        \frac{1}{n} \log \E[X^2(z^*(h^\star),h^\star)] =  \frac{1}{n} 2\log \E[X(z^*(h^\star),h^\star)]+o_n(1).
    \end{equation}
    Exponentiating both sides completes the proof.
\end{proof}



\begin{figure}
    \centering    \includesvg[inkscapelatex=false,width=0.8\textwidth]{secondmomhthres_3.svg}
    \caption{The second-moment entropy density at the threshold i.e $h=h^\star$, constrained at the normalized energy $E^*(h^\star)$. The curves are obtained by numerically solving the system of equations defined by (\ref{eq:Wxbeta}).}
    \label{fig:2ndmom_thres}
\end{figure}








\subsection{Existence of partitions with high probability}\label{sec:high_prob}

Equipped with the asymptotic first and second moment entropy densities (through Propositions \ref{prop:first_moment_sup} and \ref{prop:second_mom_main}), one could hope to apply the Paley-Zygmund inequality to obtain a lower bound on the probability of the existence of $h$-locally optimal configurations. However, this only leads to a sub-exponential lower bound. Concretely, using the Paley-Zygmund inequality and Lemma \ref{lem:paley}, we have:
\begin{equation}\label{eq:lower_bound}
    \pr(X(z^*(h),h^\star)\ge 1)\ge {(\E[X(z^*(h^\star),h)])^2\over (\E[X^2(z^*(h^\star),h^\star)])^2} \ge \exp(-(o_d(1)n+o(n)).
\end{equation}



A standard technique for boosting such lower-bounds is through the use of concentration inequalities involving sharper tails. For instance, see \cite{frieze1990independence}. This requires as a first step, identifying a suitable concentrated random variable.

We therefore introduce a global function measuring the total deficit in $h$-stability, defined as follows:
\begin{definition}\label{def:deficit}
For each $h$ and $\sigma\in B_n=\{\pm 1\}^n$ let $D(W,h,\sigma)=\sum_i \left(h-{1\over\sqrt{d}}\sum_j w_{ij}\sigma_i\sigma_j\right)^+$,
where $(\cdot)^+$ denotes the linear threshold function defined as $(x)^+=x$ for $x> 0$ and $0$ otherwise. 
We say that $D(W,h,\sigma)$ is the total $h$-deficit associated with partition $\sigma$. Let $D^*(W,h)=\min_{\sigma  \in M_0}D(W,h,\sigma)$. 
\end{definition}

Since $X(z^*(h),h^\star)\ge 1$ if and only if $D^*(W,h)=0$, (\ref{eq:lower_bound}), implies that for every $h<h^\star$
\begin{align}\label{eq:sub-exponential}
\pr\left(D^*(W,h)=0\right)\ge \exp(-(o_d(1)n+o(n)).
\end{align}

To amplify the above weak lower-bound, we start by proving the concentration of $D^*(W,h)$ at an exponential rate. The concentration is a consequence of McDiarmid's inequality, which we state here again for convenience:
\begin{lemma}\label{lem:Mcdiarmid}(McDiarmid's inequality)
Let $f:\cX^k\rightarrow \R$ be a real valued function of $k$ independent random variables taking values in a measurable space $\cX$. Suppose $f$ satisfies the following bounded difference properties for all $x_1,\cdots,x_k \in \cX$, and $1 \leq i \leq k$:
$$
    \sup_{x'_i \in \cX}\abs{f(x_1,\cdots,x'_i,\cdots, x_k)-f(x_1,\cdots,x_i,\cdots, x_k )} \leq c_i.
$$
Then, for any $\epsilon > 0$:
$$
    \pr\left(\abs{f(x_1,\cdots, x_k )-\Ea{f(x_1,\cdots,x_k )}}\geq \epsilon\right) \leq 2\exp{\left(-2\frac{\epsilon^2}{\sum_{i=1}^kc^2_i}\right)}.
$$
\end{lemma}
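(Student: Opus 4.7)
The plan is to prove McDiarmid's inequality via the standard Doob martingale + Azuma--Hoeffding approach. First I would construct the Doob martingale associated with $f$ and the filtration generated by $X_1,\dots,X_k$. Concretely, set
\begin{equation*}
V_0 = \mathbb{E}[f(X_1,\dots,X_k)], \qquad V_i = \mathbb{E}[f(X_1,\dots,X_k)\mid X_1,\dots,X_i] \text{ for } i=1,\dots,k,
\end{equation*}
so that $V_k = f(X_1,\dots,X_k)$ and $V_k - V_0 = \sum_{i=1}^k (V_i - V_{i-1})$ is a sum of martingale differences with respect to the natural filtration.

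Next, I would control each martingale difference $D_i := V_i - V_{i-1}$ using the bounded difference hypothesis. Using independence of $X_1,\dots,X_k$, I can write
\begin{equation*}
V_i = g_i(X_1,\dots,X_i), \qquad V_{i-1} = \mathbb{E}_{X'_i}[g_i(X_1,\dots,X_{i-1},X'_i)],
\end{equation*}
where $g_i(x_1,\dots,x_i) = \mathbb{E}[f(x_1,\dots,x_i,X_{i+1},\dots,X_k)]$ and $X'_i$ is an independent copy of $X_i$. Defining the random envelopes
\begin{equation*}
A_i = \inf_{x} g_i(X_1,\dots,X_{i-1},x) - V_{i-1}, \qquad B_i = \sup_{x} g_i(X_1,\dots,X_{i-1},x) - V_{i-1},
\end{equation*}
the bounded difference property of $f$ transfers (by pointwise integration against the independent product law) to $g_i$, giving $B_i - A_i \le c_i$, and hence $D_i \in [A_i,B_i]$ almost surely with $A_i, B_i$ measurable with respect to $X_1,\dots,X_{i-1}$ and $\mathbb{E}[D_i \mid X_1,\dots,X_{i-1}] = 0$.

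With this in hand I would apply Hoeffding's lemma conditionally: for any $\lambda \in \mathbb{R}$,
\begin{equation*}
\mathbb{E}\bigl[e^{\lambda D_i}\,\big|\,X_1,\dots,X_{i-1}\bigr] \le \exp\!\left(\frac{\lambda^2 c_i^2}{8}\right).
\end{equation*}
Iterating via the tower property yields $\mathbb{E}[e^{\lambda(V_k-V_0)}] \le \exp(\lambda^2 \sum_{i=1}^k c_i^2 / 8)$. Markov's inequality applied to $e^{\lambda(V_k - V_0)}$ and optimization in $\lambda > 0$ give the upper tail $\mathbb{P}(f - \mathbb{E}[f] \ge \varepsilon) \le \exp(-2\varepsilon^2 / \sum_i c_i^2)$; applying the same argument to $-f$ handles the lower tail, and a union bound produces the factor of $2$ in the stated inequality.

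The only real obstacle is the derivation of the bounded envelopes $[A_i, B_i]$ for $D_i$: one must be careful that the bounded difference hypothesis, which is stated deterministically for $f$, indeed passes through the partial integration defining $g_i$. This step genuinely uses independence of the $X_j$'s — without it the martingale increments are not almost surely bounded by $c_i$, even though the unconditional analog of the bound still holds. Everything else (Azuma--Hoeffding, Chernoff optimization) is routine once this step is justified.
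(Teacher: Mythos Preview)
Your proof is correct and follows the standard Doob martingale plus Azuma--Hoeffding route. However, the paper does not actually prove this lemma: it is stated ``for convenience'' as a well-known result and then immediately applied, with no proof given. So there is nothing to compare against; your argument supplies what the paper simply cites.
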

\begin{lemma}\label{lem:concent}
Let $\tilde{\G}(n,d/n)$, denote a multi-graph sampled under configuration model with a fixed number
of edges $m=dn/2$. Set $W$ as the negative of the adjacency matrix of $\tilde{\G}(n,d/n)$ (anti-ferromagnetic interactions). Then, for any $h<h^\star$ the minimum deficit $D^*(W,h)$,satisfies:
\begin{align*}
\pr\left(|D^*(W,h)-\Ea{D^*(W,h)}| \ge \epsilon n \right) \le 2\exp\left(-\epsilon^2n\right).
\end{align*}
\end{lemma}
\begin{proof}
We represent the generative process under the configuration model with a fixed number of $d/2n$ edges, through $d/2n$ variables $e_1,e_2,\cdots,e_{d/2n}$.
Where $e_i$ denotes the pairs of vertices independently assigned to each of the $d/2n$ edges.
Let $W$ and $W'$ denote the weight matrices for two vertex assignments $e_1,e_2,\cdots,e_i,\cdots,e_{d/2n}$ and $e_1,e_2,\cdots,e'_i,\cdots,e_{d/2n}$ differing only at the $i_{th}$ edges.
Due to the $1$-Lipschitzness of the threshold function $(\cdot)^+$, we have $\abs{D^*(W,h)-D^*(W',h)} \leq \frac{2}{\sqrt{d}}$. Therefore, an application of McDiarmid's inequality (Lemma \ref{lem:Mcdiarmid}) yields:
\begin{equation}
   \pr\left(|D^*(W,h)-\Ea{D^*(W,h)}| \ge \epsilon n \right) \le 2\exp\left(-2\frac{\epsilon^2n^2}{(d/2)n\times 4/d}\right) = 2\exp\left(-\epsilon^2n\right).
\end{equation}
\end{proof}
The above lemma allows us to bound the expectation $\Ea{D^*(W,h)}$:
\begin{corollary}\label{cor:def_1}
Let $W$ be a weight matrix as in Lemma \ref{lem:concent}, then for any $h < h^\star$:
\begin{equation}
    \Ea{D^*(W,d)}=o_d(1)n+o(n)\, .
\end{equation}
\end{corollary}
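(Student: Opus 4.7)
The plan is a proof by contradiction that combines the sub-exponential lower bound (\ref{eq:sub-exponential}) on $\pr(D^*(W,h) = 0)$ with the exponential concentration of $D^*(W,h)$ from Lemma~\ref{lem:concent} to force $\E[D^*(W,h)]/n$ to be small when $d$ and $n$ are large.

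Fix $\epsilon > 0$ and suppose, towards contradiction, that $\E[D^*(W,h)] \geq 2\epsilon n$ for some $h < h^*$ and some $d,n$. Since $D^*(W,h) \geq 0$, the event $\{D^*(W,h) = 0\}$ is contained in $\{D^*(W,h) - \E[D^*(W,h)] \leq -2\epsilon n\}$, hence in $\{|D^*(W,h) - \E[D^*(W,h)]| \geq 2\epsilon n\}$, so Lemma~\ref{lem:concent} with the deviation $2\epsilon n$ gives
$$\pr(D^*(W,h) = 0) \leq 2\exp(-4\epsilon^2 n).$$
On the other hand, (\ref{eq:sub-exponential}) provides the lower bound $\pr(D^*(W,h) = 0) \geq \exp(-(o_d(1) + o(1))n)$. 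Combining the two bounds and taking logarithms yields $4\epsilon^2 \leq o_d(1) + o(1)$, which fails once $d$ is taken large enough and then $n$ is taken large enough. Hence for every $\epsilon > 0$ there exist $d_0(\epsilon), n_0(\epsilon)$ such that $\E[D^*(W,h)] \leq 2\epsilon n$ for $d > d_0$ and $n > n_0$, which is precisely the claim $\E[D^*(W,h)] = o_d(1)n + o(n)$.

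The only subtle point is quantifier ordering: the sub-exponential rate in (\ref{eq:sub-exponential}) splits into a part $o_d(1)$ that vanishes in the large-degree limit and a part $o(1)$ that vanishes as $n \to \infty$, so the contradiction must be extracted by choosing $d$ large first and then $n$ large. The essential content of the argument is that Lemma~\ref{lem:concent} provides genuinely exponential (in $n$) concentration, which dominates the sub-exponential lower bound (\ref{eq:sub-exponential}) for any fixed $\epsilon > 0$ once $d$ and $n$ are taken large enough. There is no substantive obstacle beyond this bookkeeping, but it is precisely why exponential (and not merely sub-exponential) concentration of $D^*$ was needed in Lemma~\ref{lem:concent}.
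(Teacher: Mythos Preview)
Your proof is correct and follows essentially the same approach as the paper: assume $\E[D^*(W,h)] \ge \Theta(n)$, use Lemma~\ref{lem:concent} to upper-bound $\pr(D^*(W,h)=0)$ by a genuinely exponential term, and contradict the sub-exponential lower bound (\ref{eq:sub-exponential}). Your handling of the quantifiers (choose $d$ large, then $n$ large) is in fact slightly more careful than the paper's, which phrases the negation as ``$\E[D^*]\ge \epsilon n$ for all large $d,n$'' rather than along a subsequence.
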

\begin{proof}
Suppose that, on the contrary, there exists $\epsilon > 0$ such 
that there exist arbitrarily large $d(\epsilon), n(\epsilon)$ satisfying:
\begin{equation}
    \Ea{D^*(W,d,h)}\ge \epsilon n.
\end{equation}

Then Lemma \ref{lem:concent} implies that
 $\mathbb{P}[D^*(W,h)=0]$ is at most $2\exp(-\epsilon^2n)$ for $d=d(\epsilon),n=n(\epsilon)$.
 However, (\ref{eq:sub-exponential}) which implies that for any large enough $d,n$,  $\mathbb{P}[D^*(W,h)=0]\geq \exp(-kn)$ for any constant $k$. By setting $k>\epsilon^2$, we obtain a contradiction.
\end{proof}

Applying Lemma \ref{lem:concent} again, we obtain the following high-probability bound on $D^*(W,h)$:
\begin{corollary}\label{corr:def_2}
For every $\epsilon>0$, and $h<h^\star$,for large enough $d,n$ we have:
\begin{align}\label{eq:sub-linear}
\pr\left( D^*(W,h)\ge \epsilon n\right) 
\le 
2\exp(-\epsilon^2n/4).
\end{align}     
Therefore, $D^*(W,h) =o_d(1)n+o(n)$ with high probability.
\end{corollary}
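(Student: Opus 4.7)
The plan is to combine Corollary \ref{cor:def_1} with Lemma \ref{lem:concent} in the most direct way. Fix $\epsilon > 0$ and $h < h^*$. By Corollary \ref{cor:def_1}, we have $\mathbb{E}[D^*(W,h)] = o_d(1)n + o(n)$, so by choosing $d$ large enough and then $n$ large enough, we can guarantee $\mathbb{E}[D^*(W,h)] \le \epsilon n/2$.

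Once the expectation is controlled, the event $\{D^*(W,h) \ge \epsilon n\}$ is contained in the deviation event $\{|D^*(W,h) - \mathbb{E}[D^*(W,h)]| \ge \epsilon n/2\}$. Applying Lemma \ref{lem:concent} with the parameter $\epsilon/2$ in place of $\epsilon$ then yields
\begin{align*}
\pr\!\left( D^*(W,h) \ge \epsilon n \right)
&\le \pr\!\left( |D^*(W,h) - \mathbb{E}[D^*(W,h)]| \ge \epsilon n/2 \right) \\
&\le 2\exp\!\left(-(\epsilon/2)^2 n\right) = 2\exp(-\epsilon^2 n/4),
\end{align*}
which is exactly the claimed bound. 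The second assertion, that $D^*(W,h) = o_d(1)n + o(n)$ with high probability, then follows by letting $\epsilon \to 0$ along any sequence going to $0$ slowly enough (say $\epsilon_n$ with $\epsilon_n^2 n \to \infty$) after taking $d$ sufficiently large so that the expectation bound from Corollary \ref{cor:def_1} holds.

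There is no real obstacle here: both ingredients have already been established, and the corollary is essentially a rephrasing of Lemma \ref{lem:concent} once we know that the expectation is sub-linear. The only mild care needed is in the order of quantifiers, namely that $d$ must be chosen large enough (depending on $\epsilon$) before $n$ is sent to infinity, which is consistent with the statement and with how Corollary \ref{cor:def_1} was phrased.
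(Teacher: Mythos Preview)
Your proof is correct and follows essentially the same approach as the paper: use Corollary~\ref{cor:def_1} to bound the expectation by $\epsilon n/2$ for large enough $d,n$, then apply Lemma~\ref{lem:concent} with parameter $\epsilon/2$ to the resulting deviation event. The paper's argument is slightly terser but otherwise identical.
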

\begin{proof}
For large enough $d,n$, we have by Corollary \ref{cor:def_1}, that $\Ea{D^*(W,h)} \leq \epsilon n/2$ . Therefore, for large enough $d,n$:
\begin{equation}
    \pr\left( D^*(W,h)\ge \epsilon n\right) \leq \pr\left( | D^*(W,h)-\Ea{D^*(W,h)} |\ge \epsilon n/2\right) \le 
2\exp(-\epsilon^2n/4).
\end{equation}
Where the last inequality follows from Lemma \ref{lem:concent}.
\end{proof}

While the existence of $h$-stable bisections implies a $0$ minimal deficit, a small minimal deficit doesn't imply the existence of nearly $h$-stable partitions. This is because the deficit can be distributed amongst a large number of vertices. Therefore, to relate the above bound on the total deficit to the maximum number of $h$-stable vertices, we introduce a technique of perturbing the stability threshold $h$.

Let $N(W,h,\sigma)=\sum_i {\bf 1}\left({1\over \sqrt{d}}\sum_j W_{ij}\sigma_i\sigma_j-h\ge 0\right)$ denote the count of h-stable vertices. 
Define $N^*(W,h)=\max_{\sigma \in M_0}N(W,h,\sigma)$. We obtain the following result, whose proof illustrates the use of our perturbation technique:
\begin{lemma}\label{lem:high_prob_exist}
For every $h<h^\star$ and $\epsilon>0$,  $\pr(N^*(W,h)\le n-\epsilon n)\le \exp(-\Theta_d(1)n-o(n))$.
\end{lemma}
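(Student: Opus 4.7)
The plan is to deduce this from the concentration bound on the minimum deficit (Corollary \ref{corr:def_2}), exploiting the ``perturb-the-threshold'' device alluded to just before the statement. The issue Corollary \ref{corr:def_2} does not resolve on its own is that a small total deficit can be spread over many vertices: even when $D^*(W,h)\le \eta n$ with $\eta$ tiny, the minimizing partition may leave $\Theta(n)$ vertices each contributing an arbitrarily small positive amount, so none of them need be strictly $h$-stable. Converting a deficit bound into a count of $h$-stable vertices therefore requires a uniform slack above the threshold.

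Concretely, fix $h<h^*$ and $\epsilon>0$. First I would choose $\delta>0$ small enough that $h':=h+\delta<h^*$, and then invoke Corollary \ref{corr:def_2} at the threshold $h'$ with tolerance $\epsilon\delta$. For all sufficiently large $d$ and $n$ this gives
\[
\pr\bigl(D^*(W,h')\le \epsilon\delta\, n\bigr)\ \ge\ 1-2\exp\!\bigl(-(\epsilon\delta)^2 n/4\bigr).
\]
On this high-probability event, let $\hat\sigma$ be a minimizer of $D(W,h',\cdot)$. Every vertex $v$ failing $h$-stability under $\hat\sigma$, i.e.\ $s_{\hat\sigma}(v,W)<h$, contributes $(h'-s_{\hat\sigma}(v,W))^+>h'-h=\delta$ to $D(W,h',\hat\sigma)$. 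Hence the number of such vertices is at most $D^*(W,h')/\delta\le \epsilon n$, so $N(W,h,\hat\sigma)\ge(1-\epsilon)n$, and a fortiori $N^*(W,h)\ge(1-\epsilon)n$. The failure probability is exponentially small in $n$ with an exponent that depends only on $\epsilon$ and $\delta$ (and not on $d$), matching the $\exp(-\Theta_d(1)n-o(n))$ bound stated in the lemma.

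The main point to verify, rather than a genuine obstacle, is that the entire chain Lemma \ref{lem:paley} $\to$ Corollary \ref{cor:def_1} $\to$ Corollary \ref{corr:def_2} is indeed available at the perturbed threshold $h'$. This is precisely why I insist on $h'<h^*$: the second-moment matching at the leading exponential order in Lemma \ref{lem:paley}, which drives Corollary \ref{cor:def_1}, holds for every threshold strictly below $h^*$, so applying it at $h'$ yields the same $o_d(1)n+o(n)$ control on $\E[D^*(W,h')]$ needed above. Once this verification is in place, the perturbation argument is essentially algebraic and requires no further probabilistic input.
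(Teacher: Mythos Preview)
Your proposal is correct and follows essentially the same approach as the paper: perturb the threshold to some $h'=h+\delta<h^*$, invoke Corollary~\ref{corr:def_2} at $h'$, and use that each vertex violating $h$-stability contributes at least $\delta$ to the $h'$-deficit. The paper phrases this as a contrapositive (the event $N^*(W,h)\le n-\epsilon n$ forces $D^*(W,\tilde h)\ge(\tilde h-h)\epsilon n$ for every $\sigma$), while you argue directly from the minimizer $\hat\sigma$ of $D(W,h',\cdot)$; these are logically equivalent presentations of the same idea.
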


\begin{proof}
Let $\mathcal{E}_n=\mathcal{E}_n(h)$ denote the event $N^*(W,h)\leq n-\epsilon n$. Fix $W$ such that this event holds.
Fix any $\sigma$ and 
define $S(\sigma)$ as the set of vertices $i\in [n]$
such that 
\begin{align*}
h-{1\over \sqrt{d}}\sum_j w_{ij}\sigma_i\sigma_j\ge 0.
\end{align*}
Then, the event $\mathcal{E}_n(h)$ can be equivalently expressed as $|S(\sigma)|\ge \epsilon n$.
Fix $\tilde h$ with $h<\tilde h<h^\star$. 
We have
\begin{align*}
D(W,\tilde h,\sigma)=\sum_i \left(\tilde h-{1\over \sqrt{d}}\sum_j w_{ij}\sigma_i\sigma_j\right)^+
&= 
\sum_i \left(\tilde h-h+h-{1\over \sqrt{d}}\sum_j w_{ij}\sigma_i\sigma_j\right)^+ \\
&\ge 
\sum_{i\in S(\sigma)} \left(\tilde h-h+h-{1\over \sqrt{d}}\sum_j w_{ij}\sigma_i\sigma_j\right)^+  \\
&=
\sum_{i\in S(\sigma)} \left(\tilde h-h+h-{1\over \sqrt{d}}\sum_j w_{ij}\sigma_i\sigma_j\right) \\
&\ge 
\sum_{i\in S(\sigma)} \left(\tilde h-h\right) \\
&\ge (\tilde h-h)\epsilon n.
\end{align*}
Since $\sigma$ was arbitrary, we conclude $D^*(W,\tilde h)\geq (\tilde h-h)\epsilon n$ which occurs with probability at most  $\exp{(-Cn)}$ for some constant $C$ and large enough $d,n$ due to Corollary \ref{corr:def_2}.
Thus the event $\mathcal{E}_n$ occurs with probability at most $\exp(-\Theta_d(1)(n)+o(n))$.
\end{proof}

Lemmas \ref{lemma:2-random-graphs} and \ref{lem:config} then imply that the event $N^\star(W,h) = n-o(n)$ holds with high-probability for the associated models $\bar{\G}(n, \frac{d}{2}n)$ and $\G(n, \frac{d}{2})$. This establishes Theorem \ref{thm:sparse_thres} for $h<h^\star$.

\section{Proof of Theorem \ref{thm:sparse_max_E_thres}: Energy of $h$-stable configurations}\label{sec:max_energy}


Our proof of Theorem \ref{thm:sparse_thres}, relied on restricting the $h$-stable configurations to different values of Energy (or equivalently the cut-size $z$). This restriction led to the decoupling of the constraints across partitions crucial for the remaining arguments.

However, as a by-product we naturally obtained the first and second moment entropy densities of $h$-stable configurations and different energy levels. In this section we will exploit this to establish phase transitions related to the range of energies of $h$-stable optima.

Similar to the previous section, we first demonstrate this for the case anti-ferromagnetic interactions (i.e local max-cut) in sparse \ER graphs. This constitutes Theorem \ref{thm:sparse_max_E_thres}. In Section \ref{sec:universality_e_max}, we extend these results through a universality argument to other distributions over sparse and dense graphs.
 
Recall that, we have from Proposition \ref{prop:neg_first_mom} that  the first moment entropy density $ \frac{1}{n} \log \mathbb{E}[X(z,h)]$ at fixed energy $E$ with $z=\frac{d}{4}-\frac{E}{2}\sqrt{d}$ converges in probability under the limit $n \rightarrow \infty$ and $d \rightarrow \infty$ to a deterministic value $w(E,h)$ given by:
\begin{equation}\label{eq:fin_bound_r1_E1}
w(E,h)=-E^2+\inf_{\theta\in \mathbb{R}}(\theta^2+\log(1+\erf(\theta-(2E+h)/\sqrt{2}))).
\end{equation}
Analogously, Proposition \ref{prop:second_mom_main} establishes $W(E,\omega,h)$ as the limit of the asymptotic  second moment entropy density. 

Our proof of Theorem \ref{thm:sparse_max_E_thres} follows directly through certain properties of $w(E,h), W(E,\omega,h)$ derived from Assumptions \ref{ass:unique_max}, \ref{ass:0_unique}, and \ref{ass:e_max_h}, combined with the introduction of associated deficit functions and boosting of probability analogous to Section \ref{sec:high_prob}. 
Recall that as a first consequence, Assumptions \ref{ass:unique_max}, \ref{ass:0_unique}, and \ref{ass:e_max_h} imply that $E_{\rm cor}(h), h_{\rm cor}$ exist and satisfy $h_{\rm cor} < h^\star$ and $E_{\rm cor}(h)<E_{\rm max}(h)$. (Proposition \ref{prop:hcor})


\subsection{Illustration and numerical search}

Before proceeding further, we provide numerical illustrations of $h_{\rm cor}, E_{\rm cor}(h)$ to elucidate the corresponding phase transitions more clearly. We remark that numerical search procedure described here is only utilized to estimate the values of $h_{\rm cor}, E_{\rm cor}(h)$ while our results simply require the existence of $h_{\rm cor}$ which relies solely on Assumptions \ref{ass:unique_max}, \ref{ass:0_unique}, and \ref{ass:e_max_h}. Thus, this section does not contribute any mathematical results.

Recall that $E_{\rm cor}$ denotes the largest energy below $E_{\rm max}$ where overlap $\omega=0$  ceases to remain the global maximizer. Numerically, we observe that this happens through $\omega=0$ turning from a local maxima to a local minima. This corresponds to the point where the second derivative of $W(E,\omega,h)$ at $\omega=0$ vanishes. We illustrate the existence of $E_{\rm cor}(h)$ for $h=0$ in Figure \ref{fig:cor0}. Observe that as $E$ decreases from $E_{\rm max}(0)$, there exists a value of the energy $E_{\rm cor}(0)>E_{\rm min}$ where the overlap $\omega=0$ stops being a maxima of $W(E,\omega,h)$. Numerically, we observe that $E_{\rm cor}(0)\approx -0.6721$. The value $E_{\rm cor}(0)$ matches the value described in   \cite{bray1981metastable} as the onset of the ``correlation of local minima" for the Sherrington-Kirkpatrick Model. (Note however, that our result relies solely on the existence of $E_{\rm cor}(h)$ and not its precise value.


Next, to estimate $h_{\rm cor}$, recall that as implied by Assumption \ref{ass:0_unique}, at
$h=0$, we have that $E_{\rm min}<E_{\rm cor}<E_{\rm max}$.
Increasing $h$ from $0$, we observe that $E_{\rm cor}$ and $E_{\rm min}$ intersect at an intermediate value $< h^\star$. We treat this value as our estimate of $h_{\rm cor}$ defined as in \eqref{def:h_cor}, whose existence is implied by Proposition \ref{prop:hcor}. Numerically, we obtain that $h_{\rm cor} \approx 0.2860$. As illustrated in Figure \ref{fig:trans_e_h_cor}, at $h\approx h_{\rm cor}$, the second moment entropy density $W(E,\omega,h)$ at $E=E_{\rm cor}(h)$ is maximized at $\omega=0$ and vanishes.
For $h < h_{\rm cor}$, the quantity $E_{\rm min}(h)$ is deemed unphysical, given that it may not represent the minimum energy of nearly $h$-stable configurations. For instance, when $h=0$, the minimum value of the energy has been proven to be $E \approx -0.7632$ \citep{dembo2017extremal}, while $E_{min}(0) \approx-0.7907$

\begin{figure}
    \centering
\includesvg[width=0.6\textwidth]{secondmomhthreshcorbefore_fixed.svg}
\includesvg[width=0.6\textwidth]{secondmomhthreshcor_fixed.svg}
\includesvg[width=0.6\textwidth]{secondmomhthreshcorafter_fixed.svg}
    \caption{Illustration of the transition in the second moment entropy density at (from top to bottom), $h=h_{\rm cor}$ and $E=E_{\rm cor}(h)+0.01, E_{\rm cor}(h), E_{\rm cor}(h)-0.01$ respectively. }
    \label{fig:trans_e_h_cor}
\end{figure}

\begin{figure}
    \centering
    \includesvg[width=0.8\textwidth]{firstmomfull0_updated.svg}
    \caption{First moment entropy density at $h=0$ as a function of $E$. The curves are obtained by numerically solving (\ref{eq:fin_bound_r1_E1}). 
    }
    \label{fig:first_mom_h0}
\end{figure}
\begin{figure}
    \centering
\includesvg[width=0.6\textwidth]{secondmomhthresh0before_fixed.svg}
\includesvg[width=0.6\textwidth]{secondmomhthresh0_fixed.svg}
\includesvg[width=0.6\textwidth]{secondmomhthresh0after_fixed.svg}
    \caption{Illustration of the transition in the second moment entropy density at $h=0$ and (from top to bottom), $E_{\rm cor}(0)+0.01, E=E_{\rm cor}(0), E_{\rm cor}(0)-0.01$}
    \label{fig:cor0}
\end{figure}

\subsection{Summary of the transitions in $E,h$}

The values $h_{\rm cor},E_{\rm cor}, E_{max},E_{min}$ demarcate the transitions in the behavior of the first and second moments of $h$-stable bisections as one varies $E, h$. Below, we sketch how these transitions imply Theorem \ref{thm:sparse_max_E_thres}:
\begin{itemize}
\item For any $h < h^\star$, the first moment entropy density $w(E,h)$ is negative for $E>E_{max}(h)$ or $E<E_{min}(h)$. The first-moment method as in Section \ref{sec:first_mom_fin}, then implies the absence of bisections with energies outside the range $(E_{min}(h),E_{max}(h))$ and $o(n)$ violations of $h$-stability.

\item By the definition of $h_{\rm cor}$, for all $h_{\rm cor} \leq h \leq  h^\star$, the second-moment entropy-density is maximized at $\omega=0$ throughout the range $(E_{min}(h),E_{max}(h))$. Therefore, by applying the argument in Section \ref{sec:second_mom}, we obtain that bisections with energies $\approx E$ and $o(n)$ violations of $h$-stability exist throughout the range $E \in (E_{min}(h),E_{max}(h))$.
 \item At $h<h_{\rm cor}$
there exists  $E_{\rm cor}(h) \in \R$ with $E_{\rm min}(h) < E_{\rm cor}(h)< E_{\rm max}(h)$ such that $W(E,\omega,h)$  is maximized at $\omega=0$ for  $E \in (E_{\rm cor}(h),E_{max}(h))$. The second moment method then implies the existence of bisections with energies $\approx E$ and $o(n)$ violations of $h$-stability in the range $(E_{\rm cor}(h),E_{max}(h))$.

\item For some $E<E_{\rm cor}(h)$, $W(E,\omega,h)$ is maximized at some non-zero overlap $\omega \neq 0$, making our results inconclusive towards existence of $h$-stable bisections with energies in the interval $(E_{min}(h),E_{\rm cor}(h))$
\end{itemize}
\begin{figure}
    \centering
    \includesvg[width=0.8\textwidth]{Evaluethres_updated.svg}
    \caption{The three values of energies $E_{max},E_{\rm cor},E_{min}$ as a function of the threshold $h$. $E_{max},E_{min}$ are obtained as the roots of the (\ref{eq:fin_bound_r1_E1}). $E_{\rm cor}$ is obtained numerically as the smallest $E$ such that $W(E, \omega,h)$ defined through Proposition \ref{prop:second_mom_main} is not locally maximized at $\omega = 0$. It corresponds to the definition of $E_{\rm cor}$ (\ref{def:E_cor}) only for $h <h_{\rm cor}$} \label{fig:max_energy}
\end{figure}

These points are illustrated in Figure \ref{fig:max_energy} for $h \in (-0.1,h^\star)$. While our results do not require or establish that $E_{\rm cor}(h) > E_{min}(h)$ for all values of $h$ in the range $(-0.1,h_{\rm cor})$, we numerically find this to be the case.

\subsection{Proof of Theorem \ref{thm:sparse_max_E_thres}}


In this section, we establish Theorem \ref{thm:sparse_max_E_thres} through Propositions \ref{prop:neg_first_mom}-\ref{prop:second_mom_sup}.

\subsection{Maximal energies of $h$-stable configurations}

To establish the absence of nearly $h$-stable bisections with energy greater than $E_{\rm max}$, we introduce the following random variable:
\begin{equation}
   X_{\geq E}(h,r)=\sum_{z: zn \in \mathbb{N}, zn\leq dn-\frac{E}{2}n\sqrt{d}}X(z,h,r).
\end{equation}
The above term equals the number of configurations having at least $rn$ $h$-stable vertices and normalized energy greater than or equal to $E$. 

Similarly, we define the corresponding first moment entropy density restricted to energy values at least $E$:
\begin{equation}
\begin{split}
   \frac{1}{n} \log \mathbb{E}[X_{\geq E}(h,r)] \, .\\
\end{split}
\end{equation}

By setting $a=E,b=\infty$ in  Proposition \ref{prop:first_moment_sup}, the above first-moment entropy density can be asymptotically bounded by maximizing $w(\tilde{E},h,r)$ over the range $(\tilde{E} > E)$.
Define:
\begin{equation}
    w_{\geq E}(h,r) \coloneqq \sup_{\tilde{E} \geq E} w(\tilde{E},h,r).
\end{equation}

Then, Proposition \ref{prop:first_moment_sup} implies the following result:

\begin{proposition}\label{prop:first_mom_max}
    For any $h \in \mathbb{R}$:
    \begin{equation}
        \limsup_{d \rightarrow \infty}\limsup_{n \rightarrow \infty}\frac{1}{n} \log \mathbb{E}[X_{\geq E}(h,r)] \leq  w_{\geq E}(h,r),
    \end{equation}
    with the inequality being tight at $r=1$, in which case:
    \begin{equation}
        \lim_{d \rightarrow \infty}\lim_{n \rightarrow \infty}\frac{1}{n} \log \mathbb{E}[X_{\geq E}(h,1)] =  w_{\geq E}(h,1)
    \end{equation}
\end{proposition}

With the above asymptotic bound, a continuity argument similar to Section \ref{sec:cont_neg} results in a vanishing first moment even upon allowing for $\Theta(n)$ violations:
\begin{proposition}
\label{prop:first_mom_e}
Suppose that  $h \in (-0.1,h^\star)$, and $E > E_{\rm max}(h)$. Then $\exists r^*(E,h)$ such that for all $r> r^*(E,h)$, for sufficiently large $d$, we have:
    \begin{equation}
     w_{\geq E}(h,r) < 0.
\end{equation}
\end{proposition}
\begin{proof}
The strict concavity of $w(E,h)$ w.r.t $E$ (Proposition \ref{prop:first_mom_conv_r}) implies that $w(E,h,r)$ is strictly decreasing in $E$ for $E > E^\star(h,r)$. Subsequently, the continuity of $ E^\star(h,r)$ w.r.t $r$ (Corollary \ref{cor:Econthr}) implies that 
$\exists \tilde{r} < 1$ such that $E^\star(h,r)<E_{\rm max}(h)$  and $r \in (\tilde{r},1]$. Therefore, for any $E > E_{\rm max}(h)$, and  $r \in (\tilde{r},1]$, we have:
\begin{equation}
    w_{\geq E}(h,r)= w(E,h,r).
\end{equation}
Since $w(E,h)<0$ for $E>E_{\max}(h)$ and $w(E,h,r)$ is continuous in $r$ (Proposition \ref{prop:first_mom_conv_r}), we obtain that $w(E,h,r)<0$ for any $E>E_{\max}(h)$ and large enough $r < 1$.
\end{proof}

The above result combined with Proposition \ref{prop:first_mom_max} then implies:
\[
\limsup_{d \rightarrow \infty}\limsup_{n \rightarrow \infty}\frac{1}{n} \log \mathbb{E}[X_{\geq E}(h,r)] < 0.
\]

Hence applying Markov's inequality then proves Theorem \ref{thm:sparse_max_E_thres} for $E>E_{\rm max}(h)$. An identical argument but with $w_{\geq E}$ replaced by $w_{\leq E}(h,r) \coloneqq \sup_{\tilde{E} \leq E} w(\tilde{E},h,r)$ then yields Theorem \ref{thm:sparse_max_E_thres} for $E<E_{\rm min}(h)$.

To obtain the existence of approximately $h$-stable bisections for energies in the range $(E_{\rm min}(h),E_{\rm max}(h))$, for $h \in (h_{\rm cor}, h^\star)$ and in the range $(E_{\rm cor}(h),E_{\rm max}(h))$ for $h \in (-0.1, h^\star)$, we proceed with obtaining the second moment. Note that similar to Section \ref{sec:high_prob}, to prove existence, it suffices to consider a fixed value of cut-size.

\begin{lemma}\label{lem:paley_second_mom}
    Suppose that either $h \in (-0.1, h_{\rm cor})$, and  $E \in (E_{\rm min},E_{\rm max}(h))$ or  $h \in (h_{\rm cor}, h_{\rm max})$, and  $E \in (E_{\rm cor}(h),E_{\rm max}(h))$. Then, with $z=\frac{d}{4}-\frac{E}{2}\sqrt{d}$:
    \begin{equation}
        \Pr[X(z,h) \geq 1] \geq \exp(-o_d(1)n)\, .
    \end{equation}
\end{lemma}
\begin{proof}
    By the definition of $h_{\rm cor}, E_{\rm cor}(h)$ we have that for all $h \in (h_{\rm cor},h^\star)$,   and $E \in (E_{\rm min}(h),E_{\rm max}(h))$ or for $h < h_{\rm cor}$ and $E \in (E_{\rm cor}(h),E_{\rm max}(h))$, $W(E,\omega,h)$ maximized at $\omega=0$.     
    In either case, we have by Proposition \ref{prop:second_mom_max} $W(E,0,h)=2w(E,h)$. Proposition \ref{prop:first_moment_sup}  and Proposition \ref{prop:second_mom_sup} then imply:    \begin{equation}\label{eq:paley_2ndmom_{const}}
         \frac{1}{n} \log \E[X^2(z,h)] =  \frac{1}{n} 2\log \E[X(z,h)]+o_d(1)n+o(n).
    \end{equation}
Exponentiating both sides and applying the Paley-Zygmund inequality completes the proof.
\end{proof}

As in Section \ref{sec:high_prob}, the subsequent step
is to boost the above probability through the use of concentration of a suitable random variable. To this end, we introduce the maximal energy deficit function:
\begin{definition}\label{def:const_deficit}
For each $h$, $\sigma\in B_N=\{\pm 1\}^N$, and energy $E$, define the maximal energy deficit function $D_{ E_1,E_2}$ as follows:
$$D_{E_1,E_2}(W,h,\sigma)=\sum_i \left(h-{1\over\sqrt{d}}\sum_j w_{ij}\sigma_i\sigma_j\right)^++(H(\sigma)-nE_1)^+ +(nE_2-H(\sigma))^+. $$
\end{definition}
Similarly, we define $D^\star_{E_1,E_2}(W,h)$ as $\min_{\sigma \in M_0}D^*_{E_1,E_2}(W,h,\sigma)$. We note that $D^\star_{E_1,E_2}(W,h)= o(n)$ implies that there exists a bisection satisfying $\sum_{i=1}^n \left(h-{1\over\sqrt{d}}\sum_j w_{ij}\sigma_i\sigma_j\right)^+ = o(n)$ and $(H(\sigma)-nE_1)^+=o(n)$, and $(nE_2-H(\sigma))^+=o(n)$.

Lemma \ref{lem:paley_second_mom} then implies that for all $E \in (E_{\rm cor},E_{\rm max})$ for $h \in (-0.1, h^\star)$, and for all $E \in (E_{\rm min},E_{\rm max})$ for $h \in (h_{\rm cor}, h^\star)$, we have:
\begin{align}\label{eq:sub-exponential_max_e}
\pr\left(D^\star_{E_1,E_2}(W,h)=0\right)\ge \exp(-o_d(1)n-o(n)).
\end{align}

As in Lemma \ref{lem:concent}, using McDiarmid's inequality, we have the following concentration result:
\begin{lemma}\label{lem:concent_const}
Let $W$ be a weight matrix of a graph sampled from the anti-ferromagnetic configuration model with average degree $d$. Then, for any $E_1 < E_2,h \in \mathbb{R}$, the minimum maximal energy deficit $D_{E_1, E_2}^*(W,h)$, satisfies:
\begin{align*}
\pr\left(|D_{E_1,E_2}^*(W,h)-\Ea{D_{E_1,E_2}^*(W,h)}| \ge \epsilon n \right) \le 2\exp\left(-\epsilon^2n\right).
\end{align*}
\end{lemma}

Next, analogous to Corollaries \ref{cor:def_1} and \ref{corr:def_2}, we obtain:
\begin{corollary}\label{cor:def_const_1}
Let $W$ be a weight matrix as in Lemma \ref{lem:concent}, then for any $h \in  (-0.1,h^\star),E=E_{\rm max}(h)$:
\begin{equation}
    \Ea{D_{E_1,E_2}^*(W,h)}=o_d(1)n+o(n)\, .
\end{equation}
\end{corollary}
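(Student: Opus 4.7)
The plan is to mimic the contradiction argument used for Corollary \ref{cor:def_1}, replacing the total deficit $D^*$ by the energy-constrained deficit $D_{\geq E}^*$ and Lemma \ref{lem:concent} by Lemma \ref{lem:concent_const}. Specifically, I would suppose the conclusion fails, i.e.\ that there exist $\epsilon > 0$, $d' > 0$, and $n' \in \N$ with $\E[D_{\geq E}^*(W,h)] \geq \epsilon n$ for all $d > d'$ and $n > n'$, and derive a contradiction against the sub-exponential lower bound Equation \ref{eq:sub-exponential_max_e}.

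The first step is to establish that sub-exponential lower bound rigorously. At $E = E_{\max}(h)$, Lemma \ref{lem:paley_second_mom} provides
\begin{equation*}
(\E[X_{0,\geq E}(h)])^2 \;\geq\; \exp(-o_d(1)n - o(n))\,\E[X_{0,\geq E}^2(h)],
\end{equation*}
and a direct application of the Paley--Zygmund inequality yields $\pr(X_{0,\geq E}(h) \geq 1) \geq \exp(-o_d(1)n - o(n))$. On the event $\{X_{0,\geq E}(h) \geq 1\}$ there exists a bisection $\sigma$ which is $h$-stable and satisfies $H_d(\sigma)/n \geq E$, so both nonnegative summands in the definition of $D_{\geq E}(W,h,\sigma)$ vanish. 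Hence $D_{\geq E}^*(W,h) = 0$ on this event, giving exactly Equation \ref{eq:sub-exponential_max_e}.

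The second step closes the argument by combining this with Lemma \ref{lem:concent_const}. Under the contradiction hypothesis, McDiarmid's inequality gives
\begin{equation*}
\pr(D_{\geq E}^*(W,h) = 0) \;\leq\; \pr\bigl(|D_{\geq E}^*(W,h) - \E[D_{\geq E}^*(W,h)]| \geq \epsilon n\bigr) \;\leq\; 2\exp(-\epsilon^2 n),
\end{equation*}
which is an exponential upper bound incompatible with the sub-exponential lower bound $\exp(-o_d(1)n - o(n))$ once $d$ is taken large enough that the $o_d(1)$ term drops below $\epsilon^2/2$ and $n$ is taken large enough that the $o(n)$ term is dominated. This contradiction forces $\E[D_{\geq E}^*(W,h)] = o_d(1)n + o(n)$.

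The only step requiring genuine care is the applicability of Lemma \ref{lem:paley_second_mom} at $E = E_{\max}(h)$: it depends on the numerical fact (illustrated in Figure \ref{fig:max_energy}) that $E_{\max}(h) > E_{\rm cor}(h)$ uniformly in $h \in [-0.1, h^*)$, so that the second-moment entropy density $W'(E_{\max}(h),\omega,h)$ is indeed maximized at the uncorrelated point $\omega=0$ and matches $2w'(E_{\max}(h),h)=0$. With this verified the remainder is a routine transcription of the proof of Corollary \ref{cor:def_1}, and no new probabilistic or analytic input is needed.
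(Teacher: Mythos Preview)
Your proposal is correct and follows essentially the same approach as the paper: a contradiction argument combining the sub-exponential lower bound \eqref{eq:sub-exponential_max_e} with the McDiarmid concentration of Lemma~\ref{lem:concent_const}. The paper's proof is terser, simply citing \eqref{eq:sub-exponential_max_e} and Lemma~\ref{lem:concent_const} directly, while you additionally spell out how \eqref{eq:sub-exponential_max_e} follows from Lemma~\ref{lem:paley_second_mom} via Paley--Zygmund and flag the reliance on $E_{\max}(h) > E_{\rm cor}(h)$; but the substance is identical.
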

\begin{proof}
Suppose that, on the contrary, there exist $\epsilon$ and arbitrarily large $d(\epsilon), n(\epsilon)$  such that at $d=d(\epsilon), n=n(\epsilon)$, we have:
\begin{equation}
\Ea{D_{E_1,E_2}^*(W,h)}\ge \epsilon n.    
\end{equation}
 Lemma \ref{lem:concent_const} then implies that
 $\mathbb{P}[D_{E_1,E_2}^*(W,h)=0]$ is at most $\exp(-\epsilon^2n)$, contradicting (\ref{eq:sub-exponential_max_e}). 
\end{proof}
The above Corollary and Lemma \ref{lem:concent_const} then imply the following result:

\begin{corollary}\label{corr:def_const}
Let $W$ be a weight matrix as in Lemma \ref{lem:concent}, then for any $h \in  (h_{\rm cor},h^\star)$ and $(E_1,E_2) \subset (E_{\rm min}(h),E_{\rm max}(h))$,
 $D^\star_{E_1,E_2}(W,h) = o_d(1)n+o(n)$ with high probability as $n \rightarrow \infty$. Similarly, for any $h \in  (-0.1,h_{\rm cor})$ and $(E_1,E_2) \subset (E_{\rm cor}(h),E_{\rm max}(h))$,
 $D^\star_{E_1,E_2}(W,h) = o_d(1)n+o(n)$ with high probability as $n \rightarrow \infty$
\end{corollary}

Now, let $N_{E_1,E_2}^*(W,h)$ denote the maximum number of vertices satisfying $h$-stability amongst bisections with energy in $(E_1,E_2)$.
Using a similar technique as Lemma \ref{lem:high_prob_exist}, but with the perturbation applied to both $E$ and $h$, we will translate the bound on the deficit $D^\star_{E_1,E_2}$ in Corollary \ref{corr:def_const}, to show that $N^\star_{E_1,E_2}(h) = n-o(n)$ in the feasible range of $E,h$. To this end, we require the following Lemma, whose proof can be found in Appendix \ref{sec:exist_ecor}:

\begin{lemma}\label{lem:contecor}
$E_{\rm min}(h),E_{\rm max}(h)$ are continuous for all $h \in (-\infty,h^\star)$ and $E_{\rm cor}(h)$ is continuous for all $h \in (-0.1,h_{\rm cor})$.
\end{lemma}

We're now equipped to bound the desired high-probability lower-bound for $N^\star_{E_1,E_2}(h)$.

\begin{lemma}\label{lem:high_prob_max}
Suppose that $h \in (h_{\rm cor},h^\star)$. Then, for every $(E_1,E_2) \subset (E_{\rm min}(h),E_{\rm max}(h))$ and $\epsilon>0$,  $\pr(N_{E_1,E_2}^*(W,h)\le n-\epsilon n)\le \exp(-\Theta_d(1)n+o(n))$. Similarly, for any $h \in (-0.1,h_{\rm cor})$, and $(E_1,E_2) \subset (E_{\rm cor}(h),E_{\rm max}(h))$ and $\epsilon>0$,  $\pr(N_{E_1,E_2}^*(W,h)\le n-\epsilon n)\le \exp(-\Theta_d(1)n+o(n))$.
\end{lemma}
\begin{proof}
Let $\mathcal{E}_n=\mathcal{E}_n(h,E_1,E_2)$ be the event $N^*_{E_1,E_2}(W,h)\leq n-\epsilon n$. Fix $W$ such that this event holds.
Fix any $\sigma$ and let $S(\sigma)$ be the set of $i\in [n]$
such that 
\begin{align*}
h-{1\over \sqrt{d}}\sum_j w_{ij}\sigma_i\sigma_j\ge 0.
\end{align*}
Suppose $\mathcal{E}_n$ occurs. Then, $|S(\sigma)|\ge \epsilon n$.
Lemma \ref{lem:contecor} implies that when $h \in (h_{\rm cor},h^\star)$,  for arbitrarily small $\delta>0$, 
$\exists \tilde h \in (h,h^\star)$ such that 
$E_{\rm min}(h) < E_{\rm min}(\tilde{h}) < E_{\rm min}(h)+\delta$ and $E_{\rm max}(h)-\delta < E_{\rm max}(\tilde{h}) < E_{\rm max}(h)$. As a consequence, $\exists \tilde h \in (h,h^\star)$ and $\tilde E_1, \tilde E_2$ such that $(\tilde E_1, \tilde E_2) \subset (E_1, E_2)$ and $(\tilde E_1, \tilde E_2) \subset (E_{\rm min}(\tilde{h}),E_{\rm max}(\tilde{h}))$.  Similarly, when $h \in (-0.1, h_{\rm cor})$, $\exists \tilde h \in (-0.1,h_{\rm cor})$ and 
 $\tilde E_1, \tilde E_2$ such that $(\tilde E_1, \tilde E_2) \subset (E_1, E_2)$ and $(\tilde E_1, \tilde E_2) \subset (E_{\rm cor}(\tilde{h}),E_{\rm max}(\tilde{h}))$. 
In either case, Corollary \ref{corr:def_2} implies that $D_{\tilde E_1, \tilde E_2}(W,\tilde h,\sigma) = o(n)$ w.h.p as $n \rightarrow \infty$. 

Then, for any $\sigma$, either:
\begin{enumerate}
    \item $E(\sigma)<E_1$. Therefore $\tilde E_1n-E(\sigma)n \geq (\tilde E_1-E_1)n$.
    \item $E(\sigma)>E_2$. Therefore $E(\sigma)n-\tilde E_2n\geq (E_2-\tilde E_2)n$.
    \item $N(W,h,\sigma) \geq n-\epsilon n$.
\end{enumerate}
In either case, we obtain:
\begin{align*}
D_{\tilde E_1, \tilde E_2}(W,\tilde h,\sigma)&=\sum_i \left(\tilde h-{1\over \sqrt{d}}\sum_j w_{ij}\sigma_i\sigma_j\right)^+ +(n\tilde E_2-H(\sigma))^+ + +(H(\sigma)-n \tilde E_1)^+
\\&= 
\sum_i \left(\tilde h-h+h-{1\over \sqrt{d}}\sum_j w_{ij}\sigma_i\sigma_j\right)^+ +(n\tilde E-nE+nE-H(\sigma))^+\\
&\ge \operatorname{\rm min}((\tilde h-h)\epsilon n,(\tilde E_1-E_1),(E_2-\tilde E_2)n).
\end{align*}
Pick $\epsilon >0$ such that $\operatorname{\rm min}((\tilde h-h)\epsilon n,(\tilde E_1-E_1),(E_2-\tilde E_2)n) \geq \epsilon 'n$.
Since $\sigma$ was arbitrary, we conclude $D_{\tilde E_1, \tilde E_2}^*(W,\tilde h)\geq \epsilon'n$ which occurs with probability at most  $\exp{(-Cn)}$ for some constant $C$ and large enough $d,n$ due to Corollary \ref{corr:def_2}.
Thus the event $\mathcal{E}_n$ occurs with probability at most $\exp(-\Theta_d(1)(n)-o(n))$.
\end{proof}

The above result proves the high-probability existence part of Theorem \ref{thm:sparse_max_E_thres}.


\section{Universality for Sparse graphs: Proof of Theorem \ref{thm:uni_sparse}}\label{sec:sparse_uni}

In this section, we establish the universality of the phase transition in Definition \ref{def:thres_def} and the associated threshold $h^\star$ for families of random variables satisfying the assumptions of Theorem \ref{thm:uni_sparse}. In particular, this includes sparse Erdős-Rényi graphs having ferromagnetic, anti-ferromagnetic or spin glass interactions.
Since we do not possess analytic expressions or proofs of the existence of a threshold for arbitrary distributions over graphs,
directly relating the thresholds for different distributions seems challenging. 
Instead, we prove that for a given $h < h^\star$, a partition with at-most $o_d(1)n$ vertices violating $h$-stability exists with high probability for all distributions. Analogously, for $h>h^\star$, we prove that every partition has $\Theta_d(1)n$ vertices violating $h$-stability. 

Our proof of universality is based on the Lindeberg's method.
The Lindeberg's method is a technique derived from Lindeberg's proof of the  Central Limit Theorem \citep{lindeberg1922neue}, which was extended to a general invariance principle for functions of weakly-dependent random variables in \citep{chatterjee2006generalization}.


We first recall the central idea: Suppose we wish to prove that $\Ea{f(a_1,a_2,\cdots,a_n)}$ approximates $\Ea{f(b_1,b_2,\cdots,b_n)}$ for a thrice differentiable function $f$, and two sequences of independent variables, $(a_1,a_2,\cdots,a_n)$ and $(b_1,b_2,\cdots,b_n)$ with identical means and variance.
Lindeberg's method involves iteratively swapping $a_i$ to $b_i$ and utilizing Taylor's theorem and the matching of the first two moments of $a_i$ and $b_i$.

Therefore, to apply the Lindeberg's method, the first step is to identify a sufficiently smooth function to interpolate. For us, this role will be played by the Deficit function in Definition \ref{def:deficit}.

\subsection{Universality for near $h$-stable configurations ($h<h^\star$)}

For a given choice of weights $W$ and a configuration $\sigma$, following Definition \ref{def:deficit}, we denote by $D(W,h,\sigma)$, the total deficit of the $h$-stability vertices satisfying $h$-stability.
We have:
\begin{align*}
D(W,h,\sigma)=\sum_{1\le i\le n}\left(h-{1\over \sqrt{d}}\sum_j w_{ij}\sigma_i\sigma_j\right)^+.
\end{align*}
We define $D^*(W,h) = \min_{\sigma \in M_0} D(W,h,\sigma)$.  We further denote by $N(W,h,\sigma)$ the  number of $h$-stable vertices in $\sigma$ and $N^*(W,h)=\max_{\sigma \in M_0} N(W,h,\sigma)$ the maximum number of $h$-stable vertices in any partition.

Analogous to Lemma \ref{lem:high_prob_exist} , through a perturbation argument, we next show that a small value of $D^*(W,h)$ translates to the existence of bisections with $n(1-o_d(1))$ $h$-stable vertices.

\begin{lemma}\label{lem:def_success}
Suppose $\tilde h \in \R$ satisfies $D^*(W,\tilde h) = o_d(1)n + o(n)$ with high probability as $n \rightarrow \infty$. Then for any $h<\tilde h$, we have $N^*(W,h) =  n(1-o_d(1))$ with high probability as $n \rightarrow \infty$.
\end{lemma}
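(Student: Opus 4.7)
The plan is to mimic the perturbation idea used in Lemma \ref{lem:high_prob_exist}, but in the reverse direction: instead of bounding the deficit at a larger threshold using information about counts at a smaller one, I will bound the count of $h$-violating vertices using the deficit at a larger threshold. Crucially, the proof does not use any probabilistic reasoning beyond conditioning on the stated high-probability event, since it is a deterministic implication between two quantities evaluated on the same realization of $W$.

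First I would fix a realization of $W$ on the high-probability event that $D^*(W,\tilde h)\le \epsilon_d n + \epsilon_n n$, where $\epsilon_d=o_d(1)$ and $\epsilon_n=o(1)$. Let $\sigma^*\in M_0$ be a bisection attaining the minimum deficit at threshold $\tilde h$, and let $k$ denote the number of vertices that are not $h$-stable under $\sigma^*$. For each such vertex $i$, by definition $\tfrac{1}{\sqrt d}\sum_j w_{ij}\sigma^*_i\sigma^*_j < h < \tilde h$, so the positive-part expression appearing in $D(W,\tilde h,\sigma^*)$ contributes at least $\tilde h - h$. Summing over these $k$ vertices gives
\begin{equation*}
(\tilde h - h)\, k \;\le\; D(W,\tilde h,\sigma^*) \;=\; D^*(W,\tilde h) \;\le\; \epsilon_d n + \epsilon_n n.
\end{equation*}

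Because $\tilde h - h > 0$ is a fixed positive constant independent of $n$ and $d$, the inequality above yields $k \le (\tilde h - h)^{-1}(\epsilon_d n + \epsilon_n n) = o_d(1)n + o(n)$. Consequently $N(W,h,\sigma^*) \ge n - k \ge n(1-o_d(1)) - o(n)$, and taking the maximum over $M_0$ gives the same lower bound on $N^*(W,h)$. Since this conclusion holds on the high-probability event supplied by the hypothesis, it holds with high probability as $n\to\infty$, finishing the proof.

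There is essentially no hard step here: the whole argument is the observation that a vertex violating $h$-stability automatically incurs a deficit of at least $\tilde h - h$ when we raise the threshold to $\tilde h$. The only point worth being careful about is that the minimizer $\sigma^*$ is a bisection, so that $N^*(W,h)$, which is also taken over $M_0$, dominates $N(W,h,\sigma^*)$; this is built into the definitions of $D^*$ and $N^*$ in this section and requires no extra work.
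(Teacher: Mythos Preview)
Your proof is correct and follows essentially the same approach as the paper: the key observation in both is that any vertex violating $h$-stability contributes at least $\tilde h-h$ to the $\tilde h$-deficit, so a small $D^*(W,\tilde h)$ forces most vertices to be $h$-stable in some bisection. The only cosmetic difference is that you argue directly by exhibiting the deficit-minimizing bisection $\sigma^*$ and bounding its number of $h$-violating vertices, whereas the paper phrases it contrapositively (if every bisection had $\ge\epsilon n$ violations then $D^*(W,\tilde h)\ge\epsilon(\tilde h-h)n$); the underlying inequality is identical.
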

\begin{proof}
We utilize the argument used in the proof of Section \ref{sec:high_prob}. Assume that $N^*(W,h,\sigma) \leq (1-\epsilon)n$ for some $\epsilon > 0$. Then for any configuration, $\sigma$, at least $\epsilon n$ vertices violate $ h$-stability. Since $\tilde h>h$, each such vertex contributes $\tilde h-h$ to the total $\tilde h$-deficit for $\sigma$ i.e $D(W,\tilde h,\sigma) \geq \epsilon(h-h^\star)n$. Since $\sigma$ is arbitrary, we have $D^*(W,\tilde h,\sigma) \geq \epsilon(\tilde h - h)n$.  Therefore, $N^*(W,h,\sigma) \leq (1-\epsilon)n \implies D^*(W,h) \geq \epsilon(\tilde h -h)n$. By assumption, we have $\lim_{n \rightarrow \infty}\pr[D^*(W,h) \geq \epsilon(\tilde h-h)n] = 0$. Thus, we obtain that for any $\epsilon > 0$, $\lim_{n \rightarrow \infty}\pr[N^*(W,h,\sigma) \leq (1-\epsilon)n] = 0$. 
\end{proof}

We now show that the value of the threshold exhibits a universality property.
Recall that Theorem \ref{thm:sparse_max_E_thres} established that $h^\star$ defined as the unique root of \eqref{def:w_firstmom} is a maximal stability threshold when $W_{d,n} \in \mathbb{R}^{n \times n}$ are set to be the negative of the adjacency matrix for $\mathbb{G}(n,d/n)$.

Using a modification of Lindeberg's argument, we obtain the following result:

\begin{proposition}\label{prop:sparse_deficit_success}
Let $W_{d,n}$ be a family of random weighted graphs satisfying the assumptions in Theorem \ref{thm:uni_sparse}.
Let $h^\star$ be the threshold defined in Theorem \ref{thm:uni_sparse}.
Then, for every $\epsilon$ and $h<h^\star$, there exists a degree $d(\epsilon,h)$ such that for all $d\geq d(\epsilon,h)$, w.h.p as $n \rightarrow \infty$, we have:
\begin{equation}
    D^*(W_{d,n},h) \leq \epsilon n.
\end{equation}
\end{proposition}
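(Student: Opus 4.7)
The plan is to apply Lindeberg's swapping method to compare the target ensemble $W_{d,n}$ against the anti-ferromagnetic Erdős--Rényi reference $W^{\rm AF}$, for which we already know (Corollary~\ref{corr:def_2}) that $D^*(W^{\rm AF},h)=o_d(1)n+o(n)$ with high probability whenever $h<h^*$. Because $D^*$ is a minimum over exponentially many bisections of a sum of non-smooth terms $(\cdot)^+$, Lindeberg cannot be applied to $D^*$ itself, so the first move is to smooth in two stages: replace $(\cdot)^+$ by a $C^3$ surrogate $\phi_\eta$ agreeing with $(\cdot)^+$ outside $[-\eta,\eta]$ and with $\|\phi_\eta^{(k)}\|_\infty=O(\eta^{1-k})$, and replace the minimum by a log-sum-exp
\[
F_\beta(W)=-\frac{1}{\beta}\log\sum_{\sigma\in M_0}\exp\!\bigl(-\beta\tilde D(W,h,\sigma)\bigr),\qquad \tilde D(W,h,\sigma)=\sum_v \phi_\eta\!\bigl(h-s_\sigma(v,W)\bigr).
\]
Choosing $\eta=\epsilon/6$ and $\beta=C\log n$ gives $|F_\beta-D^*|\le \epsilon n/3+o(n)$, so it suffices to show that $F_\beta(W_{d,n})$ and $F_\beta(W^{\rm AF})$ have essentially the same distribution up to order $\epsilon n$.

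Next I would carry out the Lindeberg swap entry-by-entry: fix an ordering of the pairs $(i,j)$, $i<j$, and interpolate $W^{(0)}=W^{\rm AF}\to W^{(1)}\to\cdots\to W^{(N)}=W_{d,n}$ by replacing one $w_{ij}$ at a time, keeping all other entries frozen. Since both marginals have identical variance $(d/n)(1-d/n)$ by hypothesis, matching of first and second moments is automatic up to the mean; any residual mean discrepancy $\mu-\mu_{\rm AF}$ contributes a deterministic shift to $s_\sigma(v,W)$ of the form $(\mu-\mu_{\rm AF})\sigma_v\sum_j\sigma_j/\sqrt d$, which vanishes on bisections (since $\sum_j\sigma_j=0$). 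Let $g$ be a fixed $C^3$, bounded, monotone test function that is $1$ on $(-\infty,\epsilon n/2]$ and $0$ on $[2\epsilon n/3,\infty)$, and apply Taylor's theorem to $g\circ F_\beta$ along the swap. Each $w_{ij}$ enters $\tilde D(W,h,\sigma)$ only through $s_\sigma(i,\cdot)$ and $s_\sigma(j,\cdot)$ with coefficient $\sigma_i\sigma_j/\sqrt d$, so $\sup_\sigma |\partial_{w_{ij}}^3 \tilde D|=O(\eta^{-2}d^{-3/2})$; a Gibbs-measure calculation then gives $\sup |\partial_{w_{ij}}^3 (g\circ F_\beta)|=O(\beta^2\eta^{-2}d^{-3/2})$.

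Summing the third-order Lindeberg remainders over the $O(n^2)$ entries, using the hypothesis $\mathbb{E}|w_{ij}-\mu|^3=O(d/n)$ (and the analogous bound for the reference), yields a total discrepancy
\[
\bigl|\mathbb{E}\,g(F_\beta(W_{d,n}))-\mathbb{E}\,g(F_\beta(W^{\rm AF}))\bigr|=O\!\Bigl(n^2\cdot \tfrac{d}{n}\cdot \beta^2\eta^{-2}d^{-3/2}\Bigr)=O\!\bigl(n(\log n)^2/\sqrt d\bigr),
\]
which is $o_d(1)+o(1)$ relative to $n$ once $d$ is large. Combined with the two smoothing estimates and Corollary~\ref{corr:def_2}, this gives $\mathbb{P}[D^*(W_{d,n},h)\le\epsilon n]\ge 1-o_d(1)-o(1)$, completing the proof.

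The main obstacle I foresee is Step 4: quantitatively bounding the third derivative of $F_\beta$ uniformly. The Gibbs average over $M_0$ is highly singular because many configurations become nearly optimal near $h^*$; one must argue that although individual terms in $\partial^3 F_\beta$ can be large, the combination that appears in Lindeberg's formula is controlled through covariances under the soft-min Gibbs measure, which in turn are bounded because each $w_{ij}$ appears in only two summands of $\tilde D$. A secondary subtlety is the joint calibration of the three parameters $(\eta,\beta,d)$: one must pick $\eta$ small enough to make the smoothing error $\le \epsilon n/3$, $\beta$ large enough so that the soft-min approximates the min to within $o(n)$, and yet keep $\beta^2\eta^{-2}/\sqrt d$ small, which forces $d$ to grow with $\epsilon^{-1}$ as stated in the proposition.
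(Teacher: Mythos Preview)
Your approach is essentially the paper's: Lindeberg swapping after a two-stage smoothing (softplus $g_{\epsilon_1}$ for $(\cdot)^+$, log-sum-exp at inverse temperature $\rho$ for the minimum), compared against the anti-ferromagnetic reference. The paper differs from your sketch in two linked ways, and the second of these is a genuine gap in your calibration.

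First, the paper applies Lindeberg directly to the free energy $F(W,g_{\epsilon_1})$ rather than to a composed test function $g\circ F_\beta$. It obtains $|\E F(A)-\E F(B)|\le C_{\epsilon_1,\rho}\,n/\sqrt d$, then invokes Efron--Stein to concentrate each $\cH_d^*$ around its mean (variance $O(n\sqrt d)$), and finally unwinds the two smoothings. Your test-function route is a legitimate alternative that avoids the separate concentration step, but your stated third-derivative bound $O(\beta^2\eta^{-2}d^{-3/2})$ is the bound for $\partial^3 F_\beta$, not for $\partial^3(g\circ F_\beta)$: the chain-rule factor $\|g'\|_\infty=O(1/(\epsilon n))$ is missing from your arithmetic.

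Second, and more importantly, your choice $\beta=C\log n$ breaks the argument. Even after inserting the missing $1/n$ from $g'$, the total Lindeberg error becomes $O\bigl((\log n)^2/\sqrt d\bigr)$, which for \emph{fixed} $d$ diverges as $n\to\infty$; the proposition requires a threshold $d(\epsilon,h)$ independent of $n$. The paper avoids this by taking the inverse temperature to be a \emph{constant} $\rho=2\log 2/\epsilon$: since the target accuracy is only $\epsilon n$, the soft-min error $(n\log 2)/\rho\le \epsilon n/3$ is already acceptable, and all derivative constants $C_{k,\epsilon_1,\rho}$ stay independent of $n$. The Lindeberg remainder is then $O(n/\sqrt d)$ on expectations (or $O(1/\sqrt d)$ after the $g'$ factor in your formulation), which is $\le \epsilon n$ once $d\ge (C_{\epsilon_1,\rho}/\epsilon)^2$. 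Your own closing paragraph anticipates this calibration tension; the resolution is simply that $\beta$ need not grow with $n$.
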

Equivalently, $D^*(W_{d,n},h) = o_d(1)n+o(n)$.
Now,  for any $h < h^\star$, pick $h<\tilde{h}<h^\star$. Using the above proposition, we have $D^*(W_{d,n},\tilde h)= o_d(1)n+o(n)$ with high probability. Therefore, using Lemma \ref{lem:def_success}, we further obtain the following Corollary:
\begin{corollary}\label{cor:sparse_deficit_success}
Let $W$ be as in Proposition \ref{prop:sparse_deficit_success}. Then for any $h < h^\star$, with high probability,
\begin{equation}
N^*(W,h) \geq  n(1-o_d(1))-o(n).
\end{equation}
Where the above equation denotes convergence in probability under the sequentiallimit $n \rightarrow \infty$ and $d \rightarrow \infty$.
\end{corollary}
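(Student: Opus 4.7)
The plan is to combine Proposition \ref{prop:sparse_deficit_success} (which controls the minimum total deficit at a given threshold) with Lemma \ref{lem:def_success} (which converts a deficit bound at one threshold into a count of stable vertices at a slightly smaller threshold), using the threshold-perturbation idea already employed in Section \ref{sec:high_prob}. Since both ingredients are already available, the argument is essentially a bookkeeping step, but the order of quantifiers must be arranged carefully.

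First, given any $h < h^*$, I would fix an intermediate value $\tilde h$ with $h < \tilde h < h^*$, and set $\delta := \tilde h - h > 0$. Crucially, $\delta$ depends only on $h$ (not on $d$ or $n$), so it plays the role of a fixed perturbation gap throughout the argument.

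Next, I would invoke Proposition \ref{prop:sparse_deficit_success} at the threshold $\tilde h$: for every $\epsilon' > 0$ there is $d(\epsilon', \tilde h)$ such that for all $d \ge d(\epsilon', \tilde h)$, the bound $D^*(W_{d,n}, \tilde h) \le \epsilon' n$ holds with high probability as $n \to \infty$. I would then feed this into Lemma \ref{lem:def_success}: given any target $\epsilon > 0$ for the fraction of vertices violating $h$-stability, I choose $\epsilon' < \epsilon \delta$ in the previous step. The contrapositive argument inside Lemma \ref{lem:def_success} shows that $N^*(W, h) \le (1 - \epsilon) n$ forces $D^*(W, \tilde h) \ge \epsilon \delta n$, an event of vanishing probability. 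Hence $N^*(W, h) \ge (1 - \epsilon) n$ with high probability. Passing to a sequence $\epsilon = \epsilon(d) \to 0$ slowly enough that the hypotheses of Proposition \ref{prop:sparse_deficit_success} remain valid yields the desired $N^*(W, h) \ge n(1 - o_d(1)) - o(n)$ statement under the sequential limit $n \to \infty$ then $d \to \infty$.

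The only non-routine aspect of this proof is the quantifier handling just described: $\tilde h$ is chosen first and held fixed, then $d$ is made large enough depending on $(\tilde h, \epsilon)$, and only then $n \to \infty$. Conceptually there is no genuine obstacle in this corollary itself; the entire difficulty has already been absorbed into Proposition \ref{prop:sparse_deficit_success}, whose proof via the Lindeberg swapping technique is the actual nontrivial ingredient.
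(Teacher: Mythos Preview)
Your proposal is correct and follows essentially the same approach as the paper: pick an intermediate $\tilde h$ with $h<\tilde h<h^*$, apply Proposition~\ref{prop:sparse_deficit_success} at $\tilde h$ to get $D^*(W,\tilde h)=o_d(1)n+o(n)$ with high probability, and then invoke Lemma~\ref{lem:def_success} to pass from the deficit bound at $\tilde h$ to the count bound $N^*(W,h)\ge n(1-o_d(1))-o(n)$. Your more explicit treatment of the quantifiers and the choice $\epsilon'<\epsilon\delta$ is a welcome clarification but does not depart from the paper's argument.
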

In particular, the above result covers the cases of sparse graphs with ferromagnetic or  anti-ferromagnetic interactions, or equivalently the case of friendly or unfriendly partitions in sparse random graphs.

\begin{proof}
Let $W$ denote an arbitrary random weight matrix with i.i.d entries $w_{ij}$ with mean $\mu$. Let $W'$ denote the matrix with entries $W'_{ij}=w_{ij}-\mu$. We observe that for any $\sigma \in M_0$, $N(W,h,\sigma)=N(W',h,\sigma)$.
Therefore, without loss of generality, we restrict to weight distributions having $0$ mean.

We prove Proposition \ref{prop:sparse_deficit_success} by establishing the universality of $D^*(W,h)$ for families of distributions satisfying the given assumptions as $d\rightarrow \infty$ and $n\rightarrow \infty$.
This is expressed through the following result:
\begin{lemma}\label{lem:sparse_uni}
Let $A, B$ be arbitrary random weight matrices with i.i.d random entries $a_{ij},b_{ij}$ satisfying the assumptions in Theorem \ref{thm:uni_sparse} for parameters $d,n$ with means $0$. Then:
\begin{equation}
    \abs{D^*(A,h)-D^*(B,    h)} = o_d(1)n+o(n),
\end{equation}
 with high probability as $n \rightarrow \infty$.
\end{lemma}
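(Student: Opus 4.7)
\textbf{Proof proposal for Lemma \ref{lem:sparse_uni}.}

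The plan is to apply the Lindeberg exchange method to a doubly–smoothed version of $D^*$, then transfer the resulting expectation-level universality back to a high-probability statement via concentration. Since $D^*$ combines a non-smooth function $(\cdot)^+$ with a min over the exponentially large set $M_0$, both sources of non-smoothness must be regularised before any Taylor expansion is available.

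\textbf{Step 1 (Smoothing).} Fix parameters $\eta,\beta>0$ and let $\phi_\eta\in C^3(\R)$ satisfy $\lVert\phi_\eta-(\cdot)^+\rVert_\infty\le\eta$ and $\lVert\phi_\eta^{(k)}\rVert_\infty\le C_k\eta^{1-k}$ for $k=1,2,3$. Define the smoothed deficit and soft-min
\begin{equation*}
\tilde D(W,h,\sigma)=\sum_{i=1}^n\phi_\eta\!\Bigl(h-\tfrac{1}{\sqrt d}\sum_j w_{ij}\sigma_i\sigma_j\Bigr),\qquad
F_{\beta,\eta}(W)=-\tfrac{1}{\beta}\log\sum_{\sigma\in M_0}e^{-\beta\tilde D(W,h,\sigma)} .
\end{equation*}
Then $\lvert F_{\beta,\eta}(W)-D^*(W,h)\rvert\le n\eta+\tfrac{n\log 2}{\beta}$ uniformly in $W$.

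\textbf{Step 2 (Lindeberg exchange and derivative bounds).} Enumerate the pairs $(i,j)$ with $i\le j$ and swap $a_{ij}\mapsto b_{ij}$ one at a time. Since $a_{ij}$ and $b_{ij}$ share mean and variance and have $\E|a_{ij}|^3,\E|b_{ij}|^3=O(d/n)$, the usual telescoping inequality gives
\begin{equation*}
\bigl|\E F_{\beta,\eta}(A)-\E F_{\beta,\eta}(B)\bigr|\le C\sum_{i\le j}\bigl(\E|a_{ij}|^3+\E|b_{ij}|^3\bigr)\,\sup_W\bigl|\partial^3 F_{\beta,\eta}/\partial w_{ij}^3\bigr| .
\end{equation*}
Using $F_{\beta,\eta}=-\tfrac{1}{\beta}\log Z_\beta$ and the cumulant expansion of the log-sum-exp,
\begin{equation*}
\partial_w^3 F_{\beta,\eta}=\bigl\langle\partial_w^3\tilde D\bigr\rangle_\beta -3\beta\,\mathrm{cov}_\beta\!\bigl(\partial_w\tilde D,\partial_w^2\tilde D\bigr)+\beta^2\,\kappa_3^{\beta}\!\bigl(\partial_w\tilde D\bigr),
\end{equation*}
where $\langle\cdot\rangle_\beta$ is the Gibbs average with Hamiltonian $\tilde D$. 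The bounds $|\partial_w^k\tilde D|=O(\eta^{1-k}d^{-k/2})$ for $k=1,2,3$, together with $|\partial_w\tilde D|\le 2/\sqrt d$, imply
$|\partial^3 F_{\beta,\eta}/\partial w_{ij}^3|\le Cd^{-3/2}(\eta^{-2}+\beta\eta^{-1}+\beta^2)$, hence
\begin{equation*}
\bigl|\E F_{\beta,\eta}(A)-\E F_{\beta,\eta}(B)\bigr|\le C\,n\,d^{-1/2}\bigl(\eta^{-2}+\beta\eta^{-1}+\beta^2\bigr) .
\end{equation*}

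\textbf{Step 3 (Balancing parameters and concentration).} Combining with the approximation error from Step 1,
\begin{equation*}
\bigl|\E D^*(A,h)-\E D^*(B,h)\bigr|\le C\,n\bigl(d^{-1/2}(\eta^{-2}+\beta\eta^{-1}+\beta^2)+\eta+\beta^{-1}\bigr)+o(n) .
\end{equation*}
The choice $\eta=d^{-1/6}$, $\beta=d^{1/6}$ equalises all five contributions at order $d^{-1/6}$, giving $|\E D^*(A,h)-\E D^*(B,h)|=o_d(1)n+o(n)$. The McDiarmid argument of Lemma~\ref{lem:concent} (after a routine truncation of $w_{ij}$ at a mild threshold to handle distributions with heavier tails than Bernoulli) shows that each of $D^*(A,h)$ and $D^*(B,h)$ concentrates within $o(n)$ of its expectation w.h.p. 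A triangle inequality then yields the lemma.

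\textbf{Main obstacle.} The delicate point is Step 2: we need $\beta$ large enough that the soft-min approximates the hard min, yet small enough that the $\beta^2$ factor coming from the third Gibbs cumulant in $\partial_w^3 F_{\beta,\eta}$ does not overwhelm the $d^{-3/2}$ decay. Simultaneously, $\eta$ must be small to approximate $(\cdot)^+$ well but large enough that $\eta^{-2}$ does not dominate. The three-way balance is tight and pins down the universality rate at $O(d^{-1/6})$; carrying out the Gibbs cumulant bookkeeping with the correct dependence on $\phi_\eta^{(k)}$ is the most technical part of the argument.
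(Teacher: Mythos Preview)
Your proposal is correct and follows essentially the same route as the paper: both arguments apply the Lindeberg exchange to a doubly-smoothed free energy (smooth $(\cdot)^+$ by a $C^3$ surrogate, replace $\min$ by a log-sum-exp at inverse temperature $\rho=\beta$), bound the third derivative as $O(d^{-3/2})$, sum over the $O(n^2)$ entries against third moments of order $d/n$, and finish with a concentration argument.

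The differences are cosmetic. The paper fixes the smoothing level $\epsilon_1$ and inverse temperature $\rho$ as constants depending on the target accuracy $\epsilon$, obtaining $|\E F(A)-\E F(B)|\le C_{\epsilon_1,\rho}\,n/\sqrt d$ and then absorbing the $\epsilon_1,\rho$-dependent constant into $o_d(1)$; you instead let $\eta,\beta$ scale with $d$ and optimise, which yields the explicit rate $d^{-1/6}$ but is not needed for the lemma as stated. For concentration the paper uses Efron--Stein plus Chebyshev, which handles unbounded entries directly from the second-moment assumption and avoids the truncation step you sketch for McDiarmid; this is slightly cleaner but your route also works.
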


We first explain how the above result implies proposition \ref{prop:sparse_deficit_success}. Let  $W^{(d,n)}$ be an arbitrary family of weighted random graphs on $n$-nodes satisfying the assumptions in Theorem \ref{thm:uni_sparse}. Note that sparse Erdős-Rényi graphs with anti-ferromagnetic interactions sampled from $\G(n,p=d/n)$ correspond to one such family of random graphs. 
Let $W=W^{(d,n)}$ and $W'$ be a weighted-graph sampled from $\G'(n,p=d/n)$ with anti-ferromagnetic interactions.
Corollary \ref{corr:def_2} implies that for any $h<h^\star$,  $D^*(W',h) = o_d(1)n+o(n)$ with high probability as $n \rightarrow \infty$. Therefore, applying Lemma \ref{lem:sparse_uni} yields $D^*(W,h) = o_d(1)n+o(n)$.
\end{proof}

\subsubsection{Proof of Lemma \ref{lem:sparse_uni}}

We now apply the Lindeberg's method to the function $D^*(W,h)$. However, $D^*(W,h)$ is not-differentiable and involves a minimization over the set of configurations $\sigma$. We, therefore, introduce a series of smooth approximations to $D^*(W,h)$. A convenient approximation technique leading to simplified derivatives is through the introduction of a Hamiltonian \citep{sen2018optimization}.

Fix any function $g:\R\to \R$. Let 
\begin{equation}\label{eq:ham}
\cH_d(W,g,\sigma)=\sum_{1\le i\le n}g\left(h-{1\over \sqrt{d}}\sum_j w_{ij}\sigma_i\sigma_j\right),
\end{equation} for $w=(w_{ij}, 1\le i,j\le n), \sigma\in B_n$. 

We now define the ground state and energy conditioned on the magnetization being $0$, and the associated partition function:
\begin{align*}
\cH_d^*(W,g)&=\min_{\sigma \in M_0} \cH_d(W,g,\sigma),\\
Z(W,g)&=\sum_{\sigma \in M_0}  \exp\left(-\rho \cH_d(W,g,\sigma)\right),\\
F(W,g)&={1\over \rho}\log Z(W,g)\, ,
\end{align*}
where we introduced a parameter $\rho \in \R^+$, commonly referred to as the inverse temperature.
We have:
\begin{equation}\label{eq:approx_3}
-\cH_{d}^*(W,g) \le F(W,g) \le -\cH_{d}^*(W,g)+{\log 2\over \rho} n.
\end{equation}
For any observable  $f:B_n\to \R$ we denote by $\langle f\rangle$ the associated Gibbs average
\begin{align*}
\langle f\rangle =\sum_{\sigma \in M_0} f(\sigma){\exp(-\rho \cH_d(W,g,\sigma)) \over Z}.
\end{align*}

Fix now $\epsilon_1>0$ and let $g_{\epsilon_1}$ be an infinitely differentiable function $g_{\epsilon_1}:\R \to \R$ with uniformly bounded first, second, third derivatives such that $g_{\epsilon_1}$ that is uniformly $\epsilon_1$ close to $()^+$, i.e:
\begin{align}
\sup_{t\in \R} |g_{\epsilon_1}(t)-(t)^+| \le \epsilon_1. \label{eq:g-vs-plus}
\end{align}
Therefore, $g$ acts a smooth approximation of threshold function $(\cdot)^+$. For example, we may choose $g$ to be the following soft-plus function:
\begin{equation}\label{eq:smooth_approx}
    g_{\epsilon_1}(t) = \gamma\ln{(1+\exp{(t/\gamma)})},
\end{equation}
where $\gamma =\frac{\epsilon_1}{\ln{2}}$. Through a straightforward calculation, one may check that $g_{\epsilon_1}(0)=\epsilon_1$ and $g_{\epsilon_1}(t)-(t)_+$ is maximized at $t=0$. Furthermore, $g_{\epsilon_1}'(t), g_{\epsilon_1}''(t), g_{\epsilon_1}'''(t)$ are uniformly bounded by constants depending on $\epsilon_1$.
As per our previous discussion, our goal is to show that $\abs{\Ea{\cH_d^*(A,(\cdot)^+)}-\Ea{\cH_d^*(B,(\cdot)^+)}}$ is sufficiently small when $h<h^\star$. We have
\begin{equation}\label{eq:approx_1}
|\cH_d(W,(\cdot)^+,\sigma)-\cH_d(W,g_{\epsilon_1},\sigma)| \le \epsilon_1 n.
\end{equation}
for all $\sigma$ and thus we will focus on the case $g=g_{\epsilon_1}$. 
(\ref{eq:g-vs-plus})  along with the definition of $\cH_d^*$ further imply that:
\begin{align*}
    \cH_d^*(W,(\cdot)^+) &\leq \cH_d^*(W,g_{\epsilon_1}) +  \epsilon_1 n,\\
    \cH_d^*(W,g_{\epsilon_1})  &\leq \cH_d^*(W,(\cdot)^+) +  \epsilon_1 n\, .\\
\end{align*}
Therefore:
\begin{equation}\label{eq:approx_2}
\abs{\cH_d^*(W,(\cdot)^+)-\cH_d^*(W,g_{\epsilon_1})}\leq \epsilon_1 n\, . \end{equation}

With the above definitions, we now apply the Lindeberg's argument to the two random weighted graphs $A$ and $B$. Recall that by assumption, $A=A^T$ and $B=B^T$. Both $A$ and $B$ therefore can be represented as sets of $m=n(n+1)/2$ i.i.d entries $a_{ij}$ and $b_{ij}$ respectively with $i\leq j$.
We fix an order on the entries in the set $\{(i,j):i\leq j\}$ arbitrarily by iterating from $i=1,\cdots,n$ and $j=1,\cdots,i$
and switch each element from $A$ to $B$ in this order. 
Let $(k,r)$ be the $t_{th}$ element of the sequence.
Define the matrix $J^{(t)}$ with entries $j^{(t)}_{ij}=A_{ij}$ when $(i,j)\leq (k,r)$,
$j^{(t)}_{ij}=B_{ij}$ when $(i,j)>(k,r)$. Let $J^{(t)}_{/{kr}}=J^{(t-1)}_{/{kr}}$  denote the matrix with entries at positions $(k,r)$ and $(r,k)$ set to $0$ and all the remaining entries being identical to $J^{(t)}$.
Note that $J_0 = A$ and $J_m=B$.
Lindeberg's argument relies on the following telescoping sum:
\begin{equation}\label{eq:teles}
   \E[F(B,g_{\epsilon_1})]-\E[A,g_{\epsilon_1})]=\sum_{t=0}^m \E[F(J^{(t)},g_{\epsilon_1})]-\E[F(J^{(t-1)},g_{\epsilon_1})]\, .
\end{equation}

We denote by $F^{(l)}$ the partial $l$-th derivative of $F$ with respect to $j_{kr}$. Then, using the third order Taylor expansion over the variable $j^{(t)}_{kr}$ at a fixed value of the matrix $J^{(t)}_{/{kr}}$, along with the independence of the edge weights,  we obtain.
\begin{small}
\begin{equation}\label{eq:taylor_s}
\begin{split}
\abs{\E[F(J^{(t)},g_{\epsilon_1})|J^{(t)}_{/kr}]-F(J^{(t)}_{/{kr}},g_{\epsilon_1})-F^{(1)}(J^{(t)}_{/{kr}},g_{\epsilon_1}) \E[b_{kr}]-{1\over 2}F^{(2)}(J^{(t)}_{/{kr}},g_{\epsilon_1}) \E[(b_{kr})^2]} \leq {1\over 3!} \norm{F^{(3)}}_\infty\E[\abs{b_{kr}}^3],\\
\abs{\E[F(J^{(t-1)},g_{\epsilon_1})|J^{(t)}_{/{kr}}]-F(J^{(t)}_{/{kr}},g_{\epsilon_1})-F^{(1)}(J^{(t)}_{/{kr}},g_{\epsilon_1}) \E[a_{kr}]-{1\over 2}F^{(2)}(J^{(t)}_{/{kr}},g_{\epsilon_1})\E[(a_{kr})^2]}\leq {1\over 3!} \norm{F^{(3)}}_\infty\E[\abs{a_{kr}^3}],\\
\end{split}
\end{equation}
\end{small}
Recall that $a_{kr}$ and $b_{kr}$ have the same first and second moment. Therefore, an application of the triangle inequality yields:
\begin{equation}\label{eq:lind_bound}
\abs{\E[F(J^{(t)},g_{\epsilon_1})|J^{(t)}_{/kr}]-\E[F(J^{(t-1)},g_{\epsilon_1})|J^{(t)}_{/{kr}}]} \leq {1\over 3!} \norm{F^{(3)}}_\infty\E[\abs{b_{kr}}^3]+ {1\over 3!} \norm{F^{(3)}}_\infty\E[\abs{a_{kr}^3}]\, .\\
\end{equation}
The expected cost of a swap from $a_{kr}$ to $b_{kr}$ is therefore bounded by the 3-d derivative and the absolute centered $3_{rd}$ moment of $b_{ij}$ and $a_{ij}$.
By considering the expectation of $\E[F(J^{(t)},g_{\epsilon_1})|J^{(t)}_{/kr}]-\E[F(J^{(t-1)},g_{\epsilon_1})|J^{(t)}_{/{kr}}]$ over $J^{(t)}_{/{kr}}$, we obtain:
\begin{equation}
   \abs{\E[F(J^{(t)},g_{\epsilon_1})]-\E[F(J^{(t-1)},g_{\epsilon_1})]} \leq {1\over 3!} \norm{F^{(3)}}_\infty\E[\abs{b_{kr}}^3]+ {1\over 3!} \norm{F^{(3)}}_\infty\E[\abs{a_{kr}^3}]. 
\end{equation}
Substituting in (\ref{eq:teles}) yields:
\begin{equation}\label{eq:final_lind_bound}
   \abs{\E[F(J^{(t)},g_{\epsilon_1})]-\E[F(J^{(t-1)},g_{\epsilon_1})]} \leq \frac{n(n+1)}{2}\rbr*{{1\over 3!} \norm{F^{(3)}}_\infty\E[\abs{b_{kr}}^3]+ {1\over 3!} \norm{F^{(3)}}_\infty\E[\abs{a_{kr}^3}]}\, .
\end{equation}


To bound  $\norm{F^{(3)}}_\infty$, we now compute the derivatives of $F$ for an arbitrary symmetric weight matrix $J$ with entries $j_{ij}$ for $i,j \in [n]$. We recall that the derivatives in (\ref{eq:taylor_s}) are w.r.t the variable $j_{kr}=j_{rk}$.
Note that
\begin{align*}
{d\over d j_{kr}}\sum_{l} g_{\epsilon_1}(h-{1\over \sqrt{d}}\sum_m j_{lm} \sigma_l\sigma_m)
=-\dot g_{\epsilon_1}(h-{1\over \sqrt{d}}\sum_{m}j_{km}\sigma_k\sigma_m){1\over \sqrt{d}}\sigma_k\sigma_r-\dot g_{\epsilon_1}(h-{1\over \sqrt{d}}\sum_{m}j_{rm}\sigma_r\sigma_l){1\over \sqrt{d}}\sigma_k\sigma_r.
\end{align*}
We thus have:
\begin{align*}
F^{(1)}(J,g_{\epsilon_1}) &=  {1\over\rho}Z^{-1} \sum_\sigma \rho \dot g_{\epsilon_1}(h-{1\over \sqrt{d}}\sum_m j_{km}\sigma_k\sigma_m){1\over \sqrt{d}}\sigma_k\sigma_r
\exp(-\rho H_d(g_{\epsilon_1},J^{(t)},\sigma)) \\
\\&+{1\over\rho}Z^{-1} \sum_\sigma \rho \dot g_{\epsilon_1}(h-{1\over \sqrt{d}}\sum_l j_{rl}\sigma_r\sigma_l){1\over \sqrt{d}}\sigma_k\sigma_r
\exp(-\rho H_d(g_{\epsilon_1},J^{(t)},\sigma))\\
&=
\underbrace{{1\over \sqrt{d}}\langle \dot g_{\epsilon_1}(h-{1\over \sqrt{d}}\sum_m j_{km}\sigma_k\sigma_m)\sigma_k\sigma_r\rangle}_{T_1} + \underbrace{{1\over \sqrt{d}}\langle \dot g_{\epsilon_1}(h-{1\over \sqrt{d}}\sum_l j_{rl}\sigma_r\sigma_l)\sigma_k\sigma_r\rangle}_{T_2}. 
\end{align*}
Since $\langle \cdot \rangle$ is Gibbs average it is at most the max term. We have, by assumption, that $\sup_ t \dot g_{\epsilon_1}$ 
is bounded by a constant (independent of $n$). As $\sigma_i=\pm 1$ we obtain a bound order $1/\sqrt{d}$. 
Consider, the derivative of the first term:
\begin{align*}
{d\over d j_{kr}}T_1 &=  -\underbrace{\frac{1}{d} \langle \ddot g_{\epsilon_1}(h-{1\over \sqrt{d}}\sum_m j_{km}\sigma_k\sigma_m)\rangle}_{T_3}\\
&+\underbrace{\frac{\rho}{d}\langle(\dot g_{\epsilon_1}(h-{1\over \sqrt{d}}\sum_m j_{km}\sigma_k\sigma_m))^2\rangle}_{T_4} +\underbrace{\frac{\rho}{d}\langle(\dot g_{\epsilon_1}(h-{1\over \sqrt{d}}\sum_m j_{km}\sigma_k\sigma_m))(\dot g_{\epsilon_1}(h-{1\over \sqrt{d}}\sum_l j_{rl}\sigma_r\sigma_l))\rangle}_{T_5}\\
&- \underbrace{\frac{\rho}{d}(\langle(\dot g_{\epsilon_1}(h-{1\over \sqrt{d}}\sum_m j_{km}\sigma_k\sigma_m))\rangle)^2}_{T_6}- \underbrace{\frac{\rho}{d}(\langle(\dot g_{\epsilon_1}(h-{1\over \sqrt{d}}\sum_m j_{km}\sigma_k\sigma_m))\rangle)(\langle(\dot g_{\epsilon_1}(h-{1\over \sqrt{d}}\sum_m j_{km}\sigma_k\sigma_m))\rangle)}_{T_7}
\\&= \cO(\frac{1}{d})\, .
\end{align*}

Using similar computations, we show that the derivatives of each of the terms in ${d\over d j_{kr}}T_3,{d\over d j_{kr}}T_4,\cdots,{d\over d j_{kr}}T_7$ are $\cO(\frac{1}{d^{\frac{3}{2}}})$. For instance, we have:
\begin{align*}
   &{d\over d j_{kr}}T_3 = 
   -\frac{1}{d^{3/2}} \langle \dddot g_{\epsilon_1}(h-{1\over \sqrt{d}}\sum_m j_{km}\sigma_k\sigma_m)\rangle + \frac{\rho}{d^{3/2}} \langle (\ddot g_{\epsilon_1}(h-{1\over \sqrt{d}}\sum_l j_{km}\sigma_k\sigma_m)\dot g_{\epsilon_1}(h-{1\over \sqrt{d}}\sum_m j_{km}\sigma_k\sigma_m)\sigma_k\sigma_r)\rangle\\
   &+\frac{\rho}{d^{3/2}} \langle (\ddot g_{\epsilon_1}(h-{1\over \sqrt{d}}\sum_l j_{km}\sigma_k\sigma_m)\dot g_{\epsilon_1}(h-{1\over \sqrt{d}}\sum_l j_{rl}\sigma_r\sigma_l)\sigma_k\sigma_r)\rangle\\&-\frac{\rho}{d^{3/2}} \langle (\ddot g_{\epsilon_1}(h-{1\over \sqrt{d}}\sum_m j_{km}\sigma_k\sigma_m)\rangle\langle (\dot g_{\epsilon_1}(h-{1\over \sqrt{d}}\sum_m j_{km}\sigma_k\sigma_m)\sigma_k\sigma_j)\rangle\\
   &-\frac{\rho}{d^{3/2}} \langle (\ddot g_{\epsilon_1}(h-{1\over \sqrt{d}}\sum_m j_{km}\sigma_k\sigma_m)\rangle\langle (\dot g_{\epsilon_1}(h-{1\over \sqrt{d}}\sum_l j_{rl}\sigma_r\sigma_l)\sigma_k\sigma_r)\rangle\, .\\
\end{align*}
Similarly, the derivatives of $T_2$ are obtained by replacing $k$ and $m$ with $r$ and $l$ respectively.
We thus obtain:
\begin{equation}\label{eq:2derbound}
    \norm{F^{(2)}}_\infty \leq \frac{C_{2,\epsilon,\rho}}{d},
\end{equation}
for some constant $C_{2,\epsilon,\rho}$.

Now, from repeated applications of the chain and product rule of differentiation, we see that the derivatives of each of the terms $T_4, T_5, T_6, T_7$ will be $O(d^{\frac{3}{2}})$. We thus have:
\begin{equation}\label{eq:thirder}
     \norm{F^{(3)}}_\infty \leq  \frac{C_{3,\epsilon,\rho}}{d^\frac{3}{2}},
\end{equation}
for some constant $C_{3,\epsilon,\rho}$.


We recall that by assumption over $B$ and the definition of $A$, we have, for large enough $n$:
\begin{align*}
    \E[\abs{a_{kr}}^3]\leq C_a(\frac{d}{n}),\\
    \E[\abs{b_{kr}}^3]\leq C_b(\frac{d}{n})\, .
\end{align*}
Substituting the above bounds in (\ref{eq:optimal}), conditioning on each entry in turn, and summing over, we obtain:
\begin{align*}
\abs{\E[F(A,g_{\epsilon_1})]-\E[F(B,g_{\epsilon_1})]} &\leq \frac{n(n+1)}{2}\frac{2C_{3,\epsilon,\rho}}{d^\frac{3}{2}}(C_a(\frac{d}{n})+C_b(\frac{d}{n}))\\
   &\leq C_{F,\epsilon_1,\rho}\frac{n}{\sqrt{d}}.
\end{align*}
for some constant $C_{F,\epsilon_1,\rho}$ dependent on $\epsilon_1,\rho$. Now, let $\epsilon_2$ be arbitrary. For $d\geq \frac{C_{F,\epsilon_1,\rho}^2}{\epsilon_2^2}$, we have:
\begin{align*}
   \abs{\E[F(A,g_{\epsilon_1})]-\E[F(B,g_{\epsilon_1})]} \leq \epsilon_2n.
\end{align*}

We note that using triangle inequality:
\begin{align*}
     \abs{\E[\cH_d^*(A,(\cdot)^+)]-\E[\cH_d^*(B,(\cdot)^+)]} &\leq \abs{\E[\cH_d^*(A,(\cdot)^+)]-\E[\cH_d^*(A,g_{\epsilon_1})]} + \abs{\E[\cH_d^*(B,(\cdot)^+)]-\E[\cH_d^*(B,g_{\epsilon_1})]}\\ &+ \abs{\E[\cH_d^*(A,g_{\epsilon_1})]-\E[F(A,g_{\epsilon_1})]} + \abs{\E[\cH_d^*(B,g_{\epsilon_1})]-\E[F(B,g_{\epsilon_1})]}\\&+\abs{\E[F(A,g_{\epsilon_1})]-\E[F(B,g_{\epsilon_1})]}\\&\leq (2\epsilon_1+ 2\frac{\log 2}{\rho} +\epsilon_2)n\, ,
\end{align*}
where we used Equations \ref{eq:approx_1}, \ref{eq:approx_2}, \ref{eq:approx_3}.

Let $\epsilon >0$ be arbitrary. Then, with $\epsilon_1=\epsilon/6, \epsilon_2=\epsilon/3, \rho =2\log 2/\epsilon$, we obtain that for $d \geq \frac{C_{F,\epsilon_1,\rho}^2}{\epsilon_2^2}$, we have: \begin{equation}
\abs{\E[\cH_d^*(A,(\cdot)^+)]-\E[\cH_d^*(B,(\cdot)^+)]} \leq (\epsilon/3+ \epsilon/3 +\epsilon/3)n=\epsilon n.
\end{equation}
Since $\epsilon$ was arbitrary, we equivalently obtain
\begin{equation}\label{eq:ham_universality}
\abs{\E[\cH_d^*(A,(\cdot)^+)]-\E[\cH_d^*(B,(\cdot)^+)]} = o_d(1)n.
\end{equation}

Next, we establish the concentration of $\cH_d^*(A,(\cdot)^+)$ and $\cH_d^*(B,(\cdot)^+)$.
Let $A$ and $A^{(i,j)}$ be weight matrices sampled from $P_A$ 
differing only at the edges $(i,j),(j,i)$. Let $a_{ij},a'_{ij}$, denote the weight of the $(i,j)$ edge in $A$ and $A^{(i,j)}$ respectively.
We note that from the $1$-Lipschitzness of $(\cdot)^+$, and the definition of $h^\star$, we have:
\begin{equation}
   \abs{\cH_d^*(A,(\cdot)^+)-\cH_d^*(A^{(i,j)},(\cdot)^+)} \leq \frac{2}{\sqrt{d}}\abs{a_{ij}-a'_{ij}}.
\end{equation}
Therefore:
\begin{align*}
   \Ea{(\cH_d^*(A,(\cdot)^+)-\cH_d^*(A^{(i,j))^2},(\cdot)^+)} &\leq \frac{2}{\sqrt{d}}\Ea{(a_{ij}-a'_{ij})^2}\\ &= \frac{4}{\sqrt{d}}\frac{d}{n}(1-\frac{d}{n}).
\end{align*}
Therefore, using the Efron-Stein inequality, we obtain:
\begin{equation}\label{eq:eff-stein}
\begin{split}
    \operatorname{Var}((\cH_d^*(A,(\cdot)^+)) &\leq \frac{1}{2}\frac{n(n+1)}{2}\frac{4}{\sqrt{d}}\frac{d}{n}(1-\frac{d}{n})\\
    &\leq Cn\sqrt{d},
\end{split}
\end{equation}
for some constant $C$ and large enough $n$.
Therefore, Chebychev's inequality yields:
\begin{align*}
 \Pr{(\abs{\cH_d^*(A,(\cdot)^+)-\Ea{\cH_d^*(A,(\cdot)^+)}}\geq \epsilon n)} &\leq \frac{Cn\sqrt{d}}{\epsilon^2 n^2}\\
 &\frac{C\sqrt{d}}{\epsilon^2 n} \underset{n \rightarrow \infty}{\rightarrow} 0.
\end{align*}
Therefore, we have $\cH_d^*(A,(\cdot)^+)=\Ea{\cH_d^*(A,(\cdot)^+)}+o(n)$ with high probability as $n \rightarrow \infty$. This completes the proof of Lemma \ref{lem:sparse_uni}.

\subsection{Universality of extensive violation of $h$-stability ($h > h^\star$)}\label{sec:absense_uni}

Note that a large value of $D^\star(W,h)$ does note imply a large number of vertices violating $h$-stability. Therefore, to prove universality in the regime $h>h^\star$, we introduce the following truncated deficit function:
\begin{align*}
T(W,h,\sigma)=\sum_{1\le i\le n}\left(h-{1\over \sqrt{d}}\sum_j w_{ij}\sigma_i\sigma_j\right)_1^+,
\end{align*}
where the function $(\cdot)_1^+$ is defined as:
\begin{equation}\label{eq:trunc_thres}
    (x)_1^+ = \begin{cases}
    0, & x <0\\
    x, & 0 < x < 1\\
    1 & \text{otherwise}
    \end{cases}.
\end{equation}

Similar to the setup for $h < h^\star$, we restrict the Gibbs measure defined by the above Hamiltonian to be supported on the set of bisections. Therefore, we define $N^*(W,h)=\max_{\sigma \in M_0} N(W,h,\sigma)$ and $T^*(W,h)=\min_{\sigma \in M_0} T(W,h,\sigma)$ .

\begin{lemma}\label{lem:def_3}
Suppose $h \in \R$ satisfies $N^*(W,h) = n-\Theta_d(1)n-o(n)$ with high probability as $n \rightarrow \infty$. Then for any $\tilde h>h$, we have $T^*(W,\tilde h) \geq (\Theta_d(1))n+o(n)$ with high probability as $n \rightarrow \infty$.
\end{lemma}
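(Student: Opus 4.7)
The plan is to convert the hypothesis on $N^*(W,h)$ into a deterministic, uniform-in-$\sigma$ lower bound on the truncated deficit $T(W,\tilde h,\sigma)$, and then pass to the minimum over $\sigma \in M_0$ to obtain the claim for $T^*(W,\tilde h)$. No additional probabilistic input beyond the assumed high-probability event is required, so the argument is essentially deterministic once that event is fixed.

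First I would unpack the hypothesis: $N^*(W,h) = n-\Theta_d(1)n-o(n)$ w.h.p.\ says that there exist a positive constant $c = c(d)$ and a vanishing sequence $\eta_n$ such that, on an event $\mathcal{E}_n$ with $\pr(\mathcal{E}_n) \to 1$, every bisection $\sigma \in M_0$ satisfies
\begin{equation*}
|\{i \in [n] : s_\sigma(i,W) < h\}| \;=\; n - N(W,h,\sigma) \;\ge\; n - N^*(W,h) \;\ge\; c\,n - \eta_n n.
\end{equation*}
So on $\mathcal{E}_n$ every bisection has at least $cn - o(n)$ vertices that strictly violate $h$-stability.

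The next step uses the gap $\tilde h - h > 0$ to produce a uniform lower bound on each violating vertex's contribution to $T(W,\tilde h,\sigma)$. For any such vertex $i$, the estimate $\tilde h - s_\sigma(i,W) > \tilde h - h > 0$ together with the definition of $(\cdot)_1^+$ in \eqref{eq:trunc_thres} gives
\begin{equation*}
\bigl(\tilde h - s_\sigma(i,W)\bigr)_1^+ \;\ge\; \min\bigl(\tilde h - h,\,1\bigr) \;=:\; \delta,
\end{equation*}
a strictly positive constant independent of $n$, $d$, and $\sigma$. Summing over the at least $cn - o(n)$ violating vertices yields $T(W,\tilde h,\sigma) \ge \delta(cn - o(n))$, uniformly in $\sigma \in M_0$, and taking the minimum over $\sigma$ produces $T^*(W,\tilde h) \ge \delta c\,n - o(n) = \Theta_d(1)\,n + o(n)$ on $\mathcal{E}_n$. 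Since $\mathcal{E}_n$ has probability tending to one, this is exactly the required high-probability lower bound.

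There is no real analytic obstacle here; the content of the lemma is simply that the rigid violation count supplied by the hypothesis survives the softening into a truncated deficit. The truncation at $1$ in $(\cdot)_1^+$ plays no role in the lower bound, and is relevant only later for keeping the per-vertex summand bounded in the universality application. The only point worth flagging is that it is the global gap $\tilde h - h$, not the individual slack of each vertex, that controls $\delta$; this is why the strict inequality $s_\sigma(i,W) < h$ is enough and no quantitative improvement of the violation count is needed.
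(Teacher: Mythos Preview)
Your proof is correct and follows essentially the same route as the paper's: both unpack the hypothesis to get at least $\epsilon n$ vertices violating $h$-stability in every bisection, observe that each such vertex contributes at least $\min(\tilde h - h,1) = (\tilde h - h)_1^+$ to the truncated $\tilde h$-deficit, and sum. Your write-up is in fact somewhat cleaner than the paper's (which contains a couple of typos in the displayed bound), and your closing remark about the role of the truncation at $1$ is a useful clarification.
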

\begin{proof}
$N^*(W,h) = n-\Theta_d(1)n -o(n)$ implies that $\exists \epsilon$ independent of $d$ such that for every partition, large enough $d$, as $n \rightarrow \infty$, at least $\epsilon n$ vertices violate $h$-stability. Since $\tilde h>h$, any vertex violating $h$-stability also violates $\tilde h$-stability.
By the definition of $(\cdot)_1^+$, each vertex contributes at least $(\tilde h-h)^+_1$ to the total truncated deficit at $\tilde h$ i.e $T(W,h,\sigma) \geq \epsilon(h-h^\star)n$ for all $\sigma$. We thus have $T^*(W,h) \geq \epsilon(\tilde h-h)n$ with a high probability for large enough $d$ as $n \rightarrow \infty$.
\end{proof}

Next, we observe that, unlike the original deficit function, a large value of the truncated deficit directly implies the existence of a large number of vertices violating $h$-stability. This is expressed through the following lemma:
\begin{lemma}\label{lem:def_2}
Suppose $h \in \R$ satisfies $T^*(W,h) = \Theta_d(1)n - o(n)$ with high probability as $n \rightarrow \infty$. Then, we have $N^*(W,h)= n-(\Theta_d(1))n-o(n)$ with high probability as $n \rightarrow \infty$.
\end{lemma}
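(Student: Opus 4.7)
The plan is that this lemma is essentially a deterministic observation translated into a high-probability statement via the hypothesis on $T^*$. The key point is that the truncated threshold function $(x)_1^+$ from \eqref{eq:trunc_thres} satisfies the pointwise bound $(x)_1^+ \leq \mathbf{1}\{x > 0\}$, so each vertex $i$ contributes at most $1$ to $T(W,h,\sigma)$, and contributes strictly positively only when it violates $h$-stability (i.e., $h - s_\sigma(i,W) > 0$).

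Summing this bound over $i$, I get the pointwise-in-$\sigma$ deterministic inequality
\begin{equation*}
T(W,h,\sigma) \;\leq\; \bigl|\{i : s_\sigma(i,W) < h\}\bigr| \;=\; n - N(W,h,\sigma),
\end{equation*}
valid for every configuration $\sigma \in M_0$. Now I let $\sigma^\star \in \argmax_{\sigma \in M_0} N(W,h,\sigma)$ (so $N(W,h,\sigma^\star) = N^*(W,h)$) and evaluate the inequality at $\sigma^\star$:
\begin{equation*}
T^*(W,h) \;=\; \min_{\sigma \in M_0} T(W,h,\sigma) \;\leq\; T(W,h,\sigma^\star) \;\leq\; n - N^*(W,h).
\end{equation*}
Rearranging, I obtain the deterministic bound $N^*(W,h) \leq n - T^*(W,h)$ on every sample.

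Finally, I invoke the hypothesis: by assumption, there exist $\epsilon(d) = \Theta_d(1) > 0$ and an $o(n)$ error such that $T^*(W,h) \geq \epsilon(d) n - o(n)$ with high probability as $n \to \infty$. Applying the deterministic bound above on this high-probability event gives $N^*(W,h) \leq n - \epsilon(d)n + o(n)$ with high probability, which is exactly the conclusion $N^*(W,h) = n - \Theta_d(1)n - o(n)$. There is no serious obstacle here; the whole argument is a one-line deterministic comparison followed by transferring the hypothesis on $T^*$ to a bound on $N^*$. The only subtlety worth flagging is that the minimizer of $T(W,h,\cdot)$ need not coincide with the maximizer of $N(W,h,\cdot)$, which is handled cleanly by evaluating at $\sigma^\star$ and using that $T^*$ is the \emph{minimum} of $T$.
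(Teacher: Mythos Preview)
Your proof is correct and follows essentially the same approach the paper takes: the paper does not give an explicit proof of this lemma, but the analogous constrained version (Lemma~\ref{lem:def_2_const}) is proved by noting that $(\cdot)_1^+$ is bounded by $1$, so if $T(W,h,\sigma)\geq \epsilon n$ then at least $\epsilon n$ summands must be positive, i.e.\ at least $\epsilon n$ vertices violate $h$-stability. This is exactly your inequality $(x)_1^+ \leq \mathbf{1}\{x>0\}$ summed over $i$, and your handling of the $\min$/$\max$ mismatch by evaluating at $\sigma^\star$ is clean and correct.
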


We recall that Theorem \ref{thm:sparse_thres} implies that when $W$ corresponds to an Erdős-Rényi graph with anti-ferromagnetic interactions, then for any $h<h^\star$, $N^*(W,h) = n-\Theta_d(1)n-o(n)$ with high probability as $n \rightarrow \infty$. Therefore,  Lemma \ref{lem:def_3} implies that
$T^*(W,h) = \Theta_d(1)n - o(n)$.
Subsequently, replacing the threshold function $()_+$ by the truncated threshold $()^1_+$ in the proof of Lemma \ref{lem:sparse_uni} results in the following universality result:
\begin{lemma}\label{lem:sparse_deficit_failure}
Let $W$ be a weighted graph satisfying the assumptions in Theorem \ref{thm:uni_sparse}.
Let $h^\star$ be the threshold defined in Theorem \ref{thm:sparse_thres}.
Then, $\forall h > h^\star$, there exists an $\epsilon = \epsilon(h)$, such that for large enough $d$, w.h.p as $n \rightarrow \infty$, we have:
\begin{equation}
  T^*(W,h)\geq \epsilon n.
\end{equation}
\end{lemma}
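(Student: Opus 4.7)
The plan is to mirror the Lindeberg swapping argument of Lemma~\ref{lem:sparse_uni}, applied to the truncated deficit $T^*(W,h)$ in place of $D^*(W,h)$, with an anti-ferromagnetic Erd\H{o}s--R\'enyi graph as the reference model on which the desired lower bound is already known. Fix $h>h^*$ and choose any intermediate $\tilde h$ with $h^*<\tilde h<h$. Theorem~\ref{thm:sparse_thres}, applied at threshold $\tilde h$ to anti-ferromagnetic $\G'(n,p=d/n)$, produces a reference weight matrix $W'$ with $N^*(W',\tilde h)\le(1-\epsilon_0)n$ w.h.p.\ for some $\epsilon_0=\epsilon_0(\tilde h)>0$, provided $d$ is large enough. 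Applying Lemma~\ref{lem:def_3} then yields $T^*(W',h)\ge c_0(h)\,n$ w.h.p., where $c_0(h)=\epsilon_0\cdot\min(h-\tilde h,\,1)>0$: any vertex violating $\tilde h$-stability contributes at least $(h-\tilde h)_1^+>0$ to $T(W',h,\sigma)$ uniformly in $\sigma$.

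Next, I would establish the universality statement $|\E[T^*(A,h)]-\E[T^*(B,h)]|=o_d(1)\,n$ for any two symmetric weight matrices $A,B$ satisfying the hypotheses of Theorem~\ref{thm:uni_sparse}, by transplanting the argument of Lemma~\ref{lem:sparse_uni} almost verbatim. Introduce the Hamiltonian $\cH_d(W,g,\sigma)=\sum_i g\bigl(h-\tfrac{1}{\sqrt d}\sum_j w_{ij}\sigma_i\sigma_j\bigr)$ restricted to $\sigma\in M_0$, with associated partition function $Z(W,g)$ and free energy $F(W,g)=\rho^{-1}\log Z(W,g)$ at inverse temperature $\rho$. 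Replace the single-kink $(\cdot)^+$ by the bounded doubly-kinked threshold $(\cdot)_1^+$ of Equation~\ref{eq:trunc_thres}, and mollify on scale $\epsilon_1$ (e.g., convolve with a smooth compactly supported bump) to obtain $g_{\epsilon_1}$ with $\sup_t|g_{\epsilon_1}(t)-(t)_1^+|\le\epsilon_1$ and uniformly bounded derivatives up to order three. As in the opening reduction of Proposition~\ref{prop:sparse_deficit_success}, centering the weights is free since $\sigma\in M_0$.

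With $g_{\epsilon_1}$ in place, run the symmetric swap: iteratively replace $a_{ij}$ by $b_{ij}$ over the $\binom{n+1}{2}$ entries, third-order Taylor-expand $F$ in the swapped coordinate, and cancel the zeroth-, first-, and second-order contributions using the matched mean and variance of $a_{ij}$ and $b_{ij}$. The chain-rule computation leading to the $T_1$--$T_7$ decomposition of Lemma~\ref{lem:sparse_uni} is formally identical since only the fixed smooth function $g$ has changed, and delivers $\|F^{(3)}\|_\infty\le C_{\epsilon_1,\rho}/d^{3/2}$. Summing over $O(n^2)$ swap positions with the third-moment bound $O(d/n)$ produces $|\E[F(A,g_{\epsilon_1})]-\E[F(B,g_{\epsilon_1})]|\le C'_{\epsilon_1,\rho}\cdot n/\sqrt d$. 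Passing from $F$ to the ground state $\cH_d^*$ via Equation~\ref{eq:approx_3} contributes an additional $(\log 2)/\rho\cdot n$, and replacing $g_{\epsilon_1}$ by $(\cdot)_1^+$ contributes $2\epsilon_1 n$. Choosing $\epsilon_1,1/\rho$ small relative to $c_0(h)$ and then $d$ large gives $|\E[T^*(A,h)]-\E[T^*(B,h)]|\le(c_0(h)/2)\,n$.

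Concentration of $T^*(W,h)$ around its mean follows immediately from the $1$-Lipschitzness of $(\cdot)_1^+$: perturbing a single symmetric entry changes $T^*(W,h)$ by $O(1/\sqrt d)$, so Efron--Stein as in Equation~\ref{eq:eff-stein} gives $\operatorname{Var}(T^*(W,h))=O(n\sqrt d)$ and hence $T^*(W,h)=\E[T^*(W,h)]+o(n)$ w.h.p.\ via Chebyshev. Combining concentration for $A=W$ and $B=W'$ with the expectation transport and the reference bound $T^*(W',h)\ge c_0(h)\,n$ yields $T^*(W,h)\ge(c_0(h)/3)\,n$ w.h.p.\ for sufficiently large $d$; setting $\epsilon(h)=c_0(h)/3$ completes the proof. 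The main point requiring care---rather than a genuine obstacle---is verifying that the $T_1$--$T_7$ derivative computation still produces the bound $C_{\epsilon_1,\rho}/d^{3/2}$ with a doubly-kinked target; since the derivation of Lemma~\ref{lem:sparse_uni} only uses uniform bounds on $g_{\epsilon_1}',g_{\epsilon_1}'',g_{\epsilon_1}'''$, which standard mollification of $(\cdot)_1^+$ supplies, this is essentially automatic.
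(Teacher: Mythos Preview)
Your proposal is correct and follows essentially the same approach as the paper: the paper's argument for Lemma~\ref{lem:sparse_deficit_failure} is precisely to invoke Theorem~\ref{thm:sparse_thres} and Lemma~\ref{lem:def_3} on the anti-ferromagnetic reference model to get $T^*(W',h)=\Theta_d(1)\,n$, and then to rerun the Lindeberg swap of Lemma~\ref{lem:sparse_uni} with $(\cdot)_1^+$ in place of $(\cdot)^+$, together with the Efron--Stein concentration step. Your write-up is in fact more explicit than the paper's (which just says ``replacing the threshold function $(\cdot)^+$ by the truncated threshold $(\cdot)_1^+$ in the proof of Lemma~\ref{lem:sparse_uni}''), and your careful introduction of the intermediate $\tilde h\in(h^*,h)$ to invoke Lemma~\ref{lem:def_3} is exactly right.
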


Combining the above Lemma with Lemma \ref{lem:def_2}, we obtain the following result:
\begin{corollary}\label{cor:sparse_deficit_failure}
Let $W$ and $h$ be as in Lemma \ref{lem:sparse_deficit_failure}
\begin{equation}
N^*(W,h) \leq  n(1-\Theta_d(1))+o(n),
\end{equation}
with high probability.
\end{corollary}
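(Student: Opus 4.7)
The plan is to derive the corollary directly from Lemma \ref{lem:sparse_deficit_failure} together with a short deterministic comparison between the truncated deficit function $T(W,h,\sigma)$ and the count of $h$-unstable vertices $n - N(W,h,\sigma)$; essentially no further probabilistic work is needed beyond what Lemma \ref{lem:sparse_deficit_failure} already provides, since that lemma has absorbed all the Lindeberg-style universality content.

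The first step is to establish the elementary pointwise bound
\[
T(W,h,\sigma) \leq n - N(W,h,\sigma)
\]
for every weight matrix $W$, threshold $h$, and configuration $\sigma \in M_0$. This is immediate from the definition of $(\cdot)^+_1$ in Equation \ref{eq:trunc_thres}: for each $h$-stable vertex $i$ the quantity $h - \frac{1}{\sqrt d}\sum_j w_{ij}\sigma_i\sigma_j$ is non-positive, so that summand vanishes, while for each $h$-unstable vertex it contributes at most $1$. Because both the minimization defining $T^*(W,h)$ and the maximization defining $N^*(W,h)$ range over the same bisection set $M_0$, I can apply this inequality at any configuration $\sigma^\star$ attaining $N(W,h,\sigma^\star)=N^*(W,h)$ to obtain
\[
T^*(W,h) \leq T(W,h,\sigma^\star) \leq n - N^*(W,h),
\]
i.e.\ $N^*(W,h) \leq n - T^*(W,h)$ deterministically.

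The second and final step is to invoke Lemma \ref{lem:sparse_deficit_failure}: for every $h > h^*$ there exists $\epsilon(h) > 0$ (independent of $d$) and a degree $d(h)$ such that for all $d \geq d(h)$, $T^*(W,h) \geq \epsilon(h)\, n$ holds with high probability as $n \to \infty$. Inserting this high-probability lower bound into the deterministic inequality above gives $N^*(W,h) \leq (1-\epsilon(h))n$ with high probability, which is exactly the asserted estimate $n(1-\Theta_d(1)) + o(n)$, with $\Theta_d(1)$ realised by the $d$-independent constant $\epsilon(h)$ (the $o(n)$ slack is trivially available since the bound holds even without it).

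There is no genuine obstacle in this step: the truncation in $(\cdot)^+_1$ is precisely what makes the pointwise inequality hold deterministically and uniformly in $d$, which is what allows a lower bound on the minimum truncated deficit to be translated into an upper bound on the maximum number of stable vertices. The only point worth checking carefully is that $T^*(W,h)$ and $N^*(W,h)$ are optimised over the same set of bisections $M_0$, so that plugging $\sigma = \sigma^\star$ into the chain of inequalities is legitimate; this is built into the definitions used in Section~\ref{sec:absense_uni}, so the argument goes through cleanly.
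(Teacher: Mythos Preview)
Your proof is correct and follows essentially the same route as the paper: the paper derives the corollary by combining Lemma~\ref{lem:sparse_deficit_failure} with Lemma~\ref{lem:def_2}, and your pointwise bound $T(W,h,\sigma)\le n-N(W,h,\sigma)$ together with the evaluation at the maximizer $\sigma^\star$ is precisely the content of Lemma~\ref{lem:def_2} written out explicitly. The only cosmetic difference is that you inline the deterministic comparison rather than citing it as a separate lemma.
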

\subsection{Proof of Theorem \ref{thm:uni_sparse}}

We finally note that corollaries \ref{cor:sparse_deficit_success} and \ref{cor:sparse_deficit_failure} together imply Theorem \ref{thm:uni_sparse}.
\section{Proof of Theorem \ref{thm:uni_dense}: Sparse to Dense Reduction}\label{sec:dense_uni}


Now, consider the case of weight matrices $W$ with i.i.d entries indexed by $n$ satisfying the assumptions in Theorem \ref{thm:uni_dense} i.e. $\E[\abs{w_{ij}-\mu}^2]=1$, $\E[\abs{w_{ij}-\mu}^3]=O_n(1)$ 
This includes weight matrices with i.i.d Gaussian entries  i.e.
$w_{ij}\sim \mathcal{N}(0,1)$.
For simplicity, we include self-interactions i.e $w_{ii} \neq 0$. The proof can be generalized to exclude self-interactions by interpolating between the corresponding sparse model without loops. Let $D(W,h,\sigma)$ denote the total $h$-deficit for the dense graph $W$, i.e:
\begin{align*}
D(W,h,\sigma)=\sum_{1\le i\le n}\left(h-{1\over \sqrt{d}}\sum_j w_{ij}\sigma_i\sigma_j\right)^+,
\end{align*}

Analogous to Section \ref{sec:sparse_uni}, we let $D^*(W,h) = \min_{\sigma \in M_0} D(W,h,\sigma)$, and define the following Hamiltonian for the dense graph:
\begin{align*}
\cH(W,g,\sigma)=\sum_{1\le i\le n}g\left(h-{1\over \sqrt{n}}\sum_j w_{ij}\sigma_i\sigma_j\right)\, .
\end{align*}

We observe that the above dense Hamiltonian $\cH_d$ can now be expressed as the sparse Hamiltonian $H$ for rescaled and shifted variables $p_{ij}=\sqrt{\frac{d}{n}}w_{ij}$.
\begin{equation}\label{eq:rescale_ham}
\cH(W,g,\sigma)= \cH_d(p,g,\sigma)=\sum_{1\le i\le n}g\left(h-{1\over \sqrt{d}}\sum_j p_{ij}\sigma_i\sigma_j\right)\, .
\end{equation}
 
Let $A$ and $J$ be random weight matrices corresponding to edge weights sampled from the sparse anti-ferromagnetic random graphs with loops and the standard normal distribution respectively. Define $P=\sqrt{\frac{d}{n}}J$.
Repeating the Lindeberg's argument while going from the matrix $A$ to the matrix $P$, while replacing each element in turn as in the previous section, we obtain:
\begin{small}
\begin{equation}\label{eq:taylor_d}
\begin{split}
\abs{\E[F(J^{(t)},g_{\epsilon_1})|J^{(t)}_{/kr}]-F(J^{(t)}_{/kr},g_{\epsilon_1})-F^{(1)}(J^{(t)}_{/kr},g_{\epsilon_1}) \E[b_{kr}]-{1\over 2}F^{(2)}(J^{(t)}_{/kr},g_{\epsilon_1}) \E[(p_{kr})^2]} \leq {1\over 3!} \norm{F^{(3)}}_\infty\E[\abs{p_{kr}}^3],\\
\abs{\E[F(J^{(t-1)},g_{\epsilon_1})|J^{(t)}_{/{kr}}]-F(J^{(t)}_{/{kr}},g_{\epsilon_1})-F^{(1)}(J^{(t)}_{/{kr}},g_{\epsilon_1}) \E[a_{kr}]-{1\over 2}F^{(2)}(J^{(t)}_{/{kr}},g_{\epsilon_1}) \E[(a_{kr})^2]}\leq {1\over 3!} \norm{F^{(3)}}_\infty\E[\abs{a_{kr}^3}],
\end{split}
\end{equation}
\end{small}
where $J^{(t)}$ as before denotes the intermediate matrix.
Note that unlike the previous section, the variances $\E[(a_{kr})^2] = \frac{d}{n}$ and $\E[(s_{kr})^2] = \frac{d}{n}(1-\frac{d}{n})$ are not equal. However, (\ref{eq:taylor_d})
and triangle inequality yield:
\begin{align*}
\abs{\E[F(J^{(t)},g_{\epsilon_1})|J^{(t)}_{/kr}]-\E[F(J^{(t-1)},g_{\epsilon_1})|J^{(t)}_{/{kr}}]} &\leq {1\over 2}\norm{F^{(2)}}_\infty (\E[(a_{kr})^2]- \E[(p_{kr})^2])\\&+{1\over 3!} \norm{F^{(3)}}_\infty\E[\abs{p_{kr}}^3]+ {1\over 3!} \norm{F^{(3)}}_\infty\E[\abs{a_{kr}^3}]\\
&\leq {1\over 2}\norm{F^{(2)}}_\infty (\frac{d^2}{n^2})+{1\over 3!} \norm{F^{(3)}}_\infty\E[\abs{p_{kr}}^3]+ {1\over 3!} \norm{F^{(3)}}_\infty\E[\abs{a_{kr}^3}].
\end{align*}

We have the following bounds on the absolute third moment: 
\begin{equation}\label{eq:thirdmom}
\begin{split}
    \E[\abs{a_{kr}}^3]\leq C_b(\frac{d}{n}).\\
    \E[\abs{p_{kr}}^3]\leq C_m(\frac{d}{n})^\frac{3}{2}.
\end{split}
\end{equation}
Utilizing the above bounds, along with the bounds on $\norm{F^{(2)}}_\infty$ and $\norm{F^{(3)}}_\infty$ in Equations \ref{eq:2derbound},\ref{eq:thirder} and taking expectation over $J^{(t)}_{/{kr}}$, we obtain:
\begin{align*}
\abs{\E[F(J^{(t)},g_{\epsilon_1})]-\E[F(J^{(t-1)},g_{\epsilon_1})|]} \leq \frac{C_{2,\epsilon,\rho}d}{n^2}+\frac{C_{3,\epsilon,\rho}C_b}{3!n\sqrt{d}}+\frac{2C_{3,\epsilon,\rho}(C_m)}{3!n^\frac{3}{2}}.
\end{align*}
Finally, summing over the $\frac{n(n+1)}{2}$ variables and using $\frac{n+1}{n} \leq 2 \forall n \in \N$ results in the following bound:
\begin{equation}
   \abs{\E[F(P,g_{\epsilon_1})]-\E[F(A,g_{\epsilon_1})]} \leq C_{2,\epsilon,\rho}d+\frac{2C_{3,\epsilon,\rho}C_bn}{3!\sqrt{d}}+\frac{4C_{3,\epsilon,\rho}(C_m)}{3!\sqrt{n}}.
\end{equation}
 
Now, let $\epsilon_2>0$ be arbitrary. Suppose:
\begin{equation}\label{eq:nd_bounds}
    d \geq (\frac{3!}{2C_{3,\epsilon,\rho}C_b\epsilon_2})^2,n \geq \max{(\frac{C_{2,\epsilon,\rho}d}{\epsilon_2},(\frac{4C_{3,\epsilon,\rho}(C_m)}{3!\epsilon_2})^2)},
\end{equation}
then we have:
\begin{align*}
\abs{\E[F(P,g_{\epsilon_1})]-\E[F(A,g_{\epsilon_1})]}  \leq 
\frac{\epsilon_2}{3}n+\frac{\epsilon_2}{3}n+\frac{\epsilon_2}{3}n = \epsilon_2n.
\end{align*}

Next, similar to the proof of Theorem \ref{thm:uni_sparse},
we use the error bounds in Equations \ref{eq:approx_1}, \ref{eq:approx_2}, \ref{eq:approx_3} to obtain, for large enough $d,n$ satisfying (\ref{eq:nd_bounds}):
\begin{equation}
    \abs{\E[\cH_d^*(A,(\cdot)^+)]-\E[\cH_d^*(P,(\cdot)^+)]} \leq (2\epsilon_1 + 2\frac{\log 2}{\rho}+\epsilon_2)n.
\end{equation}
Then, for any $\epsilon >0$ setting $\epsilon_1=\epsilon/6, \epsilon_2=\epsilon/3, \rho =2\log 2/\epsilon$ and $d,d$ satisfying (\ref{eq:nd_bounds}), results in:
\begin{equation}
     \abs{\E[\cH_d^*(A,(\cdot)^+)]-\E[\cH_d^*(P,(\cdot)^+)]} \leq \epsilon n.
\end{equation}
Therefore, 
\begin{equation}
\abs{\E[\cH_d^*(A,(\cdot)^+)]-\E[\cH_d^*(P,(\cdot)^+)]} = o_d(1)n +o(n)\, .
\end{equation}
Next, using Efron-Stein inequality as in (\ref{eq:eff-stein}), we have $\abs{\cH_d^*(A,(\cdot)^+)-\Ea{\cH_d^*(A,(\cdot)^+)}}=o(n)$  and $\abs{\cH_d^*(P,(\cdot)^+)-\Ea{\cH_d^*(P,(\cdot)^+)}}=o(n)$
with high probability as $n \rightarrow \infty$. Therefore, we obtain, with high probability as $n \rightarrow \infty$
\begin{equation}
    \abs{\cH_d^*(A,(\cdot)^+)-\cH_d^*(P,(\cdot)^+)} = o_d(1)n+o(n).
\end{equation}

Since $\cH_d^*(P,(\cdot)^+)=D^*(J,h)$ we obtain the following result:
\begin{lemma}\label{lem:dense_uni}
Let $A,J$ be arbitrary weight matrices with i.i.d random entries $a_{ij},j_{ij}$ satisfying assumptions in Theorem \ref{thm:uni_sparse}, \ref{thm:uni_dense}  for parameters $d,n$ and $n$ respectively. 
Then:
\begin{equation}
    \abs{D^*(A,h)-D^*(J,h)} = o_d(1)n+o(n),
\end{equation}
 with high probability as $n \rightarrow \infty$.
\end{lemma}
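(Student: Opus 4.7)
The plan is to reduce the dense comparison to the sparse Lindeberg argument of the previous section by an explicit rescaling of the entries, then to carry out a modified interpolation that handles the new variance mismatch between the two models, and finally to upgrade the expectation statement to a high-probability one via Efron--Stein concentration.

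First, I would set $p_{ij}=\sqrt{d/n}\,j_{ij}$, so that the dense Hamiltonian $\cH(J,g,\sigma)$ equals the sparse Hamiltonian $\cH_d(P,g,\sigma)$ for any smooth $g$. This turns the dense-vs-sparse question into a comparison of two sparse-type free energies, $F(A,g_{\epsilon_1})$ and $F(P,g_{\epsilon_1})$, and lets me reuse the derivative bounds $\|F^{(2)}\|_\infty=O(1/d)$ and $\|F^{(3)}\|_\infty=O(d^{-3/2})$ that were proved in the sparse setting. I would then replace $(\cdot)^+$ by a smooth approximation $g_{\epsilon_1}$ and the ground state $\cH_d^*$ by the log-partition function $F$ at inverse temperature $\rho$; the approximation errors $\epsilon_1 n$ and $(\log 2/\rho)n$ are sent to zero at the very end of the argument by choosing $\epsilon_1,1/\rho$ arbitrarily small.

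Next, I would iterate Lindeberg swaps over the $n(n+1)/2$ symmetric entries, moving from $A$ to $P$ one entry at a time. The key difference from the sparse case is that the second moments no longer match exactly: $\E[a_{kr}^2]=(d/n)(1-d/n)$ whereas $\E[p_{kr}^2]=d/n$, so the standard third-order Taylor bound picks up an extra second-order term proportional to $\|F^{(2)}\|_\infty\cdot (d/n)^2$. Using $\E|a_{kr}|^3=O(d/n)$ and $\E|p_{kr}|^3=O((d/n)^{3/2})$ together with the derivative bounds, the per-swap error is $O(d/n^2)+O(d^{-1/2}/n)+O(n^{-3/2})$; summing over $\Theta(n^2)$ swaps gives a total of $O(d)+O(n/\sqrt{d})+O(\sqrt{n})$, which is $o(n)$ as long as $d\to\infty$ slowly compared to $n$. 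This is the step I expect to be the main obstacle: tracking the new $\|F^{(2)}\|_\infty$ contribution and verifying that the order of limits $n\to\infty$ first and then $d\to\infty$ absorbs it into the $o_d(1)n+o(n)$ budget.

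Finally, I would combine the three approximation inequalities
\[
|\cH_d(W,(\cdot)^+,\sigma)-\cH_d(W,g_{\epsilon_1},\sigma)|\le \epsilon_1 n,\qquad -\cH_d^*\le F\le -\cH_d^*+(\log 2/\rho)n
\]
with the Lindeberg bound to conclude $|\E[\cH_d^*(A,(\cdot)^+)]-\E[\cH_d^*(P,(\cdot)^+)]|=o_d(1)n+o(n)$. To pass from expectations to high probability, I would invoke the Efron--Stein inequality: changing a single entry of $A$ or $P$ alters $\cH_d^*(\cdot,(\cdot)^+)$ by at most $O(1/\sqrt{d})$ times the magnitude of the entry, giving $\mathrm{Var}(\cH_d^*)=O(n\sqrt{d})$, so Chebyshev yields concentration up to $o(n)$. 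Since $\cH_d^*(P,(\cdot)^+)=D^*(J,h)$ and $\cH_d^*(A,(\cdot)^+)=D^*(A,h)$, the triangle inequality gives the claimed bound $|D^*(A,h)-D^*(J,h)|=o_d(1)n+o(n)$ with high probability, completing the reduction of the dense universality statement to the sparse one already established.
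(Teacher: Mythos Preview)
Your proposal is correct and follows essentially the same route as the paper: rescale the dense entries to $p_{ij}=\sqrt{d/n}\,j_{ij}$ so that $D^*(J,h)=\cH_d^*(P,(\cdot)^+)$, run the Lindeberg swap between $A$ and $P$ using the smooth surrogate $g_{\epsilon_1}$ and the soft-min $F$, absorb the variance mismatch $(d/n)^2$ via the $\|F^{(2)}\|_\infty=O(1/d)$ bound, and then upgrade to high probability via Efron--Stein and Chebyshev. You also correctly identify the one new ingredient relative to the sparse proof---the extra second-order term---and handle the order-of-limits issue in the right way (take $n\to\infty$ first so that the $O(d)$ piece is $o(n)$, then $d\to\infty$ to kill the $O(n/\sqrt d)$ piece).
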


Since Theorem \ref{thm:uni_sparse} implies that $D^*(A,h) = o_d(1)n+o(n)$ for $h<h^\star$ with high probability as $n \rightarrow \infty$, considering the limit $d \rightarrow \infty$, we obtain:
\begin{lemma}\label{lem:dense_deficit_success}
Let $J$ be a weighted graph on $n$ nodes satisfying the assumptions in Theorem \ref{thm:uni_dense}.
Let $h^\star$ be the threshold defined in Theorem \ref{thm:sparse_thres}, then for any $h<h^\star$, with high probability as $n \rightarrow \infty$:
\begin{equation}
    D^*(J,h)=o(n).
\end{equation}
\end{lemma}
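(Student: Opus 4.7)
\textbf{Proof proposal for Lemma \ref{lem:dense_deficit_success}.}

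The plan is to derive the dense result as an immediate consequence of the sparse-to-dense interpolation in Lemma \ref{lem:dense_uni} combined with the sparse universality statement already obtained in Proposition \ref{prop:sparse_deficit_success}. Since the dense graph $J$ is fixed (parameterized only by $n$), the role of the sparse degree $d$ is now that of a tunable auxiliary parameter that we will send to infinity after choosing a target precision $\epsilon > 0$.

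First I would fix $h < h^*$ and an arbitrary $\epsilon > 0$. For each $d$, let $A = A^{(d,n)}$ denote the sparse Erd\H{o}s--R\'enyi antiferromagnetic weight matrix coupled to $J$ through the rescaling $p_{ij} = \sqrt{d/n}\, w_{ij}$ used in Section \ref{sec:dense_uni}. Lemma \ref{lem:dense_uni} supplies a function $f(d)$ with $f(d)\to 0$ as $d\to\infty$ such that, with high probability as $n\to\infty$,
\begin{equation*}
|D^*(A,h) - D^*(J,h)| \le f(d)\,n + o(n).
\end{equation*}
On the other hand, Proposition \ref{prop:sparse_deficit_success} gives a function $g(d)$ with $g(d)\to 0$ as $d\to\infty$ such that, with high probability as $n\to\infty$, $D^*(A,h) \le g(d)\, n + o(n)$.

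Combining the two bounds by the triangle inequality, for any fixed $d$, with high probability as $n\to\infty$,
\begin{equation*}
D^*(J,h) \le \bigl(f(d) + g(d)\bigr)\, n + o(n).
\end{equation*}
Choose $d = d(\epsilon)$ large enough that $f(d) + g(d) \le \epsilon/2$. Then for this fixed $d$, with high probability as $n\to\infty$, $D^*(J,h) \le \epsilon n$. Since $\epsilon > 0$ was arbitrary, this establishes $D^*(J,h) = o(n)$ in probability, which is exactly the desired conclusion.

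The only genuine subtlety, and the step I would be most careful about, is ensuring that the high-probability statements in Lemmas \ref{lem:dense_uni} and Proposition \ref{prop:sparse_deficit_success} can be applied simultaneously for the same coupling of $(A, J)$; this requires a union bound over two events each of vanishing probability as $n\to\infty$ for the chosen $d$, which is straightforward since $d$ is fixed before taking $n \to \infty$. No new concentration or Lindeberg-type estimates are needed beyond those already established in the excerpt.
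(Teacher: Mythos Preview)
Your proposal is correct and matches the paper's approach essentially verbatim: the paper deduces the lemma in one line by combining Lemma~\ref{lem:dense_uni} with the sparse bound $D^*(A,h)=o_d(1)n+o(n)$ (equivalently Proposition~\ref{prop:sparse_deficit_success}) and then letting $d\to\infty$. Your write-up simply makes the $\epsilon$–$d$ quantification and the union bound over the two high-probability events explicit, which is exactly the intended argument.
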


We note that Lemma \ref{lem:def_success} also applies to the deficit $D^*(J,h)$ and the maximum number of $h$-stable vertices $N^*(J,h)$ for dense graphs. Therefore, we obtain the following Corollary: \begin{corollary}\label{cor:dense_deficit_success}
Let $J,h^\star$ be as in Proposition \ref{lem:dense_deficit_success}. Then for any $h<h^\star$
\begin{equation}
N^*(J,h) \geq  o(n).
\end{equation}
\end{corollary}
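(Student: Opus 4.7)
The plan is to reduce the corollary to two ingredients already in place: Lemma \ref{lem:dense_deficit_success}, which controls the minimal total deficit of a dense weight matrix at any subcritical threshold, and the deterministic deficit-to-count conversion used in Lemma \ref{lem:def_success}. The desired conclusion (correcting the obvious typo) is that $N^*(J,h)\ge n-o(n)$ with high probability as $n\to\infty$, and the route is to pick a slightly larger threshold and then pay for every vertex that violates $h$-stability by a quantifiable amount of $\tilde h$-deficit.

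First I would fix $h<h^*$ and pick $\tilde h$ with $h<\tilde h<h^*$. By Lemma \ref{lem:dense_deficit_success}, applied at $\tilde h$, we have $D^*(J,\tilde h)=o(n)$ with high probability as $n\to\infty$. So it suffices, on the event that $D^*(J,\tilde h)\le \delta n$ for arbitrarily small $\delta>0$, to deduce that $N^*(J,h)\ge n-\epsilon n$ for any prescribed $\epsilon>0$ (choosing $\delta$ after $\epsilon$).

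The deterministic conversion is the exact analog of the argument in Lemma \ref{lem:def_success}: if $N^*(J,h)<n-\epsilon n$, then \emph{every} bisection $\sigma\in M_0$ has at least $\epsilon n$ vertices $i$ with $h-\frac{1}{\sqrt n}\sum_j w_{ij}\sigma_i\sigma_j\ge 0$, and each such vertex contributes at least $\tilde h-h>0$ to the truncated local $\tilde h$-gap, because
\[
\Bigl(\tilde h-\tfrac{1}{\sqrt n}\textstyle\sum_j w_{ij}\sigma_i\sigma_j\Bigr)^+\ \ge\ \tilde h-h.
\]
Summing over these vertices gives $D(J,\tilde h,\sigma)\ge (\tilde h-h)\epsilon n$ for every $\sigma\in M_0$, hence $D^*(J,\tilde h)\ge(\tilde h-h)\epsilon n$. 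Choosing $\delta=(\tilde h-h)\epsilon/2$ contradicts $D^*(J,\tilde h)\le\delta n$, so the event $\{N^*(J,h)<n-\epsilon n\}$ is contained in the low-probability event $\{D^*(J,\tilde h)>\delta n\}$, which vanishes as $n\to\infty$ by Lemma \ref{lem:dense_deficit_success}. Since $\epsilon>0$ was arbitrary, this gives $N^*(J,h)\ge n-o(n)$ w.h.p.

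Since all ingredients are already proved, the only point that requires any real thought is to verify that the bisection constraint $\sigma\in M_0$ is preserved through the reduction: the dense deficit $D^*(J,\tilde h)$ in Lemma \ref{lem:dense_deficit_success} is a minimum over $M_0$, and the witness configuration we are producing for $N^*(J,h)$ is also a bisection, so no extra work is needed. The main (and only mild) obstacle is bookkeeping the order of quantifiers $\delta\ll\epsilon\ll \tilde h-h$ so that the $o(n)$ bound from Lemma \ref{lem:dense_deficit_success} actually beats the deterministic lower bound $(\tilde h-h)\epsilon n$; this is handled by fixing $\tilde h$ once $h<h^*$ is given and then letting $\epsilon\downarrow 0$ along a countable sequence.
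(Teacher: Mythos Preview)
Your proposal is correct and follows essentially the same route as the paper: apply Lemma~\ref{lem:dense_deficit_success} at a slightly larger $\tilde h\in(h,h^*)$ and then invoke the deterministic deficit-to-count conversion of Lemma~\ref{lem:def_success}, which the paper simply notes applies verbatim in the dense setting. You also correctly identify the typo in the statement (the intended bound is $N^*(J,h)\ge n-o(n)$).
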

Similarly, using the truncated deficit function $(\cdot)_1^+$ instead of $(\cdot)^+$ defined in (\ref{eq:trunc_thres}), we obtain the following results:
\begin{lemma}\label{lem:dense_deficit_failure}
Let $J,h^\star$ be as in Proposition \ref{lem:dense_deficit_success} then for any $h>h^\star$
\begin{equation}
    T^*(J,h)=\Theta(n).
\end{equation}
\end{lemma}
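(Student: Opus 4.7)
The plan is to combine the sparse-to-dense Lindeberg interpolation argument used in Section \ref{sec:dense_uni} for the deficit $D^*$ with the truncated-threshold substitution employed in Section \ref{sec:absense_uni} for the regime $h>h^*$. Fix $h>h^*$ and write $P=\sqrt{d/n}\,J$, so that, exactly as in Equation \ref{eq:rescale_ham}, the dense truncated Hamiltonian on $J$ coincides with the sparse Hamiltonian on $P$: $\cH(J,(\cdot)_1^+,\sigma)=\cH_d(P,(\cdot)_1^+,\sigma)$, and in particular $T^*(J,h)=\cH_d^*(P,(\cdot)_1^+)$. The idea is then to interpolate between $P$ and the sparse antiferromagnetic matrix $A$ (for which Lemma \ref{lem:sparse_deficit_failure} gives a linear-in-$n$ lower bound on $T^*$) through a suitable $d=d(h)$ chosen large enough that the interpolation error is a small fraction of that bound.

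The Lindeberg step proceeds by taking a smooth approximation $g_{\epsilon_1}$ of $(\cdot)_1^+$ with $\sup_t|g_{\epsilon_1}(t)-(t)_1^+|\le \epsilon_1$ and uniformly bounded first, second, and third derivatives (e.g.\ a shifted-and-clipped softplus). The resulting soft-min free energy $F(W,g_{\epsilon_1})=\rho^{-1}\log Z(W,g_{\epsilon_1})$ then satisfies the same pair of envelope inequalities \eqref{eq:approx_1}--\eqref{eq:approx_3} with $(\cdot)^+$ replaced by $(\cdot)_1^+$. The Gibbs-average derivative bounds \eqref{eq:2derbound}--\eqref{eq:thirder} carry over verbatim, since they depend only on the sup-norms of the first three derivatives of $g_{\epsilon_1}$ and on the spin magnitudes $\sigma_i=\pm 1$. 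Repeating the entry-by-entry swap exactly as in Equation \eqref{eq:taylor_d}, using the third-moment bounds \eqref{eq:thirdmom} on $a_{kr}$ and $p_{kr}$, and noting that here the variances $\E[a_{kr}^2]=d/n$ and $\E[p_{kr}^2]=d/n$ (the shift $-d/n$ in the dense case being handled as in Section \ref{sec:dense_uni} by centering), one obtains
\begin{equation*}
\bigl|\E[F(P,g_{\epsilon_1})]-\E[F(A,g_{\epsilon_1})]\bigr|\le C_{2,\epsilon_1,\rho}d+\frac{C_{3,\epsilon_1,\rho}C_b\,n}{\sqrt{d}}+\frac{C'_{3,\epsilon_1,\rho}C_m}{\sqrt{n}}.
\end{equation*}
Choosing $\epsilon_1,\rho$ as in Section \ref{sec:dense_uni} and using \eqref{eq:approx_1}--\eqref{eq:approx_3}, the triangle inequality yields $|\E[\cH_d^*(A,(\cdot)_1^+)]-\E[\cH_d^*(P,(\cdot)_1^+)]|=o_d(1)n+o(n)$. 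Concentration via Efron--Stein (as in \eqref{eq:eff-stein}; truncation only tightens the bounded-difference constants) then upgrades this to $|\cH_d^*(A,(\cdot)_1^+)-\cH_d^*(P,(\cdot)_1^+)|=o_d(1)n+o(n)$ with high probability as $n\to\infty$.

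To finish, apply Lemma \ref{lem:sparse_deficit_failure} at parameter $d$: there is some $\epsilon(h)>0$ independent of $d$ (for $d$ sufficiently large) such that $T^*(A,h)=\cH_d^*(A,(\cdot)_1^+)\ge \epsilon(h)n$ w.h.p. Combining with the universality bound above and the identity $T^*(J,h)=\cH_d^*(P,(\cdot)_1^+)$, we get $T^*(J,h)\ge \epsilon(h)n-o_d(1)n-o(n)$ w.h.p. Now choose $d=d(h)$ large enough that the $o_d(1)n$ term is at most $\epsilon(h)n/2$, which is possible because $d$ only parametrizes the auxiliary interpolation endpoint and does \emph{not} appear in the statement of the lemma. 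This yields $T^*(J,h)\ge (\epsilon(h)/2)\,n-o(n)$ w.h.p., proving $T^*(J,h)=\Theta(n)$.

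The main obstacle, as in the sparse case of Section \ref{sec:absense_uni}, is that one cannot transfer an $\Omega(n)$ lower bound on the \emph{unbounded} deficit $D^*$ through Lindeberg, because the per-site contribution to $D$ is not uniformly bounded and the single-entry Efron--Stein constant would scale badly in $d$; this is precisely why the truncation $(\cdot)_1^+$ is essential. Once the truncation is in place, the Lindeberg derivative bookkeeping is identical to that of Section \ref{sec:dense_uni}, and the only genuinely delicate point is the order of quantifiers: the sparse lower bound $\epsilon(h)n$ must be extracted \emph{before} sending $d\to\infty$ in the interpolation, so that the implicit constant $\epsilon(h)$ beats the $o_d(1)$ interpolation error at some fixed, sufficiently large $d(h)$.
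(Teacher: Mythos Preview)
Your proposal is correct and follows exactly the route the paper sketches: replace $(\cdot)^+$ by the truncated $(\cdot)_1^+$ in the sparse-to-dense Lindeberg argument of Section~\ref{sec:dense_uni}, then combine with the sparse lower bound from Lemma~\ref{lem:sparse_deficit_failure}, choosing $d$ large enough after extracting $\epsilon(h)$. One small slip: the variances $\E[a_{kr}^2]=(d/n)(1-d/n)$ and $\E[p_{kr}^2]=d/n$ are \emph{not} equal---their $(d/n)^2$ mismatch is precisely what generates the $\|F^{(2)}\|_\infty\cdot d^2/n^2$ term in the paper's bound---but since you defer to Section~\ref{sec:dense_uni} for this step, the argument goes through unchanged.
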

\begin{corollary}\label{cor:dense_deficit_failure}
Let $J,h^\star$ be as in Proposition \ref{lem:dense_deficit_success}. Then for any $h>h^\star$:
\begin{equation}
    N^*(J,h) \leq  n(1-\Theta(1))+o(n).
\end{equation}
\end{corollary}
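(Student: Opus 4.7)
The plan is to deduce this corollary directly from Lemma \ref{lem:dense_deficit_failure} by reusing the pointwise bound between the truncated deficit and the number of $h$-unstable vertices, exactly as in the sparse argument of Lemma \ref{lem:def_2}. The truncated threshold $(\cdot)_1^+$ was designed precisely so that each unstable vertex contributes at most $1$ to the deficit, which makes $T$ a direct upper bound on the count of violators.

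First I would observe that for every configuration $\sigma \in M_0$, since $(x)_1^+ \leq \one\{x > 0\}$ for all $x \in \R$ by the definition in Equation \ref{eq:trunc_thres}, we have
\begin{equation*}
T(J,h,\sigma) = \sum_{i=1}^n \left(h - \tfrac{1}{\sqrt{n}}\sum_{j} j_{ij}\sigma_i\sigma_j\right)_1^+ \leq \sum_{i=1}^n \one\{s_\sigma(i,J) < h\} = n - N(J,h,\sigma).
\end{equation*}
If $\sigma^\star \in M_0$ attains the maximum defining $N^*(J,h)$, then taking the minimum over bisections on the left-hand side gives
\begin{equation*}
T^*(J,h) \leq T(J,h,\sigma^\star) \leq n - N^*(J,h), \qquad \text{i.e.} \quad N^*(J,h) \leq n - T^*(J,h).
\end{equation*}

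By Lemma \ref{lem:dense_deficit_failure}, for every $h > h^*$ there exists $\epsilon = \epsilon(h) > 0$ such that $T^*(J,h) \geq \epsilon n - o(n)$ with high probability as $n \to \infty$. Combining this with the displayed inequality yields $N^*(J,h) \leq (1-\epsilon) n + o(n)$ with high probability, which is exactly the claim $N^*(J,h) \leq n(1-\Theta(1)) + o(n)$.

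There is essentially no obstacle specific to this corollary: it is a one-line consequence of the elementary bound $(x)_1^+ \leq \one\{x > 0\}$ together with Lemma \ref{lem:dense_deficit_failure}. All the substantive work sits inside that lemma, whose proof carries out a Lindeberg-style interpolation from sparse to dense graphs for the smoothed truncated Hamiltonian, mirroring the sparse version Lemma \ref{lem:sparse_deficit_failure}. The only minor bookkeeping point is to verify that $T^*$ and $N^*$ are both defined with the min/max restricted to the bisection set $M_0$, so that the witness $\sigma^\star$ for $N^*$ is admissible as a comparator in $T^*$; this is immediate from the definitions introduced in Section \ref{sec:sparse_uni} and carried over verbatim to the dense setting.
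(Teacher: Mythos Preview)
Your proof is correct and follows essentially the same approach as the paper: the paper derives this corollary by combining Lemma \ref{lem:dense_deficit_failure} with the elementary relation between $T^*$ and $N^*$ (the dense analogue of Lemma \ref{lem:def_2}), and you have simply made that relation explicit via the pointwise bound $(x)_1^+ \le \one\{x>0\}$. Your remark that both $T^*$ and $N^*$ are taken over $M_0$, so that the maximizer $\sigma^\star$ of $N^*$ is an admissible comparator for $T^*$, is exactly the right bookkeeping check.
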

\subsection{Proof of Theorem \ref{thm:uni_dense}}

Corollaries \ref{cor:dense_deficit_success} and \ref{cor:dense_deficit_failure} together imply Theorem \ref{thm:uni_dense}.

\subsection{Extension of the result to all configurations}

In this paper, we restrict our attention to theorems concerning bisections, as this approach enables us to concurrently establish the universality result for distributions with arbitrary means of the edges $w_{ij}$. For specific cases where the mean is $0$, such as the Sherrington-Kirkpatrick (SK) model, it is feasible to extend our results to encompass all configurations. This extension can be done leveraging the first moment results in \cite{skcount}, who derived an expression for the first moment entropy density for the SK model when considering all partitions, matching the form of \eqref{def:w_firstmom} derived by us for sparse graphs.

\section{Proof of Theorems  \ref{thm:sparse_max_E_thres_uni} and \ref{thm:dense_max_E_thres}: Universality of the Maximal/Minimal Energy of Local Optima}\label{sec:universality_e_max}

Using similar techniques as the previous section, we now prove the universality of the maximal energy thresholds at fixed values of $h$.
As in the proof of Lemma \ref{lem:high_prob_max} and Lemma \ref{lem:def_success}, we have the following relation between $N_{E_1,E_2}^*(W,h)$ i.e. the maximum number of $h$-stable vertices amongst configurations having energy in the interval $(E_1,E_2)$, and the maximal energy deficit $D_{E_1,E_2}$, defined in Definition \ref{def:const_deficit}:
\begin{lemma}\label{lem:max_E_def_success}
Suppose $\tilde h, \tilde{E}_1, \tilde{E}_2 \in \R$ satisfy $D^*_{\tilde E_1, \tilde{E}_2}(W,\tilde h) = o_d(1)n + o(n)$ with high probability as $n \rightarrow \infty$. Then for any $(E_1, E_2)\subset (\tilde{E}_1,\tilde{E}_2)$ and $h<\tilde h$, we have $N_{E_1,E_2}^*(W, h) =  n(1-o_d(1))-o(n)$ with high probability as $n \rightarrow \infty$.
\end{lemma}
\begin{proof}
    Assume that $N_{E_1,E_2}^*(W,h) \leq (1-\epsilon)n$ for some $\epsilon > 0$ and large enough $d,n$. Then, for any configuration $\sigma$, either $E(\sigma) < \tilde E_1$, $E(\sigma) > \tilde E_2$, or at least $\epsilon n$ vertices violate $\tilde h$ stability. In either case, we obtain:
\begin{align*}
    D_{\tilde E_1, \tilde E_2}(W,\tilde h,\sigma) &\geq \sum_{1\le i\le n}\left(h-{1\over \sqrt{d}}\sum_j w_{ij}\sigma_i\sigma_j\right)^+  + \left(n E_1+{1\over \sqrt{d}}\sum_{i,j \leq n} w_{ij}\sigma_i\sigma_j\right)^+ + \left({1\over \sqrt{d}}\sum_{i,j \leq n} w_{ij}\sigma_i\sigma_j-nE_2\right)^+\\
    &= \sum_i \left( h-\tilde h+\tilde h-{1\over \sqrt{d}}\sum_j w_{ij}\sigma_i\sigma_j\right)^+ + n\left(E_1-\tilde{E}_1+\tilde{E}_1-H(\sigma)\right)^+ \\&+n\left(H(\sigma)-E_2+\tilde{E}_2-E_2\right)^+\\
     &\geq \max\rbr*{\epsilon n\left( h-\tilde h\right)^+,n(E_1-\tilde{E}_1)^+,n(\tilde E_2-E_2)^+}.
\end{align*}
This contradicts $D^*_{E_1, E_2}(W,h) = o_d(1)n + o(n)$, proving that $N_{\tilde E_1 \tilde E_2}^*(W,\tilde h) =  n(1-o_d(1))-o(n)$.
\end{proof}

Let $g$ be a smooth uniform approximation of the threshold function $()_+$ as in (\ref{eq:smooth_approx}). Analogous to \ref{eq:hamilton}, we define the following Hamiltonian corresponding to the maximal energy deficit $D_{\geq E}$ function defined in \eqref{def:deficit}: 
\begin{equation}\label{eq:ham_const}
\begin{split}
\cH_{E_1, E_2,d}(W,g,\sigma)&=\sum_{1\le i\le n}g\left(h-{1\over \sqrt{d}}\sum_j w_{ij}\sigma_i\sigma_j\right) + g\left(nE_1+{1\over \sqrt{d}}\sum_{i,j \leq n} w_{ij}\sigma_i\sigma_j\right)\\&+g\left(-{1\over \sqrt{d}}\sum_{i,j \leq n} w_{ij}\sigma_i\sigma_j-nE_2\right),
\end{split}
\end{equation} 
and the associated partition function:
\begin{align*}
Z_{E_1,E_2}(W,g)&=\sum_{\sigma \in M_0}  \exp\left(-\rho \cH_{ E_1,E_2,d}(W,g,\sigma)\right)\\
F_{E_1,E_2}(W,g)&={1\over \rho}\log Z_{E_1,E_2,d}(W,g).
\end{align*}

The rest of the proof follows that of Theorem \ref{thm:uni_sparse}. Let $A, B$ be two random weight matrices satisfying the assumptions in Theorem \ref{thm:uni_sparse}. We again apply the Lindeberg's method to the function $F_{E_1,E_2}$. The derivatives of $F_{E_1,E_2}$ now involve additional terms due to the addition of the term $g({1\over \sqrt{d}}\sum_{i,j \leq n} w_{ij}-nE)$ to the Hamiltonian. However, we recover the same bounds on the derivatives as Equations \ref{eq:2derbound}, \ref{eq:thirder}. For instance, with $J$ as an arbitrary symmetric matrix, and $g_{\epsilon_1}$ as defined in (\ref{eq:smooth_approx}), we have:
\begin{align*}
F_{E_1,E_2}^{(1)}(J,g_{\epsilon_1}) 
&=
{1\over \sqrt{d}}\langle \dot g_{\epsilon_1}(h-{1\over \sqrt{d}}\sum_m j_{km}\sigma_k\sigma_m)\sigma_k\sigma_r\rangle + {1\over \sqrt{d}}\langle \dot g_{\epsilon_1}(h-{1\over \sqrt{d}}\sum_l j_{rl}\sigma_r\sigma_l)\sigma_k\sigma_r\rangle\\
&+ {1\over \sqrt{d}}\langle \dot g_{\epsilon_1}(nE_1+{1\over \sqrt{d}}\sum_{i,j \leq n} w_{ij}) \rangle + {1\over \sqrt{d}}\langle \dot g_{\epsilon_1}(-{1\over \sqrt{d}}\sum_{i,j \leq n} w_{ij}-n E_2) \rangle.
\end{align*}

Proceeding similarly, we obtain that:
\begin{equation}\label{eq:thirder_const}
     \norm{F_{E_1,E_2}^{(3)}}_\infty \leq  \frac{C_{3,\epsilon,E,\rho}}{d^\frac{3}{2}}.
\end{equation}
Now, applying equations \ref{eq:lind_bound}
,\ref{eq:final_lind_bound}, and summing over all $w_{i,j}$, we obtain:
\begin{align*}
\abs{\E[F_{E_1,E_2}(A,g_{\epsilon_1})]-\E[F_{E_1,E_2}(B,g_{\epsilon_1})]}
   &\leq C_{F,E,\epsilon_1,\rho}\frac{n}{\sqrt{d}},
\end{align*}
Where $C_{3,\epsilon_1,E}
,C_{F,E,\epsilon_1,\rho}$ denote constants dependent on $E,\epsilon_1,\rho$.
Subsequently, following the proof of (\ref{eq:ham_universality}), we obtain:
\begin{lemma}\label{lem:uni_sparse_max_E_def}
Let $A, B$ be arbitrary random weight matrices with i.i.d random entries satisfying the assumptions in Theorem \ref{thm:uni_sparse}. For any $h,E \in \R$, we have,
\begin{equation}
    \abs{D_{E_1,E_2}^*(A,h)-D_{E_1,E_2}^*(B,h)} = o_d(1)n+o(n),
\end{equation}
 with high probability as $n \rightarrow \infty$.
\end{lemma}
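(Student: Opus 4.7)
The plan is to mirror the strategy of the proof of Lemma \ref{lem:sparse_uni}, but applied to the augmented Hamiltonian $\cH_{\geq E, d}(W, g_{\epsilon_1}, \sigma)$ from Equation \ref{eq:ham_const}, which contains an additional global term penalizing configurations whose energy falls below $E$. The excerpt has already defined the partition function $Z_{\geq E}$ and free energy $F_{\geq E}$, sketched the derivative bound $\norm{F_{\geq E}^{(3)}}_\infty \leq C_{3,\epsilon,E,\rho}/d^{3/2}$, and derived the Lindeberg-based estimate $\abs{\E[F_{\geq E}(A, g_{\epsilon_1})] - \E[F_{\geq E}(B, g_{\epsilon_1})]} \leq C_{F,E,\epsilon_1,\rho}\, n/\sqrt{d}$. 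Building on this, the remaining work is to transfer the bound from the smoothed free energy to the minimum of the deficit itself, and to upgrade the resulting statement in expectation into a high-probability statement.

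First, I would argue exactly as around Equations \ref{eq:approx_1}, \ref{eq:approx_2}, \ref{eq:approx_3} that the smooth approximation introduces only an $O(\epsilon_1 n)$ error, since the full Hamiltonian $\cH_{\geq E, d}$ is a sum of $n+1$ evaluations of $()^+$-type functions and $g_{\epsilon_1}$ is uniformly $\epsilon_1$-close to $()^+$. The log-partition sandwich $-\cH_{\geq E, d}^*(W, g_{\epsilon_1}) \leq F_{\geq E}(W, g_{\epsilon_1}) \leq -\cH_{\geq E, d}^*(W, g_{\epsilon_1}) + \tfrac{\log 2}{\rho} n$ remains valid since $\abs{M_0} \leq 2^n$. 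Chaining the three error bounds via the triangle inequality, and tuning $\epsilon_1 = \epsilon/6$, $\rho = 2\log 2/\epsilon$, while taking $d$ large enough that $C_{F,E,\epsilon_1,\rho}/\sqrt{d} \leq \epsilon/3$, yields $\abs{\E[\cH_{\geq E, d}^*(A, ()_+)] - \E[\cH_{\geq E, d}^*(B, ()_+)]} \leq \epsilon n$; since $\epsilon$ was arbitrary and $\cH_{\geq E, d}^*(W, ()_+) = D_{\geq E}^*(W, h)$, this gives the universality of the expected minimum deficit.

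Next, I would establish concentration of $D_{\geq E}^*(A, h)$ and $D_{\geq E}^*(B, h)$ around their expectations via the Efron--Stein inequality, exactly as in Equation \ref{eq:eff-stein}. The $1$-Lipschitz property of $()^+$ combined with $\abs{\sigma_i \sigma_j / \sqrt{d}} \leq 1/\sqrt{d}$ still gives a bounded-difference coefficient of order $\abs{w_{ij} - w'_{ij}}/\sqrt{d}$, now with a contribution both from the single row/column containing the edge and from the single global energy term. Plugging into Efron--Stein, the assumption $\E[\abs{w_{ij}-\mu}^2] = \frac{d}{n}(1-\frac{d}{n})$ yields $\var(D_{\geq E}^*(W,h)) = O(n\sqrt{d})$, so Chebyshev's inequality gives $\abs{D_{\geq E}^*(W,h) - \E[D_{\geq E}^*(W,h)]} = o(n)$ with high probability. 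Combining this with the expectation bound via the triangle inequality produces the claimed $\abs{D_{\geq E}^*(A,h) - D_{\geq E}^*(B,h)} = o_d(1)n + o(n)$ with high probability.

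The main obstacle is verifying that the global energy penalty $g_{\epsilon_1}(nE + \tfrac{1}{\sqrt{d}} \sum_{ij} w_{ij} \sigma_i \sigma_j)$ does not degrade the derivative estimates, since unlike the local deficit terms each of which depends on only a single row of $W$, the energy term depends on all entries simultaneously. Differentiating it with respect to a single entry $w_{kr}$ produces $\dot g_{\epsilon_1}(\cdots) \cdot \sigma_k \sigma_r / \sqrt{d}$, which inherits the same $1/\sqrt{d}$ per differentiation as the local terms because each derivative of $g_{\epsilon_1}$ is uniformly bounded. Consequently, when the derivatives of the Gibbs average $\langle \cdot \rangle$ are expanded by the quotient rule, the same propagation argument used for $T_1, \ldots, T_7$ in the proof of Lemma \ref{lem:sparse_uni} gives identical $1/d$ and $1/d^{3/2}$ scalings for $\norm{F_{\geq E}^{(2)}}_\infty$ and $\norm{F_{\geq E}^{(3)}}_\infty$, up to constants depending on $E, \epsilon_1, \rho$. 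Once this book-keeping is in place, the rest of the argument is a direct transcription of the proof of Lemma \ref{lem:sparse_uni}.
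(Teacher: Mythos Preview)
Your proposal is correct and follows essentially the same approach as the paper: Lindeberg's method applied to the smoothed free energy $F_{\geq E}$ associated with the augmented Hamiltonian $\cH_{\geq E,d}$, followed by the approximation chain (smoothing error $O(\epsilon_1 n)$, log-partition sandwich error $O(n/\rho)$) and Efron--Stein concentration. Your identification of the key verification---that the single global energy term contributes the same $1/\sqrt{d}$ per differentiation as each local term and hence does not spoil the $\norm{F_{\geq E}^{(3)}}_\infty = O(d^{-3/2})$ bound---matches the paper's treatment exactly.
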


\begin{proposition}\label{prop:sparse_deficit_success_max_e}
Let $W_{d,n}$ be a family of random weighted graphs satisfying the assumptions in \ref{thm:uni_sparse}.
Let $h^\star$ be the threshold defined in Theorem \ref{thm:sparse_thres}.
Then, for every $\epsilon$ and $E_1,E_2$ such that either $i)$ $h\in (h_{\rm cor}, h^\star)$, and  $(E_1,E_2) \subset (E_{\rm min}(h),E_{\rm max}(h))$ or $ii)$ $h\in (-0.1, h_{\rm cor})$, and  $(E_1,E_2) \subset (E_{\rm cor}(h),E_{\rm max}(h))$, we have:
\begin{equation} 
N_{E_1,E_2}^*(W,h) \geq  n(1-o_d(1))-o(n).
\end{equation}
\end{proposition}

\begin{proof}
Let $W'$ be a random weighted graph from the sparse anti-ferromagnetic model with parameters $d,n$.
Let $\tilde{h},\tilde{E}_1,\tilde{E}_2$ be as in Lemma \ref{lem:high_prob_max}.
Corollary \ref{cor:def_const_1} then implies that $D^*_{\tilde E_1,\tilde E_2}(W',\tilde h)=o_d(1)n + o(n)$. Applying Lemma \ref{lem:uni_sparse_max_E_def} then yields $D^*_{\tilde E_1,\tilde E_2}(W,\tilde h)=o_d(1)n + o(n)$. Subsequently, we apply Lemma \ref{lem:max_E_def_success} to obtain $N_{E_1,E_2}^*(W,h) =  n(1-o_d(1))-o(n)$
\end{proof}

Next, we use the truncated $()_1^+$ defined in (\ref{eq:trunc_thres}), to define the truncated maximal energy deficit function:
\begin{equation}\label{eq:ham_const_trunc}
\begin{split}
T_{E_1,E_2}(W,h,\sigma)&=\sum_{1\le i\le n}\left(h-{1\over \sqrt{d}}\sum_j w_{ij}\sigma_i\sigma_j\right)^+_1 + n\left(E_1+{1\over n\sqrt{d}}\sum_{i,j \leq n} w_{ij}\sigma_i\sigma_j\right)^+_1\\&+ n\left(-{1\over n\sqrt{d}}\sum_{i,j \leq n} w_{ij}\sigma_i\sigma_j-E_2\right)^+_1,
\end{split}
\end{equation}  
and similarly we define $T^\star_{E_1,E_2}(W,h) = \min_{\sigma \in M_0} T_{E_1,E_2}(W,h,\sigma)$

The following results relate the truncated maximal energy deficit cut to the maximum number of $h$-stable vertices amongst configurations having sufficient energy:
\begin{lemma}\label{lem:def_3_const}
Suppose $E,h \in \R$ satisfy $N_{E_1,E_2}^*(W,h) = n-\Theta_d(1)n$ with high probability as $n \rightarrow \infty$. Then for any $(\tilde E_1,\tilde E_2) \subset (E_1, E_2)$ and $\tilde h<h$, we have $T_{\tilde E_1, \tilde E_2}^*(W,\tilde h) \geq (\Theta_d(1))n+o(n)$ with high probability as $n \rightarrow \infty$.
\end{lemma}
\begin{proof}
$N_{E_1,E_2}^*(W,h)=n-\Theta_d(1)n$ implies that for any configuration $\sigma$, for large enough $d$, $\exists \epsilon$ such that, we have either:
\begin{enumerate}
    \item $E(\sigma)<E_1$.
    \item $E(\sigma)>E_2$.
    \item $N_{E_1,E_2}(W,h,\sigma)\leq n-\epsilon n$.
\end{enumerate}
In either case, we obtain:
\begin{align*}
    T_{\tilde E_1,\tilde E_2}(W,\tilde h,\sigma) &\geq \sum_{1\le i\le n}\left(h-{1\over \sqrt{d}}\sum_j w_{ij}\sigma_i\sigma_j\right)^+_1 + \left(n \tilde E_1+{1\over \sqrt{d}}\sum_{i,j \leq n} w_{ij}\sigma_i\sigma_j\right)^+_1\\&+\left(-{1\over \sqrt{d}}\sum_{i,j \leq n} w_{ij}\sigma_i\sigma_j-n \tilde E_2\right)^+_1\\
    &= \sum_i \left(\tilde h-h+h-{1\over \sqrt{d}}\sum_j w_{ij}\sigma_i\sigma_j\right)_1^+ +\left(n \tilde E_1-n E_1+nE_1-H(\sigma)\right)_1^+ \\&+\left(H(\sigma)-nE_2 + nE_2-n\tilde{E}_2\right)_1^+\\
     &\geq \max\rbr*{\epsilon n\left(\tilde h-h\right)_1^+,n(\tilde E_1-E_1)^+_1, n(E_2-\tilde E_2)^+_1}.
\end{align*}
\end{proof}
\begin{lemma}\label{lem:def_2_const}
Suppose $E,h \in \R$ satisfy $T_{E_1,E_2}^*(W,h) = \Theta_d(1)n - o(n)$ with high probability as $n \rightarrow \infty$. Then, we have $N_{E_1,E_2}^*(W,h)= n-(\Theta_d(1))n-o(n)$ with high probability as $n \rightarrow \infty$.
\end{lemma}
\begin{proof}
$T_{E_1,E_2}^*(W,h) = \Theta_d(1)n - o(n)$ implies that there exists an $\epsilon > 0$ such that for large enough $d$, any configuration $\sigma$ satisfies  $T_{E_1,E_2}(W,h,\sigma) \geq \epsilon n$. Since $()^+_1$ is bounded by $1$, we obtain that any configuration with $(nE_1+{1\over n\sqrt{d}}\sum_{i,j \leq n} w_{ij}\sigma_i\sigma_j)^+_1>0$ and $(nE_1+{1\over n\sqrt{d}}\sum_{i,j \leq n} w_{ij}\sigma_i\sigma_j)^+_1=0$ and $(-{1\over n\sqrt{d}}\sum_{i,j \leq n} w_{ij}\sigma_i\sigma_j-nE_2)^+_1>0$  must satisfy $\left(h-{1\over \sqrt{d}}\sum_j w_{ij}\sigma_i\sigma_j\right)>0$ for at least $\epsilon n$ vertices.
\end{proof}
\begin{proposition}\label{prop:sparse_deficit_fail_max_e}
Let $W_{d,n}$ be a family of random weighted graphs satisfying the assumptions in \ref{thm:sparse_max_E_thres}.
Let $h^\star$ be the threshold defined in Theorem \ref{thm:sparse_thres}.
Then, for every $\epsilon>0$ and $h < h^\star$, we have,
\begin{equation} 
N_{-\infty,E_{\rm min}(h)}^*(W,h) \leq  n(1-\Theta_d(1))+o(n),
\end{equation}
and
\begin{equation} 
N_{E_{\rm max}(h),\infty}^*(W,h) \leq  n(1-\Theta_d(1))+o(n).
\end{equation}
\end{proposition}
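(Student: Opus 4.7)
The plan is to mirror the proof of Proposition \ref{prop:sparse_deficit_success_max_e}, but replacing the smooth deficit $D_{\geq E}$ by the truncated maximal-energy deficit $T_{\geq E}$ from \eqref{eq:ham_const_trunc}, exactly as Section \ref{sec:absense_uni} did when passing from $h < h^*$ to $h > h^*$. The key virtue of the truncated threshold $(\cdot)_1^+$ is that it is bounded, so a linear lower bound on $T_{\geq E}^*$ genuinely forces a linear number of $h$-stability violations among configurations of energy at least $E$, via Lemma \ref{lem:def_2_const}.

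The sparse anti-ferromagnetic anchor is Theorem \ref{thm:sparse_max_E_thres}. Fix $h \in [-0.1, h^*)$ and $E > E_{\max}(h)$. Using the continuity and monotonicity of $h \mapsto E_{\max}(h)$ visible in Figure \ref{fig:max_energy} (which follows from the implicit-function theorem applied to the defining equation $w'(E,h)=0$, once one checks that $\partial_E w'$ is nonzero at the largest root of the convex-in-$E$ function $w'(\cdot,h)$), I would pick $\tilde h \in (h, h^*)$ and $\tilde E \in (E_{\max}(\tilde h), E)$; both intervals are nonempty because $E > E_{\max}(h) \geq E_{\max}(\tilde h)$. Applying Theorem \ref{thm:sparse_max_E_thres} at parameters $(\tilde h, \tilde E)$ to a weight matrix $W'$ drawn from the sparse anti-ferromagnetic configuration model then gives $N_{\geq \tilde E}^*(W', \tilde h) \leq n - \Theta_d(1) n - o(n)$ w.h.p. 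Feeding this into Lemma \ref{lem:def_3_const} (with the lemma's reference pair taken as $(\tilde E, \tilde h)$ and its tilded pair taken as our $(E, h)$, for which the inequalities $E > \tilde E$ and $h < \tilde h$ match the lemma's hypotheses) yields $T_{\geq E}^*(W', h) \geq \Theta_d(1) n + o(n)$ w.h.p.

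To transfer this estimate to a general $W$ satisfying the hypotheses of Theorem \ref{thm:uni_sparse}, I would run a Lindeberg swap on the smoothed free energy $F_{\geq E}(W, g_{\epsilon_1}) = \rho^{-1}\log Z_{\geq E}(W, g_{\epsilon_1})$ built from the Hamiltonian \eqref{eq:ham_const}, where $g_{\epsilon_1}$ is now a $C^\infty$ uniform approximant to $(\cdot)_1^+$ with uniformly bounded first three derivatives (such an approximant exists because $(\cdot)_1^+$ is $1$-Lipschitz and compactly supported in slope). The second- and third-derivative bounds \eqref{eq:2derbound} and \eqref{eq:thirder_const} carry over verbatim; the energy-penalty summand $g_{\epsilon_1}(nE + \frac{1}{\sqrt{d}}\sum_{i,j} w_{ij}\sigma_i\sigma_j)$ contributes only $O(1/d)$ to the second derivative and $O(d^{-3/2})$ to the third, as already noted in the derivation preceding \eqref{eq:thirder_const}. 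Combining this swap bound with Efron-Stein concentration of $T_{\geq E}^*$ (identical to \eqref{eq:eff-stein}) produces $|T_{\geq E}^*(W, h) - T_{\geq E}^*(W', h)| = o_d(1) n + o(n)$ w.h.p., hence $T_{\geq E}^*(W, h) \geq \Theta_d(1) n + o(n)$ w.h.p. A final application of Lemma \ref{lem:def_2_const} converts this into $N_{\geq E}^*(W, h) \leq n - \Theta_d(1) n + o(n)$.

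The main technical hurdle is the bookkeeping for choosing $(\tilde h, \tilde E)$ so that Lemma \ref{lem:def_3_const} is actually available; once the continuity and strict monotonicity of $E_{\max}$ on $[-0.1, h^*)$ is secured, the Lindeberg and concentration steps are structurally identical to those already carried out for Lemma \ref{lem:uni_sparse_max_E_def} and Proposition \ref{prop:sparse_deficit_success_max_e}, so no new estimates are required.
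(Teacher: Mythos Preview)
Your overall architecture is exactly the paper's: anchor in the anti-ferromagnetic model, pass to the truncated maximal-energy deficit $T_{\geq E}^*$, run Lindeberg on the associated free energy, and close with Lemma~\ref{lem:def_2_const}. The Lindeberg and concentration parts go through verbatim.

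The substantive gap is the direction of your $\tilde h$-perturbation. You take $\tilde h\in(h,h^*)$ and then invoke Lemma~\ref{lem:def_3_const} with hypothesis at $(\tilde E,\tilde h)$ and conclusion at $(E,h)$. This matches the lemma \emph{as printed}, but the printed inequality ``$\tilde h<h$'' is a typo: compare with Lemma~\ref{lem:def_3}, and inspect the proof of Lemma~\ref{lem:def_3_const}, where the bound uses $(\tilde h-h)_1^+$, which vanishes unless $\tilde h>h$. Concretely, with your choice $\tilde h>h$, a violation of $\tilde h$-stability (the \emph{stronger} condition) says only that $s_\sigma(v,W)<\tilde h$; it gives no lower bound on $(h-s_\sigma(v,W))_1^+$, so Case~2 of the argument yields nothing and you cannot deduce $T_{\geq E}^*(W',h)\geq cn$ from $N_{\geq \tilde E}^*(W',\tilde h)\leq n-\epsilon n$.

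The fix is to anchor at $h'<h$ (and $E'<E$), not above. Since $E_{\max}$ is strictly decreasing, $E_{\max}(h')>E_{\max}(h)$, so the interval $(E_{\max}(h'),E)$ is nonempty only for $h'$ close enough to $h$, which is where continuity of $E_{\max}$ is genuinely needed. With such $(E',h')$, Theorem~\ref{thm:sparse_max_E_thres} at $(h',E')$ gives $N_{\geq E'}^*(W',h')\leq n-\epsilon n$; then the corrected Lemma~\ref{lem:def_3_const} (conclusion parameters $E>E'$, $h>h'$) yields $T_{\geq E}^*(W',h)\geq cn$, and the rest of your argument (Lindeberg transfer, Lemma~\ref{lem:def_2_const}) finishes the job. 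Note this also explains why your justification ``$E>E_{\max}(h)\geq E_{\max}(\tilde h)$'' came for free: that inequality is the one that holds in the wrong direction.
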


Propositions \ref{prop:sparse_deficit_success_max_e},\ref{prop:sparse_deficit_fail_max_e} along with Lemma \ref{lem:max_E_def_success} imply Theorem \ref{thm:sparse_max_E_thres_uni}.

Similarly, for the proof of Theorem \ref{thm:dense_max_E_thres}, we consider the constrained Hamiltonian for the dense graph: 
\begin{equation}\label{eq:const_ham}
\begin{split}
\cH_{E_1,E_2}(W,g,\sigma)&=\sum_{1\le i\le n}g\left(h-{1\over \sqrt{n}}\sum_j w_{ij}\sigma_i\sigma_j\right) + g\left(nE_1+{1\over \sqrt{n}}\sum_{i,j \leq n} w_{ij}\sigma_i\sigma_j\right)\\
&+ g\left(-{1\over \sqrt{n}}\sum_{i,j \leq n} w_{ij}\sigma_i\sigma_j-nE_2\right).
\end{split}
\end{equation}

Next, analogous to (\ref{eq:rescale_ham}), we introduce the rescaled and shifted variables $p_{ij}=\sqrt{\frac{d}{n}}w_{ij}$ and observe that the above Hamiltonian $\cH_E$ reduces to the constrained Hamiltonian $\cH_{E,d}$ for sparse graphs:
\begin{equation}\label{eq:rescale_ham_const}
\begin{split}
\cH_{E_1,E_2}(w,g,\sigma)= \cH_{E_1,E_2,d}(p,g,\sigma)&=\sum_{1\le i\le n}g\left(h-{1\over \sqrt{d}}\sum_j p_{ij}\sigma_i\sigma_j\right) + g\left(nE_1+{1\over \sqrt{d}}\sum_{i,j \leq n} p_{ij}\sigma_i\sigma_j\right)\\+ g\left(-{1\over \sqrt{d}}\sum_{i,j \leq n} p_{ij}\sigma_i\sigma_j-nE_2\right).
\end{split}
\end{equation}

Therefore, analogous to Lemma \ref{lem:dense_deficit_success}, we utilize the bound in (\ref{eq:thirder_const}) to obtain:
\begin{lemma}\label{lem:dense_deficit_success_const}
Let $J$ be a weighted graph on $n$ nodes satisfying the assumptions in Theorem \ref{thm:uni_dense}.
Suppose that either $i)$ $h\in (h_{\rm cor}, h^\star)$, and  $(E_1,E_2) \subset (E_{\rm min}(h),E_{\rm max}(h))$ or $ii)$ $h\in (-0.1, h_{\rm cor})$, and  $(E_1,E_2) \subset (E_{\rm cor}(h),E_{\rm max}(h))$, then with high probability as $n \rightarrow \infty$:
\begin{equation}
    D^*_{E_1,E_2}(J,h)=o(n).
\end{equation}
\end{lemma}

The rest of the proof of Theorem \ref{thm:dense_max_E_thres}, follows that of Theorem \ref{thm:sparse_max_E_thres}, namely we utilize Lemma \ref{lem:max_E_def_success} to obtain that $N^*_{E_1,E_2}(W,h)=n-o(n)$.  Similarly, we apply the Lindeberg's argument to the truncated deficit function along with Lemmas \ref{lem:def_3_const}, \ref{lem:def_2_const} to obtain the statement of Theorem \ref{thm:dense_max_E_thres}. 

. 

\section{Conclusions and Open Problems}\label{sec:conc}

In this work, we analyzed several interesting phenomena related to the single-spin-flip-stability in random graphs and spin glasses.
There are several promising directions for future work:
\begin{enumerate}
    \item Geometry of solutions: Given the existence of near $h$-stable solutions, it is natural to wonder about the geometry of solutions. 
    In Figure \ref{fig:2ndmom_thres}, we observe that the second-moment entropy density at the fixed energy $E^\star(h)$ as a function of the overlap demonstrates a steep drop below $0$ near overlaps $-1$ and $1$.
 Using the Markov inequality, this implies that for large enough $d$, with high probability as $n\rightarrow \infty$, all pairs of $h$-stable configurations do not have overlaps in a certain range.
 Such a separation property, also known as the ``Overlap Gap Property", has been linked to algorithmic hardness in a recent line of work \citep{gamarnik2021overlap}. By the above argument, the  negativity of second moment entropy density $W(E,\omega,h)$ for an interval of $\omega$ thus implies the ``Overlap Gap Property" for the given value of $h$.
 However, for small enough $h>0$, we observe that the second-moment entropy density stays above $0$ for all overlaps $\omega \in (-1,1)$.
 We illustrate this in Fig.~\ref{fig:thres_h_005} for $h=0.05$. Thus, our results only imply ``Overlap Gap Property" for sufficiently large $h$. Going beyond our results, \cite{huang2025strong} recently established the same for all $h>0$, using it to prove the hardness for finding such states through low-degree polynomial methods.

 \begin{figure}
     \centering
     \includesvg[width=0.6\textwidth]{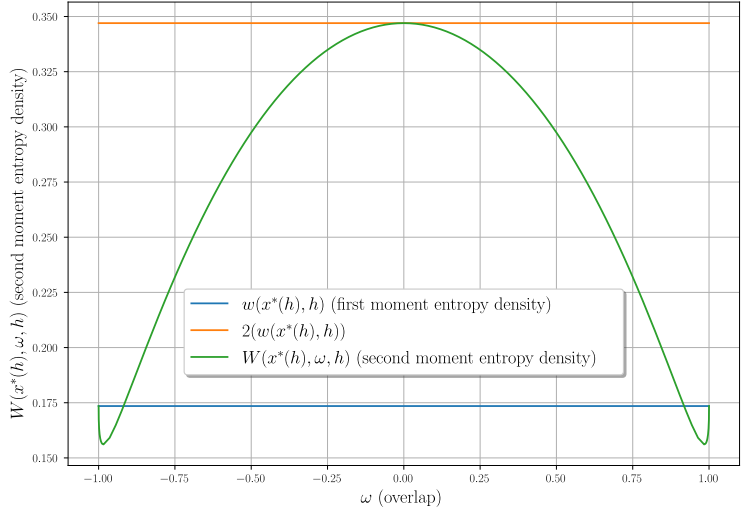}
     \caption{The second moment entropy density at $E=E^*(h)$ for $h=0.05$}
     \label{fig:thres_h_005}
 \end{figure}

Such a result for all $h>0$ is also known for sparse regular graphs, wherein \citep{behrens2022dis} used the small-set expansion to prove that with high probability, any two $h$-stable partitions have a hamming distance at least $C(d)n$ for some constant $C$ possibly dependent on $d$. 
 

\item Universality and the existence of fully $h$-stable partitions. Unlike the case of sparse Erdős-Rényi graphs, the SK model might contain configurations with all vertices being $h$-stable. We leave to future work extending our results for the existence of configurations containing $n-o(n)$ $h$-stable vertices to configurations with all $n$ vertices being $h$-stable. 
A concurrent work by Minzer, Sah, Sawhney \citep{minzer2023perfectly} recently proved such a result for dense \ER graphs from $\G(n,1/2)$. 
A promising direction would be to extend the result to the SK model with Gaussian disorder. We believe that our proof technique could also allow establishing the universality of the phase transition for configurations with all spins being $h$-stable. 

\item Extension to $p$-spin Ising models and hypergraphs: 
We believe that our proof techniques can also be generalized to analyze the single-flip stability in models involving interactions between $p\geq 3$ spins such as $p$-spin models and hypergraphs.

\item Extensions of universality and the limiting the entropy density: As mentioned in sections \ref{sec:intro} and \ref{sec:universality_e_max}, our results corroborate the predictions in \cite{bray1981metastable} for $E_{min}(0)$ and $E_{max}(0)$. However, further inspection of the numerical values reveals that even the values of the first moment entropy density and $E_{\rm cor}(0)$ obtained through Theorem \ref{thm:sparse_max_E_thres} for sparse graphs with anti-ferromagnetic interactions match the ones reported in \cite{bray1981metastable}. This leads us to conjecture that a large number of properties of the $h$-stable configurations obey universality, including the first and second moment entropy densities. Furthermore, the first moment entropy density obtained by us for specific values of energy matches the quenched entropy density in \cite{bray1981metastable}, defined as the expectation of logarithm of the number of local optima at the given energy level. We therefore conjecture that the first moment entropy density asymptotically matches the quenched one. More precisely, let $X_{E_1,E_2}(h)$ denote the number of $h$-stable configurations with normalized energy in range $(E_1,E_2)$ for some $E_1<E_2$. We conjecture that:
\begin{equation}
    \lim_{n \rightarrow \infty} \frac{1}{n}\Ea{\log (1+X_{E_1,E_2}(h))}  =  \lim_{n \rightarrow \infty} \frac{1}{n}\log (1+\Ea{X_{E_1,E_2}(h)}),
\end{equation}
for $E_1,E_2,h$ such that the RHS is positive. Here we use $\log (1+X_{E_1,E_2}(h))$ instead of $\log (X_{E_1,E_2}(h))$ ensure that the term remains defined for $X_{E_1,E_2}(h)=0$.
\end{enumerate}

\section{Acknowledgments}
David Gamarnik acknowledges funding from NSF grant DMS-2015517. We thank Freya Behrens for discussions and pointers to existing literature.

\bibliographystyle{plainnat}
\bibliography{citations}

\appendix
\section{First Moment Entropy Density}\label{app:first_mom}

\subsection{Setup}\label{sec:setup}

Let $\tilde{\G}(n,d/n)$, denote a multi-graph sampled under the configuration model with a fixed number
of edges $m=dn/2$ (as described in Section \ref{sec:Theorem1}). Define $A^{(d,n)}$ to be
the adjacency matrix of $\tilde{\G}(n,d/n)$ with entries 
$a_{ij}$ for $i,j \in [n]$ being equal to the number of edges between vertices $i$ and $j$. Following the setup of Theorem \ref{thm:sparse_thres}, we further define the edge weights $W^{(d,n)}$ as the negative of the adjacency matrix i.e $W^{(d,n)}=-A^{(d,n)}$. The single
flip stability \eqref{def:h-stable} value of a node $v$ 
with respect to bisection 
$\sigma:[n]\to \{\pm 1\}$ can then be expressed as:
\begin{align}
    s_\sigma(v,W) &= -{1\over \sqrt{d}} \sigma_v \sum_{u\in [n]} w_{vu}\sigma_u,
    \notag\\
    &=\frac{1}{\sqrt{d}}\left(\sum_{u\in [n],\sigma_u\ne \sigma_v} a_{vu} - \sum_{u\in [n],\sigma_u=\sigma_v} a_{vu}\right). \label{eq:friendli-1}
\end{align}
while $h$-stability of a vertex $v\in [n]$ 
is defined as before, namely,
$s_\sigma(v)\ge h$. In words, the stability $s_\sigma(v,W)$ equals the differences between the number of cross-partition neighbors and in-partition neighbors of $v$.



For every $z \in \mathbb{R}$, let $X(z,h,r)$ 
denote
the (random) number of bisections
$\sigma$ of $[n]$ in $\G(n;dn/2)$ with at least $rn$
$h$-stable nodes, 
such that the cut
value associated with $\sigma$ is $\floor{zn}$.
Namely the cardinality of the set
of pairs $(i,j), 1\le i<j\le n$ 
with $\sigma(i)\ne \sigma(j), w_{i,j}=1$
is $\floor{zn}$. 
 Proposition~\ref{prop:neg_first_mom} claims  the bound of the form $\E[X(z,h,r)]\le \exp(w(E,h,r)n+o(n))$, 
where $w(E,h,r)$ is defined by (\ref{eq:wrh})

To compute $\mathbb{E}[X(z,h,r)]$  we further divide the number of bisections
according to the number of edges within each side of the bisections. 
For any bisection 
$\sigma:[n]\to \{\pm 1\}$,
let $V_1=\{i\in [n]: \sigma(i)=1\},
V_2=\{i\in [n]: \sigma(i)=-1\}$.
Let $E_{ij}$ denote the number of edges 
having ends in sets $V_i, V_j$. This 
partitions the set of edges of $\G(n;dn/2)$
into  three subsets of edges with cardinalities
$E_{11},E_{12},E_{22}$. 

When $r=1$, by adding
(\ref{eq:friendli-1}) over $v \in V_1$, we obtain the following relation between the subset sizes $E_{11}, E_{12}, E_{22}$ and the $h$-stabilities:
\begin{equation}\label{eq:E_1_const}
E_{12}-2E_{11} = \sum_{v \in V_1} s_\sigma(v, W) \geq h\frac{n}{2}\sqrt{d},
\end{equation}
and
\begin{equation}\label{eq:E_2_const}
    E_{12}-2E_{22} = \sum_{v \in V_2} s_\sigma(v, W) \geq h\frac{n}{2}\sqrt{d}.
\end{equation}
Since $E_{11}+ E_{12}+E_{22}=\frac{d}{2}n$ and $E_{12}=\floor{zn}$,
we obtain the bound:
\begin{equation}\label{eq:E_const}
    4\floor{zn}-d n \geq h\sqrt{d}.
\end{equation}

For any $h \in \mathbb{R}, r \in [0,1]$ and $z,z_1,z_2 \in \mathbb{R}$, let $X(z,z_1,z_2, h,r)$
denote the number
of bisections satisfying $E_{12}=\floor{zn},E_{11}=z_1n,E_{22}=z_2n$ and $s_\sigma(v) \geq h$ for at least $\floor{rn}$ vertices. Trivially we have
 $X(z,z_1,z_2, h,r)\le X(z,h,r)$, and $X(z,h,r)$ can be decomposed as:
\begin{equation}\label{eq:X-conditioned-z}
   \mathbb{E}[X(z,h,r)] = \sum_{z_1,z_2} \mathbb{E}[X(z,z_1,z_2, h,r)],
\end{equation}
where the sum is over $z_1,z_2$ satisfying $(z_1+z_2)=(d/2)n-\floor{zn}$. 

Our first task to bound the range of values for $z,z_1,z_2$ for large $n$ and $d$. For any constant $C>0$ and $z \in \mathbb{R}$, define the following set:
\begin{equation}\label{eq:def_Dz}
D(C,z)=\{z_1,z_2: z_1n, z_2n \in \Z_+, |2z_1n-dn/4|, |2z_2n-dn/4| \le C\sqrt{d}n, (\floor{zn}+(z_1+z_2)n=d/2n\}.
\end{equation}

\begin{lemma}\label{lemma:range-z}
For every  $C\ge 2$, $d\ge 1$, and $z \in \mathbb{R}$, for large enough $n$, the sum in (\ref{eq:X-conditioned-z})
restricted to pairs $(z_1,z_2)$ outside $D(C,z)$ is bounded by $\exp(-2n)$.
\end{lemma}

\begin{proof}
The constant $2$ above is somewhat arbitrary, but suffices for our purposes. Fix any bisection $\sigma$ and 
consider the associated random variables
$E_{11}, E_{22}$. The generative process under the configuration model can be represented as independently assigning the $i_{th}$ half-edge for $i \in [dn]$ uniformly to one of the $n^2$ pairs of vertices (including self-edges). Therefore, under the configuration model, the total number of half-edges $2E_{11}, 2E_{22}$ are Binomial random variables with $nd$ trials with success probability $1/4$.(assuming $n$ is even for convenience). 
Applying Hoeffding inequality to $E_{11}, E_{22}$, we have
\begin{align*}
\pr\left( 2 E_{jj} -\E[2 E_{jj}] |\ge t\right)\le 2\exp\left(-{2t^2\over nd}\right),
\end{align*}
for $j=1,2$. Using $t=C\sqrt{d}n$, we obtain a bound $2\exp\left(-2C^2n\right)$. The number of bisections is trivially at most  $2^n$. Relaxing the $h$-stability 
requirement, we obtain 
\begin{align*}
\sum_{(z_1,z_2)\notin D(C,z)} \mathbb{E}[X(z,z_1,z_2, h,r)] 
&\le \sum_{(z_1,z_2)\notin D(C,z)}\sum_\sigma  \pr(E_{12}=zn,E_{11}=z_1n,E_{22}=z_2n) \\
  & \le 6(dn/2)^3 \times 2^n\exp\left(-C^2n\right).
\end{align*}
Since $\exp(-C^2)2 < \exp(-C)$ for $C \geq 2$, the LHS is bounded by $\exp(-Cn)$ for large enough $n$.

Threfore, by our choice of $C\geq 2$ the claim is established.
\end{proof}
From this point on we let $C$ denote a fixed constant $\geq 2$ to obtain:

\begin{equation}\label{eq:X-conditioned-z-restricted}
   \mathbb{E}[X(z,h,r)] = \sum_{(z_1,z_2)\in D(C,z)} \mathbb{E}[X(z,z_1,z_2, h,r)]+\exp(-Cn),
\end{equation}
where, as we recall, 
\begin{align}
D(C,z)=\{z_1,z_2: z_1n, z_2n \in \Z_+, |z_1n-dn/8|, |z_2n-dn/8| \le 2\sqrt{d}n, z+z_1+z_2=d/2 \}.
\label{eq:D2}
\end{align}

Next, we fix a bisection $\sigma$ with parts $V_1,V_2$ 
and consider the probability distribution over graphs under the configuration model, conditioned on some fixed values of $z_1,z_2,z$
with respect to bisection $\sigma$.
This configuration model can be described as being obtained through two independent assignments of a fixed number of half-edges to vertices within each partition, corresponding to in-partition and cross-partition edges. Consider the partition $V_1$. First,
$2z_1$ half-edges are assigned independently with replacement to the vertices in $V_1$
and subsequently paired with each other.
Similarly,
$2z_2$ half-edges are assigned independently with replacement to the vertices in $V_1$
and subsequently paired with each other.
The second assignment matches $\floor{zn}$
cross-partition half-edges independently with vertices in $V_1$.
Similarly, the corresponding assignment of the remaining half-edges to vertices in $V_2$ is performed independently.

 Let
$\cE(\sigma, z,z_1,z_2)$ denote the event $E_{12}=\floor{zn},E_{11}=z_1,E_{22}=z_2$
that bisection $\sigma$ has $z_1n$ and $z_2n$ edges respectively, inside 
each  partition $V_1$ and $V_2$ of 
the bisection $\sigma$, 
 and $zn$ cross edges. By symmetry, for every fixed values of $z,z_1,z_2$, $\pr[ \mathcal{E}(\sigma,z,z_1,z_2)]$
is identical for all bisections $\sigma$.
Let also  $\cE_{opt}(\sigma,h,r)$ denote the event that the bisection $\sigma$ satisfies $h$-stability for at least
one subset of $\floor{rn}$ vertices. 

We have:
\begin{equation}\label{eq:firs_mom-0}
    \E[X(z,z_1,z_2,h,r)] = \sum_{\sigma} \pr[ \mathcal{E}(\sigma,z,z_1,z_2)]\pr[\mathcal{E}_{opt}(\sigma,h,r)|\mathcal{E}(\sigma,z,z_1,z_2)]. 
\end{equation}

\ignore{
\dg{if we stick with $\alpha=0$ and
obtain uniform bound on the $E_{\rm opt}$
term, we can kill what we have below}

The first term $\pr[\mathcal{E}(\sigma,z,z_1,z_2)])$ 
was evaluated in  
Lemma 3.2 of \citep{gamarnikMaxcutSparseRandom2018}:

\begin{align}
\pr[\mathcal{E}(\sigma,z,z_1,z_2,\alpha)]&={n \choose {(1/2+\alpha)n}} {{dn}  \choose {{2z_1n},  {2z_2n}, {zn}, {zn}}} \left((1/2+\alpha)n\right)^{(2z_1+z)n} \left((1/2-\alpha)n\right)^{(2z_2+z)n} (zn)!\\
&\times F(2z_1n)F(2z_2n)n^{-dn} (F(dn))^{-1},
\nonumber
\end{align}
where $F(m)=\frac{m!}{(m/2)! 2^{m/2}}$ denotes the number of
perfect matchings on a set of $m$ nodes.
Next we use the following lemma from \cite{gamarnikMaxcutSparseRandom2018}, characterizing the leading exponential terms in $\pr[\cE(\sigma,z,z_1,z_2,\alpha)]$:
\begin{lemma}[Equations 35,36 in \citep{gamarnikMaxcutSparseRandom2018}):]\label{lem:first_mom_ent_bound}
\begin{align}
  \frac{1}{n} \log \pr[\cE(\sigma,z,z_1,z_2,\alpha)] &=-d\log2+4(z_1-z_2)\alpha-(2+2d)\alpha^2
-z\log z -z_1 \log 2z_1 -z_2 \log 2z_2 \notag \\
&+ d/2\log d + o_{\alpha}(\alpha^2)d+o(1) O_d(\sqrt{d})\alpha- 2d \alpha^2 + o_{\alpha}(\alpha^2)d.\\
\frac{1}{n} \log \pr[\cE(\sigma,z,z_1,z_2,\alpha)] &\leq z (-\log 4 - 4 \alpha^2 ) + 2z_1(-\log2+2\alpha)
         +2z_2(-\log2-2\alpha)-z\log z \notag \\
         &-z_1 \log 2z_1 -z_2 \log 2z_2 + d/2\log d + o(1). \label{eq:upp_bound}
\end{align}
Let the dominating term on $\alpha$ in the RHS of (\ref{eq:upp_bound}) be denoted by $q(\alpha,d)$. Then, $q(\alpha,d)=\cO(
\sqrt{d})\alpha - 2d\alpha^2+ o_\alpha(\alpha^2)d$, implying that $\lim q(\alpha,d)=-\infty$ as $d \rightarrow \infty$ for $\alpha = \omega(\frac{1}{\sqrt{d}})$.
Furthermore, the dominating terms in the above bound are maximized when $z_1=z_2$, implying:
\begin{equation}\label{eq:final_first_mom_bound}
   \frac{1}{n} \log \pr[\mathcal{E}(\sigma,z,z_1,z_2,\alpha)] \leq \log2-2x^2+o_d(1)+q(\alpha,d)+o(1) \, .
\end{equation}
\end{lemma}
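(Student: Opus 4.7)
\textbf{Proof proposal for Lemma \ref{lem:first_mom_ent_bound}.}

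The plan is to derive the asymptotic expression starting from the exact combinatorial formula for $P[\cE(\sigma,z,z_1,z_2,\alpha)]$ given immediately before the lemma, and then to derive the upper bound and the final simplification at $z_1=z_2$ as consequences. The key tool is Stirling's approximation $\log m! = m\log m - m + O(\log m)$, applied in an $n$-normalized way so that all $O(\log n)$ and smaller terms collapse into the $o(1)$ term in the final expression.

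First I would expand the six factors in the explicit formula: the binomial $\binom{n}{(1/2+\alpha)n}$ contributes $(\log 2 - 2\alpha^2 + o_\alpha(\alpha^2))n$ through the standard Taylor expansion of $H(1/2+\alpha)$; the multinomial $\binom{dn}{2z_1n,2z_2n,zn,zn}$ and the factor $(zn)!$ contribute terms involving $dn\log(dn)$, $-2z_i n\log(2z_in)$, and $-zn\log(zn)$; the two perfect-matching counts $F(2z_in)$ contribute $z_in\log(2z_in) - z_in + O(\log n)$; and $F(dn)^{-1}$ contributes $-(dn/2)\log(dn) + (dn/2) + O(\log n)$. The geometric factors $((1/2+\alpha)n)^{(2z_1+z)n}$ and $((1/2-\alpha)n)^{(2z_2+z)n}$ are expanded via $\log(1/2\pm\alpha) = -\log 2 \pm 2\alpha - 2\alpha^2 + O(\alpha^3)$, which is where the linear term $4(z_1-z_2)\alpha$ and the $-2d\alpha^2$ term arise (the latter because $2z_1+2z_2+2z = d$ multiplies the common $-2\alpha^2$). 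Collecting all terms of order $n$ gives the first displayed asymptotic equality.

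For the upper bound \eqref{eq:upp_bound}, the idea is to discard subleading contributions while only using monotonicity. Using $\log(1/2\pm \alpha) \leq -\log 2 \pm 2\alpha - 2\alpha^2$ (since the $O(\alpha^3)$ remainder is bounded above by $0$ for small $\alpha$), the geometric factors get replaced by $\exp(n(2z_i(-\log 2 \pm 2\alpha) + z(-\log 4) - 4z\alpha^2))$, and absorbing the binomial contribution $(\log 2 - 2\alpha^2)n$ into the $-4\alpha^2$ coefficient leads directly to the displayed bound after collecting terms. The $-2z\log 2$ and $-4z\alpha^2$ are tagged on and combined with the original $-(2+2d)\alpha^2$ to get the $q(\alpha,d)$ envelope. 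For the estimate on $q(\alpha,d)$, the observation is that in the regime of interest $z = d/4 + x\sqrt{d/2}$, so $|z_1-z_2| = O(\sqrt{d})$ and the linear piece is $O(\sqrt{d})\alpha$; combined with the quadratic $-2d\alpha^2$, completing the square gives a maximum at $\alpha = O(1/\sqrt{d})$ of $O(1)$, while for $\alpha = \omega(1/\sqrt{d})$ the quadratic term dominates and drives $q(\alpha,d) \to -\infty$.

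Finally, to show the dominating terms in \eqref{eq:upp_bound} are maximized at $z_1=z_2$, fix $z_1+z_2 = d/2 - z$ and view the $\alpha$-independent part as $f(z_1) + f(z_2)$ with $f(u) = -u\log(2u) - 2u\log 2$, which is strictly concave. Jensen (or $f''(u) = -1/u < 0$) shows the sum is maximized at $z_1=z_2=(d/2-z)/2$. Substituting this choice and $z = d/4 + x\sqrt{d/2}$ into the upper bound, then expanding $\log(1 + 2x\sqrt{2/d}) = 2x\sqrt{2/d} - 4x^2/d + O(d^{-3/2})$ to second order in $x/\sqrt{d}$, the $d\log d$ terms cancel exactly and the surviving constant plus quadratic contributions assemble into $\log 2 - 2x^2 + o_d(1)$, giving \eqref{eq:final_first_mom_bound}. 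The main bookkeeping obstacle will be verifying that all the $O(\log n)$, $O(\alpha^3)$, and $O(d^{-3/2})$ remainders can genuinely be absorbed into the claimed $o(1)$, $o_\alpha(\alpha^2)d$, and $o_d(1)$ symbols uniformly in the range of $x$ and $\alpha$ of interest — a task which is routine but tedious, and which is essentially where the proof in \citep{gamarnikMaxcutSparseRandom2018} is invoked verbatim.
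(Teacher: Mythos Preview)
The paper does not give its own proof of this lemma: it is quoted verbatim as Equations 35, 36 from \cite{gamarnikMaxcutSparseRandom2018}, with no derivation beyond displaying the exact combinatorial formula for $P[\cE(\sigma,z,z_1,z_2,\alpha)]$ immediately before. Your proposal --- Stirling on each factor, Taylor-expanding $\log(1/2\pm\alpha)$, then concavity in $(z_1,z_2)$ and a second-order expansion in $x/\sqrt{d}$ --- is exactly the computation that underlies the cited equations, so there is nothing to compare against and your outline is the right one.
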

The above result further allows us to prove throughout the subsequent discussion, we may restrict ourselves to  $\alpha=\cO(\frac{1}{\sqrt{d}})$. This is shown through the following lemma:

\begin{lemma}\label{lem:alpha_bound}
Define $g(\alpha,d)=\limsup_{n \rightarrow \infty}\frac{1}{n} \log \pr \mathcal{E}(\sigma,z,z_1,z_2,\alpha)]\pr\cE_{opt}(\sigma,h,r)|\mathcal{E}(\sigma,z,z_1,z_2,\alpha)]$.Then there exists constant $C$ such that $\limsup_{d\rightarrow \infty}\sup_{\alpha \leq C/\sqrt{d}}g(\alpha,d)=\limsup_{d\rightarrow \infty}\sup_{\alpha}g(\alpha,d)$
\end{lemma}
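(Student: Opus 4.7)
The strategy is to show that the ``quadratic well'' in $\alpha$ identified in Lemma~\ref{lem:first_mom_ent_bound} localizes the supremum of $g(\alpha,d)$ to a window of width $O(1/\sqrt{d})$ around $\alpha=0$. Concretely, I would combine a uniform upper bound showing $g(\alpha,d)\to -\infty$ outside this window with a uniform-in-$d$ lower bound on $g(0,d)$, so that unbalanced partitions cannot compete with the balanced ones in the supremum.

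First I would use Lemma~\ref{lem:first_mom_ent_bound} together with the trivial inequality $P[\cE_{opt}\mid \cE]\le 1$ to get $g(\alpha,d)\le \log 2 - 2x^2 + q(\alpha,d) + o_d(1)$, where $q(\alpha,d) = O(\sqrt{d})\alpha - 2d\alpha^2 + o_\alpha(\alpha^2)d$ as in Equation~(\ref{eq:final_first_mom_bound}). I would then split the range of $\alpha$ into two regimes. For $|\alpha|\le \alpha_0$ with $\alpha_0$ chosen small enough that the $o_\alpha(\alpha^2)d$ remainder is dominated by $d\alpha^2$, the substitution $\alpha=y/\sqrt{d}$ yields $q(\alpha,d)\le K|y|-y^2$ for an absolute constant $K$, so that $|y|\ge C$ with $C\ge 2K$ forces $q(\alpha,d)\le -C^2/2$, which tends to $-\infty$ as $C\to\infty$ uniformly in $d$. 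For $|\alpha|\ge \alpha_0$, I would invoke the raw bound~(\ref{eq:upp_bound}) with $z_1=z_2=(d/2-z)/2$, where the leading quadratic contribution $-4z\alpha^2\sim -d\alpha_0^2$ drives the upper bound to $-\infty$ as $d\to\infty$ uniformly over this compact range. Together these yield: for any $M>0$ there exist $C$ and $d_0$ such that $\sup_{|\alpha|>C/\sqrt{d}} g(\alpha,d)\le -M$ for every $d\ge d_0$.

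Next I would verify that $g(0,d)$ stays bounded below as $d\to\infty$. This follows directly from the first-moment computations of Section~\ref{sec:first_moment}, where the entropy density $w_r(x,h)$ at $\alpha=0$ is an explicit finite quantity. Combining with the previous step, for $M$ sufficiently large we have $\sup_{|\alpha|>C/\sqrt{d}}g(\alpha,d) < g(0,d) \le \sup_{|\alpha|\le C/\sqrt{d}} g(\alpha,d)$ for all large $d$, so the unrestricted sup equals the restricted sup at each such $d$, and taking $\limsup_{d\to\infty}$ on both sides yields the lemma.

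The main obstacle will be the intermediate regime $|\alpha|\in[\alpha_0,1/2]$: the quadratic expansion of $q$ that is valid near $\alpha=0$ no longer applies there because of the $o_\alpha(\alpha^2)d$ remainder, and one must instead work with the original per-configuration bound~(\ref{eq:upp_bound}) and check that the entropy terms $-z\log z - z_1\log 2z_1 - z_2\log 2z_2 + (d/2)\log d$ do not offset the strongly negative $-4z\alpha^2$ contribution. Mild additional care is needed near $\alpha\approx 1/2$, where $z_1$ or $z_2$ may be forced close to $0$ and the $z_i\log 2z_i$ entropy terms become singular, but this is handled by using $-x\log x\le 1/e$ and absorbing the bounded remainder into the dominant $-d\alpha_0^2$ quadratic decay.
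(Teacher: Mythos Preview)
Your proposal is correct and follows essentially the same approach as the paper: both use the upper bound from Lemma~\ref{lem:first_mom_ent_bound} together with the trivial estimate $P[\cE_{opt}\mid\cE]\le 1$ to show that $g(\alpha,d)$ is driven to $-\infty$ once $\alpha$ leaves an $O(1/\sqrt{d})$ window. The paper's proof is terser and finishes with a short contradiction argument (if no $C$ works then the unrestricted $\limsup$ must itself be $-\infty$, in which case the two $\limsup$'s trivially agree), whereas you argue directly by exhibiting a uniform lower bound at $\alpha=0$ and treating the intermediate range $|\alpha|\in[\alpha_0,1/2]$ explicitly; your version is more detailed but not substantively different.
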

\begin{proof}
    From Lemma \ref{lem:first_mom_ent_bound}, we have that:
    \begin{equation}
        \frac{1}{n} \log \pr\mathcal{E}(\sigma,z,z_1,z_2,\alpha)] \leq \log2-2x^2+o_d(1)+q(\alpha,d)+o(1),
    \end{equation}
where  $\limsup_{d\rightarrow \infty} q(\alpha,d)=-\infty$ if $\alpha = \omega(\frac{1}{\sqrt{d}})$.
Since, $\pr\cE_{opt}(\sigma,h,r)|\mathcal{E}(\sigma,z,z_1,z_2,\alpha)] < 1$, we further have that $1/n\log \pr\cE_{opt}(\sigma,h,r)|\mathcal{E}(\sigma,z,z_1,z_2,\alpha)]$ is bounded uniformly in $\alpha$ by 0. Therefore, we have:
\begin{equation}
    \frac{1}{n} \log \pr \mathcal{E}(\sigma,z,z_1,z_2,\alpha)]\pr[E_{opt}(\sigma,h,r)|\mathcal{E}(\sigma,z,z_1,z_2,\alpha)] \leq \log2-2x^2+o_d(1)+q(\alpha,d)+o(1).
\end{equation}
Now, for any constant $C$:
$\lim_{d\rightarrow \infty}\sup_{\alpha \leq C/\sqrt{d}}g(\alpha,d)\leq \lim_{d\rightarrow \infty}\sup_{\alpha}g(\alpha,d)$. Therefore, suppose no constant satisfies $\lim_{d\rightarrow \infty}\sup_{\alpha \leq C/\sqrt{d}}g(\alpha,d)= \lim_{d\rightarrow \infty}\sup_{\alpha}g(\alpha,d)$ , then considering large enough $C$ yields $\lim_{d\rightarrow \infty}\sup_{\alpha}g(\alpha,d)=-\infty$.
\end{proof}
}

Next we bound the term
 $\pr[\cE_{opt}(\sigma,h,r)|\cE(\sigma,z,z_1,z_2)]$. 
 For this goal we perform 
 the computation associated with the 
 probability that a \emph{fixed} subset
 of $rn$ nodes satisfies $h$-stability.
 Given a subset $V_r\subset [n]$ 
 with cardinality $rn$, let
   $\cE_{opt}(\sigma, h,V_r)$ denote the event that all nodes in $V_r$ are 
   $h$-stable with respect to $\sigma$.
Let $r_1$n denote the cardinality of the intersection
of $V_r$ with the set of nodes $i$ with $\sigma(i)=1$.
Namely, it is the number of nodes from $V_r$ in the $+1$
part of the partition $\sigma$.
Similarly define $r_2n$.
We note that, by symmetry, $\pr[\cE_{opt}(\sigma,h,V_r)|E(\sigma,z,z_1,z_2)]$ is the same 
for any fixed $\sigma$ and any fixed values of $r_1$ and $r_2$. 
For any $r_1,r_2 \in \mathbb{R}$, denote by $\cE_{opt}(\sigma,h,r_1,r_2)$ 
the event $\cE_{opt}(\sigma,h,V_r)$ when $V_r$ consists
of the first $\floor{r_1n}$ nodes of $+1$ part of $\sigma$ and
first $\floor{r_2n}$ nodes of the $-1$ part of $\sigma$. 
Then, from the union bound, we obtain the following (when $n$ is even): 
\begin{equation}\label{eq:union_bound}
    \pr[
    \cE_{opt}(\sigma,h,r)|\mathcal{E}(\sigma,z,z_1,z_2)] \leq \sum_{r_1+r_2=r}\binom{{n\over 2}}{r_1n} \binom{{n\over 2}}{r_2n}  
    \pr[\cE_{opt}(\sigma,h,r_1,r_2)|\cE(\sigma,z,z_1,z_2)].
\end{equation}    
The corresponding expression when $n$ is odd is similar and omitted. 
Combining with (\ref{eq:X-conditioned-z-restricted}) and (\ref{eq:firs_mom-0}) we obtain:
\begin{align}
    \mathbb{E}[X(z,h,r)] 
    &\leq 2^n\sum_{(z_1,z_2)\in D(C,z)}  \pr(\cE(\sigma,z,z_1,z_2)) 
     \sum_{r_1+r_2=r}\binom{{n\over 2}}{r_1n} \binom{{n\over 2}}{r_2n}  
    \pr[\cE_{opt}(\sigma,h,r_1,r_2)|\cE(\sigma,z,z_1,z_2)] 
    \label{eq:firs_mom-1}\\
    &+ \exp(-Cn) \notag. 
\end{align}

Given a bisection $\sigma$
with parts $V_1,V_2$,
  let $O_1$ denote the event 
 that  the first $r_1n$ vertices in $V_1$ are $h$-stable. Similarly, let $O_2$ denote the event of that the first $r_2n$ vertices in $V_2$ are $h$-stable.
An important observation we use 
is that conditioned on the value 
of $E_{12}$, namely the number of edges
cut by $\sigma$, the events $O_1$ and $O_2$
are independent. This is because the
the event $O_1$ depends on the location
of the ``left'' half edges of $E_{12}$
and the event $O_2$ depends on the location
of the ``right'' half edges of $E_{12}$
and these are independent.
Therefore, we have:
\begin{equation}\label{eq:opt_prob}
\pr[\mathcal{E}_{opt}(\sigma,h,r_1,r_2)|\mathcal{E}(\sigma,z,z_1,z_2)]=\mathbb{P}[O_1]\mathbb{P}[O_2].
\end{equation}

Our next goal is to 
 compute $\mathbb{P}[O_1]$.
The event leading to $O_1$ 
can be described through the following generative model.
Fix non-negative integers 
$\mu_1,\mu_2,N \in \mathbb{N}$.
Consider the process
of assigning $\mu_1$ balls (half-edges)
into  $N$ bins independently. 
Denote the number of balls in bin
$i$ by $E_i$. Repeat the experiment
independently using $\mu_2$ balls 
with the same $N$ bins and denote the 
number of balls in bin $i$ in the second
round by $F_i$.
Given $\zeta\in [0,1]$ let
\begin{equation}
\label{eq:Kmu_mubar}
K(N,\mu_1,\mu_2,h,\zeta)\triangleq \mathbb{P}[ E_i \geq F_i+h\sqrt{d}, 1\leq i\leq \zeta N].
\end{equation}
Letting $E_i$ and $F_i$ denote
the out-degree and in-degree of the node
$i$ associated with bisection $V_1,V_2$,
we obtain
\begin{equation}\label{eq:p_optimal}
\pr[\mathcal{E}_{opt}(\sigma,h,r_1,r_2)|\mathcal{E}(\sigma,z,z_1,z_2,)]=K(n/2, zn, 2z_1 n,h,r_1) 
K(n/2 , zn, 2z_2 n,h,r_2). \notag
\end{equation}
Combining with (\ref{eq:firs_mom-1}), we obtain:
\begin{align}
    &\mathbb{E}[X(z,h,r)]  \notag\\
    &\le 
    2^n\sum_{(z_1,z_2)\in D(C,z)}  
    \pr(\cE(\sigma,z,z_1,z_2)) 
     \sum_{r_1+r_2=r}\binom{{n\over 2}}{r_1n} \binom{{n\over 2}}{r_2n}  
     K(n/2, zn, 2z_1 n,h,r_1) K(n/2 , zn, 2z_2 n,h,r_2) \label{eq:firs_mom}\\
     &+\exp(-Cn). \notag
\end{align}

We additionally note that for $r=1$, the union bound in (\ref{eq:union_bound}) is tight, leading to a corresponding lower-bound:

\begin{align}
    &\mathbb{E}[X(z,h,r)]  \notag\\
    &\ge 
    2^n\sum_{(z_1,z_2)\in D(C,z)}  
    \pr(\cE(\sigma,z,z_1,z_2)) 
     \sum_{r_1+r_2=r}\binom{{n\over 2}}{r_1n} \binom{{n\over 2}}{r_2n}  
     K(n/2, zn, 2z_1 n,h,r_1) K(n/2 , zn, 2z_2 n,h,r_2) \label{eq:firs_mom_lb}\\
     &+\exp(-Cn). \notag 
\end{align}

\subsection{Poissonization}
Next we turn to  the Poissonization technique.

\begin{lemma}
{\cite{durrett2019probability} (Exercise 3.6.13), \cite{coja2013upper}(Corollary 2.4)}
\label{lem:poissonization}
Consider the balls-into-bins model
above where $\mu$ balls are thrown into
$N$ bins. Let $E_i$ denote 
the number of balls assigned to the 
bin $i\in [N]$.  
Let $(B_i)_{i \in [N]}$ be a sequence of independent Poisson random variables with the mean $\mu/N$. 
Then for any sequence of non-negative integers $(t_i)_{i \in [N]}$, summing up to $\mu$, the  
joint distribution of $t_i$ 
satisfies
\begin{align}
\label{Poi_approx1}
\mathbb{P}[ E_i=t_i, 1\leq i\leq N] 
&=\mathbb{P}[ B_i=t_i, 1\leq i\leq N| 
\sum_{i=1}^{N} B_i=\mu]
=\Theta_{\mu}(\sqrt{\mu}) \mathbb{P}[ 
B_i=t_i, 1\leq i\leq N],
\end{align}
where the constant in $\Theta_\mu$ is 
universal.
\end{lemma}

The above lemma expresses the   fact that conditioned on the total number of balls, the joint probability of obtaining a particular distribution of the number of 
balls in each of the bins is 
approximately factorized.

Next, we introduce the event that the first $\zeta N$ bins simultaneously satisfy 
$h$-stability defined as follows:
\begin{align*}
S(\mu_1,\mu_2,h,\zeta)
=\{((t_i, s_i))_{1\leq i \leq N} \in 
(\mathbb{Z}_{\geq 0})^{2N}: t_i \geq 
s_i+h\sqrt{d}, 1\leq i \leq \zeta N; 
\sum_{i=1}^{N}t_i=\mu_1; 
\sum_{i=1}^{N}s_i=\mu_2\}.
\end{align*}
We have
\begin{align*}
 \mathbb{P}[E_i \geq F_i+h\sqrt{d},  1\leq 
 i\leq \zeta N]=\sum_{S(\mu_1,\mu_2,h,\zeta)} 
 \mathbb{P}[E_i=t_i, 1\leq i\leq 
 N]\pr[F_i=s_i, 1\leq i\leq N]\, .
\end{align*}
Using Lemma \ref{lem:poissonization}, the 
terms $\mathbb{P}[E_i=t_i, 1\leq i\leq N]$ 
and $\mathbb{P}[F_i=s_i, 1\leq i\leq N]$
can be approximated through sequences of 
independent Poisson random variables. 
Namely, we claim 
\begin{lemma}
\label{lemma_twoPo}
Let $(B_i)_{i \in [N]},(C_i)_{i \in [N]}$ be sequences of independent Poisson variables with the means $\mu_1/N$,$\mu_2/N$. Then, for any $h \in R,\zeta \in [0,1]$ following holds:
\begin{small}
\begin{align}
&K(N,\mu_1,\mu_2,h,\zeta) \nonumber\\
&=\Theta_{\mu_1}(\sqrt{\mu_1}) \Theta_{\mu_2}(\sqrt{\mu_2})
\mathbb{P}\left[\sum_{i=1}^{N} B_i=\mu_1, \sum_{i=1}^{N} C_i=\mu_2 \Bigm\vert B_i \geq C_i+h\sqrt{d}, 1\leq i\leq \zeta N \right]
(\mathbb{P}[B_1 \geq C_1 + h\sqrt{d}])^{\zeta N} 
\notag,
\end{align}
\end{small}
where $B_i,C_i$ denote independent Poisson variables with means $\frac{\mu_1}{N}$ and $\frac{\mu_2}{N}$ respectively. 
\end{lemma}

\begin{proof}
We have
\begin{align*}
&K(N,\mu_1,\mu_2,h,\zeta) \\
&=\sum_{S(\mu_1,\mu_2,h,\zeta)} 
 \mathbb{P}[E_i=t_i, 1\leq i\leq 
 N]\mathbb{P}[F_i=s_i, 1\leq i\leq N] \\
&=\Theta_{\mu_1}(\sqrt{\mu_1}) \Theta_{\mu_2}(\sqrt{\mu_2})
\sum_{S(\mu_1,\mu_2,h,\zeta)} 
 \mathbb{P}[B_i=t_i, 1\leq i\leq 
 N]\mathbb{P}[C_i=s_i, 1\leq i\leq N] \\
&=
\Theta_{\mu_1}(\sqrt{\mu_1}) \Theta_{\mu_2}(\sqrt{\mu_2})
\sum_{S(\mu_1,\mu_2,h,\zeta)} 
 \mathbb{P}[B_i=t_i, 1\leq i\leq 
 N, C_i=s_i, 1\leq i\leq N] \\
&=
\Theta_{\mu_1}(\sqrt{\mu_1}) \Theta_{\mu_2}(\sqrt{\mu_2})
 \pr[\sum_{1\le i\le N}B_i=\mu_1,\sum_{1\le i\le N}C_i=\mu_2,
 B_i\ge C_i+h\sqrt{d}, 1\le i\le \zeta N] \\
 &=
 \Theta_{\mu_1}(\sqrt{\mu_1}) \Theta_{\mu_2}(\sqrt{\mu_2})
 \pr[\sum_{1\le i\le N}B_i=\mu_1,\sum_{1\le i\le N}C_i=\mu_2|
 B_i\ge C_i+h\sqrt{d}, 1\le i\le \zeta N]\times \\
 &\times \pr[B_i\ge C_i+h\sqrt{d}, 1\le i\le \zeta N] \\
 &=
  \Theta_{\mu_1}(\sqrt{\mu_1}) \Theta_{\mu_2}(\sqrt{\mu_2})
 \pr[\sum_{1\le i\le N}B_i=\mu_1,\sum_{1\le i\le N}C_i=\mu_2|
 B_i\ge C_i+h\sqrt{d}, 1\le i\le \zeta N]\times \\
 &\times \left(\pr[B_1\ge C_1+h\sqrt{d}]\right)^{\zeta N}.
\end{align*}
\end{proof}

\ignore{
We denote the term $\mathbb{P}[B_1 \geq 
C_1 + h\sqrt{d}]$ by $P_1(h)$.
Thus the probability of optimality is 
reduced to computing the large deviation 
rate of a single variable (number of 
edges) under independent constraints on 
each vertex. 
This is achieved through generalizations 
of large deviation results in 
\citet{gamarnikMaxcutSparseRandom2018} to 
the case of $r < 1$ and $h \neq 0$.
}

\subsection{Large Deviations}

Lemma \ref{lemma_twoPo} relates the estimation of $K(N,\mu_1,\mu_2,h,\zeta)$ to large-deviation rates for centered and rescaled 2-dimensional
Poisson distributions considered above. In light of this, our next result provides  
large deviations rate for sums of 
independent variables arising from two 
distributions supported on lattices. 

The following is a known fact from the theory of large deviations, but we provide the proof 
for completeness.
\begin{lemma}\label{LCLTLDP}
Fix 
$b=(b_1,\ldots,b_k),
h=(h_1,\ldots,h_k)
\in 
\R^k, j=1,2.$
Consider a lattice $\mathcal{L}$  defined as:
\begin{align*}
    \mathcal{L} 
    &= 
    \{b_1+z_1h_1,
b_2+z_2h_2,
\cdots,b_k+z_kh_k, z_i 
\in  \mathbb{Z}, 1\le i\le k \}.
\end{align*}
Let $P_1$ and $P_2$ be two  discrete
probability measures supported 
on $\mathcal{L}$ with full-rank (positive definite) 
covariances. 
Let $M_1(\theta)=
\mathbb{E}_{P_1}[e^{\langle \theta,X \rangle}], 
M_2(\theta)=\mathbb{E}_{P_2}[e^{\langle \theta,X 
\rangle}]$ be the associated 
Moment Generating Functions, and let
$\Lambda_1(\theta) \triangleq \log M_1(\theta)$ and  $\Lambda_2(\theta) \triangleq \log M_2(\theta)$.
Suppose that $M_1(\theta) < \infty$ and $M_2(\theta) < \infty$ for all $\theta \in \mathbb{R}^d$.

Fix $\zeta\in [0,1]$.
Let $X_1, X_2, \dots, X_N$ be independent random vectors in $\mathbb{R}^k$, distributed as:
\begin{align*}
    X_i \stackrel{d}{=} \begin{cases}
    P_1, & 1 \leq i \leq \zeta N,\\
    P_2, & \text{otherwise},
    \end{cases}
\end{align*}
Suppose $y\in\R^k$
 is such that for all $N$, $Ny$
is of the form $\sum_{j\in [N]}x_j$ for some $x_j\in\mathcal{L}, j\in [N]$. (Namely $Ny$ is a sum
of $N$ lattice elements).
Suppose furthermore that   
$\theta^*\in \R^k$ satisfies
\begin{align}
    y=\zeta\nabla \Lambda_1(\theta^*)
+(1-\zeta)\nabla\Lambda_2(\theta^*).\label{eq:y}
\end{align}

Then the
following large deviations result holds for $S_N = \sum_{i=1}^N X_i$:
\begin{align}
\lim_{N\rightarrow \infty} \frac{1}{N} \log \mathbb{P}[S_N=y N] 
&=-\langle \theta^{*},y \rangle+\zeta\Lambda_1(\theta^*)+(1-\zeta)\Lambda_2(\theta^*). \label{LLPD1}
\end{align} 
\end{lemma}

\begin{proof}
\ignore{
\dg{Check that not having full support in $\mathcal{L}$
is not an issue} \yd{Full-support isn't an issue yet since we assume $Ny$ is sum over elements of the lattice and have non-generate covariance. Durret only has the result for single-variable case but I can look for another reference.}
}


To  prove  (\ref{LLPD1}).
introduce the probability measure $\tilde{P_1}$  through the following exponential tilting
\begin{equation}\label{eq:measure_change_1}
    \frac{d\tilde{P_1}}{dP_1}(z)=e^{\langle \theta^{*}, z \rangle-\Lambda_1(\theta^{*})}, \qquad z\in \mathcal{L}.
\end{equation}
Namely,  each point $z\in \mathcal{L}$ is associated with probability mass 
$e^{\langle \theta^{*}, z \rangle-\Lambda_1(\theta^{*})}P_1(z)$. 
Analogously,  let $\tilde{P_2}$ be the measure defined by:
\begin{equation}\label{eq:measure_change_2}
    \frac{d\tilde{P_2}}{dP_2}(z)=e^{\langle \theta^{*}, z \rangle-\Lambda_2(\theta^{*})}, \qquad z\in \mathcal{L}.
\end{equation}

\ignore{
Since $P_1,P_2$ are discrete, the above tilting corresponds to reweighing the probability
mass of each point in the corresponding lattice. 
Therefore, equations \ref{eq:measure_change_1}, \ref{eq:measure_change_2} define the measures $\tilde{P_1}$, $\tilde{P_2}$ uniquely. 
Furthermore, $\tilde{P_1},\tilde{P_2}$ are probability measures since:
\begin{equation}
\begin{split}
    \int_{\mathbb{R}^d} d \tilde{P_1} &= \int_{\mathbb{R}^d} e^{\langle \theta^{*}, z \rangle-\Lambda_1(\theta^{*})} d P_1=\frac{1}{M_1(\theta^{*})}\int_{\mathbb{R}^d} e^{\langle \theta^{*}, z \rangle}  dP_1=1\\
    \int_{\mathbb{R}^d} d \tilde{P_2} &= \int_{\mathbb{R}^d} e^{\langle \theta^{*}, z \rangle-\Lambda_1(\theta^{*})} d P_1=\frac{1}{M_2(\theta^{*})}\int_{\mathbb{R}^d} e^{\langle \theta^{*}, z \rangle}  dP_2=1
\end{split}
\end{equation}
}

We note that under the measure changes defined by Equations \ref{eq:measure_change_1} and \ref{eq:measure_change_2}, the means of the distributions can be evaluated as follows.
Letting $\tilde X\stackrel{d}{=}P_j$
we have 
\begin{align*}
m_1\triangleq\mathbb{E}_{P_1}[\tilde{X}]=\frac{1}{M_1(\theta^{*})}\int_{\mathbb{R}^d} z e^{\langle \theta^{*}, z \rangle}  d P^{(1)} &= \nabla \Lambda_1(\theta^{*}),\\
m_2\triangleq \mathbb{E}_{P_2}[\tilde{X}]=\frac{1}{M_2(\theta^{*})}\int_{\mathbb{R}^d} z e^{\langle \theta^{*}, z \rangle}  d P^{(2)} &= \nabla \Lambda_2(\theta^{*}).
\end{align*}
In particular, we have 
$\zeta m_1+(1-\zeta)m_2=\zeta\nabla \Lambda_1(\theta^{*})+(1-\zeta)\nabla \Lambda_2(\theta^{*})=y$.

\ignore{
Let $\tilde{X_1}, \tilde{X_2}, \dots, \tilde{X_n}$ be independent random vectors distributed according to the above measure changes:
\begin{align*}
    \tilde{X_i} \sim \begin{cases}
    \tilde{P_1}, & 1 \leq i \leq \zeta n,\\
    \tilde{P_2}, & \text{otherwise}.
    \end{cases}
\end{align*}
}

\ignore{
Since an exponential tilting of a measure doesn't affect its support, the measures $\tilde{\mu}^{(1)}$ and  $\tilde{\mu}^{(2)}$ still possess the biases and periods given by $b_i^{(1)}, h_i^{(1)}$ and $b_i^{(2)}, h_i^{(2)}$ respectively.
Let $\tilde{S}^{(1)}_n = \sum_{i=1}^{rn} \tilde{X_i}$ and $\tilde{S}^{(2)}_n = \sum_{i=rn+1}^n \tilde{X_i}$. Let $\tilde{S}^{(1)}_n = \sum_{i=1}^{n} \tilde{X_i} = \tilde{S}^{(1)}_n + \tilde{S}^{(2)}_n$.
We next define the following three measures:
\begin{align*}
    \tilde{P}_n(\{x\})&=\mathbb{P}[\tilde{S}_n/n=x],\\
    \tilde{P}^{(1)}_n(\{x\})&=\mathbb{P}[(\tilde{S}^{(1)}_n-rnm_1)/\sqrt{rn}=x],\\
    \tilde{P}^{(2)}_n({x})&=\mathbb{P}[(\tilde{S}^{(2)}_n-(1-r)nm_2)/\sqrt{n-rn}=x].
\end{align*}
}

We have:
\begin{align*}
\mathbb{P}[S_N/N=y]=&  
\int_{\sum_{i=1}^N z_i=y N} \prod_{i=1}^{\zeta N}P_1(z_i)\prod_{j=\zeta N+1}^{N}P_2(z_j)\nonumber \\
=& \int_{\sum_{i=1}^N z_i=y N}
e^{-\sum_{i=1}^N \langle \theta^{*}, z_i \rangle+N(\zeta\Lambda_1(\theta^{*})+(1-\zeta)\Lambda_2(\theta^{*}))}  \prod_{i=1}^{\zeta N}\tilde P_1(z_i)
\prod_{j=\zeta N+1}^{N}\tilde{P_2}(z_j) \\
&=
e^{- \langle \theta^{*}, y N\rangle
+N(\zeta\Lambda_1(\theta^{*})+(1-\zeta)\Lambda_2(\theta^{*}))}
\int_{\sum_{i=1}^N z_i=y N}
  \prod_{i=1}^{\zeta N}\tilde P_1(z_i)
\prod_{j=\zeta N+1}^{N}\tilde{P_2}(z_j),
\end{align*}
further implying 
\begin{align*}
{1\over N}\log\mathbb{P}[S_N/N=y]
&=
- \langle \theta^{*}, y \rangle
+\zeta\Lambda_1(\theta^{*})+(1-\zeta)\Lambda_2(\theta^{*}) \\
&+{1\over N}\log 
\int_{\sum_{i=1}^N z_i=y N}
  \prod_{i=1}^{\zeta N}\tilde P_1(z_i)
\prod_{j=\zeta N+1}^{N}\tilde{P_2}(z_j).
\end{align*}

Let $\tilde S_1$ be a sum of $\zeta N$ i.i.d. random variables distributed as $\tilde P_1$,
which we recall has mean value $m_1$.
Define $\tilde S_2$ similarly as sum of $(1-\zeta)N$ i.i.d.
random variables distributed 
as $\tilde P_2$. 
Then
\begin{align*}
{1\over N}\log\mathbb{P}[S_N/N=y]=
- \langle \theta^{*}, y \rangle
+\zeta\Lambda_1(\theta^{*})+(1-\zeta)\Lambda_2(\theta^{*})
+{1\over N}\log 
\sum_{t_1,t_2\in \mathcal{L}: t_1+t_2=y N} 
\pr(\tilde S_1=t_1)\pr(\tilde S_2=t_2).
\end{align*}
The proof of the lemma will be completed upon 
verifying the limit ${1\over N}\log(\cdot)\to 0$
on the right-hand side,
which we do next.

For this purpose,
let $\tilde\Sigma_1,\tilde\Sigma_2$ be the covariance matrices of 
measures $\tilde P_1,\tilde P_2$ respectively. 
Since covariances of measure $P_1,P_2$ are full
rank, the same applies to $\tilde\Sigma_1,\tilde\Sigma_2$.
Indeed, a degenerate covariance of $\tilde\Sigma_j$ 
would imply an almost sure linear relation between
components of $X\stackrel{d}{=}\tilde P_j$ which 
would imply the same for $X\stackrel{d}{=}P_j$ 
since two measures are continuous with respect to each
other.

Let $\tilde{\phi}_1(x),\tilde{\phi}_2(x)$ be the densities 
of normal variables with means $0$ and
covariances $\tilde \Sigma_1,\tilde\Sigma_2$ 
respectively. The non-degeneracy of $\tilde \Sigma_j$
allows us to invoke the local 
Central Limit Theorem (CLT)~\citep{durrett2019probability} and obtain for 
$j=1,2$
\begin{align}
\lim_N
\sup_{z\in \mathcal{L}}
\Big| \frac{(\zeta_j N)^{k\over 2}}{\prod_{i=1}^k h_i} 
\pr\left(\tilde S_j=z\right)-
\tilde\phi_j\left({z-m_j\zeta_j N\over \sqrt{\zeta_j N}}\right) \Big|
=0, \label{eq:LCLT}
\end{align}
where $\zeta_1 = \zeta, \zeta_2 = (1-\zeta)$.

Recall that 
$y=\zeta m_1+(1-\zeta)m_2$.
We find now any sequence
$\bar t_1^N\in \mathcal{L}$
 within any constant ($N$-independent)  distance from 
$\zeta m_1 N$.
Then $(\bar t_1^N-\zeta m_1 N)/\sqrt{N}\to 0$
and therefore 
\begin{align*}
\lim_N\tilde \phi_1\left({\bar t_1^N-\zeta m_1 N\over \sqrt{N}}\right)=\tilde\phi_1(0)>0.
\end{align*}
Applying the local CLT (\ref{eq:LCLT})
\begin{align*}
\lim_N
\Big| (\zeta N)^{k\over 2} 
\pr\left(\tilde S_j=\bar t_1^N\right)-
\tilde\phi_j(0) \Big|
=0.
\end{align*}
The strict positivity  of $\tilde\phi_1(0)$ implies
This implies
\begin{align*}
\lim_N {1\over N}\log\pr\left(\tilde S_j=\bar t_1^N\right)
=0.
\end{align*}
Next we let $\bar t_2^N=y N-\bar t_1^N$, 
which we observe belongs to $\mathcal{L}$.
Also,
\begin{align*}
\bar t_2^N &=yN-\bar t_1^N \\
&= (1-\zeta)m_2 N-\left(\bar t_1^N-\zeta m_1 N\right).
\end{align*}
Then 
\begin{align*}
\lim_N {\bar t_2^N-(1-\zeta) m_2 N
\over \sqrt{N}}=0.
\end{align*}
Thus, for a similar reason we obtain
\begin{align*}
\lim_N {1\over N}\log\pr
\left(\tilde S_j=\bar t_2^N\right)
=0.
\end{align*}
We have 
\begin{align*}
1\ge 
\sum_{t_1,t_2\in \mathcal{L}: t_1+t_2=y N} 
\pr(\tilde S_1=t_1)\pr(\tilde S_2=t_2))
\ge 
\pr
\left(\tilde S_1=\bar t_1^N\right)
\pr
\left(\tilde S_j=\bar t_2^N\right).
\end{align*}
Taking $\log$ of both sides, dividing by $N$
and taking the limit $N\to\infty$
we obtain
\begin{align*}
\lim_{N \rightarrow \infty}
{1\over N}\log \sum_{t_1,t_2\in \mathcal{L}: t_1+t_2=y N} 
\pr(\tilde S_1=t_1)\pr(\tilde S_2=t_2)
=0,
\end{align*}
as claimed.
\end{proof}

We now derive a corollary of the local large deviations
fact above to the case of two dimensional Poisson 
distributions.

\begin{corollary}\label{coro:Poisson-local-LD}
Suppose $Z_1,Z_2$ are two indpendent Poisson random
variables with positive integer parameters $\lambda_1,\lambda_2$
respectively. Let $\lambda=(\lambda_1,\lambda_2)$.
Fix $a\ge 0$ and $\zeta \in (0,1]$ and suppose that $\lambda_1,\lambda_2$ are strictly positive integers satisfying either of the following conditions:
\begin{enumerate}
    \item $\zeta=1$ and $\lambda_1\ge \lambda_2+a+2$.
    \item $\zeta < 1$, $\min(\lambda_i, \lambda_i/a^2) \geq C$ for $i = 1,2$,
\end{enumerate}
for a sufficiently large constant $C > 0$.
.

Let $P_1$ be the probability distribution of 
$\left({Z_1-\lambda_1\over \sqrt{\lambda_1}},
{Z_2-\lambda_2 \over\sqrt{\lambda_2}}\right)$ conditioned on $Z_1\ge Z_2+a$ 
and let $P_2$ be the probability distribution of the same
pair without conditioning. 
Suppose $X_i\stackrel{d}{=} P_1$ i.i.d. 
$1\le i\le \zeta N$,
and $X_i\stackrel{d}{=} P_2$ i.i.d. $\zeta N<i\le N$.
Let $S_N=\sum_{i\in [N]}X_i$ and let $\Lambda_j$ for $ j=1,2$ denote the log-Moment Generating
Function of $P_j$. 

Then, the function $g(\theta)= \zeta \Lambda_1(\theta)+(1-\zeta) \Lambda_2(\theta)$ admits a unique minimizer $\theta^*\in \R^2$:
\begin{align}
    \theta^* \coloneqq \argmin_\theta \left( \zeta \Lambda_1(\theta)+(1-\zeta) \Lambda_2(\theta) \right),
\end{align}
characterized by the following first-order conditions:
\begin{align}
    \zeta\nabla \Lambda_1(\theta^*)
+(1-\zeta)\nabla\Lambda_2(\theta^*)=0.
\label{eq:y-target}
\end{align}
Furthermore,
\begin{align}
\lim_N {1\over N}\log 
\pr\left(S_N=0\right)
&= \inf_{\theta \in \mathbb{R}}\zeta\Lambda_1(\theta)+(1-\zeta)\Lambda_2(\theta)\\
&=\zeta\Lambda_1(\theta^*)+(1-\zeta)\Lambda_2(\theta^*).
\label{eq:lD-rate-Poisson}
\end{align}
\end{corollary}

\begin{proof}
The Poisson distribution
assumption also implies that the Moment Generating
Functions of $P_1,P_2$ are finite for all $\theta$.
The uniqueness of the solution $\theta^*$ of (\ref{eq:y-target}) follows
from the well-known fact of 
strict convexity of the log-moment 
generating function 
$\zeta\Lambda_1(\theta)+
(1-\zeta)\Lambda_2(\theta)$. 
We now turn to the existence. Consider
the variational problem
\begin{align*}
\inf_\theta \zeta\Lambda_1(\theta)+
(1-\zeta)\Lambda_2(\theta).
\end{align*}
We claim that
\begin{align}\label{eq:theta-B}
\liminf_{B\to\infty}
{\inf_{\theta: \|\theta\|\ge B}} \zeta\Lambda_1(\theta)+
(1-\zeta)\Lambda_2(\theta)=\infty.
\end{align}
This implies that this variational
problem is solved by points 
$\theta^*$ satisfying
$\|\theta\|\le B$ for large enough $B$,
and thus by setting the gradient
to zero, completing the proof of
the existence of $\theta^*$ satisfying
(\ref{eq:y-target}).

To prove the claim (\ref{eq:theta-B}), we 
first consider the case $\zeta=1$, where we recall that we further assume $\lambda_1 \geq \lambda_2 + a +2$.

The assumptions $\lambda_1 \geq \lambda_2 + a +2$ and $\lambda_1, \lambda_2 \in \mathbb{Z}$ with  $\lambda_1, \lambda_2 >0 $ imply that $Z_1, Z_2=(\lambda_1+1, \lambda_2+1), (\lambda_1+1, \lambda_2-1), (\lambda_1-1, \lambda_2+1), (\lambda_1-1, \lambda_2-1)$ satisfy $Z_1 \geq Z_2 +a$ and
therefore
the
following points lie in the support of $P_1$:
\begin{equation}\label{eq:support_points}
    v_1 = (\frac{1}{\sqrt{\lambda}_1},\frac{1}{\sqrt{\lambda}_2}), \  v_2 = (\frac{1}{\sqrt{\lambda}_1},-\frac{1}{\sqrt{\lambda}_2}), v_3 = (-\frac{1}{\sqrt{\lambda}_1},\frac{1}{\sqrt{\lambda}_2}),
    v_4 = (-\frac{1}{\sqrt{\lambda}_1},-\frac{1}{\sqrt{\lambda}_2}).
\end{equation}
It is easy to check that the following relation holds:
\begin{equation}
    \cup_{i=1}^4 \{\theta: \langle v_i, \theta \rangle > 0\} = \mathbb{R}^2 \backslash 0.
\end{equation}
Define:
\begin{equation}
   k^\star = \inf_{\theta: \norm{\theta}=1} \max_i (\langle v_i, \theta \rangle).
\end{equation}
Then, by the compactness of the set $\norm{\theta}=1$ and continuity of $\max_i (\langle v_i, \theta \rangle)$, we have $k^\star > 0$. We exploit this to obtain a lower-bound on $\Lambda_1(\theta)$ as follows:
\begin{align*}
\Lambda_1(\theta)&=\log \left(\sum_{x}\exp\left(\langle \theta, x \rangle \right)P_1(X=x)\right) \\
&\ge
\log[\exp(k^\star \norm{\theta})[\min_{i} P_1(X=v_i)]],
\end{align*}
where $X \sim P_1$. Since $k^\star > 0$, we obtain that $\Lambda_1(\theta) \rightarrow \infty$ whenever $\norm{\theta} \rightarrow \infty$.


Next, consider the case $\zeta< 1$, where we further assumed $\min(\lambda_i, \lambda_i^2/a) \geq C$, for $i=1,2$. First, by Jensen's inequality, $\Lambda_1(\theta)$ is lower-bounded as follows:
\begin{equation}\label{eq:jensen}
\begin{split}
    \Lambda_1(\theta) &= \log \Eb{X\sim P_1}{\exp(\langle \theta, X \rangle)}\\
    & \geq \Eb{X\sim P_1}{\langle \theta, X \rangle}\\
    &=\theta_1 \Eb{X\sim P_1}{X_1}+\theta_2 \Eb{X\sim P_1}{X_2}.
\end{split}
\end{equation}

Next, we note that $\Lambda_2(\theta)$ is given as:
\begin{align*}
\Lambda_2(\theta)&=\sum_{j=1,2}\log \E\exp\left(\theta_j(Z_j-\lambda_j)/\sqrt{\lambda_j}\right) \\
&=\sum_{j=1,2}\log\exp\left(\lambda_je^{\theta_j\over \sqrt{\lambda_j}}
-\theta_j\sqrt{\lambda_j}-\lambda_j\right) \\
&=\sum_{j=1,2}\left(\lambda_je^{\theta_j\over \sqrt{\lambda_j}}
-\theta_j\sqrt{\lambda_j}-\lambda_j\right).
\end{align*}

Now, suppose that either $\theta_1 >0$ or $\theta_2 >0$. The exponential term $\lambda_je^{\theta_j\over \sqrt{\lambda_j}}$ combined with the lower-bound in (\ref{eq:jensen}) then ensures that:
\begin{equation}
\zeta\Lambda_1(\theta)+
(1-\zeta)\Lambda_2(\theta) \rightarrow \infty.
\end{equation}
For the case when both $\theta_1<0, \theta_2 <0$, we will leverage the condition $\lambda_i, \lambda_i/a^2 \geq C$ for $i=1,2$.

By Cauchy-Schwartz, we obtain the following bounds on $\Eb{X\sim P_1}{X_1},  \Eb{X\sim P_1}{X_2}$:
\begin{align}
  \abs{\Eb{X\sim P_1}{X_1}} 
  &= \abs{\frac{\Ea{\frac{Z_1-\lambda_1}{\sqrt{\lambda_1}}\mathbf{I}_{Z_1 \geq Z_2 + a}}}{\pr[Z_1 \geq Z_2 + a]}} \nonumber\\
  & \leq \frac{1}{{\pr[Z_1 \geq Z_2 + a]}}{\Ea{(\frac{Z_1-\lambda_1}{\sqrt{\lambda_1}})^2}^{1/2}\Ea{(\mathbf{I}_{Z_1 \geq Z_2 + a})^2}}^{1/2}\nonumber\\  
  & \leq \frac{1}{{\pr[Z_1 \geq Z_2 + a]}^{1/2}}, \label{eq:boundp1}
\end{align}

where in the last line, we used that $\Ea{(\frac{Z_1-\lambda_1}{\sqrt{\lambda_1}})^2}^{1/2}=1$ since $\frac{Z_1-\lambda_1}{\sqrt{\lambda_1}} \sim \mathcal{N}(0,1)$
and
$\Ea{(\mathbf{I}_{Z_1 \geq Z_2 + a})^2} \leq 1$.
From the Poisson-Normal approximation, as  $\lambda_1, \lambda_2 \rightarrow \infty$ the sequence of distributions of $(\frac{Z_1-\lambda_1}{\sqrt{\lambda_1}}, \frac{Z_2-\lambda_2}{\sqrt{\lambda_2}})$ converges to standard normal variables. The condition $a \leq C(\max (\sqrt{\lambda_1}, \sqrt{\lambda_2}))$, further implies that $\pr[Z_1 \geq Z_2 + a]$ remains lower-bounded by a constant.

Therefore, there exist constants $c_1 > 0, K > 0$ such that for $\lambda_i, \lambda_i/a^2 \geq K$, $\pr[Z_1 \geq Z_2 + a] > 
\frac{1}{c_1}$. The upper bound in (\ref{eq:boundp1}) then implies:
\begin{equation}
    \abs{\Eb{X\sim P_1}{X_1}} \leq c_1. 
\end{equation}
Similarly, we obtain:
\begin{equation}
    \abs{\Eb{X\sim P_1}{X_2}} \leq c_2, 
\end{equation}
for some constant $c_2 > 0$.

The above bounds along with (\ref{eq:jensen}) yield:
\begin{equation}\label{eq:lam_1_low_bound}
    \Lambda_1(\theta) \geq -\abs{\theta_1 c_1}-\abs{\theta_2 c_2}.
\end{equation}

Combining with the expression for $\Lambda_2(\theta)$ then results in:
\begin{equation}\label{eq:zeta}
\zeta\Lambda_1(\theta)+
(1-\zeta)\Lambda_2(\theta) \geq \sum_{j=1,2} -\theta_j\sqrt{\lambda_j} +\theta_j c_j.
\end{equation}

Now, setting $C> \max(K, c_1^2, c_2^2)$, we obtain that the condition  $\min(\lambda_i, \lambda_i/a^2) \geq C$ for $i=1,2$ further implies that
 $\sqrt{\lambda_i} > \abs{c}_i$ for $i=1,2$. We  therefore, obtain that the coefficient of $\theta_i$ in (\ref{eq:zeta}) for $i=1,2$ is strictly negative. Hence, $\zeta\Lambda_1(\theta)+
(1-\zeta)\Lambda_2(\theta) \rightarrow \infty$ as $\theta_1, \theta_2 \rightarrow -\infty$. This proves the claim in (\ref{eq:theta-B}).

Given the existence of $\theta^\star$, consider now the $k=2$-dimensional lattice
with $b_j=-\sqrt{\lambda_j}, h_j=1/\sqrt{\lambda_j}, j=1,2$,
so that $(z-\lambda_j)/\sqrt{\lambda_j}$ is the element
of this lattice for each integer $z$. 
Note that  any sequence of integers $z_i, i\in [N]$
satisfying $\sum_{1\le i\le N} z_i=\lambda_j N$ 
(which exists by integrality of $\lambda_j$) also satisfies 
$\sum_{1\le i\le N} (z_i-\lambda_j)/\sqrt{\lambda_j}=0$,
and therefore $0$  can be represented as a sum of some $N$ lattice elements. Thus we are in the setting 
of Lemma~\ref{LCLTLDP}. (\ref{eq:lD-rate-Poisson}) then
follows by the conclusion of the lemma.
\end{proof}

\subsection{Derivation of the Asymptotic Rate Function}
Our next goal is to obtain an asymptotic
form of Corollary~\ref{coro:Poisson-local-LD} when
the means of the two Poisson are growing, and use bi-variate
Gaussian approximations for the large deviations
rate functions. This will be of relevance to our 
analysis of $K(\cdot)$ term in (\ref{eq:Kmu_mubar}).

Let $Y=(Y_1,Y_2)$ be a pair of independent 
standard normal random 
variables. Fix a constant $c$ and consider the
distribution of $Y$ conditioned on the event
$Y_1\ge Y_2+c$. Let $\Lambda_1(\theta)=
\log \E[\exp(\langle 
Y,\theta\rangle)]$ 
denote the log-MGF
of this conditional distribution of $Y$. 
Recall that $\Phi$ stands for the Cumulative
Distribution Function of a standard normal
random variable.

\begin{lemma}\label{lemma:conditional-log-MGF}
The following identity holds
\begin{align}\label{eq:conditional-log-MGF}
\Lambda_1(\theta)=-\log 
\Phi\left(-c/\sqrt{2}\right)
+{1\over 2}\theta_1^2+{1\over 2}\theta_2^2
+\log \Phi\left(-{c+\theta_2-\theta_1\over \sqrt{2}}\right).
\end{align}
\end{lemma}

\begin{proof} 
We have
\begin{align*}
\E[&\exp\left(\theta_1 Y_1+\theta_2 Y_2\right) | Y_1>Y_2+c] \\
&=\pr^{-1}(Y_1>Y_2+c)
{1\over 2\pi}\int_{t_1>t_2+c}
\exp\left(\theta_1 t_1+\theta_2 t_2 -{1\over 2}t_1^2-{1\over 2}t_2^2\right)
dt_1dt_2 \\
&=
\pr^{-1}(Y_1>Y_2+c)
\exp\left({1\over 2}\theta_1^2+{1\over 2}\theta_2^2\right)
{1\over 2\pi}\int_{t_1>t_2+c}
\exp\left( -{1\over 2}(t_1-\theta_1)^2-{1\over 2}(t_2-\theta_2)^2\right)
dt_1dt_2 \\
&=
\pr^{-1}(Y_1>Y_2+c)
\exp\left({1\over 2}\theta_1^2+{1\over 2}\theta_2^2\right)
{1\over 2\pi}\int_{s_1>s_2+c+\theta_2-\theta_1}
\exp\left( -{1\over 2}s_1^2-{1\over 2}s_2^2\right)
dt_1dt_2.
\end{align*}
We simplify $\pr(Y_1>Y_2+c)$ as $\pr(Y_1<-c/\sqrt{2})$.
We recognize the integral as 
\begin{align*}
\pr(Y_1>Y_2+c+\theta_2-\theta_1)=
\pr\left(Y_1<-{c+\theta_2-\theta_1\over \sqrt{2}}\right).
\end{align*}
Taking the log of both sides we obtain the result.

\end{proof}

Corollary \ref{coro:Poisson-local-LD} and Lemma \ref{lemma:conditional-log-MGF} yield the final $N$-independent asymptotics for the large deviation rates appearing in $K(\cdot)$. This is achieved in the subsequent lemma.

\begin{lemma}\label{lemma:Gaussian-of-Pois}
Consider the setting of  
Corollary~\ref{coro:Poisson-local-LD}. 
Fix any $\tau_1>\tau_2, \tau_1,\tau_2\in\R, b\in \R$ such that, either:
\begin{enumerate}
    \item $\zeta=1$, $\tau_1-\tau_2>b$.
    \item $0 \leq \zeta < 1$.
\end{enumerate}
Suppose that 
$\lambda(t)=(\lambda_j(t), j=1,2)$ 
are sequences of rates 
indexed by $t\in\Z_+$
satisfying the following 
\begin{enumerate}

\item $\lambda_j(t)=t+\tau_j\sqrt{t}+o_t(\sqrt{t}), 
j=1,2$, as $t\to\infty$.

\item $\lambda_j(t), j=1,2$ are integers.

\end{enumerate}

Suppose further that $a(t)=b\sqrt{t}+o(\sqrt{t})$.
Let $S_N$ and $a=a(t)$ stand for the same quantities as 
in Corollary~\ref{coro:Poisson-local-LD}.
Then the function $\theta^2
+\zeta\log\Phi\left(-{\tau_2-\tau_1+b-2\theta\over\sqrt{2}}\right)$ admits a unique minimizer $\bar{\theta}$ characterized through the following first-order conditions:
\begin{align}\label{eq:fixed-point-theta}
\bar{\theta}-
{\zeta\over\sqrt{2}}{\dot \Phi\left(-{\tau_2-
\tau_1+b-2\bar{\theta}\over \sqrt{2}}\right)
\over \Phi\left(-{\tau_2-
\tau_1+b-2\bar{\theta}\over \sqrt{2}}\right)}=0,
\end{align}
Furthermore,
\begin{align}
&\lim_{t\to\infty}\lim_{N\to\infty} 
{1\over N}\log 
\pr\left(S_N=0\right) \notag\\
&=
-\zeta\log \Phi\left(-{\tau_2-
\tau_1+b\over\sqrt{2}}\right)+
\inf_{\theta} \left( \theta^2
+\zeta\log\Phi\left(-{\tau_2-\tau_1+b-2\theta\over\sqrt{2}}\right) \right) \notag\\
&=
-\zeta\log \Phi\left(-{\tau_2-
\tau_1+b\over\sqrt{2}}\right)+
 \bar{\theta}^2
+\zeta\log\Phi\left(-{\tau_2-\tau_1+b-2\bar{\theta}\over\sqrt{2}}\right) \label{eq:S_N-limit}\\
\end{align}

\end{lemma}

\begin{proof}
The property 
$\lambda_j(t)\to\infty$ as $t\to\infty$
implies 
that $\left({Z_1-\lambda_1\over \sqrt{\lambda_1}},
{Z_2-\lambda_2 \over\sqrt{\lambda_2}}\right)$ 
converges distributionally to $(Y_j,j=1,2)$ which is a 
pair of independent standard normal random variables.
The condition $Z_1\ge Z_2+a(t)$ is equivalent to
\begin{align*}
{Z_1-\lambda_1(t)\over \sqrt{\lambda_1(t)}}
&\ge
{\sqrt{\lambda_2(t)}\over \sqrt{\lambda_1(t)}}
{Z_2-\lambda_2(t)\over \sqrt{\lambda_2(t)}}
+{\lambda_2(t)-\lambda_1(t)+a(t) \over\sqrt{\lambda_1(t)}} \\
&=
(1+o_t(1)){Z_2-\lambda_2(t)\over \sqrt{\lambda_2(t)}}
+(1+o_t(1))(\tau_2-\tau_1+b). 
\end{align*}
Thus conditional on the events  $Z_1\ge Z_2+a(t)$
the sequence of distributions of 
 $\left({Z_1-\lambda_1\over \sqrt{\lambda_1}},
{Z_2-\lambda_2 \over\sqrt{\lambda_2}}\right)$ 
converges distributionally to $(Y_j,j=1,2)$ conditioned
on $Y_1\ge Y_2+\tau_2-\tau_1+b$.
This implies that the sequence of log-MGFs 
$\Lambda_1(t)$ of these 
pairs converges to the log-MGF $\Lambda_1$
of $(Y_j,j=1,2)$ conditioned
on $Y_1\ge Y_2+\tau_2-\tau_1+b$.
Similarly, the log-MGFs $\Lambda_2(t)$
of the  sequence
$\left({Z_1-\lambda_1\over \sqrt{\lambda_1}},
{Z_2-\lambda_2 \over\sqrt{\lambda_2}}\right)$ 
converges to the log-MGF $\Lambda_2$ of $(Y_1,Y_2)$ 
without conditioning, which is simply
$\Lambda_2(\theta)={1\over 2}(\theta_1^2+\theta_2^2)$.

Let $\theta^*(t)$ be the unique solution 
of (\ref{eq:y-target}) for $\Lambda^{(t)}_1(\theta),\Lambda^{(t)}_2(\theta)$,
associated with conditional and unconditional
pairs Poisson random variables 
$\left({Z_1-\lambda_1(t)\over \sqrt{\lambda_1(t)}},
{Z_2-\lambda_2(t)\over \sqrt{\lambda_2(t)}}\right)$,
where existence and uniqueness is verified by Corollary~\ref{coro:Poisson-local-LD}.

We claim that there exists a unique  solution $\theta^*$
of (\ref{eq:y-target}) for $\Lambda_1,\Lambda_2$,
associated with conditional and unconditional
pairs of standard normals $(Y_1,Y_2)$. As before, the uniqueness follows
from strict convexity. The existence is established
by setting $\theta^*$ as a solution of the variational problem
$\inf_\theta \zeta \Lambda_1(\theta)+(1-\zeta)\Lambda_2(\theta)$. Similarly to the Poisson distribution case earlier, we claim 
that the optimum of this problem is obtained by the first order condition on the gradient thus implying the existence. 


First, consider the case $\zeta=1$.  Set $c=\tau_2-\tau_1+b$, which is negative by assumption, and recall the expression for $\Lambda_1(\theta)$ (\ref{eq:conditional-log-MGF})
from Lemma~\ref{lemma:conditional-log-MGF}.  Fix a large constant $B>0$ and suppose
\begin{align}\label{eq:upper-B}
(c+\theta_2-\theta_1)/\sqrt{2}\ge B.
\end{align}
By
well known expansion
\begin{align*}
\log\Phi\left(-{c+\theta_2-\theta_1\over \sqrt{2}}\right) 
&=
-(1/4)\left(c+\theta_2-\theta_1\right)^2+o_B(1) \\
&=
-(1/4)\left(\theta_1^2+\theta_2^2-2\theta_1\theta_2
+2c(\theta_2-\theta_1)+c^2\right)+o_B(1).
\end{align*}
Applying (\ref{eq:conditional-log-MGF}) we obtain
\begin{equation}\label{eq:mgf_exp}
\begin{split}
\Lambda_1(\theta)&=
-\log 
\Phi\left(-c/\sqrt{2}\right)
+{1\over 4}\theta_1^2+{1\over 4}\theta_2^2
+(1/2)\theta_1\theta_2
-(c/2)(\theta_2-\theta_1)-c^2/4+o_B(1) \\
&=
(1/4)(\theta_1+\theta_2)^2-(c/2)(\theta_2-\theta_1)+
D(c)+o_B(1),
\end{split}
\end{equation}
For some function $D$ which depends on  $c$ alone.
From (\ref{eq:upper-B}) we have 
$\theta_2-\theta_1\ge \sqrt{2}B-c$. Thus
$-c(\theta_2-\theta_1)\to +\infty$ as $B\to+\infty$,
and therefore $\Lambda_1(\theta)\to\infty$ when
$\theta\to\infty$ within the set satisfying (\ref{eq:upper-B}) and $B\to\infty$.

On the other hand, when (\ref{eq:upper-B}) does not hold
we obtain that $\Lambda_1(\theta) \geq \frac{1}{2}\theta_1^2+\frac{1}{2}\theta_2^2+D'(B,c)$
for some function $D'$ which depends on $B$ and $c$ alone.

Similarly, for the case $\zeta<1$, combining $\Lambda_2(\theta)={1\over 2}(\theta_1^2+\theta_2^2)$ with \eqref{eq:mgf_exp}, gives:
\[
\zeta \Lambda_1(\theta)+(1-\zeta)\Lambda_2(\theta) = {(1-\zeta)\over 2}(\theta_1^2+\theta_2^2)+ \zeta\left((1/4)(\theta_1+\theta_2)^2-(c/2)(\theta_2-\theta_1)+
D(c)+o_B(1)\right).
\]

Under the transformation, $\tilde{\theta}_1=\theta_1+\theta_2,\tilde{\theta}_2=\theta_2-\theta_1$, the above can be re-expressed as:
\[
\zeta \Lambda_1(\theta)+(1-\zeta)\Lambda_2(\theta) = {(1-\zeta)\over 2}((\tilde{\theta}_1)^2+ (\tilde{\theta}_2)^2)+\zeta\left((1/4)(\tilde{\theta}_1)^2-(c/2)(\tilde{\theta}_2)+
D(c)+o_B(1)\right).
\]

One readily verifies that for any value of $c$, the RHS $\rightarrow \infty$ as $\tilde{\theta} \rightarrow \infty$. Hence, we obtain that in either case, $\zeta \Lambda_1(\theta)+(1-\zeta)\Lambda_2(\theta)$ as $\theta \rightarrow \infty$. This establishes the existence of a minimizer $\theta^\star$.

Next we  prove that the the asymptotic value 
of the right-hand side of (\ref{eq:lD-rate-Poisson})
for $\theta^*(t),\Lambda_j(t), j=1,2$ converges to the 
right-hand side of (\ref{eq:lD-rate-Poisson})
for $\theta^*,\Lambda_j, j=1,2$
as $t\to\infty$. We will further prove that the latter
evaluates to (\ref{eq:S_N-limit}).
The result will follow then from  
Corollary~\ref{coro:Poisson-local-LD}.

Define $g_t(\theta) =  \zeta\Lambda^{(t)}_1(\theta)+(1-\zeta)\Lambda^{(t)}_2(\theta)$. 
Recall that $g_t(\theta)$ is strictly convex with $\theta^*(t)$ being the unique minimizer of $g_t(\theta)$. 
Analogously, we have that $g(\theta) = \zeta\Lambda_1(\theta)+(1-\zeta)\Lambda_2(\theta)$ is strictly convex, and as mentioned earlier, $g(\theta)$ is furthermore coercive i.e: $g(\theta) \rightarrow \infty$ for any sequence of $\theta$ satisfying $\norm{\theta} \rightarrow \infty$. 

Let $M>0$ be fixed. The coercivity of $g(\theta)$ implies that  $\exists R > 0$ such that  $g(\theta) > M$ for all $\theta \in \R^2$ with $\norm{\theta} > R$.
Denote by $\mathcal{S}_{R'}$ the circle $\mathcal{S}_{R'}= \{\theta \in \R^2: \norm{x} = R'\}$ for some $R'>R$.  Then, since point-wise convergence for convex functions implies uniform convergence on compact sets \citep{Rockafellar+1970}, we have that
$g_t(\theta)$ converges uniformly to $g(\theta)$ on $\mathcal{S}_{R'}$. Subsequently, since $g(\theta) > M $ for $\theta \in \mathcal{S}_{R'}$, we obtain that $\exists t' \in \mathbb{N}$ such that $g_t(\theta) > M$ for $\theta \in \mathcal{S}_{R'}$ for all $t > t'$.

Suppose $\theta \in \R^2$ satisfies $\norm{\theta} \ge R'$. 
Let $\theta' \in \mathcal{S}_{R'}$ be given as $\theta' = \alpha \theta$ where $\alpha = \frac{\norm{\theta'}}{\norm{\theta}} \le 1$.
The convexity of $g_t(\theta)$ then implies:
\begin{equation}\label{eq:mgf_conv}
    \alpha g_t(\theta) + (1-\alpha) g_t(0) \geq g_t(\theta').
\end{equation}
Recall that by the choice of $R'$, we have that 
$\theta' \in \mathcal{S}_{R'} \implies g_t(\theta') > M \ \forall t>t'$. 
Combining  $g_t(\theta') > M$ and $g_t(0)=0$ with (\ref{eq:mgf_conv}), we obtain, for $t > t'$:
\begin{equation}
    g_t(\theta)> M > g_t(0)=0.
\end{equation}
Therefore, for any $t > t'$, $\theta^*(t)$ lies 
in the disc $B_R' = \{\theta \in \R^2: \norm{\theta}\leq R'\}$.

Finally, $\theta^*(t), \theta^* \in B_R'$ for $t>t'$, and the uniform convergence of $g_t$ on 
the compact set  $B_R'$ \citep{Rockafellar+1970}, imply that for any $\epsilon > 0$ and large 
enough $t$: 
\begin{equation}
 g^\star(\theta^\star) - \epsilon \leq g^\star(\theta^\star(t))  -\epsilon \leq  g^t(\theta^\star(t)) \leq g^t(\theta^\star) \leq  g^\star(\theta^\star) + \epsilon, 
\end{equation}
implying that $\lim_{t \to \infty} g^t(\theta^\star(t)) = g(\theta^\star)$.

Now we complete the evaluation 
of (\ref{eq:S_N-limit}).
We need to compute the 
right-hand side of 
(\ref{eq:lD-rate-Poisson}) where we recall that
$\theta^*$ is the unique solution of 
(\ref{eq:y-target}) and $\Lambda_1,\Lambda_2$
are log-MGF for conditional and unconditional
versions of $Y$. 
Applying Lemma~\ref{lemma:conditional-log-MGF} 
we have for $c=\tau_2-\tau_1+b$ 
\begin{align*}
{\partial \Lambda_1 \over \partial \theta_1}
=\theta_1-{1\over\sqrt{2}}{\dot \Phi\left(-{\tau_2-
\tau_1+b+\theta_2-\theta_1\over \sqrt{2}}\right)
\over \Phi\left(-{\tau_2-
\tau_1+b+\theta_2-\theta_1\over \sqrt{2}}\right)}.
\end{align*}
Similarly, 
\begin{align*}
{\partial \Lambda_1 \over \partial \theta_2}
=\theta_2+{1\over\sqrt{2}}{\dot \Phi\left(-{\tau_2-
\tau_1+b+\theta_2-\theta_1\over \sqrt{2}}\right)
\over \Phi\left(-{\tau_2-
\tau_1+b+\theta_2-\theta_1\over \sqrt{2}}\right)}.
\end{align*}
For $\Lambda_2$ we have 
${\partial \Lambda_2\over \partial \theta_j}=
\theta_j, j=1,2.$ Combining, (\ref{eq:y-target})
translates into
\begin{align*}
\theta_1-\zeta{1\over\sqrt{2}}
{\dot \Phi\left(-{\tau_2-
\tau_1+b+\theta_2-\theta_1\over \sqrt{2}}\right)
\over \Phi\left(-{\tau_2-
\tau_1+b+\theta_2-\theta_1\over \sqrt{2}}\right)}=0,
\end{align*}
and,
\begin{align*}
\theta_2+\zeta{1\over\sqrt{2}}{\dot \Phi\left(-{\tau_2-
\tau_1+b+\theta_2-\theta_1\over \sqrt{2}}\right)
\over \Phi\left(-{\tau_2-
\tau_1+b+\theta_2-\theta_1\over \sqrt{2}}\right)}
=0.
\end{align*}
Summing the two identity we obtain $\theta_2=-\theta_1$. Denoting $\theta_1$ by $\bar\theta$, we obtain $\bar\theta$ is the unique solution
of (\ref{eq:fixed-point-theta}), confirming
the first part of the claim. Finally, again using Lemma~\ref{lemma:conditional-log-MGF} and $\Lambda_2(\theta)
=(1/2)(\theta_1^2+\theta_2^2)$ we verify (\ref{eq:S_N-limit}).
\end{proof}

The above result yields the asymptotic limit of a given set of $\zeta N$ vertices being $h$-stable conditioned on the cut-sizes
i.e.
$K(N, \lambda_1(t) N, \lambda_2(t) N, a(t), \zeta)$: 

\begin{proposition}\label{prop:K_asymp}
Let $\mu(d), \mu_1(d)$ denote a sequence of integers satisfying
\begin{equation}
    \mu(d) = (\frac{d}{2}+2\tau\sqrt{d} + o_d(\sqrt{d}))N, \ \mu_1(d) = (\frac{d}{2}+2\tau_1\sqrt{d} + o_d(\sqrt{d}))N.
    \end{equation}
Then:
\begin{equation}\label{eq:Kblim}
   \lim_{d \rightarrow \infty}  \lim_{N \rightarrow \infty} \frac{1}{N} \log K(N, \mu N, \mu_1 N, b, \zeta) = L(\zeta, 2\sqrt{2}(\tau_1-\tau), \sqrt{2}b),
\end{equation}
where:
\begin{equation}\label{eq:def_L}
    L(\zeta,t,b)  \triangleq \inf_\theta \left( \theta^2
+\zeta\log\Phi\left(-{t+b-2\theta\over\sqrt{2}}\right) \right).
\end{equation}
\end{proposition}
\begin{proof}
By Lemma~\ref{lemma_twoPo},  $K(N, \mu N, \mu_1 N, b, \zeta)$ equals:
\begin{align*}
\Theta(N) 
\mathbb{P}\left[\sum_{i=1}^{N} B_i=\mu, \sum_{i=1}^{N} 
C_i=\mu_1 \Bigm\vert B_i \geq C_i+b\sqrt{d}, 1\leq i\leq \zeta N \right]
(\mathbb{P}[B_1 \geq C_1 + b\sqrt{d}])^{\zeta N},
\end{align*}
where $B_i,C_i$ denote i.i.d. Poisson variables with integer means $\mu, \mu_1$ respectively.
The event 
$\sum_{i=1}^{N} B_i=\mu, \sum_{i=1}^{N}  C_i=\mu_1$ is equivalent to the event:
\begin{align*}
&\sum_{i=1}^{N} {B_i- \frac{\mu}{N} \over \sqrt{\frac{\mu}{N}}}=0, \\
&\sum_{i=1}^{N} {C_i-\frac{\mu_1}{N} \over  \sqrt{\frac{\mu_1}{N}}}=0,
\end{align*}
which matches the form described by Lemma \ref{lemma:Gaussian-of-Pois} with $t=\frac{d}{2}$. We therefore obtain:
\begin{align*}
&\lim_{N \rightarrow \infty}\frac{1}{N} \log \mathbb{P}\left[\sum_{i=1}^{N} B_i=\mu, \sum_{i=1}^{N} 
C_i=\mu_1 \Bigm\vert B_i \geq C_i+b\sqrt{d}, 1\leq i\leq \zeta N \right]\\&= 
-\zeta\log \Phi\left(-{2\sqrt{2}(\tau_1-
\tau)+b\sqrt{2}\over\sqrt{2}}\right)+
 \bar{\theta}^2
+\zeta\log\Phi\left(-{2\sqrt{2}(\tau_1-\tau)+b-2\bar{\theta}\over\sqrt{2}}\right).
\end{align*}

Furthermore, by the Poisson-Gaussian approximation:
   $$
       \lim_{d \rightarrow \infty} \log \pr[B_1\ge C_1+b\sqrt{d}] = \log \Phi\left(-{2\sqrt{2}(\tau_1-\tau)+b\sqrt{2}\over\sqrt{2}}\right).
  $$
Therefore, combining Lemmas \ref{lemma:Gaussian-of-Pois} and \ref{lemma_twoPo}, and noting the cancellation of the above term results in (\ref{eq:Kblim}).
\end{proof}

The variational solution to $L(\cdot)$ in (\ref{eq:def_L}) will allow us to 
estimate the probability 
that  fixed subsets of 
sizes $r_1,r_2$
of the two parts of a bisections consist of stable nodes,
as appears in (\ref{eq:p_optimal}).
To turn this into existence property, we need to take
a union over such subsets which, when combined with Proposition \ref{prop:K_asymp} in the log scale
will let to the expression of the form
\begin{equation}\label{eq:def_cK}
\mathcal{K}(\zeta_1,\zeta_2,\tau_{12},\tau_1,\tau_2, \sqrt{2}b) \coloneqq  \frac{1}{2}L(\zeta_1,2\sqrt{2}(\tau_1-\tau_{12}),\sqrt{2}b)
+
\frac{1}{2}L(\zeta_2,2\sqrt{2}(\tau_2-\tau_{12}),\sqrt{2}b).
\end{equation}
 As we justify in the subsequent section, the sum over probabilities of events $O_1,O_2$ in the bound on $\mathbb{E}[X(h,r)]$
will lead to the optimization of $\mathcal{K}(\zeta_1,\zeta_2,\tau_{12},\tau_1,\tau_2, b)$ w.r.t $\tau_{12},\tau_1,\tau_2$ subject to  appropriate constraints. The resulting optimization will be simplified through certain properties of $\mathcal{K}(\cdot)$, which we establish in Appendix \ref{app:properties}

\begin{lemma}\label{lem:L_conv}
For any $\zeta > 0, b \in \mathbb{R}$,  $L(\zeta,t,b)$ defined in (\ref{eq:def_L}) is concave in $t$ and non-increasing in $\zeta$.
 \end{lemma}

The above result allows us to simplify the objective $\mathcal{K}(\cdot)$ defined in (\ref{eq:def_cK}) when $\tau_1+\tau_2$ is constrained to a fixed value. This is achieved in the subsequent Lemma:
\begin{lemma}\label{lemma:Optimum-LD-rates}
For any $1/2 < \zeta < 1$ and $t \in \mathbb{R}$,
the value of the function $\mathcal{K}(\zeta_1,\zeta_2,\tau_{12},\tau_1,\tau_2, b)$
subject to constraints 
$\zeta_1+\zeta_2=2\zeta$ 
and $(\tau_1+\tau_2)=t$ 
is bounded above as follows:
\begin{equation}
\mathcal{K}(\zeta_1,\zeta_2,\tau_{12},\tau_1,\tau_2, b) \leq \mathcal{K}(2\zeta-1, 2\zeta-1,\tau_{12},\frac{t}{2},\frac{t}{2}, b) = 2L(2\zeta-1,2\sqrt{2}(\frac{t}{2}-\tau_{12}),\sqrt{2}b).
\label{eq:2-rates}
\end{equation}
Furthermore, at $\zeta=1$, the bound is tight at $\zeta_1=\zeta_2=\zeta$ and $\tau_1=\tau_2= \frac{t}{2}$.
\end{lemma}

\begin{proof}

We start by noting that $\zeta_1 \leq 1, \zeta_2  \leq 1$ with the constraint $\zeta_1+\zeta_2 = 2 \zeta$ imply that $\min{\zeta_1, \zeta_2} \geq 2\zeta-1$. Recall that by assumption $1/2 < \zeta$ and thus $2\zeta-1>0$. Combining  this with the non-increasing property of $L$ established in Lemma \ref{lem:L_conv}, we obtain the following bound:
\begin{equation}\label{eq:object}
\mathcal{K}(\zeta_1,\zeta_2,\tau_{12},\tau_1,\tau_2, b) \leq L(2\zeta-1,2\sqrt{2}(\tau_1-\tau_{12}),\sqrt{2}b)+L(2\zeta-1,2\sqrt{2}(\tau_2-\tau_{12}),\sqrt{2}b).
\end{equation}

The RHS of the above bound can be further bounded through the concavity of $L$ (Lemma \ref{lem:L_conv}), yielding:
\begin{equation}
L(2\zeta-1,2\sqrt{2}(\tau_1-\tau_{12}),\sqrt{2}b)+L(2\zeta-1,2\sqrt{2}(\tau_2-\tau_{12}),\sqrt{2}b) \leq 2L(2\zeta-1,\sqrt{2}t-2\sqrt{2}\tau_{12},\sqrt{2}b),
\end{equation}
where we used that $\tau_1+\tau_2=t$.

Substituting in (\ref{eq:object}), we obtain:
\begin{equation} \mathcal{K}(\zeta_1,\zeta_2,\tau_{12},\tau_1,\tau_2, b) \leq 2L(2\zeta-1,\sqrt{2}t-2\sqrt{2}\tau_{12},\sqrt{2}b),
\end{equation}
which equals the RHS of (\ref{eq:2-rates}). This completes the proof for $1/2 < \zeta < 1$. For $\zeta=1$, we notice that $2\zeta-1=\zeta$ and the convexity-based inequalities for $L$ and $H$ are tight. Therefore, the above upper bound is tight for $\zeta=1$.
\end{proof}

Before proceeding further, we establish the continuous differentiability of $L(\cdot)$ which will be utilized later (Proof in Appendix \ref{app:properties}):

\begin{lemma}\label{lem:L_lips}
For any $0 \leq \zeta < 1$, and  $L(\zeta, t, b)$ is continuously differentiable in $t,b$ on $\mathbb{R}^2$. While for $\zeta=1$, 
$L(1, t, b)$ is continuously differentiable over the open set $\{(t,b): t+b < 0\}$.
\end{lemma}

\subsection{Putting everything together (Proof of Proposition~\ref{prop:neg_first_mom})}\label{sec:first_mom_fin}
We now return to estimating (\ref{eq:firs_mom}) which we reproduce here for convenience. For any $C \geq 2$:

\begin{equation}\label{eq:firs_mom_rep}
\begin{split}
    &\mathbb{E}[X(z,h,r)]\\ &\le 2^n\sum_{z_1,z_2 \in D(C,z)}  \pr(\cE(\sigma, z_1,z_2)) 
     \sum_{r_1+r_2=2r}\binom{{n\over 2}}{r_1n} \binom{{n\over 2}}{r_2n}  
     K(n/2, \floor{zn}, 2z_1 n,h,r_1) K(n/2 , \floor{zn}, 2z_2 n,h,r_2) \\&+ \exp{(-C n)}.
\end{split},
\end{equation}

where the first sum is over the set 

\begin{align}
D(C,z)=\{(z_1,z_2): |2z_1-dn/4|, |2z_2-dn/4| \le C\sqrt{d}n, \floor{zn}+(z_1+z_2)n=d/2 \}.
\label{eq:D2}
\end{align}
Recall the notation $z=\frac{d}{4}n-\frac{E}{2}\sqrt{d}$ with the cut-size represented as $E_{12} = \floor{zn}$ and the constraint $E\leq -\frac{h}{2}$ by the assumption in Proposition \ref{prop:neg_first_mom}.
For any such value of $E$, Equations \ref{eq:E_1_const} and \ref{eq:E_2_const} further imply that under the event $\mathcal{E}_{\text{opt}}(\sigma,h,1)$,  $E_{12}-2 E_{11} \geq h\sqrt{d}n/2$ and $E_{12}-2 E_{22} \geq h\sqrt{d}n/2$ hold respectively. Therefore, we may further restrict the sum in (\ref{eq:firs_mom_rep}) to $z_1,z_2$ satisfying $\floor{zn} \geq 2z_1n+h\sqrt{d}n/2, \floor{zn} \geq 2z_2n+h\sqrt{d}n/2$. 
To impose such a restriction, we define the admissible set $\mathcal{A}(C,z,r)$ as:
\begin{equation}
    \mathcal{A}(z,r) = \begin{cases}
        D(C,z) \cap \{z_1,z_2: , \floor{zn} \geq \max{(2z_1,2z_2)}+h\}, & \text{if} \ r=1\\
        D(C,z) & \text{otherwise}
    \end{cases}.
\end{equation}


To estimate the $K(\cdot)$ terms in (\ref{eq:firs_mom_rep}), we cannot directly apply Proposition \ref{prop:K_asymp} to the above sum since Proposition \ref{prop:K_asymp} applies to a fixed set of values of $\tau_1, \tau_2, \tau$. Hence, to allow us to choose $d,n$ uniformly large for all the choices of $z_1,z_2$ in $D(C,z)$, we introduce a discretization of $[-C,C]$ into a finite number of approximating points. 

Fix $\epsilon >0$ and $C>\max(\abs{h},2)$ and consider
the partition of $[-C,C]$ as 
\begin{equation}\label{eq:C_partition}
\mathcal{C}(\epsilon, C)=    
\{-C,-C+\epsilon ,-C+2\epsilon,
    -C+3\epsilon,\ldots, C\},
\end{equation}
where we assume for convenience that
$C/\epsilon$ is an integer.

Fix any $(z_1,z_2)\in D(C,z)$.
We find $\tau,\tau_1,\tau_2\in\mathcal{C}(\epsilon), b,\zeta_1,\zeta_2\in [0,\epsilon,2\epsilon,\ldots, 1]$, so that 
\begin{equation}\label{eq:tau_construct}
\begin{split}
d/4+\tau\sqrt{d}+\alpha_{\rm l} &\le z < d/4+\tau\sqrt{d}+\epsilon\sqrt{d}-\alpha_{\rm u}, \\
d/4+\tau_1\sqrt{d}+\alpha_{\rm l,1} &\le 2z_1 < d/4+\tau_1\sqrt{d}+\epsilon\sqrt{d}-\alpha_{\rm u,1}, \\
d/4+\tau_2\sqrt{d}+\alpha_{\rm l,2} &\le 2z_2 < d/4+\tau_2\sqrt{d}+\epsilon\sqrt{d}-\alpha_{\rm u,2}, \\
\zeta_1&\le r_1< \zeta_1+\epsilon, \\
\zeta_2&\le r_2< \zeta_2+\epsilon, \\
b &\le h\le b+\epsilon,
\end{split}
\end{equation}
where $\alpha\in [0,1)$ are terms to ensure that the lower and upper bounds in the first three inequalities are integers. 
From the constraint $\floor{zn}+(z_1+z_2)n=d/2$, we obtain 
\begin{align}
|\tau+\frac{\tau_1+\tau_2}{2}|\le 2\epsilon+O(1/\sqrt{d}). \label{eq:tau-constrained}
\end{align}

It is immediate that the 
function (probability) $K(N,\mu_1,\mu_2,h,\zeta)$ is non-decreasing in $\mu_1$ and non-increasing in $\mu_2, \zeta$ and $h$.
Thus 
\begin{align}
K(n/2, \floor{zn}, 2z_j n,h,r_j)\le 
K\left(n/2, \left(d/4+\tau\sqrt{d}+\epsilon\sqrt{d}-\alpha_{\rm u}\right)n, 
\left(d/4+\tau_j\sqrt{d}+\alpha_{\rm l,j}\right) n,b,\zeta_j\right),   \qquad j=1,2. \label{eq:K_in}
\end{align}

Observe that when $r=1$, for any admissible, $z_1,z_2 \in \mathcal{A}(z,1)$ and $\epsilon > 0$ we have that $\tau,\tau_1,\tau_2$ satisfy:
\begin{equation}\label{eq:tau_cond}
    \tau + \epsilon > \max{(\tau_1,\tau_2)}+b.
\end{equation}
This ensures that assumption $1$ in Lemma~\ref{lemma:Gaussian-of-Pois} for $\zeta=1$ applies to the poisson random variables $\lambda_1(d)=\frac{d}{4}+\tau+\sqrt{d}\epsilon-\alpha_u$ and $\lambda_2(d) = \frac{d}{4}+\tau_j\sqrt{d}+\alpha_{1,j}$ for $j=1,2$.
Therefore, applying Proposition \ref{prop:K_asymp}, we obtain that for large enough $n, d$ (independent of $z_j, z$), with high probability as $n \rightarrow \infty$:
\begin{align*}
\frac{1}{n}\log K\left(n/2, \left(d/4+\tau\sqrt{d}+\epsilon\sqrt{d}-\alpha_{\rm u}\right)n, 
\left(d/4+\tau_j\sqrt{d}+\alpha_{\rm l,j}\right) n,b,\zeta_j\right) \le  L(\zeta_j,2\sqrt{2}(\tau_j-(\tau+\epsilon)),\sqrt{2}b)+\epsilon
\end{align*}
Combining with (\ref{eq:K_in}), we obtain (for large enough $d,n$):
\begin{align}\label{eq:KL_ineq}
{1\over n/2}\log K(n/2, zn, 2z_j n,h,r_j)\le L(\zeta_j,2\sqrt{2}(\tau_j-(\tau+\epsilon)),\sqrt{2}b) +\epsilon.
\end{align}


Next, we address the combinatorial terms ${{n\over 2}\choose r_j n}$. Using Stirling's approximation we have:
\begin{align*}
\log {{n\over 2}\choose r_j n}={n\over 2}H(2r_j)+o(n),
\end{align*}
for $j=1,2$.

The concavity of $H(\cdot)$ and $r_1+r_2 = r$, further implies:
\begin{equation}
    {1\over 2}H(2r_1)+{1\over 2}H(2r_2) \leq H(r). 
\end{equation}
Therefore:
\begin{equation}
    \log {{n\over 2}\choose r_1 n} + \log {{n\over 2}\choose r_2 n} \leq n H(r)+o(n).
\end{equation}


Finally, we combine the above bounds and incorporate the discretization error due to restriction to $\mathcal{C}(\epsilon, C)$. Recall that $z=\frac{d}{4}-\frac{E}{2}\sqrt{d}$.

To account for the constraint given by (\ref{eq:tau-constrained}) in addition to the discretization set $\mathcal{C}(\epsilon, C)$, we introduce the following constraint set:

\begin{equation}\label{eq:const_set}
    \mathcal{\tilde{S}}(E,h) \coloneqq
        \{\tau \in [-\frac{E}{2},-\frac{E}{2}+\epsilon], \tau_1, \tau_2 \in \mathbb{R}^2, \zeta_1, \zeta_2 \in [0, 1]^{\otimes 2}, \abs{\tau+\frac{\tau_1+\tau_2}{2}} \leq 3\epsilon, r \leq \zeta_1+\zeta_2 \leq r+2\epsilon\},
\end{equation}

where we relaxed the constraint in (\ref{eq:tau-constrained}) slightly, by absorbing the extra $\mathcal{O}(\frac{1}{\sqrt{d}})$ into another $\epsilon$.
 
To simultaneously, handle the cases $r=1$ and $r <1$, we denote by $\mathcal{S}(E,h)$ the set  $ \mathcal{\tilde{S}}(E,h) \cap \{\tau+\epsilon > \max{\tau_1,\tau_2}+b\}$ if $r=1$ and $ \mathcal{\tilde{S}}(E,h)$ otherwise.

From Equations \ref{eq:tau_construct} and \ref{eq:tau-constrained}, we note that for any admissable $z_1,z_2 \in \mathcal{A}(z,r)$,
the parameters $\tau, \tau_1,\tau_2, \zeta_1, \zeta_2, b$ defined by Equations \ref{eq:tau_construct} lie in the set $\mathcal{S}(E,h) \cap C(\epsilon)^{\otimes 6}$.
Therefore, for all large enough $d$ and $n$, we obtain by (\ref{eq:KL_ineq}):
\begin{align*}
&{1\over n}\log\left[
\binom{{n\over 2}}{r_1n} \binom{{n\over 2}}{r_2n}  
     K(n/2, zn, 2z_1 n,h,r_1) K(n/2 , zn, 2z_2 n,h,r_2)
     \right] \\
     &\le \sum_{j=1}^2{1\over 2}
     L(\zeta_j,2\sqrt{2}\tau_j-2\sqrt{2}(\tau+\epsilon),\sqrt{2}b)
+H(r) + 2\epsilon \\
&\le \max_{\tau,\tau_1,\tau_2,\zeta_1,\zeta_2 \in \mathcal{S}(E,h) \cap C(\epsilon)^{\otimes 6}}
\sum_{j=1}{1\over 2}
     L(\zeta_j,2\sqrt{2}\tau_j-2\sqrt{2}\tau,\sqrt{2}b)
+H(r)+2\epsilon\\
&\stackrel{(a)}{=} \max_{\tau,\tau_1,\tau_2,\zeta_1,\zeta_2 \in \mathcal{S}(E,h) \cap C(\epsilon)^{\otimes 6}} \mathcal{K}(\zeta_1,\zeta_2, \tau+\epsilon,\tau_1,\tau_2,b)+H(r)+2\epsilon\\
&\stackrel{(b)}{\leq} \sup_{\tau,\tau_1,\tau_2,\zeta_1,\zeta_2 \in \mathcal{S}(E,h)} \mathcal{K}(\zeta_1,\zeta_2, (\tau+\epsilon),\tau_1,\tau_2,b)+H(r)+2\epsilon.
\end{align*}
where in $(a)$, we used the definition of $\mathcal{K}$ in (\ref{eq:def_cK}) and in $(b)$ we relaxed the constraint of the variables belonging to $C(\epsilon)^{\otimes 6}$.
Here the term $o(n)$ was absorbed by an extra $\epsilon$ term added at the end. 

We next introduce a parameter $t=\tau_1+\tau_2$ and translate the optimization over $\tau_1,\tau_2$ to one over $t$ through the following bound:
\begin{equation}
\sup_{\tau,\tau_1,\tau_2,\zeta_1,\zeta_2 \in \mathcal{S}(x,h)} \mathcal{K}(\zeta_1,\zeta_2, \tau+\epsilon,\tau_1,\tau_2,b)\leq \sup_{\tau,\zeta,t} \left[\sup_{\tau_1,\tau_2,\zeta_1,\zeta_2} \mathcal{K}(\zeta_1,\zeta_2, \tau+\epsilon,\tau_1,\tau_2,b)\right],
\end{equation}
where the first supremum is over the set $\{\tau \in [-E/2, -E/2 + \epsilon,\zeta \in [r, r+\epsilon], \abs{\tau+\frac{t}{2}} \leq 3\epsilon\}$ and the second supremum is over the set $\{\tau_1+\tau_2=t, \zeta_1+\zeta_2=\zeta\}$. In the second supremum, we relaxed the constraint $\tau > \max{\tau_1,\tau_2}+b$ on $\tau_1, \tau_2$.
By Lemma~\ref{lemma:Optimum-LD-rates}, the inner supremum is bounded by the objective at $\tau_1=\tau_2=\frac{t}{2}, \zeta_1=\zeta_2=2\zeta-1$, yielding:
 \begin{align}
&{1\over n}\log\left[
\binom{{n\over 2}}{r_1n} \binom{{n\over 2}}{r_2n}  
     K(n/2, zn, 2z_1 n,h,r_1) K(n/2 , zn, 2z_2 n,h,r_2)    \right] \nonumber\\
  &\leq \sup_{\tau \in [-\frac{E}{2}, -\frac{E}{2}] + \epsilon,\zeta \in [r, r+\epsilon], \abs{\tau+\frac{t}{2}} \leq 3\epsilon} \mathcal{K}(2\zeta-1, 2\zeta-1,\tau+\epsilon,\frac{t}{2}, \frac{t}{2},b).\label{eq:gen_K_bound}
\end{align}

To proceed further, we shall treat the edge case $E=-\frac{h}{2}, r = 1$ and the general case $E < -\frac{h}{2}$ or $r < 1$ seperately. Recall that by the statement of Proposition \ref{prop:neg_first_mom}, either $r < 1$ or $r=1$ and $E < -\frac{h}{2}$. In the first case, for small enough $\epsilon$, $[r, r+\epsilon] \in [0,1)$. While in the second case i.e. when $r=1$ and  $E < -\frac{h}{2}$, the constraints $\abs{\tau+\frac{t}{2}} \leq 3\epsilon$ and 
 $\tau \in [-\frac{E}{2}, -\frac{E}{2}+\epsilon]$ imply that
 $\abs{\tau-\frac{t}{2}-E} \leq \abs{2\tau-E} +\abs{\tau+\frac{t}{2}} \leq 5\epsilon$. Therefore, for small enough $\epsilon$,
 $\tau-\frac{t}{2} > h$.
In either case, Lemma \ref{lem:L_lips}  yields a uniform bound on the Lipschitzness of $\mathcal{K}$ over the (compact) range of $\tau, t, r$ resulting in the bound:
\begin{align}
    &{1\over n}\log\left[
\binom{{n\over 2}}{r_1n} \binom{{n\over 2}}{r_2n}  
     K(n/2, zn, 2z_1 n,h,r_1) K(n/2 , zn, 2z_2 n,h,r_2) \right] \nonumber \\ &\le \mathcal{K}(2\zeta-1, 2\zeta-1, -\frac{E}{2},\frac{E}{2}, \frac{E}{2},b)+C_{E}\epsilon.\label{eq:E_uni_bound}
\end{align}
for some constant $C_{E}$ dependent on $E$. By the definition of $\mathcal{K}(\cdot)$ in (\ref{eq:def_cK}), the RHS above equals $2L(2\zeta-1,2\sqrt{2}E,\sqrt{2}b)$.

The edge-case $E= -\frac{h}{2}$ and $r=1$ is controlled through the following proposition, whose proof can be found in Appendix \ref{app:properties}:
\begin{proposition}\label{prop:Linf}
Let $b \in \mathbb{R}$ be fixed. Then:
\begin{equation}
    \lim_{t \uparrow -b} L(1, t, b) = -\infty.
\end{equation}
\end{proposition}

Recall that $b < h \leq b+\epsilon$. Hence when $E=-\frac{h}{2}$, taking $\epsilon \rightarrow 0$ implies that $2E \uparrow -b$. Subsequently, by proposition \ref{prop:Linf}, $\mathcal{K}(2\zeta-1, 2\zeta-1, -\frac{E}{2},\frac{E}{2}, \frac{E}{2},b) = 2L(2\zeta-1,2\sqrt{2}E,\sqrt{2}b)$ diverges to $-\infty$ as $\epsilon \rightarrow 0$.

Hence, we take 
$\epsilon \rightarrow 0$ in (\ref{eq:gen_K_bound}) and $i)$ apply proposition \ref{prop:Linf} when $E=-\frac{h}{2}$, $r=1$ and 
$ii)$ apply (\ref{eq:E_uni_bound}) when either $E>-\frac{h}{2}, r=1$ or $r<1$, to obtain:
\begin{equation}\label{eq:combined_bound}
\begin{split}
    &\limsup_{n \rightarrow \infty}\limsup_{d \rightarrow \infty} {1\over n}\log\left[
\binom{{n\over 2}}{r_1n} \binom{{n\over 2}}{r_2n} 
     K(n/2, zn, 2z_1 n,h,r_1) K(n/2 , zn, 2z_2 n,h,r_2)    \right]\\ &\leq \begin{cases}
         - \infty, & r=1, E=-\frac{h}{2}\\
         \mathcal{K}(2\zeta-1, 2\zeta-1, -\frac{E}{2},\frac{E}{2}, \frac{E}{2},b)+H(r), & \text{otherwise}
     \end{cases}.
\end{split}
\end{equation}


Recalling (\ref{eq:firs_mom_rep}), it remains to incorporate the contribution of the term $\pr[ \mathcal{E}(\sigma,z,z_1,z_2)]$. 
Under the configuration model, it is easy to obtain that:
\begin{equation} \label{eq:prob_cut_size}
    \pr[ \mathcal{E}(\sigma,\floor{zn},z_1n,z_2n)] = \binom{dn}{2z_1n,2z_2n,\floor{zn}, \floor{zn}} (\frac{1}{4})^{2z_1 n}(\frac{1}{4})^{2z_2 n}(\frac{1}{2})^{\floor{zn}}. 
\end{equation}

The above follows directly by recalling that under the configuration model, each edge is assigned independently to one of the $n^2$ pairs of vertices (including self-edges). Thus, $E_{11}, E_{22}, E_{12}$ follow a multi-nomial distribution with probabilities $1/4,1/4, 1/2$ respectively.

Next, we utilize asymptotic approximations and the restriction of $z_1,z_2$ to $D(C,z)$ to simplify the term $\frac{1}{n}\log \pr[ \mathcal{E}(\sigma,\floor{zn},z_1n,z_2n)]$ up to leading order terms.

First, applying Stirling's approximation and $\floor{zn} = zn +o(1)$, we have, for large enough $n$:
\begin{align}\label{eq:binom_stir}
   \frac{1}{n} \log \binom{dn}{2z_1,2z_2,\floor{zn}, \floor{zn}} &= 2(-z_1 \log 2 z_1n - z_2 \log 2 z_2n -2z \log zn) + d \log dn) + \mathcal{O}(\frac{1}{n} \log (d n))\\
   &=  2(-z_1 \log 2 z_1n - z_2 \log 2 z_2n - 2z \log zn) + d \log dn+ o_n(1).
\end{align}

Combining with the factor $(\frac{1}{4})^{2z_1 n}(\frac{1}{4})^{2z_2 n}(\frac{1}{2})^{\floor{zn}}$ in (\ref{eq:prob_cut_size}), we obtain:
\begin{equation}\label{eq:logz}
    \frac{1}{n}\log \pr[ \mathcal{E}(\sigma,\floor{zn},z_1n,z_2n)] = z\log (1/4)+2z_1 \log (1/2) + 2z_2 \log (1/2) -2z\log z n - z_1\log 2z_1 n - z_2\log 2z_2 n+ o_n(1).
\end{equation}

Write $z_1, z_2$ as $2z_1=\frac{d}{4}-\frac{E_1}{2}\sqrt{d}, 2z_2=\frac{d}{4}-\frac{E_2}{2}\sqrt{d}$. The constraint $z_1,z_2 \in D(C,z)$ implies that $E_1, E_2$ are uniformly bounded in $d$.

Therefore, using the expansion:
\begin{equation}
    \log (1+t) =t-t^2/2+o_t(t^3),
\end{equation}
we obtain:
\begin{align}
z \log zn
  &= \frac{d}{4} \log\!\Bigl(\frac{dn}{4}\Bigr)
     + \frac{\sqrt{d}}{2}\,E
     + \frac{E^2}{2}
     + o_d(1),\\[6pt]
z_1 \log(2 z_1)
  &= \frac{dn}{4} \log\!\Bigl(\frac{dn}{4}\Bigr)
     + \sqrt{d}\,E_1
     + E_1^2
     + o_d(1),\\[6pt]
z_2 \log(2 z_2)
  &= \frac{dn}{4} \log\!\Bigl(\frac{dn}{4}\Bigr)
     + \sqrt{d}\,E_2
     + E_2^2
     + o_d(1).
\end{align}

Upon cancellation, (\ref{eq:logz}) simplifies to:
\begin{equation}\label{eq:comb_term}
    \frac{1}{n}\log \pr[ \mathcal{E}(\sigma,z,z_1,z_2)] = -\frac{E^2}{2}- E_1^2 -E_2^2+o_d(1)+o_n(1).
\end{equation}
Recall that the constraint $z_1n+z_2n=\floor{zn}$ implies that $\abs{2E_1+2E_2-E}=o_n(1)$. Furthermore, recall that for any fixed value of $E_1+E_2$, $-E_1^2 - E_2^2$ is maximized at $E_1=E_2=\frac{E_1+E_2}{2}$.  Hence, the constraint $\abs{2E_1+2E_2-E}=o_n(1)$ implies :
\begin{equation}
    \frac{1}{n}\log \pr[ \mathcal{E}(\sigma,z,z_1,z_2)] \leq - E^2 +o_n(1).
\end{equation}


Substituting in (\ref{eq:firs_mom_rep}) and using the bound in (\ref{eq:combined_bound}),  we obtain: 
\begin{equation}
  \limsup_{d \rightarrow \infty} \limsup_{n \rightarrow \infty}\frac{1}{n} \log \mathbb{E} [X(z,h,r)] \leq \max(\mathcal{K}(2r-1, 2\zeta-1, -\frac{E}{2}, \frac{E}{2},\frac{E}{2},h) - E^2+H(r), -C)+\log 2,
\end{equation} 
where we further used that $\frac{1}{n} \log \abs{D(C,z)} \rightarrow 0$ as $n \rightarrow \infty$. The $-C$ inside the maximum accounts for the contribution outside $D(C,z)$ and the case $E-\frac{h}{2},r=1$, bounded by \eqref{eq:combined_bound}.

Recognizing $\mathcal{K}(2r-1, 2\zeta-1, -\frac{E}{2}, \frac{E}{2},\frac{E}{2},h) - E^2$ as $w(E,h,r)$ in Proposition \ref{prop:neg_first_mom}, we obtain, for any $C > 0$:
\begin{equation}
       \limsup_{d \rightarrow \infty} \limsup_{n \rightarrow \infty}\frac{1}{n} \log \mathbb{E} [X(z,h,r)] \leq \max{(w(E,h,r),-C)}.
\end{equation}

Taking  $C \rightarrow \infty$ then yields:
\begin{equation}
    \limsup_{d \rightarrow \infty} \limsup_{n \rightarrow \infty }\frac{1}{n} \log \mathbb{E} [X(z,h,r)] \leq w(E,h,r).
\end{equation}

 This establishes the upper bound for Proposition~\ref{prop:neg_first_mom}. To obtain a matching lower-bound  when $r=1$ for a fixed value of $E<-\frac{h}{2}$, we set $C > \abs{E}$.  Subsequently, we note that $\exists \tau^\star \in \mathcal{C}(\epsilon,C)$ such that $-E/2 \in [\tau^\star, \tau^\star+\epsilon]$. Furthermore, $\exists \epsilon$ small enough such that $E<-\frac{h}{2}$ implies that $\tau^\star < -h/2$. 

Next, set $z,z_1,z_2 \in \mathbb{Z}$ as follows:
\begin{align*}
     z=2\floor{\frac{d}{8}}-2\floor{E/4\sqrt{d}},\\
     z_1=\ceil{\frac{d}{8}}+\floor{E/4\sqrt{d}},\\
     z_2=\ceil{\frac{d}{8}}+\floor{E/4\sqrt{d}}.
\end{align*}
Since $-E/2 \in [\tau^\star, \tau^\star+\epsilon)$, we obtain:
\begin{equation}\label{eq:lowerbounddisc2}
\begin{split}
d/4+\tau^\star\sqrt{d}+\alpha_{\rm l} &\le z < d/4+\tau^\star\sqrt{d}+\epsilon\sqrt{d}-\alpha_{\rm u}, \\
d/4-\tau^\star\sqrt{d}+\alpha_{\rm l,1} &\le 2z_1 < d/4-\tau^\star\sqrt{d}+\epsilon\sqrt{d}-\alpha_{\rm u,1}, \\
d/4-\tau^\star\sqrt{d}+\alpha_{\rm l,2} &\le 2z_2 < d/4-\tau^\star\sqrt{d}+\epsilon\sqrt{d}-\alpha_{\rm u,2}.
\end{split},
\end{equation} 
where $\alpha_{\rm u},\alpha_{\rm l,j},\alpha_{\rm u,j} \in (0,1]$ ensure that the lower and upper bounds are integers.

We next recall that the inequality in (\ref{eq:firs_mom}) is tight at $r=1$, in the sense that:
\begin{equation}
    \mathbb{E}[X(z,h,1)] = 2^n\sum_{z_1,z_2 \in D(C,z)}  \pr(\cE(\sigma, z_1,z_2))  
     K(n/2, \floor{zn}, 2z_1 n,h,1) K(n/2 , \floor{zn}, 2z_2 n,h,1) + \exp{(-C n)}.
\end{equation}

Therefore, with $\floor{zn}, z_1n, z_2n$ as chosen above, we have:
\begin{align*}
    &\log 2 +\frac{1}{n}\log \pr[ \mathcal{E}(\sigma,z,z_1,z_2)]+ \frac{1}{n}\log K(n/2, \floor{zn}, 2z_1n,h,1)+ \frac{1}{n}\log K(n/2, \floor{zn}, 2z_2n,h,1)\\ &\leq \frac{1}{n} \log \mathbb{E}[X(z,h,1)]. 
\end{align*}
Subsequently, using the monotonicity of $K(\cdot)$ ((\ref{eq:K_in})) and the relations between $\tau^\star, z,z_1,z_2$ specified by Equations \ref{eq:lowerbounddisc2}, we obtain:
\begin{equation}
      \frac{1}{n}\log \pr[ \mathcal{E}(\sigma,z,z_1,z_2)]+\frac{2}{n}\log  K(n/2, \floor{zn}, (d/4 +\tau^\star/2\sqrt{d}+\epsilon\sqrt{d})n,h,1) 
 \leq \frac{1}{n} \log \mathbb{E} [X(z,h,r)].
\end{equation}
By (\ref{eq:comb_term}) and the local-lipschitzness of $x\rightarrow x^2$, the first term satisfies:
\begin{equation}
    \frac{1}{n}\log \pr[ \mathcal{E}(\sigma,z,z_1n,z_2n)] \geq  -2(\tau^\star)^2-C_{1,E} \epsilon,
\end{equation}
for some constant $C_{1,E}>0$. Proposition \ref{prop:K_asymp} and Lemma \ref{lem:L_lips} imply that the second term is bounded for large enough $d,n$ as:
\begin{equation}
   \frac{2}{n}\log K(n/2, (d/4+\tau^\star\sqrt{d})n, (d/4 +\tau^\star/2\sqrt{d}+\epsilon\sqrt{d})n,h,1)  \geq \mathcal{K}(\zeta_1,\zeta_2, 2(\tau^\star),2\tau^\star_1,2\tau^\star_2,b) - C_{2,E}\epsilon, 
\end{equation}
for some constant $C_{2,E} > 0$, derived from the local-Lipschitzness of $L(\cdot)$. Combining, we obtain:
\begin{equation}\label{eq:lowerb}
 -2(\tau^\star)^2+\mathcal{K}(\zeta_1,\zeta_2, 2(\tau^\star),2\tau^\star_1,2\tau^\star_2,b)-(C_{1,E}+C_{2,E})\epsilon \leq \frac{1}{n} \log \mathbb{E} [X(z,h,r)].
\end{equation}

Thus, we obtain the lower-bound in Proposition \ref{prop:neg_first_mom} upon taking $\epsilon \rightarrow 0$.

\subsection{Proof of Proposition \ref{prop:first_moment_sup}}

Recall that $X(z,h,r)$ denotes the number bisections with cut-size $\floor{zn}$ and at least $rn$ $h$-stable vertices. For $a < b \in \mathbb{R}$,
 $X_{a,b}(h,r)$ defined by \eqref{def:xab} can then be expressed as a sum over $X(z,h,r)$ for each possible cut-size $z$ in the corresponding range. 
 
 Define $z_a=\frac{d}{4}-\frac{a}{2}\sqrt{d}, z_a=b=\frac{d}{4}-\frac{b}{2}\sqrt{d}$.
Since the cut-size in $\mathbb{G}(n, d/n)$ ranges between $0$ to $\floor{2dn}$, we have:
\begin{equation}
\Ea{X_{a,b}(h,r)}=\sum_{z: zn \in \mathbb{N}, z_b < z < z_a} \Ea{X(z,h,r)}
\end{equation}

Next, to exclude diverging range of the energies, we introduce the following set (analogous to the set $D(C,z)$ defined in (\ref{eq:def_Dz})) :
\begin{equation}\label{eq:def_Dz}
D(C)=\{z: zn \in \mathbb{N}^+,|zn-dn/4| \le C\sqrt{d}n\}.
\end{equation}

As in Lemma \ref{lemma:range-z}, an application of Hoeffding's inequality yields that the contribution to $\Ea{X(h,r)}$ from $z$ outside $D(C)$ is bounded by $\exp(-Cn)$ for large enough $n$ and $C\geq 2$.

We therefore, obtain:

\begin{equation}\label{eq:X-conditioned-z-restricted_ab}
   \mathbb{E}[X_{a,b}(h,r)] = \sum_{z_i\in D(C) \cap (z_b,z_a)} \mathbb{E}[X(z_i,z_1,z_2, h,r)]+\exp(-Cn),
\end{equation}

resulting in:
\begin{equation}
    \Ea{X(h,r)} \leq \abs{D(C)}\max_{z \in \mathbb{D}(C) \cap (z_b,z_a)} \Ea{X(z,h,r)}+\exp(-Cn).
\end{equation}

As in the proof of Proposition~\ref{prop:neg_first_mom}, we introduce the discretization set $\mathcal{C}(\epsilon, C)=    
\{-C,-C+\epsilon ,-C+2\epsilon,
    -C+3\epsilon,\ldots, C\}$. 
    
For any $z_i \in \mathbb{D}(C)$, $ \exists \tau \in \mathcal{C}(\epsilon)$ such that:
\begin{equation}
  d/4+\tau \sqrt{d}+\epsilon\sqrt{d} +\alpha_l \le z_i \leq d/4+\tau \sqrt{d}+\epsilon\sqrt{d}- \alpha_u,
\end{equation}
for $\alpha_l, \alpha_u \in [0,1)$.

Subsequently, we proceed exactly as in the proof of \eqref{eq:gen_K_bound}. Concretely, we apply the monotonocity of $K(\cdot)$ as in (\ref{eq:K_in}), Proposition \ref{prop:K_asymp}, (\ref{eq:comb_term}), and $\abs{D(C)} \leq Cdn$, to obtain that for large enough $d,n$:

\begin{equation}
    \frac{1}{n} \log \Ea{X_{a,b}(h,r)} \leq \max(\sup_{\tau \in [-C,C] \cap (a,b),\zeta \in [r, r+\epsilon], \abs{\tau+\frac{t}{2}} \leq 3\epsilon} \mathcal{K}(2\zeta-1, 2\zeta-1, 2\sqrt{2}(\tau+\epsilon),\sqrt{2}t, \sqrt{2}t,b)-\frac{\tau^2}{2}-2t^2,-C)
\end{equation}
where we absorbed the $o(n)$ terms due to the error in $\log(\abs{D(C)})$ and the approximation error in Proposition \ref{prop:K_asymp} within the extra $\epsilon$.

As in the analysis following (\ref{eq:gen_K_bound}), to uniformly control the error in maximization over $\tau$ as $\epsilon \rightarrow 0$, we fix $\delta>0$, and split the above maximization into the interval $(-\frac{h}{2} + \delta, -\frac{h}{2})$ and its complement.

By Proposition \ref{prop:Linf}, there exist $\delta, \epsilon$ such that:
\begin{equation}
    \sup_{\tau \in [-C,h/2+\delta],\zeta \in [r, r+\epsilon], \abs{\tau+\frac{t}{2}} \leq 3\epsilon} \mathcal{K}(2\zeta-1, 2\zeta-1, 2\sqrt{2}(\tau+\epsilon),\sqrt{2}t, \sqrt{2}t,b) -\frac{\tau^2}{2}-2t^2\leq -C.
\end{equation}

We therefore obtain:
\begin{align*}
     &\frac{1}{n} \log \Ea{X_{a,b}(h,r)}\\ &\leq \max\left(  \sup_{\tau \in [h/2+\delta,C] \cap (a,b) ,\zeta \in [r, r+\epsilon], \abs{\tau+t} \leq 3\epsilon} \mathcal{K}(2\zeta-1, 2\zeta-1, 2\sqrt{2}(\tau+\epsilon),\sqrt{2}t, \sqrt{2}t,b)-\frac{\tau^2}{2}-2t^2,-C\right).
\end{align*}

By Lemma \ref{lem:L_lips}, the first argument in the above maximum is bounded as:
\begin{align*}
    &\sup_{\tau \in [h/2+\delta,C] \cap (a,b),\zeta \in [r, r+\epsilon], \abs{\tau+\frac{t}{2}} \leq 3\epsilon} \mathcal{K}(2\zeta-1, 2\zeta-1, 2\sqrt{2}(\tau+\epsilon),\sqrt{2}t, \sqrt{2}t,b)-\frac{\tau^2}{2}-2t^2\\ &\leq   \sup_{\tau \in \cap (a,b)}\mathcal{K}(2\zeta-1, 2\zeta-1, 2\sqrt{2}\tau,\sqrt{2}\tau, \sqrt{2}\tau,b)-\tau^2+C_\delta \epsilon,
\end{align*}
for some $C_\delta > 0$. Taking the sequential limit $\epsilon \rightarrow 0$ and $\delta \rightarrow 0$ and recognizing the RHS as $w(E,h)$ completes the proof of the upper bound.

 To establish the lower-bound for $r=1$,
 we  note that by (\ref{eq:E_const}), we may restrict ourselves to values $E\leq -\frac{h}{2}$.
 Proposition \ref{prop:Linf}  further implies, for large enough $C>0$:
 \begin{equation}
     \sup_{E \in (a,b)} w(E,h)=\sup_{E \in [-2C,-h/2] \cap (a,b)} w(E,h) .
 \end{equation}

The lower-bound can be obtained if there exist feasible energy values in $[-2C,-h/2] \cap (a,b)$ achieving the above bound. To this end, we recall Proposition \ref{prop:first_mom_conv} which implies that $w(E,h)$ is strictly concave in the range $(-\infty,-h/2)$ and therefore, attains the supremum at a unique value $E^\star < - \frac{h}{2}$.

 Subsequently,  for any $(a,b)$ with $a<-\frac{E}{2}$, $w(E,h,r)$ is maximized over the interval $[a,b]$ at a unique $E^\star_{a,b} \in [a,b]$. Finally, the proof is completed by setting $z=\frac{d}{4}-\frac{E^\star_{a,b}}{2}\sqrt{d}$ to obtain feasible cut-size values $\floor{zn}, z_1n, z_2n$ participating in the sum given by (\ref{eq:X-conditioned-z-restricted}) (as in the proof of (\ref{eq:lowerb})).




\newpage

\section{Second Moment Entropy Density}\label{app:sec_mom}
\begin{figure}[h]
    \centering
\includegraphics[width=0.7\textwidth]{partitioning.pdf}
    \caption{Illustration of the two configurations $\sigma, \sigma'$ and the partitioning of the set of vertices. Left: Each edge in the figure, including loops, denotes the set of edges between vertices belonging to the two sets. Middle: Visualization of the configuration $\sigma$. Right: Visualization of the configuration $\sigma'$. 
    }\label{fig:partition}
\end{figure}

In this section, we provide the proofs for Propositions \ref{prop:second_mom_main} and \ref{prop:second_mom_sup}.
Recall from the statement of  \ref{prop:second_mom_main} that our aim is to obtain the asymptotic limit of the following quantity:
\begin{equation}\label{eq:defx2w}
   X^2_\omega(z,h) = \sum_{\sigma, \sigma' \in M_0, \omega(\sigma,\sigma')=\omega } \mathbf{1}[\cE_{opt}(\sigma,h,z)\cap \cE_{opt}(\sigma',h, z)],
\end{equation}
where we recall that $\cE_{opt}(\sigma,h,z), \cE_{opt}(\sigma',h,z)$ denote the events of configurations $\sigma, \sigma'$ being $h$-stable bisections with cut-sizes $\floor{zn}$.
Following \cite{gamarnikMaxcutSparseRandom2018}, to turn the estimation of the above quantity into a tractable large deviations problem, we condition on the number of edges between different subsets of spins based on the the values these spins take in the two bisections. The different types of spins are illustrated by Figure \ref{fig:partition}. We begin by setting up the required notation.

\subsection{Setup}

Let $\sigma,\sigma' \in \{-1,1\}^{n}$ be two bisections on $n$ vertices such that $\omega(\sigma,\sigma') = \omega$. As in Section \ref{sec:first_moment}, we consider a (multi)-graph $G=(V,E)$ sampled using the configuration model $\tilde{\mathbb{G}}(n,m=\frac{d}{2}n)$. We associate the multi-graph with a weighted graph $G=(v,W)$ with $W \in \mathbb{R}^{n \times n}$ defined as the negative of the adjacency matrix.
We define the following subsets of partitions:
\begin{equation}\label{def:vars}
\begin{split}
  V_1   &= \{j:\sigma_j=1,\sigma'_j=1\} \\
  V_2  &= \{j:\sigma_j=1,\sigma'_j=-1\} \\
  V_3   &= \{j:\sigma_j=-1,\sigma'_j=1\} \\
   V_4  &= \{j:\sigma_j=-1,\sigma'_j=-1\}. \\
\end{split}  
\end{equation}
The partitioning of the vertices is illustrated in Figure \ref{fig:partition}.
Due to the zero magnetization (bisection) constraint, we have:
\begin{equation}  \abs{V_1}+\abs{V_3}=\abs{V_1}+\abs{V_3}=\abs{V_2}+\abs{V_4}=\abs{V_3}+\abs{V_4}=\frac{n}{2}.
\end{equation}

Now, suppose that $\sigma, \sigma'$ have normalized overlap $\frac{\floor{\omega n}}{n} \in (0,1)$. Combined with the bisection constraint, this implies that $\abs{V_1}=\abs{V_4}=(\frac{1+\omega}{4}) n+O(1)$ and $\abs{V_2}=\abs{V_3}=(\frac{1-\omega}{4})n+O(1)$.
Now, consider a vertex $i$ in the set $V_1$, i.e. the set of vertices having positive spins in both configurations. Let $E_i^{(1,1)},E_i^{(1,2)},E_i^{(1,3)},E_i^{(1,4)}$ denote the total number of edges having one end in vertex $i$ and the other in sets $V_1, V_2, V_3, V_4$ respectively. 
By the definition of $s_\sigma$ in (\ref{eq:stab_e}), the stability or unfriendliness of the vertex $i$ is given by:
\begin{align*}
    s_\sigma(i,W) &= (E_i^{(1,3)}+E_i^{(1,4)}-E_i^{(1,1)}-E_i^{(1,2)})/\sqrt{d},\\
     s_{\sigma'}(i,W) &= (E_i^{(1,2)}+E_i^{(1,4)}-E_i^{(1,1)}-E_i^{(1,3)})/\sqrt{d}.
\end{align*}
Therefore, the condition for the simultaneous $h$-stability of $i$ in  both partitions is equivalent to the following set of conditions:
\begin{equation}\label{eq:optimal_1}
   E_i^{(1,4)}-E_i^{(1,1)} \geq \abs{E_i^{(1,3)} - E_i^{(1,2)}}+h\sqrt{d}, \ \forall{i} \in V_1 \, .
\end{equation}
Analogously, we obtain the following conditions for vertices in sets $V_2, V_3, V_4$ respectively:
\begin{equation}\label{eq:optimal_2}
   E_i^{(2,3)}-E_i^{(2,2)} \geq \abs{E_i^{(2,4)} - E_i^{(2,1)}}+h\sqrt{d}, \ \forall{i} \in V_2 \, .
\end{equation}
\begin{equation}\label{eq:optimal_3}
   E_i^{(3,2)}-E_i^{(3,3)} \geq \abs{ E_i^{(3,1)}-E_i^{(3,4)}}+h\sqrt{d}, \ \forall{i} \in V_3 \, .
\end{equation}
\begin{equation}\label{eq:optimal_4}
   E_i^{(4,1)}-E_i^{(4,4)} \geq \abs{E_i^{(4,3)} - E_i^{(4,2)}}+h\sqrt{d}, \ \forall{i} \in V_3 \, .
\end{equation}

Introduce variables $z_{j,k}$ for $j, k \in [4]^2$, such that
$nz_{j,k}$ denote the number edges connecting vertices in $V_i$ to $V_j$, for $j,k \in [4]^2$.  Since both $\sigma, \sigma'$ are constrained to have cut size $\floor{zn}$,  the variables $z_{j,k}$ satisfy:
\begin{equation}\label{eq:cut_constraint}
\begin{split}
    z_{1,3}n+z_{1,4}n + z_{2,3}n+ z_{2,4}n &=\floor{zn},\\
    z_{1,2}n+z_{1,4}n + z_{2,3}n+z_{3,4}n &=\floor{zn}.
\end{split}
\end{equation}

Additionally, since the total number of edges is fixed to $\frac{d}{2}n$, we further have:
\begin{equation}\label{eq:sum_bis}
    \sum_{1 \leq i \leq j \leq 4} z_{i,j} n = dn  
\end{equation}

In what follows, we denote by $\mathcal{S}_{d,n}(z)$ denote the subset of values $\{z_{ij}: ij \in [n]\}$ satisfying the constraints in Equations \ref{eq:cut_constraint} and \ref{eq:sum_bis}.

Similar to the first moment computation, we decouple the optimality constraints for different sets $V_1, V_2, V_3, V_4$ by conditioning on $z_{j,k}$ and adopting the configuration model. Let $\cE_{opt}(\sigma,\sigma',h)$ denote the event of both configurations $\sigma, \sigma'$
satisfying $h$-stability for all the vertices.
Furthermore, let $\cE(\sigma,\sigma', z,(z_{i,j})_{i,j=1}^4)$ denote the event of $\sigma,\sigma'$ defining partitions $V_1, V_2, V_3, V_4$ with cardinalities of the edge sets given by $(nz_{j,k})_{j,k \in [4]^2}$.

Taking expectation over (\ref{eq:defx2w}),  $\mathbb{E}[X^2_\omega(z,h)]$ can be expressed as:
\begin{equation}
\label{eq:EX2m}
\begin{split}
    \mathbb{E}[X^2_\omega(z,h)] 
    &= 2^n\binom{n}{\frac{1+\omega}{2}\frac{1-\omega}{2}}\sum_{(z_{i,j})_{i,j=1}^4 \in \mathcal{S}_{d,n}(z)}  \pr(\cE(\sigma,\sigma', z,(z_{i,j})_{i,j=1}^4))
    \pr[\cE_{opt}(\sigma,\sigma',h)|\cE(\sigma,\sigma', z,(z_{i,j})_{i,j=1}^4)].
\end{split}
\end{equation}

For notational convenience, we additionally define $\beta_i=\frac{1+\omega}{4}$ for $i=1,2$ and $\beta_i=\frac{1-\omega}{4}$ for $i=3,4$. Thus $\abs{V}_i= \beta_in+O(1)$.
Let $\beta_{ij}=\beta_i\beta_j$. To simplify the subsequent analysis, similar to (\ref{eq:def_Dz}), we introduce the following restriction to the cut-sizes:
\begin{equation}\label{eq:def_Dz2}
D_\omega(C,z)=\{(z_{i,j})_{i,j=1}^4\in \mathcal{S}_{d,n}(z): |z_{i,j}n-\beta_{ij} dn| \le C\sqrt{d}n\}.
\end{equation}

By an application of Hoeffding's inequality as in the proof of Lemma \ref{lemma:range-z}, we have:

\begin{lemma}\label{lemma:range-z-2mom}
For every  $C\ge 2$, $d\ge 1$, and $z \in \mathbb{R}$, for large enough $n$, the sum in Eq. \ref{eq:EX2m}
restricted to pairs $(z_{i,j})_{i,j=1}^4$ outside $D_\omega(C,z)$ is bounded by $\exp(-Cn)$.
\end{lemma}

With the above restriction and setup, we now proceed to analyze the asymptotics of the conditional probability:
$$\pr[\cE_{opt}(\sigma,\sigma',h)|\cE(\sigma,\sigma', z,(z_{i,j})_{i,j=1}^4)].$$

Let $O_i$ for $i \in [4]$ denote the event of each vertex in partition $V_i$ being $h$-stable w.r.t both $\sigma, \sigma'$. Under the configuration model, conditioned on $\cE(\sigma,\sigma', z,(z_{i,j})_{i,j=1}^4)$, the events $O_1,O_2,O_3,O_4$ are independent. Therefore:
\begin{equation}\label{eq:optimal}
    \pr[\cE_{opt}(\sigma,\sigma',h)|\cE(\sigma,\sigma', z,(z_{i,j})_{i,j=1}^4)] = \pr[O_1]\pr[O_2]\pr[O_3]\pr[O_4]
\end{equation}

Conditioned on $\cE(\sigma,\sigma', z,(z_{i,j})_{i,j=1}^4)$, each event of the form $O_j$ for $j \in [4]$ can be represented through a balls-in-bins process, which we illustrate for $O_1$: Consider $N$ bins for $N \in \mathbb{N}$ and integer values $\mu_{1,1},\mu_{2,1},\mu_{3,1},\mu_{4,1} > 0$. For each $j \in [4]$, independently assign $\mu_{j,1}$ balls into the $N$ bins, representing the total number of half-edges connecting $V_1$ with $V_1,V_2,V_3,V_4$ respectively.  Let $(A_i^{(j,1)})_{j \in [4]})$ denote the number of balls assigned to the $i_{th}$ bin. When $N=\abs{V_1}$ and $\mu_{1,j}=z_{1,j}$ for $j \in [4]$, the variables $E_i^{(1,1)},E_i^{(2,1)},E_i^{(3,1)},E_i^{(4,1)}$ in (\ref{eq:optimal_1}), denoting the number of half-edges connecting the $i_{th}$ vertex to vertices in the four partitions $V_1, V_2, V_3, V_4$ respectively, possess the same distribution as $(A_i^{(j,1)})_{j \in [4]})$. (To generate the multi-graph, these half-edges (balls) are subsequently matched to half-edges in $V_1,V_2,V_3,V_4$).

Define:
\begin{equation}\label{def:K2def}
K_2(N,\mu_{1,1},\mu_{2,1},\mu_{3,1},\mu_{4,1},h) \coloneqq \mathbb{P}[A_i^{(4,1)}-A_i^{(1,1)} \geq \lvert A_i^{(3,1)}-A_i^{(2,1)} \rvert +h\sqrt{d}, 1 \leq i \leq N],
\end{equation}
where the function $K_2$ plays a role analogous to the role played by $K(\cdot)$ defined in (\ref{eq:Kmu_mubar}) in the first-moment analysis.

By (\ref{eq:optimal}),  under the configuration model, the probability $\pr[\cE_{opt}(\sigma,\sigma',h)|\cE(\sigma,\sigma', z,(z_{i,j})_{i,j=1}^4)]$ can be expressed as:
\begin{equation}\label{eq:opt2ndmom}
\begin{split}
    &\pr[\cE_{opt}(\sigma,\sigma',h)|\cE(\sigma,\sigma', z,(z_{i,j})_{i,j=1}^4)]\\ &= K_2(\abs{V_1},z_{1,1}n,z_{1,2}n,z_{1,3}n,z_{1,4}n,h) K_2(\abs{V_2},z_{2,2}n, z_{2,1}n,z_{2,4}n,z_{2,3}n,h)\\
    &\times K_2(\abs{V_3},z_{3,3}n,z_{3,1}n,z_{3,4}n,z_{3,2}n,h)K_2(\abs{V_4},z_{4,4}n,z_{4,2}n,z_{4,3}n,z_{4,1}n,h).
\end{split}
\end{equation}

\subsection{Large Deviations and Poissonization}

To simplify each of the $K(\cdot)$ terms, we proceed similar to the first moment entropy density, by first utilizing Poissonization to express the probability of each subset of vertices satisfying the corresponding constraints through certain large-deviations rates, and subsequently simplifying the rates through Gaussian approximations and convexity based arguments.

Analogous to Lemma  \ref{lemma_twoPo}, an application of the Poissonization technique (Lemma \ref{lem:poissonization}) yields:
\begin{lemma}
\label{lemmaK4d}
For each $j$, $j=1,\dots,4$, Let $(B_i^{(j,1)})_{i\in [n]}$ be a family of i.i.d. Poisson variables with the same mean $\lambda_{1,j}=\mu_{1,j}/n>0$, then we have
\begin{align}\label{Km_4d}
K_2(n,\mu_1,\mu_2,\mu_3,\mu_4,h)
= \prod_{j=1}^4 \theta_{\mu_j}(\sqrt{\mu_j})\mathbb{P}\left[\sum_{i=1}^{n} B_i^{(j,1)}=\mu_j, 1 \leq j \leq 4   \Biggl| \mathcal{B}_i, 1 \leq i \leq n \right]
                                  (\mathbb{P}[ \mathcal{B}_1 ])^{n},
\end{align}
where $\mathcal{B}_i$ denotes the event $B_i^{(4,1)}-B_i^{(1,1)} \geq \lvert B_i^{(3,1)}-B_i^{(2,1)} \rvert + h\sqrt{d}$.
\end{lemma}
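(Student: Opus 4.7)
}
The plan is to extend the single-index Poissonization result of Lemma~\ref{lem:poissonization} to the four-index setting by leveraging the independence of the four half-edge assignments under the configuration model, and then applying Bayes' rule to swap the conditioning. First, I would observe that once the cardinalities $\mu_1,\mu_2,\mu_3,\mu_4$ are fixed, the four assignments producing $(E_i^{(1)})_{i\in[n]},\ldots,(E_i^{(4)})_{i\in[n]}$ are mutually independent: for each $j$, the vector $(E_i^{(j)})_{i\in[n]}$ arises from throwing $\mu_j$ balls uniformly at random into the $n$ bins, independently of the other three. Consequently, for any compatible target tuples $(t_i^{(j)})$,
\begin{equation*}
\mathbb{P}[E_i^{(j)}=t_i^{(j)},\,1\le i\le n,\,1\le j\le 4]=\prod_{j=1}^{4}\mathbb{P}[E_i^{(j)}=t_i^{(j)},\,1\le i\le n].
\end{equation*}

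Next, I would apply Lemma~\ref{lem:poissonization} separately to each of the four coordinates $j=1,\ldots,4$, replacing $(E_i^{(j)})_{i}$ by the Poisson family $(B_i^{(j)})_i$ of mean $\mu_j/n$ conditioned on $\sum_i B_i^{(j)}=\mu_j$. Each substitution contributes a factor $\Theta_{\mu_j}(\sqrt{\mu_j})$, and summing over all tuples $(t_i^{(j)})$ compatible with the cardinality constraints and with the events $\mathcal{B}_i$ yields
\begin{equation*}
K(n,\mu_1,\mu_2,\mu_3,\mu_4,h)=\prod_{j=1}^{4}\Theta_{\mu_j}(\sqrt{\mu_j})\cdot\mathbb{P}\!\left[\textstyle\sum_{i=1}^{n} B_i^{(j)}=\mu_j,\,1\le j\le 4,\ \text{and }\mathcal{B}_i,\,1\le i\le n\right].
\end{equation*}

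I would then apply Bayes' rule to the joint probability on the right, writing it as
\begin{equation*}
\mathbb{P}\!\left[\textstyle\sum_i B_i^{(j)}=\mu_j,\,\forall j\,\Bigm|\,\mathcal{B}_i,\,\forall i\right]\cdot \mathbb{P}[\mathcal{B}_i,\,1\le i\le n].
\end{equation*}
The crucial observation here is that each event $\mathcal{B}_i$ depends only on the quadruple $(B_i^{(1)},B_i^{(2)},B_i^{(3)},B_i^{(4)})$, and these quadruples are jointly independent across $i$ by the independence of the underlying Poisson variables. Therefore $\mathbb{P}[\mathcal{B}_i,\,1\le i\le n]=\mathbb{P}[\mathcal{B}_1]^n$ by identical distribution, which gives exactly the factorization claimed in~\eqref{Km_4d}.

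I do not anticipate a serious obstacle: every step is an adaptation of the one-index argument of Lemma~\ref{lem:poissonization} modulo the bookkeeping for four independent copies. The only point that deserves a brief verification is the independence of the four half-edge assignments under the configuration model once the bin sizes $\mu_1,\ldots,\mu_4$ (i.e.\ the edge counts between the four classes $V_1,\ldots,V_4$ of vertices) are fixed; this follows from the same decoupling argument used in the first-moment computation in Section~\ref{sec:first_moment}, where conditioning on $z_1,z_2,z$ makes the in-partition and cross-partition assignments independent. Once that is noted, the remaining manipulations are formal.
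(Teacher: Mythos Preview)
Your proposal is correct and follows exactly the approach the paper has in mind: the paper does not spell out a proof but simply presents the lemma as ``the extension of Lemma~4.2 from \cite{gamarnikMaxcutSparseRandom2018}, analogous to Lemma~\ref{lem:poissonization},'' and your argument---applying Lemma~\ref{lem:poissonization} coordinate-wise using the independence of the four half-edge assignments, then Bayes' rule, then the i.i.d.\ factorization of the events $\mathcal{B}_i$---is precisely that extension. Note that the independence of the four assignments is in fact built into the definition of $K(N,\mu_1,\mu_2,\mu_3,\mu_4,h)$ (``$E_1^{(j)},\dots,E_N^{(j)}$ are generated by independently assigning $\mu_j$ half-edges to $N$ vertices''), so no separate decoupling argument is needed there.
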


\begin{proof}
For $i \in [n]$, let $(A^{(j,1)})_{j \in [4]}$ be as in \eqref{def:K2def}.
Let $S(\mu_1,\mu_2,\mu_3, \mu_4, h)$ denote the event of all vertices in $V_1$ being $h$-stable w.r.t both $\sigma, \sigma'$, defined as:
    \begin{align*}
&S(\mu_{1,1},\mu_{2,1},\mu_{3,1}, \mu_{4,1}, h)
\\&=\{((A_i^{(1,1)}, A_i^{(2,1)}, A_i^{(3,1)}, A_i^{(4,1)}))_{1\leq i \leq N} \in 
(\mathbb{Z}_{\geq 0})^{4N}:\mathcal{A}_i, 1\leq i \leq N; 
\sum_{i=1}^{N} A_i^{(j,1)},=\mu_{j,1}, j \in [4]\}.
\end{align*}
where $\mathcal{A}_i$ denotes the event $A_i^{(4,1)}-A_i^{(1,1)} \geq 
\abs{A_i^{(3,1)}- A_i^{(2,1)}}+h\sqrt{d}$

By the total law of probability, $K_2(N,\mu_1,\mu_2,\mu_3,\mu_4, h)$ can be expressed as:

\begin{align*}
&K_2(N,\mu_{1,1},\mu_{2,1},\mu_{3,1},\mu_{4,1}, h) \\
&=\sum_{S(\mu_{1,1},\mu_{2,1},\mu_{3,1} \mu_{4,1}, h)} 
 \prod_{j=1}^4\mathbb{P}[A^{(j,1)}_i=t_i, 1\leq i\leq 
 N]
\end{align*}

We then apply Poissonization (Lemma \ref{lem:poissonization}) to the RHS to obtain:
\begin{align*}
&K_2(N,\mu_{1,1},\mu_{2,1},\mu_{3,1},\mu_{4,1}, h)\\ &=\Theta_{\mu_{1,1}}(\sqrt{\mu_{1,1}}) \Theta_{\mu_{2,1}}(\sqrt{\mu_{2,1}})\Theta_{\mu_{3,1}}(\sqrt{\mu_{3,1}})\Theta_{\mu_{4,1}}(\sqrt{\mu_{4,1}})
\sum_{S(\mu_{1,1},\mu_{2,1},\mu_{3,1}, \mu_{4,1}, h)} 
\prod_{j=1}^4 \mathbb{P}[B^{(j,1)}_i=t_i, 1\leq i\leq 
 N]\\
&=
\Theta_{\mu_{1,1}}(\sqrt{\mu_{1,1}}) \Theta_{\mu_{2,1}}(\sqrt{\mu_{2,1}})\Theta_{\mu_{3,1}}(\sqrt{\mu_{3,1}})\Theta_{\mu_{4,1}}(\sqrt{\mu_{4,1}})
 \pr[\sum_{1\le i\le N}B^{(j,1)}_i=\mu_{j,1} \forall j \in [4],
\mathcal{B}_i, 1\le i\le  N] \\
 &=
  \Theta_{\mu_{1,1}}(\sqrt{\mu_{1,1}}) \Theta_{\mu_{2,1}}(\sqrt{\mu_{2,1}})\Theta_{\mu_{3,1}}(\sqrt{\mu_{3,1}})\Theta_{\mu_{4,1}}(\sqrt{\mu_{4,1}})
 \pr[\sum_{1\le i\le N}B^{(j,1)}_i=\mu_{j,1} \forall j \in [4],
\mathcal{B}_i, 1\le i\le  N]\times \\
 &\times \left(\pr[\mathcal{B}_i]\right)^{N},
\end{align*}
where $\mathcal{B}_i$ denotes the event $B_i^{(4,1)}-B_i^{(1,1)} \geq 
\abs{B_i^{(3,1)}- B_i^{(2,1)}}+h\sqrt{d}$
\end{proof}

Next, similar to Corollary \ref{coro:Poisson-local-LD}, we estimate $K_2(\cdot)$ through the following large-deviations result:

\begin{lemma}\label{lem:poiss_2nd_mom}
Suppose $Z_1,Z_2, Z_3,Z_4$ are  Poisson random variables with rates $\lambda_1, \lambda_2,\lambda_3, \lambda_4$. Fix $a>0$ such that:
\begin{equation}\label{eq:lam_cond}
    \lambda_1-\lambda_4 \geq \abs{\lambda_2-\lambda_3} + a+2.
\end{equation}
Let $P$ be the probability distribution of 
$\left({Z_1-\lambda_1\over \sqrt{\lambda_1}},
{Z_2-\lambda_2 \over\sqrt{\lambda_2}}, {Z_3-\lambda_3 \over\sqrt{\lambda_3}}, {Z_4-\lambda_4 \over\sqrt{\lambda_4}}\right)$ conditioned on $Z_4-Z_1\ge \abs{Z_2-Z_3}+a$.
Suppose $X_i\stackrel{d}{=} P$ i.i.d. 
$1\le i\le N$,
Let $S_N=\sum_{i\in [N]}X_i$. Then, there exists a
unique $\theta^*\in \R^4$  satisfying:
\begin{align}\label{eq:thetst2mom}
\nabla \Lambda(\theta^*)=0,
\end{align}
where $\Lambda$ denotes the log-Moment Generating
Function of $P$. 
Furthermore,
\begin{align}\label{eq:ldp2mon}
\lim_{N \rightarrow \infty} {1\over N}\log 
\pr\left(S_N=0\right)
=\Lambda(\theta^*).
\end{align}
\end{lemma}

\begin{proof}
By  Lemma \ref{LCLTLDP}, (\ref{eq:ldp2mon}) follows directly from the existence of $\theta^\star$ satisfying (\ref{eq:thetst2mom}). To show the existence of such $\theta^\star$, we begin by noting that by the convexity of $\Lambda(\theta)$, the condition  $\nabla \Lambda(\theta^*)=0$ is equivalent to the existence of a minimizer of $\Lambda(\theta)$.

Next, by (\ref{eq:lam_cond}), similar to (\ref{eq:support_points}), we note that the following subset of points lie in the support of $P$:
    \begin{equation}
       V= \{(\pm \frac{1}{\sqrt{\lambda_1}}, \pm \frac{1}{\sqrt{\lambda_2}}, \pm \frac{1}{\sqrt{\lambda_3}}, \pm \frac{1}{\sqrt{\lambda_4}})\}
    \end{equation}
Let:
\begin{equation}
    k^\star= \inf_{\theta: \norm{\theta}=1} \max_{v \in V} (\langle v, \theta \rangle)
\end{equation}
Therefore:
\begin{align*}
    \Lambda(\theta)&=\log \left(\sum_{x}\exp\left(\langle \theta, x \rangle \right)P(X=x)\right) \\
&\ge \log[\exp(k^\star \norm{\theta}_2)[\min_{v\in V} P_1(X=v)]].
\end{align*}

Since $k^\star >0$, $\Lambda(\theta) \rightarrow \infty$ as $\theta \rightarrow \infty$, establishing the existence of a minimizer.
\end{proof}

Next, similar to Lemma \ref{lemma:Gaussian-of-Pois}, approximating the Poisson random variables through Gaussians yields the following large-deviations rate:
\begin{lemma}\label{lem:2ndmompoiss}
Let $Z_1,Z_2, Z_3,Z_4$ be Poisson random variables as in Lemma \ref{lem:poiss_2nd_mom} indexed by $t\in\Z_+$, 
with  parameters $\lambda_1(t),\lambda_2(t), \lambda_3(t), \lambda_4(t)$
satisfying the following
\begin{enumerate}
\item $\lambda_j(t)= \zeta_j t+\tau_j\sqrt{t}+o_t(\sqrt{t}), 
j=1,2,3,4$, as $t\to\infty$, where $(\zeta_j)_{j \in [4]} \in \mathbb{R}$ satisfy $\zeta_1=\zeta_4, \zeta_2=\zeta_3$. 

\item $\lambda_j(t), j=1,2$ are integers.
\end{enumerate}

Suppose further that $a_1(t)$ is a sequence such that:
\begin{equation}
    a(t) = b\sqrt{t} + o(\sqrt{t}),
\end{equation}
and:
\begin{equation}
    \tau_4-\tau_1 > \abs{\tau_2-\tau_3}+b.
\end{equation}

Define:
\begin{equation}
    b_1 \coloneqq \frac{\tau_2-\tau_3}{\sqrt{\zeta_2+\zeta_3}}, \ b_2 \coloneqq \sqrt{\frac{\zeta_2+\zeta_3}{\zeta_1+\zeta_4}}, \ b_3 \coloneqq \frac{\tau_4-\tau_1-b}{\sqrt{\zeta_4+\zeta_1}},
\end{equation}
Then:
\begin{equation}\label{eq:limit_poiss}
\begin{split}
    \lim_{t \rightarrow \infty} \lim_{N\rightarrow \infty} {1\over N}\log 
\pr\left(S_N=0\right) &= -\log \Pr\left[Y_1 +b_3\geq b_2 \abs{Y_2+b_1}\right]+\inf_{\theta_1,\theta_2}\log \mathbb{E}\left[\exp(\theta_1 Y_1+\theta_2 Y_2)\mathbf{1}_{[Y_1+b_3 \geq b_2\abs{Y_1+b_1}]}\right],
\end{split}
\end{equation}
where $Y_1,Y_2$ denote i.i.d standard Normal variables.
\end{lemma}

\begin{proof}
Let $\Lambda(\theta)$ for $\theta \in \mathbb{R}^4$ denote again the log-MGF of the normalized and centered Poisson variables  $B_1,B_2,B_3,B_4 \coloneqq \left({Z_1-\lambda_1\over \sqrt{\lambda_1}},
{Z_2-\lambda_2 \over\sqrt{\lambda_2}}, {Z_3-\lambda_3 \over\sqrt{\lambda_3}}, {Z_4-\lambda_4 \over\sqrt{\lambda_4}}\right)$ subject to the constraint $Z_4 - Z_1 \geq \abs{Z_2-Z_3}+a$. Lemma \ref{lem:poiss_2nd_mom} then implies that:
\begin{equation}
  \lim_{N \rightarrow \infty} {1\over N}\log 
\pr\left(S_N=0\right)=  \inf_{\theta \in \mathbb{R}}\Lambda(\theta)
\end{equation}

 Note that the constraint  $Z_4 - Z_1 \geq \abs{Z_2-Z_3}+a$ can be rewritten w.r.t $B_1,B_2,B_3,B_4$ as:
\begin{equation}
    \frac{\sqrt{\lambda_4}B_4-\sqrt{\lambda_1}B_1}{\sqrt{\lambda_1+\lambda_4}}+\frac{\lambda_4-\lambda_1-a}{\sqrt{\lambda_1+\lambda_4}} \geq \sqrt{\frac{\lambda_2+\lambda_3}{\lambda_4+\lambda_1}}\abs{\frac{\sqrt{\lambda_2}B_2-\sqrt{\lambda_3}B_3}{\sqrt{\lambda_2+\lambda_3}}+\frac{\lambda_2-\lambda_3}{\sqrt{\lambda_2+\lambda_3}}}
\end{equation}
Note that $\frac{\lambda_1}{\lambda_1+\lambda_4} = \frac{\zeta_1}{\zeta_1+\zeta_4} +o_t(1)$. Our choice of $\zeta_j$ for $j\in [4]$, then implies that:
\begin{equation}
    \frac{\lambda_1}{\lambda_1+\lambda_4}=\frac{\lambda_4}{\lambda_1+\lambda_4} = \frac{\lambda_2}{\lambda_2+\lambda_3}= \frac{\lambda_3}{\lambda_2+\lambda_3} = \frac{1}{2}+o_t(1)
\end{equation}

Therefore, by Poisson-Gaussian approximation, $\Lambda(\theta)$ 
converges point-wise to the log-MGF for standard Gaussian variables 
$\tilde{Z} \in \mathbb{R}^4$, $\tilde{Z} \sim \mathcal{N}(0,I_4)$  subject to constraint $\frac{1}{\sqrt{2}}(\tilde{Z}_4-\tilde{Z}_1)+b_3 \geq \frac{b_2}{\sqrt{2}} \abs{\tilde{Z}_2-\tilde{Z}_3+b_1}$.

The associated log-MGF is denoted as $\Lambda^\star$, and is given by:
\begin{align}
 \Lambda^\star(\theta_1, \theta_2, \theta_3,\theta_4)&=    -\log \Pr\left[\frac{1}{\sqrt{2}}(\tilde{Z}_4-\tilde{Z}_1)+b_3 \geq b_2 \abs{\frac{1}{\sqrt{2}}(\tilde{Z}_2-\tilde{Z}_3)+b_1}\right]
\\
&+\log
\mathbb{E}\left[\exp(\sum_{i=1}^4 \theta_i \tilde{Z}_i)\mathbf{1}_{\left[\frac{1}{\sqrt{2}}(\tilde{Z}_4-\tilde{Z}_1)+b_3 \geq b_2{}\abs{\frac{1}{\sqrt{2}}(\tilde{Z}_2-\tilde{Z}_3)+b_1}\right]}\right]
\notag.
     \end{align}

Next, through an argument similar to the proof of Lemma \ref{lemma:Gaussian-of-Pois}, we translate the point-wise convergence of $\Lambda(\theta)$ to
$\Lambda^\star(\theta)$ to the convergence of their minimum values.  Let $P^\star$ denote the distribution of $\tilde{Z} \sim \mathcal{N}(0,I_4)$  conditioned on the constraint $\frac{1}{\sqrt{2}}(\tilde{Z}_4-\tilde{Z}_1)+b_3 \geq b_2{}\abs{\frac{1}{\sqrt{2}}(\tilde{Z}_2-\tilde{Z}_3)+b_1}$.
Recall that $\Lambda^\star(\theta)$ is the log-MGF w.r.t $P^\star$ and therefore convex in $\theta$. 
The condition $\tau_4-\tau_1 > \abs{\tau_2-\tau_3}+a$ implies that $b_3 > \abs{b_1}b_2$ and thus $\frac{1}{\sqrt{2}}(\tilde{Z}_4-\tilde{Z}_1)+b_3 \geq b_2 \abs{\frac{1}{\sqrt{2}}(\tilde{Z}_2-\tilde{Z}_3)+b_1}$ is satisfied for $(\tilde{Z}_i)_{i \in [4]}$ restricted to a ball of small enough radius.

Therefore, $\exists \delta > 0$, such that the shell $S_\delta = \{\frac{\delta}{2}< \norm{x}_2 \leq \delta\}$ lies in the support of $P^\star$. We thus obtain the lower-bound
\begin{equation}
    \Lambda^\star(\theta) \geq \exp(\norm{\theta}_2\frac{\delta}{2})P^\star[S_\delta],
\end{equation}
implying that $\Lambda^\star \rightarrow \infty$ as $\theta \rightarrow \infty$ and $\Lambda^\star(\theta)$ admits a unique maximizer $\theta^\star$.

Subsequently, as in the proof of Lemma \ref{lemma:Gaussian-of-Pois}, the uniform convergence of convex functions on compact sets implies:
\begin{equation}
    \inf_{\theta} \Lambda(\theta) \rightarrow \inf_{\theta}\Lambda^\star(\theta),
\end{equation}
as $t \rightarrow \infty$


The above rate can be simplified by exploiting symmetries between the roles played by $\tilde{Z}_1,\tilde{Z}_4$ and $\tilde{Z}_2,\tilde{Z}_3$ respectively. Concretely, recall that $\Lambda^\star_{1}(\theta)$ admits a unique maximizer $\theta^\star_1, \theta^\star_2, \theta^\star_3, \theta^\star_4$.

Note that $\Lambda^\star$ is even in $\theta_2-\theta_3$ and odd in $\theta_4-\theta_1$. In particular, we have:
\begin{equation}
    \Lambda^\star(\theta_1, \theta_2, \theta_3,\theta_4) =  \Lambda^\star(\theta_1, \theta_3, \theta_2,\theta_4), 
\end{equation}
and 
\begin{equation}
    \Lambda^\star(\theta_1, \theta_2, \theta_3,\theta_4) =  \Lambda^\star(-\theta_4, \theta_3, \theta_4,-\theta_1). 
\end{equation}
Therefore, any minimizer $\theta^\star_1,\theta^\star_2,,\theta^\star_3,,\theta^\star_4$ of $\Lambda^\star$, maps to another minimizer $-\theta^\star_4,\theta^\star_3,\theta^\star_2,-\theta^\star_1$. Since the minimizer of $\Lambda^\star(\cdot)$ is unique, we obtain:
\[
    \theta^\star_1=-\theta^\star_4,
\]
and \[
\theta^\star_2=\theta^\star_3.\]

Hence, we may set $\Lambda^\star(\theta^\star)$ as the minimum upon the restriction $\theta_4=\theta_1$ and $\theta_3=\theta_2$. Upon such restriction, $\Lambda^\star(\theta^\star)$ simplifies as:
\begin{align*}
 \Lambda^\star(\theta^\star)-&\log \Pr\left[\frac{1}{\sqrt{2}}{\tilde{Z}_4-\tilde{Z}_1}+b_3 \geq b_2 \abs{\frac{1}{\sqrt{2}}(\tilde{Z}_2-\tilde{Z}_3)+b_1}\right]
\\
&+\log
\mathbb{E}\left[\exp(\theta_1 (\tilde{Z}_1+\tilde{Z}_4)+\theta_2 (\tilde{Z}_2-\tilde{Z}_4))\mathbf{1}_{\left[\frac{1}{\sqrt{2}}(\tilde{Z}_4+\tilde{Z}_1)+b_3 \geq b_2\abs{\frac{\tilde{Z}_2-\tilde{Z}_3}{\sqrt{2}} +b_1}\right]}\right]\\
\end{align*}

Finally, note that $Y_1\coloneqq \frac{1}{\sqrt{2}}{(\tilde{Z}_4-\tilde{Z}_1)}$ and $ Y_2 \coloneqq \frac{1}{\sqrt{2}}{(\tilde{Z}_3-\tilde{Z}_2)}$ are distributed as standard independent normal variables.

Therefore, upon replacing $\theta_1 \leftarrow \sqrt{2} \theta_1, \theta_2 \leftarrow \sqrt{2} \theta_2$ $\Lambda^\star(\theta^\star)$ further simplifies to:

\begin{equation}
\Lambda^\star(\theta^\star)= -\log \Pr[Y_1 \geq b_2 \abs{Y_2}+b_3]+\inf_{\theta_1,\theta_2}\log \mathbb{E}\left[\exp(\sum_{i=1}^2 \theta_i Y_i)\mathbf{1}_{[Y_1+b_3 \geq b_2\abs{Y_2+b_1}]}\right].
\end{equation}
\end{proof}

The above Lemma combined with Lemma \ref{lemmaK4d} results in the 
asymptotic expression for $K_2(\cdot)$: 

\begin{corollary}\label{cor:K_asymp_2}
Let $(\mu_{1,i}(d))_{i\in [4]},a(d)$ denote a sequence of integer rates satisfying:
\begin{equation}
    \mu_{1,i}(d) = (\beta_{1,i} \beta_1 d+\tau_{1,i}\sqrt{d} + o_d(\sqrt{d}))N,
    \end{equation}
\begin{equation}
a(d)=h\sqrt{d}+o_d(\sqrt{d}), 
\end{equation}
\begin{equation}\label{eq:Kb2}
\begin{split}
   &\lim_{d \rightarrow \infty}  \lim_{N \rightarrow \infty} \frac{1}{N} \log K_2(\floor{\beta_1 N}, \mu_{1,1}(d) N, \mu_{1,2}(d), \mu_{1,3}(d), \mu_{1,4}(d), a(d))\\ &= L_2\left(\frac{(\tau_{1,2}-\tau_{1,3})}{\beta_1\sqrt{\beta_2+\beta_3}}, \sqrt{\frac{\beta_2+\beta_3}{\beta_4+\beta_1}}, \frac{\tau_{1,4}-\tau_{1,1}}{\beta_1\sqrt{\beta_1+\beta_4}}-\frac{h}{\sqrt{\beta_1+\beta_4}}\right),
\end{split}
\end{equation}
where:
\begin{equation}\label{eq:def_L2}
    L_2(b_1, b_2, b_3)  \triangleq \inf_{\theta_1,\theta_2}\log \mathbb{E}[\exp(\theta_1 Y_1+\theta_2 Y_2)\mathbf{1}_{[Y_1+b_3 \geq b_2\abs{Y_1+b_1}]}].
\end{equation}
\end{corollary}

Combining the above estimate for the four partitions and summing across configurations will leads to a variational problem over the edge partition sizes $z_{ij}$. To analyze the same, we introduce the following function over $16$ scalars $\tau_{i,j}, i, j \in [4]^2$:
\begin{equation}\label{eq:K_def_obj}
\begin{split}
   \mathcal{K}_2(\tau_{i,j}, i, j \in [4]^2)  &\coloneqq  \frac{1+\omega}{4}L_2\left(\frac{4\sqrt{2}(\tau_{1,2}-\tau_{1,3})}{\sqrt{(1-\omega)}(1+\omega)}, \sqrt{\frac{(1-\omega)}{(1+\omega)}}, \frac{4\sqrt{2}(\tau_{1,4}-\tau_{1,1})}{(1+\omega)^{3/2}}-\frac{2h}{\sqrt{1+\omega}}\right)\\&+\frac{1-\omega}{4}L_2\left(\frac{4\sqrt{2}(\tau_{2,1}-\tau_{2,4})}{\sqrt{(1+\omega)}(1-\omega)}, \sqrt{\frac{(1+\omega)}{(1-\omega)}}, \frac{4\sqrt{2}(\tau_{3,2}-\tau_{2,2})}{(1-\omega)^{3/2}}-\frac{2h}{\sqrt{1-\omega}}\right)\\&+\frac{1-\omega}{4}L_2\left(\frac{4\sqrt{2}(\tau_{3,1}-\tau_{3,4})}{\sqrt{(1+\omega)}(1-\omega)}, \sqrt{\frac{(1+\omega)}{(1-\omega)}}, \frac{4\sqrt{2}(\tau_{3,2}-\tau_{3,3})}{(1-\omega)^{3/2}}-\frac{2h}{\sqrt{1-\omega}}\right)\\&+\frac{1+\omega}{4}L_2\left(\frac{4\sqrt{2}(\tau_{4,2}-\tau_{4,3})}{\sqrt{(1-\omega)}(1+\omega)}, \sqrt{\frac{(1-\omega)}{(1+\omega)}}, \frac{4\sqrt{2}(\tau_{4,1}-\tau_{4,4})}{(1+\omega)^{3/2}}-\frac{2h}{\sqrt{1+\omega}}\right).
\end{split}
\end{equation}

Fortunately, the number of parameters in the above objective can be substantially reduced by exploiting certain constraints on $(\tau_{i,j})_{(i,j) \in [4]^2}$ and certain properties of $L_2$. These properties are obtained in the following Lemma, whose proof can be found in Appendix \ref{app:properties}.

\begin{lemma}\label{lem:l2conc}
    $L_2(b_1, b_2, b_3)$ is strictly concave in $b_1,b_3$ and even in $b_1$. 
\end{lemma}

The strict concavity and even symmetry of $L_2(\cdot)$ w.r.t $b_1$ imply that for any $b_2, b_3 \in \mathbb{R}$, $ L_2(b_1,b_2,b_3)$ is uniquely maximized w.r.t $b_1$ at $b_1=0$. Hence, we obtain the following bound on $ L_2(\cdot)$:
   \begin{equation}\label{eq:0min}
       L_2(b_1,b_2,b_3) \leq  L_2(0,b_2,b_3), \, \forall b_1, b_2, b_3 \in \mathbb{R}^3:
   \end{equation}

We now leverage the above properties of $ L_2(\cdot)$, together with specific constraints on $(\tau_{i,j})_{(i,j) \in [4]^2}$, to reduce the optimization of $\mathcal{K}_2(\cdot)$ in ~\eqref{eq:K_def_obj} over $(\tau_{i,j})_{(i,j) \in [4]^2}$ to a problem involving a single parameter. These constraints on \((\tau_{i,j})_{(i,j) \in [4]^2}\) will later be connected to the constraints on \((z_{i,j})_{(i,j) \in [4]^2}\) in~\eqref{eq:cut_constraint} and~\eqref{eq:sum_bis}. The reduction of is formalized in the lemma below, where \(t\) denotes the sole remaining ``free parameter.''

\begin{lemma}\label{lemma:Optimum-LD-rates-2nd-monent}

Consider the objective $\mathcal{K}_2(\cdot)$ defined by (\ref{eq:K_def_obj}). Fix $t \in \mathbb{R}$. Define the constraint set $\mathcal{S}_E(t)$ as the set of $(\tau_{i,j})_{(i,j) \in [4]^2}$ satisfying:
\begin{equation}\label{eq:tau_cut_constraint}
\begin{split}
    \tau_{1,3}+\tau_{1,4} + \tau_{2,3}+ \tau_{2,4} &=-\frac{E}{2},\\
    \tau_{1,2}+\tau_{1,4} + \tau_{2,3}+\tau_{3,4} &=-\frac{E}{2},
\end{split}
\end{equation}
\begin{equation}\label{eq:tot_const}
    \sum_{i<j} \tau_{i,j} + \frac{1}{2} \sum_{i=1}^4 \tau_{i,i}=0,
\end{equation}
\begin{equation}
    \tau_{1,4}-\frac{1}{2}(\tau_{1,1}+\tau_{4,4})=t.
\end{equation}
Then, for any $t \in \mathbb{R}$:
\begin{equation}\label{eq:k2optt}
\begin{split}
    &\sup_{\{\tau_{i,j}:(ij) \in [4]^2\} \in \mathcal{S}_E(t)} \mathcal{K}_2(\tau_{i,j}, i, j \in [4])\\ &=  \frac{1+\omega}{4}L_2\left(0, \sqrt{\frac{(1-\omega)}{(1+\omega)}}, \frac{4\sqrt{2}t}{(1+\omega)^{3/2}}-\frac{2h}{\sqrt{1+\omega}}\right)+ \frac{1-\omega}{4}L_2\left(0, \sqrt{\frac{(1+\omega)}{(1-\omega)}},\frac{4\sqrt{2}(-E-t)}{(1-\omega)^{3/2}}-\frac{2h}{\sqrt{1-\omega}}\right),
\end{split}
\end{equation}
with the supremum achieved at $\tau_{1,4}=\frac{t}{2}$, $\tau_{1,1}=\tau_{4,4}=-\frac{t}{2}$,$\tau_{2,3}=\frac{E-t}{2}$,  $\tau_{2,2}=\tau_{3,3}=-\frac{E-t}{2}$ and $\tau_{1,2}=\tau_{1,3}=\tau_{2,4}=\tau_{3,4}=0$.
\end{lemma}
\begin{proof}



Applying the inequality (\ref{eq:0min}) to each $L_2(\cdot)$ term in the definition of $\mathcal{K}_2(\cdot)$ given by (\ref{eq:K_def_obj}), we obtain the following bound for any $(\tau_{i,j})_{i,j \in [4]^2}$:

\begin{equation}\label{eq:K2bound}
\begin{split}
&\mathcal{K}_2(\tau_{i,j}, i, j \in [4]^2)   \leq \frac{1+\omega}{4}L_2\left(0, \sqrt{\frac{(1-\omega)}{(1+\omega)}}, \frac{4\sqrt{2}(\tau_{1,4}-\tau_{1,1})}{(1+\omega)^{3/2}}-\frac{2h}{\sqrt{1+\omega}}\right)\\&+\frac{1-\omega}{4}L_2\left(0, \sqrt{\frac{(1+\omega)}{(1-\omega)}}, \frac{4\sqrt{2}(\tau_{2,3}-\tau_{2,2})}{(1-\omega)^{3/2}}-\frac{2h}{\sqrt{1-\omega}}\right)+\frac{1-\omega}{4}L_2\left(0, \sqrt{\frac{(1+\omega)}{(1-\omega)}}, \frac{4\sqrt{2}(\tau_{2,3}-\tau_{3,3})}{(1-\omega)^{3/2}}-\frac{2h}{\sqrt{1-\omega}}\right)\\&+\frac{1+\omega}{4}L_2\left(0, \sqrt{\frac{(1-\omega)}{(1+\omega)}}, \frac{4\sqrt{2}(\tau_{4,1}-\tau_{4,4})}{(1+\omega)^{3/2}}-\frac{2h}{\sqrt{1+\omega}}\right).
\end{split}
\end{equation}

Adding the constraints in (\ref{eq:tau_cut_constraint}) and subtracting (\ref{eq:tot_const}), we further have that any $(\tau_{i,j}:i,j \in [4]^2) \in \mathcal{S}_E(t)$ satisfy:
\begin{equation}
    2\tau_{1,4}-\tau_{1,1}-\tau_{4,4}+2\tau_{2,3}-\tau_{2,2} + -\tau_{3,3} =-2E.
\end{equation}

Since $2\tau_{1,4}-\tau_{1,1}-\tau_{4,4}$ is fixed to the value $2t$, we obtain $2\tau_{2,3}-\tau_{2,2} + -\tau_{3,3}=-2E-2t$. Therefore, by defining $t_1=\frac{4\sqrt{2}(\tau_{1,4}-\tau_{1,1})}{(1+\omega)^{3/2}}-\frac{2h}{\sqrt{1+\omega}}, t_2= \frac{4\sqrt{2}(\tau_{2,3}-\tau_{2,2})}{(1-\omega)^{3/2}}-\frac{2h}{\sqrt{1-\omega}}, t_3=\frac{4\sqrt{2}(\tau_{3,2}-\tau_{3,3})}{(1-\omega)^{3/2}}-\frac{2h}{\sqrt{1-\omega}}$, $t_4 = \frac{4\sqrt{2}(\tau_{4,1}-\tau_{4,4})}{(1+\omega)^{3/2}}-\frac{2h}{\sqrt{1+\omega}}$, (\ref{eq:K2bound}) can be rewritten as:
\begin{align*}
\mathcal{K}_2(\tau_{i,j}, i, j \in [4]) &\leq  \frac{1+\omega}{4}\left(L_2\left(0, \sqrt{\frac{(1+\omega)}{(1-\omega)}}, t_1\right)+  L_2\left(0, \sqrt{\frac{(1+\omega)}{(1-\omega)}}, t_4\right)\right)\\
&+\frac{1-\omega}{4}\left(L_2\left(0, \sqrt{\frac{(1-\omega)}{(1+\omega)}},t_2\right)+ L_2\left(0, \sqrt{\frac{(1-\omega)}{(1+\omega)}},t_3\right)\right).
\end{align*}
By the convexity of $L_2(\cdot,0,b)$ (Lemma \ref{lem:l2conc}), we have:
\begin{equation}\label{eq:conv_bound_1}
    L_2\left(0, \sqrt{\frac{(1+\omega)}{(1-\omega)}}, t_1\right) +L_2\left(0, \sqrt{\frac{(1+\omega)}{(1-\omega)}}, t_4\right) \leq 2  L_2\left(0, \sqrt{\frac{(1+\omega)}{(1-\omega)}}, \frac{t_1+t_4}{2}\right),
\end{equation}
and
\begin{equation}\label{eq:conv_bound_2}
 L_2\left(0, \sqrt{\frac{(1-\omega)}{(1+\omega)}},t_2\right)+L_2\left(0, \sqrt{\frac{(1-\omega)}{(1+\omega)}},t_3\right) \leq 2  L_2\left(0, \sqrt{\frac{(1-\omega)}{(1+\omega)}}, \frac{t_2+t_3}{2}\right).
\end{equation}

The constraint $\tau_{1,4}-\frac{1}{2}(\tau_{1,1}+\tau_{4,4})=t$ implies that $\frac{t_1+t_4}{2}=\frac{4\sqrt{2}t}{(1-\omega)^{3/2}}-\frac{2h}{\sqrt{1+\omega}}$ and $\frac{t_2+t_3}{2}=\frac{4\sqrt{2}(-E-t)}{(1-\omega)^{3/2}}-\frac{2h}{\sqrt{1-\omega}}$. Substituting in (\ref{eq:conv_bound_1}), (\ref{eq:conv_bound_2}), we obtain a bound matching the RHS of (\ref{eq:k2optt}):
\begin{equation}
\begin{split}
    \sup_{\{\tau_{i,j}:(ij) \in [4]^2\} \in \mathcal{S}_E(t)} \mathcal{K}_2(\tau_{i,j}, i, j \in [4]) &\leq \frac{1+\omega}{2}L_2\left(0, \sqrt{\frac{(1-\omega)}{(1+\omega)}}, \frac{4\sqrt{2}t}{(1+\omega)^{3/2}}-\frac{2h}{\sqrt{1+\omega}}\right)\\&+ \frac{1-\omega}{2}L_2\left(0, \sqrt{\frac{(1+\omega)}{(1-\omega)}}, \frac{4\sqrt{2}(-E-t)}{(1-\omega)^{3/2}}-\frac{2h}{\sqrt{1-\omega}}\right).
\end{split}
\end{equation}

To show that the above bound is tight, we establish the existence of $(\tau_{i,j},{i,j}\in [4]^2) \in \mathcal{S}_E(t)$ achieving the above bound. Let $t^\star$ denote the maximizer of the above objetive in RHS. Set $\tau^\star_{1,4}=\frac{t^\star}{2}$ and $\tau^\star_{1,1}=\tau^\star_{4,4}=-\frac{t^\star}{2}$,$\tau^\star_{2,3}=\frac{E-t^\star}{2}$,  $\tau^\star_{2,2}=\tau^\star_{3,3}=-\frac{E-t^\star}{2}$ and $\tau^\star_{1,2}=\tau^\star_{1,3}=\tau^\star_{2,4}=\tau^\star_{3,4}=0$. Then, $(\tau^\star_{i,j})_{i,j \in [4]^2}$ satisfy all the constraints in $\mathcal{S}_E(t)$ and ensure that the above bounds are tight, implying: 
\begin{align*}
     &\sup_{\{\tau_{i,j}:(ij) \in [4]^2\} \in \mathcal{S}_E(t)} \mathcal{K}_2(\tau_{i,j}, i, j \in [4]) = \mathcal{K}_2(\tau^\star_{i,j}, i, j \in [4])\\
     &=\frac{1+\omega}{2}L_2\left(0, \sqrt{\frac{(1-\omega)}{(1+\omega)}}, \frac{4\sqrt{2}t}{(1+\omega)^{3/2}}-\frac{2h}{\sqrt{1+\omega}}\right)+ \frac{1-\omega}{2}L_2\left(0, \sqrt{\frac{(1+\omega)}{(1-\omega)}}, \frac{4\sqrt{2}(-E-t)}{(1-\omega)^{3/2}}-\frac{2h}{\sqrt{1-\omega}}\right).
\end{align*}
\end{proof}

Additionally, analogous to Lemma \ref{lem:L_lips}, we establish the following regularity property of $L_2(\cdot)$, whose proof can again be found in Appendix \ref{app:properties}:

\begin{lemma}\label{lem:l2cont}
For any $E<-\frac{h}{2}$, $L_2(0,t,b)$ is continuously differentiable in $t,b$ on the (open) half-space $b>0$.
\end{lemma}

\subsection{Putting things together}

We begin by noting the following monotonicity property satisfied by $K_2(\cdot)$, which is a direct consequence of its definition:

\begin{proposition}\label{prop:K_2mon}
Suppose that $\mu_{1,1},\mu_{2,1},\mu_{3,1},\mu_{4,1}$ and $\tilde{\mu}_{1,1},\tilde{\mu}_{2,1},\tilde{\mu}_{3,1},\tilde{\mu}_{4,1}$ satisfy $\mu_{1,4}-\mu_{1,1} \geq \tilde{\mu}_{1,4}-\tilde{\mu}_{1,1}$ and  $\abs{\mu_{3,1}-\mu_{2,1}} \leq \abs{\tilde{\mu}_{3,1}-\tilde{\mu}_{2,1}}$. Then, for any $N \in \mathbb{N}, h \in \mathbb{R}$:
\begin{equation}
K_2(N,\mu_{1,1},\mu_{2,1},\mu_{3,1},\mu_{4,1},h) \geq  K_2(N,\tilde{\mu}_{1,1},\tilde{\mu}_{2,1},\tilde{\mu}_{3,1},\tilde{\mu}_{4,1},h).
\end{equation}
\end{proposition}

\subsection{Proof of Proposition \ref{prop:second_mom_main}}

Recall the decomposition of $\mathbb{E}[X^2_\omega(z,h)]$ in (\ref{eq:EX2m}):
\begin{equation}\label{EX2mrem}
    \mathbb{E}[X^2_\omega(z,h)] 
    = 2^n\binom{n}{\frac{1+\omega}{2}\frac{1-\omega}{2}}\sum_{(z_{i,j})_{i,j=1}^4 \in \mathcal{S}_{d,n}(z)}  \pr(\cE(\sigma,\sigma', z,(z_{i,j})_{i,j=1}^4))
    \pr[\cE_{opt}(\sigma,\sigma',h)|\cE(\sigma,\sigma', z,(z_{i,j})_{i,j=1}^4)]. 
\end{equation}

Similarly as Section \ref{sec:first_mom_fin}, we next introduce discretization partitions. Recall  that $\beta_i = \frac{1+\omega}{4}$ for $i=1,4$ and $\beta_i= \frac{1-\omega}{4}$ for $i=2,3$. Define, for any $\epsilon > 0$:

\begin{align*}
\mathcal{C}(\epsilon, C)=    
\{-C,-C+\epsilon ,-2+2\epsilon,
    -C+3\epsilon,\ldots, C\},
\end{align*}
and set $\tau_{i,j} \in \mathcal{C}(\epsilon,C)$,  $\alpha_{\rm l,ij}, \alpha_{\rm lu,ij} \in [0,1]$ such that:
\begin{equation}\label{eq:beta_ineq}
\begin{split}
\beta_{i}\beta_j d/4+\tau_{i,j}\sqrt{d}+\alpha_{\rm l,i,j} &\le z_{ij} < \beta_{i}\beta_jd+\tau_{i,j}\sqrt{d}+\epsilon\sqrt{d}-\alpha_{\rm u,i,j}, \forall i \neq j \in [4]^2,
\\
\beta^2_{i}+\tau_{i,i}\sqrt{d}+\alpha_{\rm l,i,i} &\le 2z_{ii} < \beta^2_{i}d/4+\tau_{i,i}\sqrt{d}+\epsilon\sqrt{d}-\alpha_{\rm u,i,i}, \ \forall i \in [4].
\end{split}
\end{equation}

Equations \ref{eq:cut_constraint} and \ref{eq:sum_bis} imply the following constraints on $(\tau_{i,j})_{i,j\in [4]}$:

\begin{equation}\label{eq:tau_const_1}
\begin{split}
\abs{\tau_{1,3}+\tau_{1,4} + \tau_{2,3}+ \tau_{2,4} + \frac{E}{2}} &\leq 4\epsilon, \\  \abs{\tau_{1,2}+\tau_{1,4} + \tau_{3,2}+\tau_{3,4} + \frac{E}{2}} &\leq 4\epsilon.
\end{split}
\end{equation}
and
\begin{equation}\label{eq:tau_const_2}
    \abs{\sum_{i<j} \tau_{i,j} + \frac{1}{2} \sum_{i=1}^4 \tau_{i,i}} \leq 8 \epsilon,
\end{equation}
and 
\begin{align*}
    \tau_{14}-\tau_{11} &\geq \abs{\tau_{12}-\tau_{13}}+a,\\
      \tau_{23}-\tau_{22} &\geq \abs{\tau_{21}-\tau_{14}}+a,\\
      \tau_{32}-\tau_{33} &\geq \abs{\tau_{34}-\tau_{31}}+a,\\
      \tau_{41}-\tau_{44} &\geq \abs{\tau_{42}-\tau_{43}}+a.
\end{align*}

Let $\mathcal{S}(\epsilon,C) \in \mathcal{C}(\epsilon, C)$ denote the subset of $(\tau_{i,j})_{i,j\in [4]}$ satisfying the above constraints. Recall (\ref{eq:opt2ndmom}):

\begin{small}
\begin{equation}\label{eq:PE2}
\begin{split}
     \pr[\cE_{opt}(\sigma,\sigma',h)|\cE(\sigma,\sigma', z,(z_{i,j})_{i,j=1}^4)] &= K_2(\frac{1+\omega}{4}n,z_{1,1}n,z_{1,2}n,z_{1,3}n,z_{1,4}n,h) K_2(\frac{1-\omega}{4} n,z_{2,2}n,z_{2,1}n,z_{2,4}n,z_{2,3}n,h)\\
    &\times K_2(\frac{1-\omega}{4} n,z_{3,3}n,z_{3,1}n,z_{3,4}n,z_{3,2}n,h)K_2(\frac{1+\omega}{4}n,z_{4,4}n,z_{4,2}n,z_{4,3}n,z_{4,1}n,h).
\end{split}
\end{equation}
\end{small}

Let $\tilde{z}_{i,j}= \beta_{i}\beta_j d+\tau_{i,j}\sqrt{d}+\epsilon\sqrt{d}-\alpha_{\rm u,i,j}$ for $i,j = (1,4), (2,3)$, $\tilde{z}_{ii}=\beta^2_{i}+\tau_{i,i}\sqrt{d}+\alpha_{\rm l,i,i}$ for $i\in [4]$, and $\tilde{z}_{i,j}= \beta_{i}\beta_j d+\tau_{i,j}\sqrt{d}+\alpha_{\rm l,i,j}$ otherwise. From Equations \ref{eq:beta_ineq}, we then have $\tilde{z}_{i,j}>z_{i,j}$ for $i,j = (1,4), (2,3)$ and $\tilde{z}_{i,j}< z_{i,j}$ otherwise. Proposition \ref{prop:K_2mon} then implies:
 \begin{equation}\label{eq:K_2_bound}
     K_2(\abs{V}_1,z_{1,1}n,z_{1,2}n,z_{1,3}n,z_{1,4}n,h)  \leq K_2(\abs{V}_1,\tilde{z}_{11}n,\tilde{z}_{12}n,\tilde{z}_{13}n,\tilde{z}_{14}n,h).
 \end{equation}
An analogous inequality holds for the remaining terms in (\ref{eq:PE2}).

Applying Corollary \ref{cor:K_asymp_2} to $K_2(\cdot)$ with arguments $(\tilde{z}_{i,j})_{i,j \in [4]^2}$, and recalling that $\abs{V}_1 = \frac{1+\omega}{4}n+O(1)$, we obtain, for large enough $n$ and $d$:

\vspace{-2em}
\begin{align*}
&\frac{1}{n}\log K_2(\abs{V_1},z_{1,1}n,z_{1,2}n,z_{1,3}n,z_{1,4}n,h)\\ &\leq \frac{1+\omega}{4} L_2\left(\frac{4\sqrt{2}(\tau_{1,2}-\tau_{1,3})}{\sqrt{1-\omega}(1+\omega)}, \sqrt{\frac{(1-\omega)}{(1+\omega)}}, \frac{4\sqrt{2}(\tau_{1,4}+\epsilon-\tau_{1,1})}{(1+\omega)^{3/2}}-\frac{2h}{\sqrt{1+\omega}}\right)+\epsilon,\\
&\frac{1}{n}\log  K_2(\abs{V_2},z_{2,2}n,z_{2,1}n,z_{2,4}n,z_{2,3}n,h)\\ &\leq 
\frac{1-\omega}{4}L_2\left(\frac{4\sqrt{2}(\tau_{2,1}-\tau_{2,4})}{\sqrt{1+\omega}(1-\omega)}, \sqrt{\frac{(1+\omega)}{(1-\omega)}}, \frac{4\sqrt{2}(\tau_{2,3}+\epsilon-\tau_{2,2})}{(1-\omega)^{3/2}}-\frac{2h}{\sqrt{1-\omega}}\right)+\epsilon,\\
 &\frac{1}{n} \log K_2(\abs{V_3},z_{3,3}n,z_{3,1}n,z_{3,4}n,z_{3,2}n,h)\\ &\leq \frac{1-\omega}{4} L_2\left(\frac{4\sqrt{2}(\tau_{3,1}-\tau_{3,4})}{\sqrt{1+\omega}(1-\omega)}, \sqrt{\frac{(1+\omega)}{(1-\omega)}}, \frac{4\sqrt{2}(\tau_{3,2}+\epsilon-\tau_{3,3})}{(1-\omega)^{3/2}}-\frac{2h}{\sqrt{1-\omega}}\right)+\epsilon,\\
 &\frac{1}{n} \log K_2(\abs{V_4},z_{4,4}n,z_{4,2}n,z_{4,3}n,z_{4,1}n,h)\\ &\leq \frac{1+\omega}{4} L_2\left(\frac{4\sqrt{2}(\tau_{4,2}-\tau_{4,3})}{\sqrt{1+\omega}(1-\omega)}, \sqrt{\frac{(1-\omega)}{(1+\omega)}}, \frac{4\sqrt{2}(\tau_{4,2}+\epsilon-\tau_{4,4})}{(1+\omega)^{3/2}}-\frac{2h}{\sqrt{1+\omega}}\right)+\epsilon,
\end{align*}
where we absorbed the asymptotic errors within the additional $\epsilon$ terms. Combining with (\ref{eq:PE2}), we obtain, for large enough $n,d$:
\begin{equation}\label{eq:secondmomprob}
\begin{split}
&\frac{1}{n}\log \pr[\cE_{opt}(\sigma,\sigma',h)\mid \cE(\sigma,\sigma', z,(z_{i,j})_{i,j=1}^4)]\\
&\leq \frac{1+\omega}{4}, L_2\left(\frac{4\sqrt{2}(\tau_{1,2}-\tau_{1,3})}{\sqrt{1-\omega}(1+\omega)}, \sqrt{\frac{1-\omega}{1+\omega}}, \frac{4\sqrt{2}(\tau_{1,4}+\epsilon-\tau_{1,1})}{(1+\omega)^{3/2}}-\frac{2h}{\sqrt{1+\omega}}\right)\\
&+\frac{1-\omega}{4}, L_2\left(\frac{4\sqrt{2}(\tau_{2,1}-\tau_{2,4})}{\sqrt{1+\omega}(1-\omega)}, \sqrt{\frac{1-\omega}{1+\omega}}, \frac{4\sqrt{2}(\tau_{2,3}+\epsilon-\tau_{2,2})}{(1+\omega)^{3/2}}-\frac{2h}{\sqrt{1-\omega}}\right)\\
&+\frac{1-\omega}{4}, L_2\left(\frac{4\sqrt{2}(\tau_{3,1}-\tau_{3,4})}{\sqrt{1+\omega}(1-\omega)}, \sqrt{\frac{1-\omega}{1+\omega}}, \frac{4\sqrt{2}(\tau_{3,2}+\epsilon-\tau_{3,3})}{(1+\omega)^{3/2}}-\frac{2h}{\sqrt{1-\omega}}\right)\\
&+\frac{1+\omega}{4}, L_2\left(\frac{4\sqrt{2}(\tau_{4,2}-\tau_{4,3})}{\sqrt{1-\omega}(1+\omega)}, \sqrt{\frac{1-\omega}{1+\omega}}, \frac{4\sqrt{2}(\tau_{4,1}+\epsilon-\tau_{4,4})}{(1+\omega)^{3/2}}-\frac{2h}{\sqrt{1+\omega}}\right)
+4\epsilon.
\end{split}
\end{equation}


Let $\tilde{\tau}_{i,j} \coloneqq \tau_{i,j}+\epsilon$ for $(i,j)=\{(1,4),(2,3)\}$ and $\tilde{\tau}_{i,j}\coloneqq \tau_{i,j}$ otherwise.
(\ref{eq:secondmomprob}) implies that for large enough $d,n$:
\begin{equation}\label{eq:tauti}
   \frac{1}{n}  \log \pr[\cE_{opt}(\sigma,\sigma',h)|\cE(\sigma,\sigma', z,(z_{i,j})_{i,j=1}^4)] \leq  \sup_{(\tau_{i,j}, i, j \in [4]^2) \in \mathcal{S}(\epsilon, C)}\mathcal{K}_2(\tilde{\tau}_{i,j}, i, j \in [4])+4\epsilon.
\end{equation}

Before proceeding further, we combine the above with the combinatorial term $\pr(\cE(\sigma,\sigma', z,(z_{i,j})_{i,j=1}^4)$ through the following result:

\begin{proposition}\label{prop:2ndmomcount}
Let $z_{i,j} = \beta_{i,j}\frac{d}{4}+\eta_{ij} \sqrt{d}$. Then, for any $(z_{ij})_{i,j \in [4]} \in D(C,z)$:
\begin{equation}\label{eq:2ndmomcountfin}
    \frac{1}{n} \log \pr(\cE(\sigma,\sigma', z,(z_{i,j})_{i,j=1}^4)) = - \sum_{i,j=1}^4\frac{\eta_{i,j}^2}{4 \beta_{i,j}^2} + o_n(1).
\end{equation}
\end{proposition}
\begin{proof}
The proof follows by noting that $E_{i,j}$ follow a multinomial distribution with probabilities $\beta_i\beta_j$. Subsequently, the RHS is obtained through Stirling's approximation along with the expansion of $\log(1+t)$ as in (\ref{eq:comb_term}).
\end{proof}

Note that Equations \ref{eq:tauti} imply that $\abs{\tau_{i,j}-\eta_{i,j}} \leq 2\epsilon$ for all $i,j \in [4]^2$. Since $(\tau_{i,j})_{i,j \in [4]^2}$ are uniformly bounded, the function $(\eta_{i,j})_{i,j \in [4]^2} \rightarrow \sum_{i,j=1}^4\frac{\eta_{i,j}^2}{4 \beta_{i,j}^2}$ is uniformly Lipschitz w.r.t $\eta$.

We obtain, for large enough $n$:
\begin{equation}\label{eq:comb2ndmom}
     \frac{1}{n} \log \pr(\cE(\sigma,\sigma', z,(z_{i,j})_{i,j=1}^4)) = -\sum_{i,j=1}^4\frac{\tau_{i,j}^2}{4 \beta_{i,j}^2}+C\epsilon.
\end{equation}

Combining Equations \ref{eq:tauti} and \ref{eq:comb2ndmom}, we obtain for large enough $n,d$:
\begin{equation}\label{eq:x2bound}
    \frac{1}{n} \log \mathbb{E}[X^2_\omega(z,h)]\leq 2\log 2+H(\frac{1+\omega}{2})+\sup_{(\tau_{i,j}, i, j \in [4]^2) \in \mathcal{S}(\epsilon, C)}\mathcal{K}_2(\tilde{\tau}_{i,j}, i, j \in [4])- \sum_{i,j=1}^4\frac{\tau_{i,j}^2}{4 \beta_{i,j}^2}+(C+4)\epsilon.
\end{equation}

We next leverage Lemma \ref{lemma:Optimum-LD-rates-2nd-monent} to reduce the RHS to a variational problem over a single-parameter $t$. To achieve this, we introduce the constraints $2\tau_{1,4}-\tau_{1,1}-\tau_{4,4}=2t_1$ and $2\tau_{2,3}-\tau_{2,2}-\tau_{3,3}=2t_2$
We then split the optimization as follows:

\begin{equation}\label{eq:2ndmommax}
\begin{split}
  &\sup_{(\tau_{i,j}, i, j \in [4]^2) \in \mathcal{S}(\epsilon, C)}   (\mathcal{K}_2( \tilde{\tau}_{i,j}, i, j \in [4])+\sum_{i,j=1}^4\frac{\tau_{i,j}^2}{4 \beta_{i,j}^2})\\ &= \sup_{t_1, t_2}\sup_{\tau_{i,j}, i, j \in [4], 2\tau_{1,4}-\tau_{1,1}-\tau_{4,4}=2t_1, 2\tau_{2,3}-\tau_{2,2}-\tau_{3,3}=2t_2}   (\mathcal{K}_2(
  \tilde{\tau}_{i,j}, i, j \in [4])- \sum_{i,j=1}^4\frac{\tau_{i,j}^2}{4 \beta_{i,j}^2}).
\end{split}
\end{equation}

Note that since $\tilde{\tau}_{1,4}=\tau_{1,4}+\epsilon$, $\tilde{\tau}_{2,3}=\tau_{2,3}+\epsilon$, the constraints $2\tau_{1,4}-\tau_{1,1}-\tau_{4,4}=2t_1, 2\tau_{2,3}-\tau_{2,2}-\tau_{3,3}=2t_2$ are equivalent to  $2\tilde{\tau}_{1,4}-\tilde{\tau}_{1,1}-\tilde{\tau}_{4,4}=2t_1+2\epsilon, 2\tau_{2,3}-\tau_{2,2}-\tau_{3,3}=2t_2$. By triangle inequality, the constraints in Equations \ref{eq:tau_const_1} and \ref{eq:tau_const_2}
further imply that:
\begin{equation}
\abs{\tau_{1,4}-\frac{1}{2}\tau_{1,1}-\frac{1}{2}\tau_{4,4} + \tau_{2,3}-\frac{1}{2}\tau_{2,2}-\frac{1}{2}\tau_{3,3}+E} \leq 16\epsilon.
\end{equation}

Therefore, $t_1,t_2$ satisfy:
\begin{equation}\label{eq:t1t2const}
    \abs{t_1+t_2+E} \leq 16\epsilon.
\end{equation}

By \eqref{eq:0min} and Lemma \ref{lem:L_conv}, the inner maximization is bounded by the value at $\tilde{\tau}_{1,4}-\tau_{11}=\tilde{\tau}_{1,4}-\tau_{44}=t_1$ and  $\tilde{\tau}_{2,3}-\tau_{22}=\tilde{\tau}_{2,3}-\tau_{33}=t_2$. We obtain:
\begin{equation}\label{eq:2ndmommax}
\begin{split}
&\sup_{(\tau_{i,j}, i, j \in [4]^2) \in \mathcal{S}(\epsilon, C), 2\tau_{1,4}-\tau_{1,1}-\tau_{4,4}=2t_1, 2\tau_{2,3}-\tau_{2,2}-\tau_{3,3}=2t_2}   \mathcal{K}_2(\tilde{\tau}_{i,j}, i, j \in [4])\\ &\leq \frac{1+\omega}{4}L_2\left(0, \sqrt{\frac{(1-\omega)}{(1+\omega)}},  \frac{4\sqrt{2}(t_1+\epsilon)}{(1+\omega)^{3/2}}-\frac{2h}{\sqrt{1+\omega}}\right)+ \frac{1-\omega}{4}L_2\left(0, \sqrt{\frac{(1+\omega)}{(1-\omega)}},\frac{4\sqrt{2}(t_2+\epsilon)}{(1-\omega)^{3/2}})-\frac{2h}{\sqrt{1-\omega}}\right).
\end{split}
\end{equation}

To replace $t_1+\epsilon,t_2+\epsilon$ with $t_1,t_2$ in the RHS of (\ref{eq:2ndmommax}), we require restricting the range of $(\tau_{i,j},i, j \in [4])$ to values where $\mathcal{K}_2(\cdot)$ is regular. To this end, we restrict the range of $t_1,t_2$ in (\ref{eq:2ndmommax}) through the following property of $L_2(\cdot)$ (proof in Appendix \ref{app:properties}):
\begin{proposition}\label{prop:l2inf}
    $L_2(0,b_1,b_2) \rightarrow -\infty$ as $b_2 \uparrow 0$.
\end{proposition}

Proposition \ref{prop:l2inf} allows us to simplifiy (\ref{eq:2ndmommax}) to obtain:
\begin{proposition}\label{prop:omegbound}
For any $\omega \in [-1,1]$ and large enough $n,d$:
\begin{equation}\label{eq:2ndmomupboundfin}
\begin{split}
\frac{1}{n} \log \mathbb{E}[X^2_\omega(z,h)]
&\leq 2\log 2 + \log H\!\left(\frac{1+\omega}{2}\right) 
+ \sup_{t} \Biggl(
    \frac{1+\omega}{2}\, L_2\left(0,\, \sqrt{\frac{1-\omega}{1+\omega}},\, \frac{4\sqrt{2}t}{(1+\omega)^{3/2}}-\frac{2h}{\sqrt{1+\omega}}\right)\\&+ \frac{1-\omega}{2}\, L_2\left(0,\, \sqrt{\frac{1+\omega}{1-\omega}},\, \frac{4\sqrt{2}(-E-t)}{(1-\omega)^{3/2}}-\frac{2h}{\sqrt{1-\omega}}\right) - 4\frac{t^2}{(1+\omega)^2} - 4\frac{(E-t)^2}{(1+\omega)^2}
\Biggr)
+ C_E\epsilon,
\end{split}
\end{equation}
for some constant $C_E>0$.
\end{proposition}
\begin{proof}
Consider the upper bound in (\ref{eq:2ndmommax}):
\begin{equation}
    \frac{1+\omega}{2}L_2\left(0, \sqrt{\frac{(1-\omega)}{(1+\omega)}},  \frac{4\sqrt{2}(t_1+\epsilon)}{(1+\omega)^{3/2}}-\frac{2h}{\sqrt{1+\omega}}\right)+ \frac{1-\omega}{2}L_2\left(0, \sqrt{\frac{(1+\omega)}{(1-\omega)}},\frac{4\sqrt{2}(t_2+\epsilon)}{(1-\omega)^{3/2}}-\frac{2h}{\sqrt{1-\omega}})\right).
\end{equation}


First, suppose that either $t_1< \frac{1}{2\sqrt{2}}(1+\omega) h+\delta$ or $t_2< \frac{1}{2\sqrt{2}}(1-\omega)h+\delta$ for some $\delta>0$. Lemma \ref{prop:l2inf}, then implies that for any $K>0$, there exist small enough $\delta$ such that:
\begin{equation}\label{eq:bound1}
\begin{split}
\frac{1+\omega}{2}L_2\left(0, \sqrt{\frac{(1-\omega)}{(1+\omega)}},  \frac{4\sqrt{2}(t_1+\epsilon)}{(1+\omega)^{3/2}}-\frac{2h}{\sqrt{1+\omega}}\right)+ \frac{1-\omega}{2}L_2\left(0, \sqrt{\frac{(1+\omega)}{(1-\omega)}},\frac{4\sqrt{2}(t_2+\epsilon)}{(1-\omega)^{3/2}}-\frac{2h}{\sqrt{1-\omega}}\right) \leq -K.
\end{split}
\end{equation}

With $\delta>0$ fixed as above, consider the complimentary case i.e. $t_1 > \frac{1}{2\sqrt{2}}(1+\omega) h+\delta$ and $t_2 > \frac{1}{2\sqrt{2}}(1-\omega)h+\delta$.  For small enough $\epsilon$, we obtain that the values $\tilde{\tau}_{1,4}-\tau_{1,1}, \tilde{\tau}_{1,4}-\tau_{4,4},\tilde{\tau}_{2,3}-\tau_{2,2},\tilde{\tau}_{2,3}-\tau_{3,3}$ lie in intervals 
such that the terms $L_2(\cdot)$ appearing in $\mathcal{K}_2( \tilde{\tau}_{i,j}, i, j \in [4])$ are uniformly continuous w.r.t $(\tilde{\tau}_{i,j})_{i,j \in [4]^2}$ by Lemma \ref{lem:l2cont}. Therefore, $\exists$ constant $C_\delta$ such that:
\begin{align*}
&\sup_{\tau_{i,j}, i, j \in [4], 2\tau_{1,4}-\tau_{1,1}-\tau_{4,4}=2t_1, 2\tau_{2,3}-\tau_{2,2}-\tau_{3,3}=t_2}\mathcal{K}_2(\tilde{\tau}_{i,j}, i, j \in [4])\\ &\leq \sup_{\tau_{i,j}, i, j \in [4], 2\tau_{1,4}-\tau_{1,1}-\tau_{4,4}=2t_1, 2\tau_{1,4}-\tau_{1,1}-\tau_{4,4}=2t_2}\mathcal{K}_2(\tau_{i,j}, i, j \in [4])+C_\delta \epsilon. 
\end{align*}

The constraint (\ref{eq:t1t2const}) further allows us to replace $t_1,t_2$ by $t,-E-t$ at the cost of additional $\epsilon$ factors:

\begin{equation}\label{eq:bound3}
\begin{split}
&\sup_{\tau_{i,j}, i, j \in [4], 2\tau_{1,4}-\tau_{1,1}-\tau_{4,4}=2t_1, 2\tau_{2,3}-\tau_{2,2}-\tau_{3,3}=t_2}\mathcal{K}_2(\tilde{\tau}_{i,j}, i, j \in [4])\\ &\leq \sup_{\tau_{i,j}, i, j \in [4], 2\tau_{1,4}-\tau_{1,1}-\tau_{4,4}=2t, 2\tau_{1,4}-\tau_{1,1}-\tau_{4,4}=2(-E-t)}\mathcal{K}_2(\tau_{i,j}, i, j \in [4])+\tilde{C}_\delta \epsilon, 
\end{split}
\end{equation}
for a constant $\tilde{C}_\delta > 0$.

Combining Equations \ref{eq:x2bound}, \ref{eq:bound1}, \ref{eq:bound3}, we obtain, for large enough $n,d$:
\begin{align*}
       &\frac{1}{n} \log \mathbb{E}[X^2_\omega(z,h)]\\  &\leq \sup_{t}\max\left(\sup_{\tau_{i,j}, i, j \in [4], 2\tau_{1,4}-\tau_{1,1}-\tau_{4,4}=2t}\left(   \mathcal{K}_2(\tau_{i,j}, i, j \in [4]) -\sum_{i,j=1}^4\frac{\tau_{i,j}^2}{4 \beta_{i,j}^2} \right),-K \right)+(\tilde{C}_\delta+4)\epsilon,
\end{align*}
Since $K>0$ was arbitrary, taking $\epsilon \rightarrow 0$ and subsequently $\delta \rightarrow 0$, we obtain (\ref{eq:2ndmomupboundfin}).
\end{proof} 

Combining (\ref{eq:2ndmomupboundfin}) with the combinatorial term in (\ref{eq:comb2ndmom}), and substituting in \eqref{EX2mrem}, we obtain the following bound: 
\begin{equation}\label{eq:bound_1K2}
\begin{split}
  &\frac{1}{n} \log \mathbb{E}[X^2_\omega(z,h)]  \leq   2\log 2 +\log H(\frac{1+\omega}{2})+\\&\sup_{t} \Bigr(\frac{1+\omega}{4}L_2\left(0, \sqrt{\frac{(1-\omega)}{(1+\omega)}}, \frac{4\sqrt{2}t}{(1+\omega)^{3/2}}-\frac{2h}{\sqrt{1+\omega}}\right)+ \frac{1-\omega}{4}L_2\left(0, \sqrt{\frac{(1+\omega)}{(1-\omega)}}, \frac{4\sqrt{2}(-E-t)}{(1-\omega)^{3/2}}-\frac{2h}{\sqrt{1-\omega}}\right)\\&- \sup_{\tau_{i,j}, i, j \in [4], 2\tau_{1,4}-\tau_{1,1}-\tau_{4,4}=2t}  \sum_{i,j=1}^4 \frac{\tau_{i,j}^2}{\beta_ij^2}\Bigl)+\epsilon,
\end{split}
\end{equation}
where $H(\cdot)$ denotes the standard binary entropy function and we absorbed the remaining $o_n(1)$ terms into an additional $\epsilon$. By convexity of $f(x)=x^2$, we further obtain:
\begin{equation}
    \sup_{\tau_{i,j}, i, j \in [4], 2\tau_{1,4}-\tau_{1,1}-\tau_{4,4}=2t}  \sum_{i,j=1}^4 \frac{\tau_{i,j}^2}{\beta_{ij}^2} \leq -4\frac{t^2}{(1+\omega)^2}-4\frac{(E-t)^2}{(1-\omega)^2}.
\end{equation}

We next substitute the above bound in (\ref{eq:bound_1K2}), take $\epsilon \rightarrow 0$ and recognize the RHS as $W(E,\omega,h)$. This establishes the upper bound in Proposition \ref{prop:second_mom_main}. To obtain the corresponding lower-bound, let $t^\star$ denote the unique maximizer of the objective defined by (\ref{eq:2ndmomupboundfin}).  By Lemma \ref{lemma:Optimum-LD-rates-2nd-monent}, there exist $(\tau^\star_{i,j})_{i,j \in [4]^2} \in \mathcal{S}(\epsilon, C)$  such that:
\begin{equation}
\begin{split}
   &\mathcal{K}_2(\tau^\star_{i,j}, i, j \in [4])=\\ & \sup_{t} \Biggl(\frac{1+\omega}{2}L_2\left(0, \sqrt{\frac{(1-\omega)}{(1+\omega)}}, \frac{4\sqrt{2}t}{(1+\omega)^{3/2}}-\frac{2h}{\sqrt{1+\omega}}\right)+ \frac{1-\omega}{2}L_2\left(0, \sqrt{\frac{(1+\omega)}{(1-\omega)}}, \frac{4\sqrt{2}(-E-t)}{(1-\omega)^{3/2}}-\frac{2h}{\sqrt{1-\omega}}\right) \Biggr).
\end{split}
\end{equation}
with $\tau^\star_{i,j}$ explicitly given by $\tau^\star_{1,4}=\frac{t^\star}{2},\tau^\star_{1,1}=\tau^\star_{4,4}= - \frac{t^\star}{2}$, $\tau^\star_{2,3}=\frac{E-t^\star}{2},\tau^\star_{2,2}=\tau^\star_{3,3}= - \frac{E-t^\star}{2}$ and $\tau^\star_{1,2}=\tau^\star_{1,3}=\tau^\star_{2,4}=\tau^\star_{3,4}=0$.Furthermore, by convexity of $f(x)=x^2$, the above values simultaneously maximize $\sum_{i,j=1}^4 \frac{\tau_{i,j}^2}{\beta^2_i\beta_j^2}$ subject to $2\tau_{1,4}-\tau_{1,1}-\tau_{4,4}=2t$.

Set $z^\star_{i,j} \in \mathbb{Z}$ as follows:
\begin{align*}
    z^\star_{1,4}=2\floor{\frac{1+\omega}{4}{d}}-2\floor{\frac{t^\star}{4}\sqrt{d}}, \ 
     z^\star_{1,1}=
      z^\star_{2,2}=\ceil{\frac{1+\omega}{4}{d}}-\floor{\frac{t^\star}{4}\sqrt{d}},\\
    z^\star_{2,3}=2\floor{\frac{1+\omega}{4}{d}}-2\floor{\frac{E-t^\star}{4}\sqrt{d}}, \
     z^\star_{1,1}=
      z^\star_{2,2}=\ceil{\frac{1+\omega}{4}{d}}-\floor{\frac{E-t^\star}{4}\sqrt{d}},\\
z^\star_{1,2}=z^\star_{1,3}=2\floor{\frac{1+\omega}{4}{d}}, \ z^\star_{3,4}=z^\star_{2,4}=2\ceil{\frac{1+\omega}{4}{d}}.
\end{align*}
Then, by construction $z^\star_{i,j} \in \mathcal{S}_{d,n}(z)$. Thus:
\begin{align*}
  &H(\frac{1+\omega}{2}) +  \frac{1}{n} \log \pr(\cE(\sigma,\sigma', z,(z_{i,j})_{i,j=1}^4))+\frac{1}{n}\log K_2(\frac{1+\omega}{4}n,z^\star_{1,1}n,z^\star_{1,2}n,z^\star_{1,3}n,z^\star_{1,4}n,h)\\ &+ \frac{1}{n}\log K_2(\frac{1-\omega}{4} n,z^\star_{2,2}n,z^\star_{2,1}n,z^\star_{2,4}n,z^\star_{2,3}n,h)+ \frac{1}{n}\log K_2(\frac{1-\omega}{4} n,z^\star_{3,3}n,z^\star_{3,1}n,z^\star_{3,4}n,z^\star_{3,2}n,h)\\&+\frac{1}{n}\log K_2(\frac{1+\omega}{4}n,z^\star_{4,4}n,z^\star_{4,2}n,z^\star_{4,3}n,z^\star_{4,1}n,h) \leq \frac{1}{n}\log \mathbb{E}[X^2_\omega(z,h)].
\end{align*}

By Corollary \ref{cor:K_asymp_2} and Proposition \ref{prop:2ndmomcount}, the LHS asymptotically converges to $W(E,\omega,h)$, resulting in the lower-bound:
\begin{equation}
  W(E,\omega,h) \leq  \frac{1}{n}\log \mathbb{E}[X^2_\omega(z,h)].
\end{equation}

This completes the proof of Propositions \ref{prop:second_mom_main}. 

\subsection{Proof of Proposition \ref{prop:second_mom_sup}}

Recall that:

\begin{equation}\label{eq:omega_sup}
\mathbb{E}[ X^2(z,h)] = \sum_{\omega \in [-1,1]: \omega n \in \mathbb{N}} \mathbb{E}[X^2_\omega(z,h)].
\end{equation}

It will be convenient to treat the edge-cases $\omega\approx -1,1$ and the remaining range of $\omega$ separately. To this end, fix $\delta >0$ and further split the sum over $\omega$ in the RHS of (\ref{eq:omega_sup}) into the ranges $[-1,-1+\delta) \cup (1-\delta,1]$ and $[-1+\delta, 1-\delta]$. Define:
\begin{align} X^2_{[-1,-1+\delta)\cup (1-\delta,1]}(z,h) &= \sum_{\omega \in [-1,-1+\delta)\cup (1-\delta,1]: \omega n \in \mathbb{N}} X^2_\omega(z,h),\\
X^2_{[-1+\delta,1-\delta]}(z,h)] &= \sum_{\omega \in [-1+\delta,1-\delta]: \omega n \in \mathbb{N}} X^2_\omega(z,h).
\end{align}

To obtain the bound for the first set, recall that:
\begin{equation}\label{eq:sec_mom_bound_2}
\Ea{X^2_\omega(z,h)} = \sum_{\sigma, \sigma' \in M_0, \omega(\sigma,\sigma')=\omega } \Pr[\cE_{opt}(\sigma,h,z)\cap \cE_{opt}(\sigma',h, z)].
\end{equation}

Each term in the RHS can be trivially bounded as follows:
\[
\Pr[\cE_{opt}(\sigma,h,z)\cap \cE_{opt}(\sigma',h, z)] \leq 
\pr[\cE_{opt}(\sigma,h,z)].
\]

The probability $\pr[\cE_{opt}(\sigma,h,z) ]$ was analyzed in the proof of Proposition \ref{prop:neg_first_mom}, wherein we obtained:
\[
\lim_{d \rightarrow \infty}\lim_{n \rightarrow \infty}\frac{1}{n}\log \pr[\cE_{opt}(\sigma,h,z)]
=w(E,h)-\log 2.
\]
Substituting in (\ref{eq:sec_mom_bound_2}), we obtain uniformly over $\omega \in [-1,1]$:
\[
\frac{1}{n} \log \Ea{X^2_\omega(z,h)}
\leq H(\omega)+w(E,h)+o(n).
\]
Hence:
\[\
  \limsup_{d \rightarrow \infty} \limsup_{n \rightarrow \infty}\frac{1}{n} \log \Ea{X^2_{[-1,-1+\delta)\cup (1-\delta,1]}(z,h)} \leq \sup_{\omega \in [-1,-1+\delta)\cup (1-\delta,1]}H(\omega)+w(E,h).
\]

Using the continuity of $H(\cdot)$ on $[-1,1]$ and $H(1)=H(-1)=0$, the above can be expressed as:
\begin{equation}\label{eq:edge_bound_omega}
    \limsup_{d \rightarrow \infty} \limsup_{n \rightarrow \infty}\frac{1}{n} \log \Ea{X^2_{[-1,-1+\delta)\cup (1-\delta,1]}(z,h)} \leq w(E,h)+o(\delta),
\end{equation}
where $o(\delta)$ denotes a function vanishing as $\delta \rightarrow 0$.

The above bound allows us to bound $X^2_\omega(z,h)$ near $\omega=1,-1$  with the limiting first moment $w(E,h)$. To show that the above bound is tight, we next show that $W(E,\omega,h)$ converges to $w(E,h)$ as $\omega \rightarrow 1,-1$

\begin{lemma}\label{lem:W_cont}

For any $E,h \in \mathbb{R}$:
\[\lim_{\omega \downarrow -1} W(E,\omega,h)= w(E,h).\]

\[\lim_{\omega \uparrow 1} W(E,\omega,h)= w(E,h).\]
\end{lemma}

The above lemma implies that for any $\delta>0$:
\[
\sup_{\omega \in (-1,-1+\delta)\cup (1-\delta,1)} W(E,h,\omega) \geq w(E,h).
\]

Combining with (\ref{eq:edge_bound_omega}),
we obtain:
\begin{equation}\label{eq:edge_omeg_bound}
\limsup_{d \rightarrow \infty} \limsup_{n \rightarrow \infty}\frac{1}{n} \log \Ea{X^2_{[-1,-1+\delta)\cup (1-\delta,1]}(z,h)} \leq
\sup_{\omega \in (-1,-1+\delta)\cup (1-\delta,1)} W(E,h,\omega)+o(\delta),
\end{equation}
where $o(\delta)$ denotes a function vanishing as $\delta \rightarrow 0$.

We next move to the term $\Ea{[X^2_{[-1+\delta,1-\delta]}(z,h)]}$.  Recall the definition of the discretization set $\mathcal{C}(\epsilon, C)$ in (\ref{eq:C_partition}), which was utilized to approximate the edge-set sizes $(z_{i,j})_{{i,j}\in [4]^2}$ in (\ref{eq:beta_ineq}). Now, to approximate the  sum over $\omega$, through restriction of $\omega$ to finitely-many values, we introduce an additional  discretization parameter $\tilde{\epsilon}$ and set $\psi \in \mathcal{C}(\tilde{\epsilon},1)$ such that:
\begin{equation}
    \psi \leq \phi \leq \psi+\tilde{\epsilon}.
\end{equation}
Aanalogous to (\ref{eq:K_2_bound}), we aim to upper bound $K_2(\cdot)$ through terms dependent only on $\psi, \tilde{\epsilon}$ instead of $\omega$. 

We first note that by setting $\tilde{n}=\floor{\frac{1+\omega}{1+\psi}n}$, we have:
\[
 K_2(\frac{1+\omega}{4}n,z_{1,1}n,z_{1,2}n,z_{1,3}n,z_{1,4}n,h) =  K_2(\frac{1+\psi}{4}\tilde{n}+O(1),z_{1,1}n,z_{1,2}n,z_{1,3}n,z_{1,4}n,h),
\]
where the $O(1)$ correction ensures that the first argument to $K_2(\cdot)$ is an integer.

Next, as in \eqref{eq:K_2_bound}, we set $\tilde{z}_{i,j}= \beta_{i}\beta_j d+\tau_{i,j}\sqrt{d}+\epsilon\sqrt{d}-\alpha_{\rm u,i,j}$ for $i,j = (1,4), (2,3)$, $\tilde{z}_{i,i}=\beta^2_{i}+\tau_{i,i}\sqrt{d}+\alpha_{\rm l,i,i}$ for $i\in [4]$, and $\tilde{z}_{i,j}= \beta_{i}\beta_j d+\tau_{i,j}\sqrt{d}+\alpha_{\rm l,i,j}$.

To obtain $\tilde{z}_{i,j}\tilde{n}$ in the  bounds, we set $\tilde{\epsilon}$ sufficiently small, such that $\tilde{z}_{1,4} \frac{1+\omega}{1+\psi}> z_{1,4}, \tilde{z}_{2,3} \frac{1+\omega}{1+\psi}> z_{2,3}$.
(Such a choice of $\tilde{\epsilon}$ exists for any $\epsilon> 0$).

Hence, for a such a $\tilde{\epsilon}$ , for sufficiently large $n$, $\tilde{z}_{i,j}\tilde{n} > z_{i,j}n$ for $(i,j)=(1,4),(2,3)$ and $\tilde{z}_{i,j}\tilde{n} < z_{i,j}n$. Proposition \ref{eq:K2bound} then implies:
\[
K_2(\abs{V}_1,z_{1,1}n,z_{1,2}n,z_{1,3}n,z_{1,4}n,h) =  K_2(\frac{1+\psi}{4}\tilde{n}+O(1),\tilde{z}_{1,1}\tilde{n},\tilde{z}_{1,2}\tilde{n},\tilde{z}_{1,3}\tilde{n},\tilde{z}_{1,4}\tilde{n},h).
\]

Similarly, we bound the remaining $K_2(\cdot)$ terms in the expansion of $X^2_\omega(z,h)$. Subsequently, proceeding as in the proof of Proposition \ref{prop:second_mom_main}, we obtain the bound:

\begin{equation}\label{eq:supomega}
\begin{split}
\lim \sup_{d \rightarrow \infty} \lim \sup_{n \rightarrow \infty}\frac{1}{n}\log \mathbb{E}[ X^2_{[-1+\delta, 1-\delta]}(z,h)] 
 &\leq 
2\log 2+H(\frac{1+\omega}{2})+\sup_{(\tau_{i,j}, i, j \in [4]^2) \in \mathcal{S}(\epsilon, C)}\mathcal{K}_2(\tilde{\tau}_{i,j}, i, j \in [4])\\&- \sum_{i,j=1}^4\frac{\tau_{i,j}^2}{4 \beta_{i,j}^2}+(C+4)\epsilon,
\end{split}
\end{equation}
where $\tilde{\tau}_{i,j} \coloneqq \tau_{i,j}+\epsilon$ for $(i,j)=\{(1,4),(2,3)\}$ and $\tilde{\tau}_{i,j}\coloneqq \tau_{i,j}$. 

Next, since $\mathcal{K}_2(\cdot)$ is uniformly continuous in $\tau_{i,j}$ over the compact set $\omega \in [-1+\delta,1-\delta]$, we further have the bound:
\[
\sup_{(\tau_{i,j}, i, j \in [4]^2) \in \mathcal{S}(\epsilon, C)}\mathcal{K}_2(\tilde{\tau}_{i,j}, i, j \in [4]) \leq \sup_{(\tau_{i,j}, i, j \in [4]^2) \in \mathcal{S}(\epsilon, C)}\mathcal{K}_2(\tau_{i,j}, i, j \in [4])+C_\delta\epsilon,
\]
where $C_\delta$ denotes a constant dependent on $\delta$. Substituting in (\ref{eq:supomega}) and combining with (\ref{eq:edge_omeg_bound}), we obtain:
\[
\limsup_{d \rightarrow \infty} \limsup_{n \rightarrow \infty}\frac{1}{n}\log \mathbb{E}[ X^2(z,h)] \leq \sup_{\omega \in (-1,1)} W(E,\omega,h).
\]

An analogous lower-bound then follows similarly by restricting the sums to $\omega=-1,1$ for $X^2_{[-1,-1+\delta)\cup (1-\delta,1]}$ and to a maximizer $\omega^\star \in \operatorname{argmax}_{\omega \in [-1+\delta,1-\delta,1]} W(\omega,E,h)$ for $X^2_{[-1+\delta,1-\delta,1]}$. This completes the proof of Proposition  \ref{prop:second_mom_sup}.

    



\section{Proofs of properties of limiting functions}\label{app:properties}

In this section, we collect the proofs of certain properties of the functions such as $w(E,h),W(\cdot), F(\cdot)$ that appear while evaluating the limits of first, second-moment entropy densities. A central tool underlying the results in this section is the  Prékopa–Leindler inequality, which we state here for convenience:

\begin{lemma}[Prékopa–Leindler]\label{lem:prekopa}
   Let $p,q,r:\R^k \rightarrow [0,\infty)$ be non-negative measurable functions, satisfying for some $\lambda \in (0,1)$:
  \[
       r((1-\lambda) x+\lambda y) \geq p(x)^{1-\lambda} q(x)^\lambda, \ \forall x, y \in \R^k.
   \]
Then:
\[
    \int_{\R^k} r(x) \geq \left (\int_{\R^k} p(x) \right)^{1-\lambda}\left (\int_{\R^k} q(x) \right)^\lambda.
\]
\end{lemma}

\subsection{Properties for the first moment entropy density}

For use in the present section, recall that $ L(\zeta,t,b) = \inf_\theta f(\theta, \zeta, t, b)$ where:
    \begin{equation}\label{eq:f_mgf}
         f(\theta, \zeta, t, b) \defeq \theta^2
+\zeta\log\Phi\left(-{t+b-2\theta\over\sqrt{2}}\right).
    \end{equation}

\subsubsection{Strict convexity of $f(\theta, \zeta, t, b)$}\label{sec:logmgf}

Recall that $f(\theta, \zeta, t, b)$ was obtained through the rate function $g(\theta)$ in  Lemma \ref{lemma:Gaussian-of-Pois}. To establish its convexity, it will be convenient to re-express it in terms of a log-MGF. By expanding $\Phi$, $f(\theta, \zeta, t, b)$ may be expressed as:
\begin{align}
f(\theta, \zeta, t, b) &= \theta^2+\zeta \frac{1}{\sqrt{2\pi}}\log \int_{u \geq \frac{t+b-2\theta}{\sqrt{2}}} \exp(-\frac{u^2}{2}) du\\
&=\theta^2+\zeta \frac{1}{\sqrt{2\pi}}\log \int_{v \geq t+b} \exp(-\frac{v^2}{4}+\theta v -\theta^2)dv\\
&= (1-\zeta)\theta^2 +\zeta \log \Eb{z \sim \mathcal{N}(0,2)}{\exp(\theta z)1_{z \geq t+b}},
\end{align}
where in the second line, we used the change of variables $v=\sqrt{2}u+2\theta$.Hence:
\begin{equation}\label{eq:mgf_rel} f(\theta,\zeta,t,b) = \zeta\Lambda(\theta,t+b)+(1-\zeta)\theta^2 +(1-\zeta)\log \Phi(-(t+b)/\sqrt{2}),
\end{equation}
where $\Lambda(\theta,c)$ denotes the log-MGF of  of $Y\sim \mathcal{N}(0,2)$ conditioned on $Y\geq c$. We recognize that we've recovered the term $ \zeta\Lambda(\theta,t+b)+(1-\zeta)\theta^2$ which equals the log-MGF $g(\theta)$ in Lemma \ref{lemma:Gaussian-of-Pois}. Hence, by the properties of log-MGF, we conclude that $f(\theta,\zeta,t,b)$ is strictly-convex in $\theta$. The proof of Lemma \ref{lemma:Gaussian-of-Pois} further implies that $f(\theta,\zeta,t,b)$ admits a unique maximizer when $t+b<0$ or $\zeta < 1$.

\subsubsection{Proof of Lemma \ref{lem:L_conv}}
 Through a change of variables and the definition of $\Phi$, we have that:
     \begin{equation}
         \Phi(-\frac{t+b-2\theta}{\sqrt{2}}) = \frac{1}{\sqrt{2\pi}}\int_{\R} \mathbf{1}_{z \leq 0} e^{-\frac{(z+\frac{t+b-2\theta}{\sqrt{2}})^2}{2}}.
     \end{equation}
Next, the log-concavity of $e^{-\frac{z^2}{2}}$ implies that for any $z,b,\theta, t_1,t_2 \in \R$:
\begin{equation}
    e^{-\frac{(z+\frac{\lambda t_1+(1-\lambda)t_2+b-2\theta}{\sqrt{2}})^2}{2}} \geq (e^{-\frac{(z+\frac{t_1+b-2\theta}{\sqrt{2}})^2}{2}})^{\lambda}(e^{-\frac{(z+\frac{t_2+b-2\theta}{\sqrt{2}})^2}{2}})^{1-\lambda}.
\end{equation}

The Prékopa–Leindler inequality then implies that for any $\lambda \in (0,1)$ and $z,b, t_1,t_2 \in \R$:
\begin{equation}
    \log \Phi(-\frac{(\lambda t_1+ (1-\lambda)t_2)+b-2\theta}{\sqrt{2}}) \geq  \lambda \log \Phi(-\frac{t_1+b-2\theta}{\sqrt{2}})+ (1-\lambda) \log \Phi(-\frac{t_2+b-2\theta}{\sqrt{2}}).
\end{equation}
Therefore, $\zeta\log \Phi(-\frac{t+b-2\theta}{\sqrt{2}})$ is concave in $t$ for any $\zeta > 0, b, \theta \in \mathbb{R}$. Since the point-wise infimum of concave functions in concave, we obtain that $L(\zeta,t,b)$ is concave in $t$. The non-decreasing property in $\zeta$ follows directly by noting that $\Phi \leq 1$.

\subsubsection{Proof of Proposition \ref{prop:Linf}}

Recall by (\ref{eq:mgf_rel}), $L(1, t, b)$ can be expressed as:
\begin{equation}
    L(1, t, b) = \inf_\theta \Lambda(\theta,c),
\end{equation}
where $\Lambda(\theta,c)$ denotes the log-MGF of $Y\sim \mathcal{N}(0,2)$ conditioned on $Y\geq c$.

Expanding $\Lambda(\theta,c)$, we obtain:
    \begin{equation}
       L(1, t, b)  =   \inf_{\theta} \left[\log \E[\exp(
Y\theta)\vert Y \geq (t+b)]+ \log \Phi(-(t+b)/\sqrt{2}) \right],
    \end{equation}
where $Y$ denotes a normal variable with variance $2$. Suppose now that $t+b \geq -\delta$ for some $\delta > 0$.
We have:
\begin{equation}
\begin{split}
     \E[\exp( 
Y\theta)\vert Y \geq t+b] &=\frac{1}{\pr[Y \geq t+b]}\frac{1}{\sqrt{4\pi}} \int_{y=t+b}^\infty \exp(\theta y -y^2/4)
\\& \geq \frac{1}{\pr[Y \geq t+b]}\frac{1}{\sqrt{4\pi}}\int_{y=-\delta}^\infty \exp(\theta y -y^2/4)
\\&=   \frac{1}{\pr[Y \geq t+b]}\frac{1}{\sqrt{4\pi}}\int^{\infty}_{y=\delta}\exp(\theta y -y^2/4)\\&+\frac{1}{\pr[Y \geq t+b]}\frac{1}{\sqrt{4\pi}}\int^\delta_{y=-\delta} \exp(\theta y -y^2/4).
\end{split}
\end{equation}

Setting $\theta = \frac{1}{\delta^{3/2}}$ and using $\int_0^\infty \exp{(-y^2/4)} \leq \sqrt{2\pi} $, the first integral is bounded by $\exp(-\frac{1}{\sqrt{\delta}})$, while using the bound $\exp(\theta y) \leq 1+\theta y+o(\theta y)$, the second integral is bounded by $C\delta$ for some constant $C>0$. Therefore, both the terms vanish as $\delta \rightarrow 0$, implying $L(1,t,b) \rightarrow - \infty$ as $t+b \uparrow 0$. 


%
\subsubsection{Proof of Lemma \ref{lem:L_lips}}

Recall that in \ref{sec:logmgf}, we established the strict convexity and existence of minimizers for $f(\theta, \zeta, t, b)$. The strict convexity of $f(\theta, \zeta, t, b)$ w.r.t $\theta$ implies that $\nabla^2_\theta f$ is invertible and thus applying the Implicit function theorem to the fixed point condition $\nabla_\theta f = 0$
(\ref{eq:fixed-point-theta}), we obtain that the unique maximizer $\bar{\theta}$ is continuously differentiable in $t,b$ over the specified domain. This in turn implies that $L(\zeta,t,b) = f(\bar{\theta}, \zeta, t, b)$ is continously differentiable in $t,b$.

\subsubsection{Proof of Propositions \ref{prop:first_mom_conv} and \ref{prop:first_mom_conv_r}}

\begin{proof}
Define:
    \begin{equation}
         q(\theta, E, h) \defeq \theta^2
+\log\Phi\left(-{2E+h-2\theta}\right).
    \end{equation}

Then, $w(E, h)$ can be expressed as: 
\begin{equation}
    w(E, h, r)= H(r)+(1-r)\log 2- E^2+(2r-1)\inf_{\theta \in \mathbb{R}} q(\theta, E, h).
\end{equation}
Recall that in the proof of Lemma \ref{lem:L_conv}, we showed that for any fixed $\theta$, $q(\theta, E, h)$ is strictly concave in $E$.

Therefore, for any $ E_1, E_2 < -\frac{h}{2}$ with $E=\lambda E_1+(1-\lambda)E_2$ and $\theta \in \mathbb{R}$:  
\[
q\big(\theta, \lambda E_1 + (1-\lambda)E_2, h\big) > \lambda q(\theta, E_1, h) + (1-\lambda) q(\theta, E_2, h), \quad \text{for } \lambda \in (0,1).
\] \label{eq:strict_conv}

 Lemma \ref{lemma:Gaussian-of-Pois} further implies that for any $E < -\frac{h}{2}$ the infimum of $\big(\theta, E, h)$, with respect to $\theta$ is attained at some $\theta^\star \in \mathbb{R}$. For a fixed $\lambda \in (0,1)$, let $\theta^\star, \theta^*_1$ and $\theta^*_2$ denote the minimizers of $q(\theta, E, h)$ for $E=\lambda E_1+(1-\lambda)E_2, E_1$ and $E=E_2$, respectively. Then, (\ref{eq:strict_conv}) implies that:
\begin{align*}
 q\big(\theta^\star, \lambda E_1 + (1-\lambda)E_2, h\big) &> \lambda q(\theta^\star, E_1, h) + (1-\lambda) q(\theta^\star, E_2, h)\\
 &\geq \lambda q(\theta^\star_1, E_1, h) + (1-\lambda) q(\theta^\star_2, E_2, h)\\
 &= \lambda w(E_1,h)+(1-\lambda) w(E_2,h),
\end{align*}
completing the proof.
\end{proof}

\subsubsection{Proof of Proposition \ref{prop:w_mono}}

Since $\erf(\cdot)$ is strictly increasing while $\log(\cdot)$ is strictly decreasing, we have for any $E \in \mathbb{R}$ and $h<\tilde{h}$:
\begin{equation}
   -E^2+\log(1-\erf(E+h/\sqrt{2})) >  -E^2+\log(1-\erf(E+\tilde{h}/\sqrt{2})).
\end{equation}

Taking the supremum w.r.t $E$ on both sides and recalling that the supremum is achieved by Proposition \ref{prop:first_mom_conv} then implies that $w(h)$ is strictly decreasing.

Next we show that $w(h) \rightarrow -\infty$ as $h \rightarrow \infty$. Combining with the continuity and strict monotonocity of $w(h)$, this will establish the unicity and existence of the root of $w(h)$.

Consider the following two cases:

\begin{enumerate}
    \item $ E\in (-\infty, -\frac{h}{\sqrt{2}}+1)$.
    \item $E\in (-\frac{h}{\sqrt{2}}+1,\infty)$.
\end{enumerate}

In case $1$ i.e when $E\in (-\infty, -\frac{h}{\sqrt{2}}+1)$, we have:
\begin{align}
     -E^2+\log(1-\erf(E+h/\sqrt{2})) &\leq -E^2 + \log(1) \nonumber\\
&\leq \sup_{E \in (-\infty, -\frac{h}{\sqrt{2}}+1)}(-E^2) \nonumber \\
&= \begin{cases}
    0 , & h < \sqrt{2}\\
    -(-\frac{h}{\sqrt{2}}+1)^2,  & \text{otherwise}.
\end{cases}\label{eq:bound_1}
\end{align}

While in the second case i.e $E\in (-\frac{h}{\sqrt{2}}+1,\infty$, using the inequality $1-\erf(u) \leq \frac{e^{-u^2}}{\sqrt{\pi} u}$, we obtain:
\begin{align*}
    -E^2+\log(1-\erf(E+h/\sqrt{2})) &\leq  -E^2+\log( \frac{e^{-(E+h/\sqrt{2})^2}}{\sqrt{\pi} (E+h/\sqrt{2})}) \\
  &=  -E^2-(E+h/\sqrt{2})^2-\log \left(\frac{E+h/\sqrt{2}}{\sqrt{\pi}} \right)\\ 
  & \leq -E^2-(E+h/\sqrt{2})^2 ,
\end{align*}
where in the last step we used that $E+h/\sqrt{2} \geq 1$. Note that the term $-E^2-(E+h/\sqrt{2})^2$ is maximized at $E=-\frac{h}{2\sqrt{2}}$ with the maximum value being $\frac{h^2}{4}$. We thus obtain the following bound for $E\in (-\frac{h}{\sqrt{2}}+1,\infty)$
\begin{equation}\label{eq:bound_2}
    -E^2+\log(1-\erf(E+h/\sqrt{2}))\leq \frac{h^2}{4}.
\end{equation}

Equations \ref{eq:bound_1} and \ref{eq:bound_2} together imply that $w(h) \rightarrow -\infty$ as $h \rightarrow \infty$. Hence, $w(h)$ posseses a unique root.

We move on to proving $h^\star>0$. First note that
$$ w(0,0)=0.$$

Next, a direct computation shows that 

$$\frac{\partial w}{\partial E}(E,0)\vert_{E=0}
 = -\frac{2\,\exp\bigl(-(E + h/\sqrt2)^2\bigr)}          {\sqrt\pi\,\bigl[\,1 - \erf\!\bigl(E + h/\sqrt2\bigr)\bigr]} <0.
$$
Hence, by continuity there exists $\delta>0$ such that
 $$ w(E,0)>w(0,0)=0 \quad\text{whenever}\quad -\delta<E<0.$$ Consequently,
$$ w(0)\;=\;\sup_{E\in\R}w(E,0)\;>\;0, $$
which along with the monotonicity of $w(h)$ immediately implies $h^\star>0$.

\subsubsection{Proof of Corollary \ref{cor:Econthr}}
By (\ref{eq:first_mom_bound}),  $w(E,h,1)= w(E,h)$. Recall that by Proposition \ref{prop:first_mom_conv_r}, $w(E,h,r)$ is differentiable and strictly concave in $E$ for all $r \in (1/2,1]$. Therefore $\nabla^2_E w(E,h,r) < 0, \forall E,h \in \mathbb{R}, r \in (1/2,1]$. Since the optimality of $E^\star(h,r)$ is equivalent to the condition $\nabla_E w(E,h,r) = 0$, the implicit function theorem implies that 
$E^\star(h,r)$
is continuous w.r.t $r$ in $(1/2,1]$.

\subsection{Properties for the second moment entropy density}
\subsubsection{Proof of Lemma \ref{lem:l2conc}}

Recall that from the definition of $L_2$ in (\ref{eq:def_L2}), we have:
     \[
        L_2(b_1, b_2, b_3)=\inf_{\theta_1,\theta_2}\log \mathbb{E}[\exp(\theta_1 Y_1+\theta_2 Y_2)\mathbf{1}_{[Y_1+b_3 \geq b_2\abs{Y_2+b_1}]}].
     \]
By symmetry of $Y_2$ around $0$, we have:
\begin{equation}
    L_2(b_1, b_2, b_3) = L_2(-b_1, b_2, b_3),
\end{equation}
and thus that $L_2(\cdot)$ is even w.r.t $b_1$. Substituing the density of $Y_1,Y_2$ in $L_2(b_1, b_2, b_3)$, gives:
\begin{align*}
    L_2(b_1, b_2, b_3) &= \frac{1}{2\pi}\inf_{\theta_1,\theta_2}\log \int_{\mathbb{R}^2}\exp(\theta_1 y_1+\theta_2 y_2-\frac{(y_1^2+y_2^2)}{2})\mathbf{1}_{[y_1+b_3 \geq b_2\abs{y_2+b_1}]}.
\end{align*}

With a change of variables $z_1 = y_1+b_3, z_2=y_1+b_2$, the above can be expressed as:
\begin{align*}
&=\inf_{\theta_1,\theta_2} (\frac{1}{2}(\theta_1^2+\theta_2^2))+ \log \int_{\mathbb{R}^2}\exp\left(-\frac{(z_1-b_3-\theta_1)^2+(z_2-b_1-\theta_2)^2}{2}\right)\mathbf{1}_{[z_1 \geq b_2\abs{z_2}]}.
\end{align*}
     
Given $\lambda \in [0,1]$, define $b^\lambda_j = \lambda b_j + (1-\lambda)\tilde{b}_j$. The log concavity of $\exp(-\frac{(y_1^2+y_2^2)}{2})$  then implies that for any $\theta_1,\theta_2, (b_1,b_3),(\tilde{b}_1,\tilde{b}_3)  \in \R^2$:
\begin{equation}
\begin{split}
   &\exp\left(\frac{-(z_1-b^\lambda_3-\theta_1)^2-(z_2-b^\lambda_1-\theta_2^2)^2}{2}\right)\\ &\geq (\exp\left(\frac{-(z_1-b_3-\theta_1)^2-(z_2-b_1-\theta_2)^2}{2}\right))^{\lambda}(\exp\left(\frac{-(z_1-\tilde{b}_3-\theta_1)^2-(z_2-\tilde{b}_1-\theta_2)^2}{2}\right))^{1-\lambda}.
\end{split}
\end{equation}

The Prékopa–Leindler inequality then implies that for any $\lambda \in (0,1)$ and $z,b_1, b_2,b_3, \tilde{b}_1, \tilde{b}_3 \in \R$:
\begin{equation}
     \lambda L_2(b_1, b_2, b_3) + (1-\lambda) \lambda L_2(\tilde{b}_1, b_2, \tilde{b}_3)  \leq  L_2( \lambda b_1 + (1-\lambda) \tilde{b}_1,b_2, \lambda b_3 + (1-\lambda) \tilde{b}_3)
\end{equation}

\subsubsection{Proof of Lemma \ref{lem:l2cont}}

The proof is identical to that of Lemma \ref{lem:L_lips}. 
Setting $b_1=0$ in the definition of $L_2(\cdot)$ (\ref{eq:def_L2}), we obtain:
\begin{equation}
    L_2(0, t, b)=\inf_{\theta} \mathcal{Q}(\theta,t,b),
\end{equation}
where $\mathcal{Q}(\cdot)$ is defined as:
\begin{equation}
   \mathcal{Q}(\theta,t,b) \coloneqq \log \mathbb{E}[\exp(\theta  Y_1)\mathbf{1}_{[Y_1+b \geq t\abs{Y_2}]}].
\end{equation}

By adding and subtracting $\log \Pr[Y_1 \geq t\abs{Y_2}+b]]$, $\mathcal{Q}(\theta,t,b)$ can equivalently be expressed as:
\begin{equation}
    \mathcal{Q}(\theta,t,b) = \log \mathbb{E}[\exp(\theta  (Y_1-Y_2))\vert [Y_1+b \geq t\abs{Y_2}]] + \log \Pr[Y_1+b \geq t\abs{Y_2}]].
\end{equation}

Hence, by properties of log-MGF, $\mathcal{Q}(\theta,t,b)$ is strictly-convex in $\theta$. To establish the existence of minimizers, for $0<\delta_1<\delta_2$, let $B(\delta_1,\delta_2)$ denote the spherical shell of points with norm between $\delta_1, \delta_2$ i.e:
\[
B(\delta_1,\delta_2) = \{x\in \mathbb{R}^2\delta_1:\leq\norm{x}\leq \delta_2\}.
\]

The condition $b_2<0$ implies that $\exists \delta>0$ such that $B(\delta)$ lies in the support of $Y_1,Y_2$. Therefore:
\[
\log \mathbb{E}[\exp(\theta  (Y_1-Y_2))\vert [Y_1+b \geq t\abs{Y_2}]] \geq \log \mathbb{E}[\exp(\abs{\theta}\delta_1) 1_{(Y_1,Y_2) \in B(\delta)}\vert [Y_1+b \geq t\abs{Y_2}]].
\]
Hence, when $b>0$, $\mathcal{Q}(\theta,t,b) \rightarrow \infty$ as $\abs{\theta} \rightarrow \infty$, implying that  
$\mathcal{Q}(\theta,t,b)$ possesses a unique maximizer. The continuous differentiability of $L_2$ then follows by the implicit function theorem.

\subsubsection{Regularity of $W(\cdot)$ (Proof of Proposition \ref{prop:twice_diff}).}

Our proof is based on the convex-concavity of $F(\cdot)$ which we establish below. 

\begin{proposition}\label{prop:convex_2ndmom}
For any $t \in \mathbb{R}$, $F(E,\omega,h,t,\theta_1, \theta_2)$ is strictly convex in $\theta_1, \theta_2$ and strictly concave in $t$. Furthermore, the $\sup\inf$ objective in (\ref{eq:Wxbeta}) admits a unique maximizer $t^*$ and unique minimizers $\theta_1^*(E,\omega),\theta_2^*(E,\omega)$ at $t=t^\star$.
\end{proposition}  

\begin{proof}
Both the strict-concavity w.r.t $t$ and the strict convexity w.r.t $\theta_1,\theta_2$ follow directly through the proof of Lemma \ref{lem:l2conc}.
\end{proof}

The above proposition further implies the regularity of $W(\cdot)$ w.r.t its arguments.

\paragraph{Proof of Proposition \ref{prop:twice_diff}}
   Recall that by Proposition \ref{prop:convex_2ndmom}, $F(\cdot)$ defined in \eqref{def:F2ndmom} is strictly convex-concave in $(\theta_1,\theta_2), t$ for any $E \in \mathbb{R}$ and for $E < -h/2$, $F$ admits a unique saddle point $t^\star,(\theta^\star_1,\theta^\star_2)$ achieving the extremum i.e. satisfying:
   \begin{equation}
       F(t^\star,\theta^\star_1,\theta^\star_2) = \sup_t \sup_{\theta_1, \theta_2} F(t,\theta_1, \theta_2).
   \end{equation}
   
   Furthermore, $F$ is continuously differentiable w.r.t $E, \omega, h, t, \theta_1,\theta_2$ for any $\omega \in (-1,1)$.  Consider the Hessian $\nabla^2 F$ at $t^\star,(\theta^\star_1,\theta^\star_2)$:
   \begin{equation}
       \nabla^2 F = \begin{pmatrix}
           \nabla^2_\theta F & \nabla_{\theta} \nabla_t F\\
           \nabla_{t} \nabla_{\theta} F &\nabla^2_t F.
       \end{pmatrix}
   \end{equation}

By the Schur-complement inverse criteria \citep{horn2012matrix},
$\nabla^2 F$ is invertible if and only if the matrices
\begin{equation}
    \nabla^2_\theta F,
\end{equation}
and \begin{equation}
    (\nabla^2_t F - \nabla_{\theta} \nabla_t F  (\nabla^2_t F)^{-1} \nabla_t \nabla_{\theta} F),
\end{equation} 
are invertible.

Now, the strict convexity-concavity of $F$ implies that $\nabla^2_\theta F \succ 0$ while $\nabla^2_t F \prec 0$. Therefore:
\begin{equation}
    \nabla^2_\theta F \succ 0,
\end{equation}
while,
\begin{equation}
    (\nabla^2_t F - \nabla_{\theta} \nabla_t F (\nabla^2_t F)^{-1} \nabla_t \nabla_{\theta}  F^\top) \prec 0,
\end{equation} 
implying the invertibility of both the matrices. Subsequently, the Hessian $\nabla^2 F$ at the unique $t^\star,(\theta^\star_1,\theta^\star_2)$ achieving optimality in the optimization problem defined by (\ref{eq:Wxbeta}) is invertible. 
   
  Since the fixed-point equations for $t^\star,(\theta^\star_1,\theta^\star_2)$ correspond to $\nabla F=0$, 
  the implicit function theorem implies that $t^\star,(\theta^\star_1,\theta^\star_2)$ are  twice-continuously-differentiable w.r.t $\omega, h, E$. This in-turn implies the twice-continuous differentiability of $W(E,\omega,h)$.

\subsubsection{Proof of Proposition \ref{prop:second_mom_max}}

Substituting $\omega=0$ in the definition of  $F(\cdot)$ in \eqref{def:F2ndmom}, we observe that $F(\cdot)$ simplifies as:
\begin{align}
 F(E,0,h,t,\theta_1, \theta_2)   
&= 2 \log 2 \\
&-4{t^2}-4{(-E-t)^2}
+1/2
\log P\left(\theta_1,1,
4\sqrt{2}t-2h\right) \notag\\
&+
1/2\log P\left(\theta_2,1,
{4\sqrt{2}(-E-t)}-2h\right) , \label{def:F2ndmomsim}
\end{align}

By the concavity of $t \rightarrow \inf_{\theta} \log P\left(\theta,1,
{4\sqrt{2}t}-2h\right)$, we obtain that the RHS is maximized at $t^\star=-E-t^\star=-\frac{E}{2}$. For this value of $t^\star$, the optimality conditions for $\theta_1,\theta_2$ further yield $\theta^\star_1=\theta^\star_2$, implying:
\begin{equation}
\begin{split}
    \sup_{t} \inf_{\theta_1,\theta_2} F(E,0,h,t,\theta_1, \theta_2) &=  \sup_{t} \inf_{\theta} \Biggl( 2 \log 2 -4{t^2}-4{(-E-t)^2}
+1/2
\log P\left(\theta,1,
4\sqrt{2}t-2h\right) \notag\\
&+
1/2\log P\left(\theta,1,
{4\sqrt{2}(-E/4}-2h\right) \Biggr).
\end{split}
\end{equation}
Substituting the definition of $P(\cdot)$, we find that the LHS equals $2w(E,h)$.

\subsubsection{Proof of Proposition \ref{prop:l2inf}}

From (\ref{eq:def_L2}), we have:
    \begin{equation}
        L_2(0, b_2, b_3)=\inf_{\theta}\log \mathbb{E}[\exp(\theta_1 Y_1)\mathbf{1}_{[Y_1+b_3 \geq b_2\abs{Y_2}]}],
     \end{equation}
where $Y_1,Y_2 
\sim \mathcal{N}(0,1)$.  Through an identical argument as Proposition \ref{prop:Linf}, setting $\theta=\frac{1}{b_3^{3/2}}$, we obtain that $ L_2(0, b_2, b_3) \rightarrow -\infty$ as $b_3 \rightarrow 0$.

\subsubsection{Proof of Lemma \ref{lem:W_cont}}

\begin{proof}

    Recall the definition of $W(E,\omega,h)$ in (\ref{eq:Wxbeta}):
    \[W(E,\omega,h) = \sup_{t \in \mathbb{R}} \inf_{\theta_1, \theta_2 \in \mathbb{R}^2} F(E,\omega,h,t,\theta_1, \theta_2). \]

By setting $\theta_1,\theta_2=0$, we note that as $\omega \rightarrow -1$, $\inf_{\theta_1, \theta_2 \in \mathbb{R}^2} F(E,\omega,h,t,\theta_1, \theta_2)$ is bounded as:

\[
\inf_{\theta_1, \theta_2 \in \mathbb{R}^2} F(E,\omega,h,t,\theta_1, \theta_2) \leq 
    -\frac{t^2}{1-\omega}.
\]

Hence, as $\omega \rightarrow -1$, the set of maximizers converges to $t=0$. Substituting $t=0$ in the definition of $F(E,\omega,h,t,\theta_1, \theta_2)$ in (\ref{def:F2ndmom}), we recover $w(E,h)$. 
\end{proof}

\subsection{Non-triviality of $E_{\rm cor}(h), h_{\rm cor}$ (Proof of Proposition \ref{prop:hcor})} \label{sec:exist_ecor}

In this section, we prove Proposition \ref{prop:hcor}, i.e. the existence of $E_{\rm cor}(h), h_{\rm cor}$ defined in Equations \ref{def:E_cor},\ref{def:h_cor}, and Lemma \ref{lem:contecor}. To this end, we first establish certain properties of the function $W(E,\omega,h)$ that allow us to prove respectively. These rely on one or more of the Assumptions \ref{ass:unique_max}-\ref{ass:e_max_h}, whose numerical verifications is detailed in Section \ref{sec:num_W}. We recall that the proof of Theorem \ref{thm:sparse_max_E_thres}  only requires the existence of $E_{\rm cor}(h), h_{\rm cor}$ without any assumptions about their numerical values.

We begin by showing the regularity of $E_{min}(h),E_{max}(h)$:
\begin{proposition}\label{prop:e_min_cont}
    $E_{\rm max}(h)-E_{\rm min}(h)$ is continuous w.r.t $h$ for all  $h\leq h^\star$.
\end{proposition}

\begin{proof}
 Recall that Proposition \ref{prop:first_mom_conv}  implies that $w(\cdot,h)$ is continuously differentiable, strictly concave and possesses a unique maximizer  $E^\star(h)$. Since for any  $h<h^\star$, we have that $w(h)= w(E^\star(h),h)>0$, we obtain that $E_{\min}(h)<E^\star(h)<E_{\max}(h)$. The uniqueness of the maximizer $E^\star(h)$ then implies that $\frac{dw}{dE} \neq 0$ for any $E \neq E^\star(h)$.
     
Therefore, when $h<h^\star$, $\frac{dw}{dE} \neq 0$ for $E=E_{\min}(h)$ and $E =E_{\max}(h)$. Subsequently, applying the implicit-function theorem to $w(E,h)=0$ we obtain the continuity of $E_{min}, E_{max}$ w.r.t $h$ for $h<h^\star$.

To obtain continuity at $h=h^\star$ , note that since $E_{min}, E_{max}$ are bounded, the following limits exist:
\begin{equation}
    E^+_{min} = \lim_{h \rightarrow h^\star}E_{min}(h),
\end{equation}
and
\begin{equation}
    E^+_{max} = \lim_{h \rightarrow h^\star}E_{max}(h).
\end{equation}

By continuity of $W(E,h)$ w.r.t $E$, we have that $E^+_{min}= E^+_{max}= E^\star(h^\star)$.

\end{proof}

Combining the above results with Assumptions \ref{ass:unique_max}-\ref{ass:0_unique}, we obtain the non-triviality of $h_{\rm cor}, E_{\rm cor}(h)$.

\subsubsection{Proof of Proposition \ref{prop:hcor}}

The proof follows through a continuity argument based on Propositions \ref{prop:twice_diff}  and \ref{prop:e_min_cont}. 
Recall that by Assumption \ref{ass:0_unique}, $W(E^\star(h^\star),\omega, h^\star)$ is uniquely maximized w.r.t $\omega$ at $\omega=\{-1,0,1\}$. Therefore $\partial_{-}\omega W(E^\star(h^\star),1, h^\star) >0$, $\partial_{+}\omega W(E^\star(h^\star),1, h^\star) <0$ and $\nabla^2_\omega W(E^\star(h^\star),0, h^\star) < 0$.

Subsequently, the twice continuous differentiability of $W(\cdot)$ (Proposition \ref{prop:twice_diff}) implies that $\exists \delta > 0$ and a  neighborhood of $h=h^\star, E=E^\star(h^\star)$ such that throughout the neighborhood, $W(E,\omega,h)$ is strictly decreasing in $[1-\delta,1]$ and strictly concave in $(-\delta,\delta)$. Furthermore, the differentiability of $W$ w.r.t. $h$ for $\omega \in (-1,1)$ implies the uniform Lipschitzness of $W(E,\omega,h)$ w.r.t. $E,h$ over $\omega$ in the closed interval $[\delta, 1-\delta]$. Therefore $\sup_{\omega \in [\delta, 1-\delta]} W(E,\omega,h) < W(E,0,h)$ for $h,E$ sufficiently close to $h^\star, E^\star(h^\star)$.

Combining  with the continuity of $E_{\rm max}(h)-E_{\rm min}(h)$ for $h \in (-\infty, h^\star]$ (Proposition \ref{prop:e_min_cont}), there exists  $\tilde{h}<h^\star$ such that $W(E,\omega,h)$ is uniquely maximized at $\omega=\{0,-1,1\}$ for  $h \in (\tilde{h}, h^\star)$ and $E \in (E_{\rm min}(h), E_{\rm max}(h))$. The feasible set for $h_{\rm cor}$ is therefore bounded by $\tilde{h} < h^\star$.

Next, to prove the existence of $E_{\rm cor}(h)$, we note that for any $h<h^\star$, by definition of $E_{\rm min}(h)$ and the strict-concavity of $w(\cdot,h)$, we have $w(E,h)<0$ for any $E<E_{\rm min}(h)$. Since, by Proposition \ref{prop:second_mom_max} we have that for all $E,h \in \mathbb{R}$, $W(E,0,h)=2w(E,h)$ while $w(E,h)=W(E,-1,h)$, we obtain that  $W(E,0,h)<W(E,-1,h)$ for all $E<E_{\rm min}(h)$. Thus, the feasible set for $E_{\rm cor}(h)$ is non-empty and thus $E_{\rm cor}(h)$ exists. Subsequently, through a similar continuity argument as for $h_{\rm cor}$ and Assumption \ref{ass:e_max_h}, we obtain the analogous result for the non-triviality of $E_{\rm cor}(h)$.
 
\subsubsection{Proof of Lemma \ref{lem:contecor}}

For  $h \in (h_{\rm cor},h^\star)$, Lemma \ref{lem:contecor} follows immediately by noting that  the proof of Proposition \ref{prop:e_min_cont}, implies the continuity of $E_{min}(h),E_{max}(h)$ w.r.t $h$. For $h \in (-0.1, h_{\rm cor})$, by the definition of $E_{cor}(h)$, $\exists E_\ell \leq E_{cor}(h)$ such that:

\begin{equation}
    W(E_\ell,0,h) < \sup_{\omega \in (-1,1)} W(E_\ell,\omega,h).
\end{equation}

Since both the LHS and the RHS are continuous w.r.t $h$, we obtain by Proposition \ref{prop:twice_diff} that for $\tilde{h},\tilde{E}$ sufficiently close to $E_\ell,h$, we have $W(\tilde{E},0,\tilde{h}) < \sup_{\omega \in [-1,1]} W(\tilde{E},\omega,\tilde{h})$.

We further have that for any $E_{\rm cor}(h) < E_u < E_{\rm max}(h)$
\begin{equation}
\operatorname{argmax}_{\omega \in (-1,1)} W(E,\omega,h) = \{0\},
\end{equation}
for all $E \in [E_u,  E_{\rm max}(h))$. An argument identical to the proof of  Proposition \ref{prop:hcor} then implies that the above condition holds for all $\tilde{h} \in (-0.1, h_{\rm cor})$ sufficiently close to $h$.

\section{Verification of Assumptions \ref{ass:unique_max}, \ref{ass:0_unique}, \ref{ass:e_max_h}} \label{sec:num_W}

To verify these assumptions, we leverage the convexity-concavity and the sup-inf structure of the variational objective underlying $W(E, \omega,h)$ to produce certificates bounding $W(E, \omega,h)$ in certain intervals.  We remark that all of the assumptions except Assumption \ref{ass:e_max_h} require checking the properties of $W(E, \omega,h)$ only at the points $h=h^\star$ and $h=0$. 

\subsection{Assumption \ref{ass:unique_max}}

Recall that, by Lemma \ref{lem:W_cont},
$W(E,\omega,h)$ can be continuously extended from  $\omega \in (-1,1)$ to $\omega \in [-1,1]$ by setting $W(E,-1,h)=W(E,1,h)=w(E,h)$.

We verify Assumption \ref{ass:unique_max} through the following series of numerical checks:
\begin{enumerate}
    \item Verify that $\omega=0$ in a maximizer over a sufficiently small neighborhood of the origin.
    \item Verify that $\frac{dW}{d\omega}<0$ ($\frac{dW}{d\omega}>0$) in sufficiently small neighborhood of $\omega=-1$ $(\omega=1)$. By Lemma \ref{lem:W_cont} and the mean value theorem, this verifies that $W(E,0,h)<w(E,h)$ in these neighborhoods.    \item Verify that in the remaining range of $\omega$, $W(E,\omega,h)$ is bounded above by $W(E,0,h)$.
\end{enumerate}

Verifying claims $1,2$ requires only the approximation of $\frac{dW}{d\omega}$ over chosen, (arbitrarily) small neighborhoods of $\omega=-1,1,0$.
To allow tractable verification of the third claim, we shall leverage the sup-inf structure of $W(E,\omega,h)$. By the variational definition of $W$ given by (\ref{eq:Wxbeta}), we may interchange sup-inf to obtain the following upper bound on $W(E,\omega,h)$:
\begin{equation}
    W(E,\omega,h) \leq \sup_{t \in \mathbb{R}} F(E,\omega,h,t,\tilde{\theta}_1, \tilde{\theta}_2),
\end{equation}
for any $\tilde{\theta}_1, \tilde{\theta}_2 \in \mathbb{R}$. This allows us to produce ``certificates" on $W(E,\omega,h)$ by optimizing over $t$ on suitably chosen values of $\theta_1,\theta_2$. These ``candidate" values are obtained using the findroot method of mpmath with warm-start across different values of $\omega$.

Moreover, recall from \eqref{def:F2ndmom}, that $F(\cdot)$ can be expressed as:
\begin{align*}
    F(E,\omega,h,t,\theta_1,\theta_2) &= 2 \log 2 -H(\frac{(1+\omega)}{2})\notag-4{t^2\over (1+\omega)^2}-4{(-E-t)^2\over (1-\omega)^2} + \mathcal{F}(E,\omega,h,t,\theta_1,\theta_2),
\end{align*}
where:
\begin{equation}
\begin{split}
    &\mathcal{F}(E,\omega,h,t,\theta_1,\theta_2)\\ &= (1+\omega)/2
\log \mathcal{Q}\left(\theta_1,\sqrt{1-\omega\over 1+\omega},
{4\sqrt{2}t\over (1+\omega)^{3/2}}-\frac{2h}{\sqrt{(1+\omega)}}\right) \notag\\
&+(1-\omega)/2
\log \mathcal{Q}\left(\theta_2,\sqrt{1+\omega\over 1-\omega},
{4\sqrt{2}(-E-t)\over (1-\omega)^{3/2}}-\frac{2h}{\sqrt{(1-\omega)}}\right). 
\end{split}
\end{equation}

Note that from the definition of $\mathcal{Q}(\theta,a,b)$ in \eqref{eq:Q_def}, we have that increasing $a$ or decreasing $b$ decreases the support involved in the expectation in $\mathcal{Q}(\theta,a,b)$ 
Therefore, $\mathcal{Q}(\theta,a,b)$ is non-increasing in $a$ and non-decreasing in $b$.
Now, suppose that $0 < \omega_1< \omega_2< 1$. Then, for any $\omega \in [\omega_1,\omega_2]$ and $t \in \mathbb{R}$:
\begin{equation}\label{eq:omega_bound_1}
\frac{1+\omega}{2}\mathcal{Q}\left(\theta_1, \sqrt{\frac{(1-\omega)}{(1+\omega)}}, \frac{4\sqrt{2}t}{(1+\omega)^{3/2}}-\frac{2h}{\sqrt{1+\omega}} \right)  \leq \frac{1+\omega_2}{2}\mathcal{Q}\left(\theta_1, \sqrt{\frac{(1-\omega_2)}{(1+\omega_2)}}, \frac{4\sqrt{2}t}{(1+\omega_1)^{3/2}}-\frac{2h}{\sqrt{1+\omega_1}}\right).
\end{equation}

Similarly, we have:

\begin{equation}\label{eq:omega_bound_2}
\frac{1-\omega_2}{2}\mathcal{Q}\left(\theta_2, \sqrt{\frac{(1+\omega)}{(1-\omega)}}, \frac{4\sqrt{2}(-E-t)}{(1-\omega)^{3/2}}-\frac{2h}{\sqrt{1-\omega}}\right)  \leq \frac{1-\omega_1}{2}\mathcal{Q}\left(\theta_2, \sqrt{\frac{(1+\omega_1)}{(1-\omega_1)}}, \frac{4\sqrt{2}(-E-t)}{(1-\omega_2)^{3/2}}-\frac{2h}{\sqrt{1-\omega_2}}\right).
\end{equation}

Furthermore, since $H(\cdot)$ is non-increasing on $[1/2,1]$, we further obtain that for any $\omega \in [\omega_1,\omega_2]$
\begin{equation}
  -(1+\omega)/2\log(1+\omega)-(1-\omega)/2\log(1-\omega) \leq  -(1+\omega_1)/2\log(1+\omega_1)-(1-\omega_1)/2\log(1-\omega_1). 
\end{equation}

Combining the above series of bounds, we obtain that for any $\tilde{\theta}_1,\tilde{\theta}_2$:
\begin{equation}\label{eq:F_2bound}
\begin{split}
 \sup_{\omega \in [\omega_1,\omega_2]}  & W(E,\omega,h)  \leq \sup_{t \in \mathbb{R}} \Biggl( 2 \log 2 -(1+\omega_1)/2\log(1+\omega_1)-(1-\omega_1)/2\log(1-\omega_1)\\
&-4{t^2\over (1+\omega_2)^2}-4{(-E-t)^2\over (1-\omega_2)^2} + \frac{1+\omega_2}{2}\frac{1+\omega_2}{2}\mathcal{Q}\left(\tilde{\theta}_1, \sqrt{\frac{(1-\omega_2)}{(1+\omega_2)}}, \frac{4\sqrt{2}t}{(1+\omega_1)^{3/2}}-\frac{2h}{\sqrt{1+\omega_1}}\right)\\&+\frac{1-\omega_1}{2}\mathcal{Q}\left(\tilde{\theta}_2, \sqrt{\frac{(1+\omega_1)}{(1-\omega_1)}}, \frac{4\sqrt{2}(-E-t)}{(1-\omega_2)^{3/2}}-\frac{2h}{\sqrt{1-\omega_2}}\right).
\end{split}
\end{equation}

Therefore, upper bounding $W(E,\omega,h)$ over a range of $\omega$ reduces to constructing a fine-enough partitioning $\omega_1,\cdots,\omega_m$ and 
producing  for each interval $[\omega_j,\omega_{j+1}]$, parameter values $\tilde{\theta}_1, \tilde{\theta}_2$ such that the RHS in (\ref{eq:F_2bound}) lies below
$W(E,0,h)$.

We note that the above assumption was independently verified in \cite{minzer2023perfectly} through the use of interval arithmetic.

\subsection{Assumption \ref{ass:0_unique}}

We check Assumption \ref{ass:0_unique} numerically by demonstrating the existence of an $\omega_0\neq 0$ such that $W(E,\omega,0) > W(E,0,0)$. By the definition of $W(E,\omega,0)$, we have that for any $t \in \mathbb{R}$:
\begin{equation}
   \inf_{\theta_1,\theta_2}
 F(E,\omega,h,t,\theta_1,\theta_2)  \leq  W(E,\omega,0).
\end{equation}

While Proposition \ref{prop:second_mom_max} implies that $W(E,0,0)=2w(E,0)$. Therefore, to verifying Assumption \ref{ass:0_unique}, it suffices to establish the existence of $t \in \mathbb{R}$ such that:
\begin{equation}
   \inf_{\theta_1,\theta_2}
 F(E,\omega,h,t,\theta_1,\theta_2)  > 2w(E,0).
\end{equation}
where recall that the optimization in the LHS is strictly convex and thus amenable to simple first order algorithms.

\subsection{Assumption \ref{ass:e_max_h}}

Finally, Assumption \ref{ass:e_max_h} is verified through a procedure identical to Assumption  \ref{ass:unique_max}, along with an additional partitioning over $h$. Recall that by the definition of $\mathcal{Q}(\theta,a,b)$, $\mathcal{Q}(\theta,a,b)$ is non-decreasing in $b$. Hence, we obtain, for any $h_1 < h_2$:
\[
\frac{1+\omega}{2}\mathcal{Q}(\theta_1 \sqrt{\frac{(1-\omega)}{(1+\omega)}}, \frac{4\sqrt{2}t}{(1+\omega)^{3/2}}-\frac{2h_1}{\sqrt{1+\omega}}) \geq \frac{1+\omega}{2}\mathcal{Q}(\theta_1, \sqrt{\frac{(1-\omega)}{(1+\omega)}}, \frac{4\sqrt{2}t}{(1+\omega)^{3/2}}-\frac{2h_2}{\sqrt{1+\omega}}),
\]
and:
\[
\frac{1-\omega_2}{2}\mathcal{Q}\left(\theta_2, \sqrt{\frac{(1+\omega)}{(1-\omega)}}, \frac{4\sqrt{2}(-E-t)}{(1-\omega)^{3/2}}-\frac{2h_1}{\sqrt{1-\omega}}\right) \geq \frac{1-\omega_2}{2}\mathcal{Q}\left(\theta_2, \sqrt{\frac{(1+\omega)}{(1-\omega)}}, \frac{4\sqrt{2}(-E-t)}{(1-\omega)^{3/2}}-\frac{2h_2}{\sqrt{1-\omega}}\right)
\]

Combining the above bounds with (\ref{eq:omega_bound_1}) and (\ref{eq:omega_bound_2}) then results in the following bound for $W(E,\omega,h)$ for $\omega,h \in [\omega_1,\omega_2] \times [h_1,h_2]$, analogous to \eqref{eq:F_2bound}:

\begin{equation}
\begin{split}
  \sup_{h \in [h_1,h_2]}\sup_{\omega \in [\omega_1,\omega_2]} \sup_{t \in \mathbb{R}} & W(E,\omega,h)  \leq \sup_{t \in \mathbb{R}} \Biggl( 2 \log 2 -(1+\omega_1)/2\log(1+\omega_1)-(1-\omega_1)/2\log(1-\omega_1)\\
&-4{t^2\over (1+\omega_2)^2}-4{(-E-t)^2\over (1-\omega_2)^2} + \frac{1+\omega_2}{2}\mathcal{Q}\left(\tilde{\theta}_1, \sqrt{\frac{(1-\omega_2)}{(1+\omega_2)}}, \frac{4\sqrt{2}t}{(1+\omega_1)^{3/2}}-\frac{2h_1}{\sqrt{1+\omega_1}}\right)\\&+\frac{1-\omega_1}{2}\mathcal{Q}\left(\tilde{\theta}_2, \sqrt{\frac{(1+\omega_1)}{(1-\omega_1)}}, \frac{4\sqrt{2}(-E-t)}{(1-\omega_2)^{3/2}}-\frac{2h_1}{\sqrt{1-\omega_2}}\right) \Biggr),
\end{split}
\end{equation}
for any $\tilde{\theta}_1, \tilde{\theta}_2$.

The restriction of Assumption \ref{ass:e_max_h} to the bounded interval
$h\in(-0.1,h^\star)$ then allows the applicability of the above bound over finitely many intervals of the form $[h_1,h_2]$.

 
\end{document}